\newtheorem{thm}{Theorem}[section]
\newtheorem{lem}[thm]{Lemma}
\newtheorem{prop}[thm]{Proposition}
\newtheorem{cor}[thm]{Corollary}
\newtheorem{thmintro}{Theorem}
\newtheorem{corintro}[thmintro]{Corollary}
\newtheorem{propintro}[thmintro]{Proposition}
\theoremstyle{definition}
\newtheorem{defn}[thm]{Definition}
\newtheorem{ex}[thm]{Example}
\newtheorem{que}{Question}
\newtheorem{rem}[thm]{Remark}
\newtheorem*{rem*}{Remark}
\renewcommand{\emph}[1]{\textbf{\textit{#1}}}
\newcommand{\defbold}{\textbf}
\newcommand{\inv}{^{-1}}
\newcommand{\VA}{{[\mathrm{A}]}}
\newcommand{\QZ}{\mathrm{QZ}}
\newcommand{\QC}{\mathrm{QC}}
\newcommand{\PSL}{\mathrm{PSL}}
\newcommand{\CC}{\mathrm{C}}
\newcommand{\N}{\mathrm{N}}
\newcommand{\R}{\mathrm{R}}
\newcommand{\Z}{\mathrm{Z}}
\newcommand{\ie}{\textit{i.e.}\@\xspace}
\newcommand{\tdlc}{t.d.l.c.\@\xspace}
\newcommand{\tdlcsc}{t.d.l.c.s.c.\@\xspace}
\newcommand{\hji}{h.j.i.\@\xspace}
\newcommand{\con}{\mathrm{con}}
\newcommand{\Stab}{\mathrm{Stab}}
\newcommand{\sop}{(S0)\@\xspace}
\newcommand{\sdn}{(S1)\@\xspace}
\newcommand{\sur}{(S2)\@\xspace}
\newcommand{\FCbar}{$\overline{\text{FC}}$}
\newcommand{\triv}{\{1\}}
\newcommand{\nub}{\mathrm{nub}}
\newcommand{\bd}{\partial}
\newcommand{\lla}{\langle \langle}
\newcommand{\rra}{\rangle \rangle}
\newcommand{\ldlat}{\mathcal{LD}}
\newcommand{\sclass}{\mathscr S}
\newcommand{\stab}{\mathrm{stab}}
\newcommand{\rist}{\mathrm{rist}}
\newcommand{\lnorm}{\mathcal{LN}}
\newcommand{\lcent}{\mathcal{LC}}
\newcommand{\lpc}{\eta}
\newcommand{\epc}{\eta^*}
\newcommand{\Aut}{\mathrm{Aut}}
\newcommand{\Sym}{\mathrm{Sym}}
\newcommand{\Comm}{\mathrm{Comm}}
\newcommand{\Mon}{\mathrm{Mon}}
\newcommand{\Ker}{\mathrm{Ker}}
\newcommand{\ssl}{/\!\!/}
\newcommand{\bN}{\mathbf{N}}
\newcommand{\bQ}{\mathbf{Q}}
\newcommand{\bZ}{\mathbf{Z}}
\newcommand{\mcA}{\mathcal{A}}
\newcommand{\mcB}{\mathcal{B}}
\newcommand{\mcC}{\mathcal{C}}
\newcommand{\mcD}{\mathcal{D}}
\newcommand{\mcF}{\mathcal{F}}
\newcommand{\mcH}{\mathcal{H}}
\newcommand{\mcM}{\mathcal{M}}
\newcommand{\mcP}{\mathcal{P}}
\newcommand{\mcR}{\mathcal{R}}
\newcommand{\mcU}{\mathcal{U}}
\newcommand{\mfp}{\mathfrak{p}}
\newcommand{\mfS}{\mathfrak{S}}
\newcommand{\mfX}{\mathfrak{X}}
\begin{document}

\title{Locally normal subgroups \\of totally disconnected groups. \\ Part II: Compactly generated simple groups}

%\author{Pierre-Emmanuel Caprace, Colin D. Reid and George A. Willis}

\author[1]{Pierre-Emmanuel Caprace\thanks{F.R.S.-FNRS research associate, supported in part by the ERC (grant \#278469)}}
%\ead{pe.caprace@uclouvain.be}
\author[2]{Colin D. Reid\thanks{Supported in part by ARC Discovery Project DP120100996}}
%\ead{Colin.Reid@newcastle.edu.au}
\author[2]{George A. Willis\thanks{Supported in part by ARC Discovery Project DP0984342}}
%\ead{George.Willis@newcastle.edu.au}

\affil[1]{Universit\'e catholique de Louvain, IRMP, Chemin du Cyclotron 2, bte L7.01.02, 1348 Louvain-la-Neuve, Belgique}
\affil[2]{University of Newcastle, School of mathematical and physical sciences, Callaghan, NSW 2308, Australia}

\date{December 21, 2016}

\maketitle

\begin{abstract}
We use the structure lattice, introduced in Part I, to undertake a systematic study of the class $\mathscr S$ consisting of compactly generated, topologically simple, totally disconnected locally compact groups that are non-discrete. Given $G \in \mathscr S$, we show that compact open subgroups of $G$ involve finitely many isomorphism types of composition factors, and do not have any soluble normal subgroup other than the trivial one. By results of Part I, this implies that the centraliser lattice and local decomposition lattice of $G$ are Boolean algebras.  We show that the $G$-action on the Stone space of those Boolean algebras is minimal, strongly proximal, and micro-supported. Building upon those results, we obtain partial answers to the following key problems: Are all groups in $\mathscr S$ abstractly simple? Can a group in $\mathscr S$ be amenable? Can a group in $\mathscr S$ be such that the contraction groups of all of its elements are trivial? 
\end{abstract}

{\small 
\tableofcontents
}

%\newpage

\section{Introduction}

%\addtocontents{toc}{\protect\setcounter{tocdepth}{1}}

\subsection{Background}

The solution to Hilbert's fifth problem \cite[Theorem~4.6]{MZ} ensures that a connected locally compact group is in fact an inverse limit of Lie groups.  In particular, the general structure theory of connected locally compact groups largely reduces to that of simple Lie groups, soluble Lie groups, and compact groups (which are themselves inverse limits of compact Lie groups), and the structure of connected simple locally compact groups is thoroughly understood.

The possibility of a structure theory of locally compact groups beyond the connected case has become apparent over the last few decades.  At least from a local perspective, one has an immediate reduction to the case of groups that are \defbold{compactly generated}\index{compactly generated}, \ie that admit a compact generating set: any locally compact group $G$ is a directed union of compactly generated open subgroups.  In particular, every connected locally compact group is compactly generated.  There is also a developing structure theory of closed normal subgroups of locally compact groups, for which the base case is groups that are \defbold{topologically simple}\index{topologically simple}\index{simple!topologically}, that is, groups whose only closed normal subgroups are the identity subgroup and the whole group.  In contrast to topological simplicity, we say a topological group is \defbold{abstractly simple}\index{abstractly simple} if it has no proper non-trivial normal subgroup (including dense normal subgroups).

Results in \cite{CM} and their recent extensions in \cite{Reid&Wesolek:chief} suggest that, under mild assumptions that exclude discrete groups, compactly generated topologically simple locally compact groups play a critical role in the structure of general locally compact groups, generalising the status of simple Lie groups in the structure theory of connected locally compact groups.  (A complementary role, analogous to the role of soluble Lie groups in Lie theory, could be played by the class of elementary groups introduced by P. Wesolek \cite{WesolekElementary}.)  We shall exclude discrete simple groups from consideration, since their known behaviour precludes any structure theory having general scope. Indeed, the impossibility of such a theory can be given a precise mathematical formulation, see \cite{TV}.

We thus arrive at the class $\sclass$\index{S@$\sclass$} of non-discrete, compactly generated, topologically simple, totally disconnected, locally compact groups, which is the focus of the present article.  For the sake of brevity, we shall write \textit{\tdlc}\index{tdlc@\tdlc} for \textit{totally disconnected locally compact}.  Many specific families of examples of groups in $\sclass$ are known; see Appendix~\ref{sec:Examples}.  Our goal is to understand the general properties of the groups in $\sclass$.  Some of the results presented here have been announced in \cite{CRW-announcement}; the proofs  rely on general tools developed in \cite{CRW-Part1}.  To make this discussion precise, let us make a list of questions about groups in $\sclass$.  The answer to all of the following questions is known to be `no' for every known example of a group in $\sclass$, which gives some reason to believe the answer should be `no' in general.

For all the questions below, let $G \in \sclass$.

\begin{que}\label{que:abs_simple}
Can $G$ have a proper dense normal subgroup?
\end{que}

\begin{que}\label{que:amenable}
Can $G$ be amenable?
\end{que}

\begin{que}\label{que:anisotropic}
Can every element of $G$ have trivial contraction group?
\end{que}

\begin{que}\label{que:QZ}
Can $G$ have an open subgroup with a non-trivial centre?
\end{que}

\begin{que}\label{que:Cstable}
Can $G$ have an open subgroup with a non-trivial abelian normal subgroup?
\end{que}

A motivating discussion of each of these questions, as well as their interconnections, accompanies the statements of the main results in the subsequent sections of the introduction below. 
Question~\ref{que:QZ} was already answered negatively by Y. Barnea, M. Ershov and T. Weigel \cite{BEW}; Question~\ref{que:Cstable} is answered below (see Theorem~\ref{thmintro:QZ}).  The first three questions are open questions to which we obtain negative answers under more restrictive hypotheses.  Question~\ref{que:anisotropic} is a specialisation of \cite[Problem~4.1]{Willis}, which asks if $G \in \sclass$ can have the property that every element normalises a compact open subgroup.  By results of U. Baumgartner and G. Willis \cite{BaumgartnerWillis}, given an element $g$ of a \tdlc group $G$, then $g$ normalises a compact open subgroup if and only if both $\overline{\con(g)}$ and $\overline{\con(g\inv)}$ are compact.  Thus an example for Question~\ref{que:anisotropic} would also be an example for \cite[Problem~4.1]{Willis}.

By the recent groundbreaking work of K. Juschenko and N. Monod \cite{JM} and its extension due to V.~Nekrashevych \cite{Nekra}, there exist finitely generated infinite simple amenable groups, so the hypothesis in Question~\ref{que:amenable} that $G$ be non-discrete is important.  We emphasise however that the question whether a group $G \in \sclass$ can be amenable is naturally linked with the investigation of the commensurated subgroups of   finitely generated infinite simple amenable groups; see Remark~\ref{rem:amenable} below.

The following basic example (a similar construction is given in \cite[Proposition~3.2]{Willis}) gives some additional motivation for the focus on \emph{compactly generated} topologically simple groups in the questions above.

\begin{ex}\label{ex:non_cg}
Let $S = \mathrm{Sym}(\bZ)$ equipped with the permutation topology.  Let $A = \mathrm{Alt}(\bZ)$ be the group of permutations of $\bZ$ that can be written as a product of an even number of transpositions.  Then $A$ is clearly a dense normal subgroup of $S$.  It is well-known that $\mathrm{Alt}(\bZ)$ is a simple group, for essentially the same reasons as $\mathrm{Alt}(n)$ for $n \ge 5$.

Given $i \in \bZ$, let $\sigma_i$ be the $3$-cycle $(3i \; 3i+1 \; 3i+2)$ acting on $\bZ$, and let $B$ be the smallest closed subgroup of $S$ containing $\{\sigma_i \mid i \in \bZ\}$.  We see that $B$ is an abelian profinite group of exponent $3$.  Moreover, $B$ is commensurated by $A$.  We can thus construct the non-discrete \tdlc group $G = \langle A, B \rangle$, equipped with the topology so that $B$ is embedded as a compact open subgroup.

Now observe the following: $G$ is topologically but not abstractly simple (since $A$ is a proper dense normal subgroup of $G$); $G$ is locally finite, hence amenable; every element of $G$ has trivial contraction group and open centraliser in $G$; and $B$ is an open abelian subgroup of $G$.  So apart from the compact generation hypothesis, $G$ is an example for all five of the questions above.
\end{ex}

\subsection{Locally normal subgroups}
\label{sec:intro_loc_norm}

Our approach in studying the class $\sclass$ is based on the concept of \textbf{locally normal subgroup}\index{locally normal subgroup}, \ie a subgroup whose normaliser is open. {One motivation for considering this concept is the following classical fact (see \cite[Ch.~III, \S7, Prop.~2]{Bbki}): if $G$ is a $p$-adic Lie group\index{p-adic Lie group@$p$-adic Lie group}, then a subalgebra  of the $\mathbf Q_p$-Lie algebra of $G$ is an ideal if and only if it is  the Lie algebra of a compact locally normal subgroup of $G$. Thus compact locally normal subgroups may be viewed as a  group theoretic counterpart of ideals in  Lie theory.} In a general \tdlc group $G$, obvious examples of compact locally normal subgroups are provided by the trivial subgroup, or by compact open subgroups; these locally normal subgroups should be considered as \textbf{trivial}.\index{locally normal subgroup!trivial}

The following theorem answers Questions~\ref{que:QZ} and \ref{que:Cstable}.

\begin{thmintro}[See Theorem~\ref{thm:noqz}]\label{thmintro:QZ}
Let $G$ be a compactly generated \tdlc group that is topologically characteristically simple.  Suppose that $G$ is neither discrete nor compact.  Then the following hold.

\begin{enumerate}[(i)]
\item No element of $G \smallsetminus \triv$ has open centraliser in $G$. 
%In particular the only finite locally normal subgroup is the trivial one. 

\item The only virtually soluble locally normal subgroup of $G$ is the identity subgroup $\triv$.
\end{enumerate}
\end{thmintro}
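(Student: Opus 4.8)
My plan is to handle the two statements in turn, using part~(i) to feed into part~(ii), and in both cases to convert purely local information into a genuine compact normal subgroup of $G$, whose maximal instance is characteristic and must therefore be trivial or everything. For part~(i), note first that $\QZ(G)$ is a characteristic subgroup, so its closure is a closed characteristic subgroup and hence, by hypothesis, equals $\{e\}$ or $G$. In the first case we are done, so assume $\QZ(G)$ is dense. Fix a compact open subgroup $U$. Since $\QZ(G)U$ is clopen and dense it equals $G$, so a compact generating set yields finitely many elements $g_1,\dots,g_m$ with $g_i \in z_iU$ and $z_i \in \QZ(G)$; hence $G = \langle U, z_1,\dots,z_m\rangle$. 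Each $C_G(z_i)$ is open, so $W = U \cap \bigcap_i C_G(z_i)$ is compact open, and its normal core $W_0$ in $U$ is a compact open subgroup, nontrivial because $G$ is non-discrete, that is normalised by $U$ and centralised by every $z_i$; therefore $W_0$ is normal in $G$.

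It remains to derive a contradiction from the existence of a nontrivial compact open normal subgroup $W_0$. Compact normal subgroups are closed under products, so they admit a unique maximal member $B(G) = \overline{\bigcup\{K : K \text{ compact normal}\}}$, a closed characteristic subgroup containing $W_0$, hence nontrivial. If $B(G) = G$ then, passing to the discrete finitely generated quotient $G/W_0$, the images of the compact normal subgroups form a directed union of finite normal subgroups covering $G/W_0$, making it locally finite and finitely generated, hence finite, whence $G$ is compact — contrary to hypothesis. Thus $B(G)$ is a proper nontrivial closed characteristic subgroup, contradicting topological characteristic simplicity. This proves part~(i); combined with the quoted fact that finite locally normal subgroups lie in $\QZ(G)$, it shows in particular that $G$ has no nontrivial finite locally normal subgroup.

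For part~(ii) I would first reduce to the abelian case. If $K \neq \{e\}$ is a virtually soluble locally normal subgroup that is finite, then $K \subseteq \QZ(G) = \{e\}$ and we are done; otherwise $K$ has a nontrivial closed soluble normal subgroup, its maximal one is characteristic in $K$, and the last nontrivial term of its closed derived series is an abelian subgroup $A$ that is characteristic in $K$, so that $N_G(A) \supseteq N_G(K)$ is open and $A$ is again locally normal. Replacing $A$ by $A \cap U$ for a compact open $U \leq N_G(A)$, which $U$ still normalises, we may assume $A \trianglelefteq U$ with $A$ abelian and infinite. If moreover $C_U(A)$ is open, then $A$ is central in the open subgroup $C_U(A)$, so $A \subseteq \QZ(G) = \{e\}$, a contradiction.

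The remaining, and genuinely hard, case is that of an infinite abelian $A \trianglelefteq U$ on which $U$ acts with non-discrete image in $\Aut(A)$, so that $C_U(A)$ is not open and the argument through the quasi-centre is unavailable. Here I would invoke the theory of locally normal subgroups and the structure lattice developed in Part~I \cite{CRW-Part1}: the goal is to promote the local abelian datum — the commensurability class of $A$, which lies below the class of its own centraliser — into a nontrivial compact normal subgroup of $G$, at which point the maximal-compact-normal-subgroup argument of part~(i) again contradicts characteristic simplicity. The main obstacle, and the reason compact generation is indispensable, is precisely that the conjugating generators no longer centralise $A$ as the $z_i$ did in part~(i), so normality is not obtained for free; controlling the $G$-conjugates of $A$ through the lattice-theoretic framework of Part~I in order to manufacture a global normal subgroup is the crux of the proof.
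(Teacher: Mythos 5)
Your part (i) is correct and complete, and it is in fact more elementary than the paper's own derivation: the paper obtains (i) by first decomposing $G$ into topologically simple pieces via \cite[Corollary D]{CM} (Proposition~\ref{noqzchar}) and then invoking quasi-centraliser machinery, whereas your coset argument extracting a compact open normal subgroup $W_0$ from a dense quasi-centre is essentially the paper's Lemma~\ref{noqzlem} (going back to \cite[Theorem 4.8]{BEW}), and your passage to the closure $B(G)$ of the union of all compact normal subgroups is a clean way of converting the normal subgroup $W_0$ into a characteristic one. (Your claim that $\QZ(G)U$ is ``clopen'' is unjustified as stated, but unnecessary: density of $\QZ(G)$ against each open coset $gU$ already gives $G = \QZ(G)U$.) In the reduction of (ii) to the abelian case, one repair is needed: a compact group need not possess a maximal closed soluble normal subgroup, since derived lengths of normal soluble subgroups can be unbounded; instead one should take the prosoluble core $O_\infty(K)$, which is characteristic in $K$, open in $K$ (hence non-trivial) because $K$ has an open normal soluble subgroup, and itself soluble, being prosoluble and virtually soluble.

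The genuine gap is the case you explicitly leave open, and it is the entire content of statement (ii): excluding an infinite abelian $A \trianglelefteq U$ whose centraliser $\CC_U(A)$ is not open. Your stated plan --- use the structure lattice theory of Part I to ``promote'' $A$ to a non-trivial compact normal subgroup of $G$ --- cannot be carried out as described, and it is not what the paper does. Part I contains no such promotion result; what compact generation actually yields (Lemma~\ref{boxlem}, Proposition~\ref{boxcor}) is finitely many conjugates $g_1 A g_1^{-1}, \dots, g_n A g_n^{-1}$ admitting finite-index subgroups $L_1,\dots,L_n$ that pairwise normalise one another, so that $L = \prod_{i=1}^n L_i$ is normal in a compact open subgroup and \emph{commensurated} --- not normalised --- by $G$. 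The contradiction is then reached by a different mechanism (Theorem~\ref{thm:noqz}): $L$ is nilpotent by Fitting's theorem, so $\Z(L) \neq 1$, while an infinite commensurated locally normal subgroup of a compactly generated topologically simple group has trivial quasi-centraliser (Corollary~\ref{cor:TrivialQC:2}, resting on Proposition~\ref{commqcsim} and Lemma~\ref{cpctmono}; this is where compact generation enters a second time, and only within one branch of that analysis does one show the ambient group would have to be compact, which is the endgame you anticipated). Moreover, since that rigidity statement is proved for topologically simple (or monolithic) groups, the characteristically simple case of (ii) additionally requires the reduction to the simple quotients $G/\CC_G(M_i)$ via \cite[Corollary D]{CM}, exactly the step your proposal managed to avoid in (i). None of this machinery appears in your proposal, so the proof of (ii) is missing its core.
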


Part (i) in Theorem~\ref{thmintro:QZ} in the case of simple groups is \cite[Theorem 4.8]{BEW}; Part (ii) strengthens a result of Willis (\cite[Theorem~2.2]{Willis}). 

%
%It should be emphasised that the hypothesis of compact generation cannot be removed from Theorem~\ref{thmintro:QZ}: indeed, the examples  constructed in \cite[\S3]{Willis} illustrate that a topologically simple \tdlc group which is not compactly generated may violate both conclusions of Theorem~\ref{thmintro:QZ}.

\begin{rem*}Independently, Wesolek (\cite{WesolekElementary}) has given another generalisation of \cite[Theorem~2.2]{Willis} and \cite[Theorem 4.8]{BEW} by obtaining a structure theory of second-countable \tdlc groups that have a soluble open subgroup.\end{rem*}

Given Theorem~\ref{thmintro:QZ} and the theory developed in \cite{CRW-Part1}, each $G \in \sclass$ admits three canonical bounded modular lattices (in the sense of partially ordered sets) arising from the arrangement of locally normal subgroups in $G$:

The \defbold{structure lattice}\index{structure lattice}\index{LN(G)@$\lnorm(G)$} $\lnorm(G)$ is defined to be the set of all closed locally normal subgroups, modulo the equivalence relation that $H \sim K$ if $H \cap K$ is open in $H$ and $K$, with ordering induced by inclusion of groups.

The  \textbf{centraliser lattice}\index{centraliser lattice}\index{LC(G)@$\lcent(G)$} $\lcent(G)$ consists of all elements of $\lnorm(G)$ represented by the centraliser of a locally normal subgroup.

The \textbf{local decomposition lattice}\index{local decomposition lattice}\index{LD(G)@$\ldlat(G)$} consists of all elements of $\lnorm(G)$ represented by a direct factor of an open subgroup.

In general one has
\[
\{0,\infty\} \subseteq \ldlat(G) \subseteq \lcent(G) \subseteq \lnorm(G).
\]
Note that all three lattices admit a natural action of $G$ by conjugation, with $0$ and $\infty$ as fixed points.  A point $\alpha \in \lnorm(G)$ is fixed by this action if and only if the compact representatives of $\alpha$ are commensurated by $G$, in other words, for every $g \in G$ the index $|K:K \cap gKg\inv|$ is finite.

\subsection{Abstract simplicity}\label{sec:AbstractSimplicity}

The question as to whether every group in $\sclass$ is abstractly simple is natural enough in itself, but is also important for the general structural theory of \tdlc groups.  We make the following observation, inspired by \cite{CM}. Let $G$ be a compactly generated locally compact group and let $N_1, N_2$ be two distinct closed normal subgroups of $G$ that are maximal. Hence $S_1 = G/N_1$ and $S_2=G/N_2$ are two groups belonging to $\sclass$ and occurring as quotients of $G$. Can one conclude that the product $S_1 \times S_2$ also occurs as a quotient of $G$? The answer would be positive if one had $G= N_1 N_2$ or, equivalently, if the image of $N_2$ in $G/N_1 = S_1$ were full. However, the image of $N_2$ in $G/N_1 = S_1$ is a non-trivial normal subgroup, but it need not be closed \textit{a priori}. In this way, we see that the question of abstract simplicity of the groups in $\sclass$ pops up naturally when considering how a general compactly generated \tdlc group is built out of simple pieces. In fact, this specific question is also a motivation to study the structure lattice. Indeed, since $N_2$ is a closed normal subgroup of $G$, each compact open subgroup of $N_2$ is a compact locally normal subgroup of $G$ which is commensurated by $G$. Therefore its image in $S_1$ is a commensurated compact locally normal subgroup of $S_1$, so that we could conclude that $G=N_1 N_2$ provided that $S_1$ (or $S_2$) had no non-trivial fixed point in its structure lattice.

Let us record three properties that, taken together, are equivalent to abstract simplicity of a non-discrete second-countable \tdlc group (see Theorem~\ref{baireabs}).

\begin{description}
\item{\sop} $G$ has no proper open normal subgroups.\index{S0@\sop}
\item{\sdn} Every non-trivial normal subgroup of~$G$ contains an infinite commensurated compact locally normal subgroup of $G$.\index{S1@\sdn}
\item{\sur} Every infinite commensurated compact subgroup of~$G$ is open.\index{S2@\sur}
\end{description}

Non-discrete topologically simple groups clearly satisfy \sop, so $G \in \sclass$ is abstractly simple if and only if it satisfies \sdn and \sur.

\subsection{A partition of the class $\sclass$}

The properties of the lattices lead to a partition of $\sclass$ into five classes as follows:

\begin{thmintro}[See \S\ref{sec:types}]\label{thmintro:types}
Let $G \in \sclass$.  Then $G$ is of exactly one of the following types:
\begin{itemize}
\item \defbold{locally \hji}: We have $\lnorm(G) = \{0,\infty\}$, or equivalently, every compact open subgroup of $G$ is \textbf{hereditarily just-infinite}\index{hereditarily just-infinite}\index{hji@\hji} (\hji), where a profinite group is said to be \hji if every non-trivial closed locally normal subgroup is open.
\item \defbold{atomic type}: $|\lnorm(G)| > 2$ but $\lcent(G) = \{0,\infty\}$, there is a unique least element of $\lnorm(G) \smallsetminus \{0\}$, the action of $G$ on $\lnorm(G)$ is trivial and $G$ is not abstractly simple.
\item \defbold{non-principal filter type} (abbreviated by \textbf{NPF type}): The set $\lnorm(G) \smallsetminus \{0\}$ is a non-principal filter in $\lnorm(G)$ and $\lcent(G) = \{0,\infty\}$.
\item \defbold{(strictly) weakly decomposable}: $|\lcent(G)|>2$, but $\ldlat(G) = \{0,\infty\}$.
\item \defbold{locally decomposable}: $|\ldlat(G)|>2$.
\end{itemize}
Moreover, the type of $G$ is completely determined by the isomorphism type of $\lnorm(G)$ as a poset.  In particular, if $G,H \in \sclass$ have isomorphic open subgroups  then $G$ and $H$ are of the same type.
\end{thmintro}

Table~1 below summarises the progress we have made towards answering the first three questions given in the initial discussion.  The following conventions are used in the table. We indicate both definitive results for all $G \in \sclass$ of the given type (`Yes' and `No') and cases where no definitive result is known, but either all known examples satisfy the property or all known examples fail to satisfy the property (`Yes?' and `No?').  There are no entries in the table for which some examples are known to satisfy the property and others are known not to satisfy it.  We recall from Section~\ref{sec:AbstractSimplicity} that $G \in \sclass$ is abstractly simple if and only if it satisfies \sdn and \sur.  Property \sur implies that the set $\lnorm(G)^G$ of fixed points of $G$ acting on $\lnorm(G)$ is just $\{0,\infty\}$.  In particular, a necessary condition for abstract simplicity is that either $G$ is locally \hji or $G$ has faithful action on $\lnorm(G)$; we denote the latter by `$\curvearrowright \lnorm$' in the table.  We say $G$ is \defbold{anisotropic} if every element of $G$ has trivial contraction group.

\begin{table}[h!]
\begin{center}
\def\arraystretch{1.5}
\begin{tabular}{ c | c | c | c | c | c }
Property & locally h.j.i. & atomic & NPF & weakly dec. & locally dec. \\ \hline
$\curvearrowright \lnorm$ & No & No & ? & Yes & Yes \\ \hline
\sdn & Yes? & ? & ? & Yes & Yes \\ \hline
\sur & Yes & No & ? & Yes? & Yes \\ \hline
{abstractly simple} & Yes? & No & ? & Yes? & Yes \\ \hline
{anisotropic} & No? & ? & ? & No & No \\ \hline
amenable & No? & ? & ? & No & No \\
\end{tabular}
\label{fig:conjectures}
\caption{Properties and open questions}
\end{center}
\end{table}

Given $G \in \sclass$, there is a natural division between the case where $\lcent(G)$ is trivial (corresponding to the first three types; an equivalent condition is that $\lnorm(G) \smallsetminus \{0\}$ is closed under meets), and the case where $\lcent(G)$ is non-trivial.  In the latter case we obtain stronger results, and this case also includes most known examples.  Indeed, the only known examples with trivial centraliser lattice are linear algebraic groups over local fields, which are locally \hji; thus no examples at all are known in the atomic or NPF cases.  We suspect that no groups in $\sclass$ of atomic type exist, as such a group cannot be abstractly simple; indeed, it would admit an embedding of a group in $\sclass$ of some other type as a proper dense normal subgroup (see Theorem~\ref{thmintro:FixedPointsAtomic} below).  On the other hand, it could well be the case that some of the known examples of simple Kac--Moody groups are of NPF type.  See Appendix~\ref{sec:Examples} for a further discussion of the known examples.

It seems that a major challenge in advancing our understanding of the class $\sclass$ is to construct simple groups with trivial centraliser lattice, or at least with trivial decomposition lattice, that are significantly different from the ones we already know (see Remark~\ref{rem:amenable} for a potential source of a such examples). 

%The results of this paper (see especially Section~\ref{sec:MicroSupp} below) suggest that the subdivision of $\sclass$ that is morphologically the most relevant is provided by the triviality or non-triviality of the centraliser lattice $\lcent(G)$. 

\subsection{Fixed points in the structure lattice}

As noted in Section~\ref{sec:AbstractSimplicity}, a necessary condition for $G \in \sclass$ to be abstractly simple is \sur, which is equivalent to the requirement that $\lnorm(G)^G = \{0,\infty\}$.  The next result shows that fixed points of $G$ acting on $\lnorm(G)$ play an important role in the structure of orbits of $G$ on $\lnorm(G)$.

\begin{thmintro}[See Theorem~\ref{boxcor}]\label{thmintro:FixedPointsJoin}
Let $G\in \sclass$.\index{structure lattice!fixed points in}  For each $\alpha \in \lnorm(G)$, there exist $g_1, \dots, g_n \in G$ such that 
$g_1\alpha \vee \dots \vee g_n\alpha$ is fixed  by $G$.
\end{thmintro}

The following corollary is clear.

\begin{corintro}
Let $G\in \sclass$ and let $H$ be a normal subgroup of $G$.  Suppose $H$ contains an infinite  compact locally normal subgroup.  Then $H$ contains an infinite  \emph{commensurated} compact locally normal subgroup.  In particular, if every non-trivial normal subgroup of $G$ contains an infinite compact locally normal subgroup, then $G$ satisfies \sdn.
\end{corintro}

We also note a finite generation property of groups $G \in \sclass$ with \sur; in particular this applies to every abstractly simple group in $\sclass$.

\begin{corintro}[See Corollary~\ref{cor:FixedPointsGeneration}]\label{corintro:FixedPointsGeneration}
Let $G \in \sclass$ such that $G$ satisfies \sur.  Let $H$ be a non-trivial compact locally normal subgroup of $G$.  Then there exist $g_1, \dots, g_n \in G$ such that 
\[
G = \langle g_1Hg\inv_1, g_2Hg\inv_2, \dots, g_nHg\inv_n \rangle.
\]
\end{corintro}

A further restriction is that the set of non-zero fixed points of $G$ acting on $\lnorm(G)$ forms a filter (Lemma~\ref{lnormfix}).  Moreover, we can show that indeed $G$ has property \sur under some additional hypotheses on $G$ (see Corollary~\ref{cor:FixedPointsSimple}).

Other than locally \hji groups, we do not know of any groups in $\sclass$ such that there is a \hji compact locally normal subgroup or there is a minimal non-zero element of $\lnorm(G)$; the following puts further restrictions on the possible structure of such groups.

\begin{thmintro}[See \S\ref{sec:mono}]\label{thmintro:FixedPointsAtomic}
Let $G\in \sclass$.
\begin{enumerate}[(i)]
\item Any hereditarily just-infinite\index{hereditarily just-infinite} compact locally normal subgroup is commensurated by $G$.
\item Suppose there is a minimal non-zero element of $\alpha$ of $\lnorm(G)$.  Then $G$ is either locally \hji or of atomic type.
\item Suppose that $G$ is of atomic type.  Then there exists $S \in \sclass$, unique up to isomorphism, and a continuous homomorphism $\phi \colon  S \rightarrow G$ such that   $S$ is not of atomic type and $\phi(S)$ is a proper dense normal subgroup of $G$ containing a representative of the atom of $\lnorm(G)$.  In particular, $G$ is not abstractly simple.
\end{enumerate}
\end{thmintro}

%
%\begin{corintro}\label{corintro:PrincipalFilterType}
%Let $G \in \sclass$. Any hereditarily just-infinite\index{hereditarily just-infinite} compact locally normal subgroup is commensurated by $G$. In particular, if $G$ contains such a subgroup, then $G$ is locally \hji or of atomic type. 
%\end{corintro}
%
%\begin{corintro}\label{corintro:AtomicType}
%Let $G \in \sclass$ such that $G$ is of atomic type. Then $G$ is not abstractly simple.  Indeed, there exists a continuous homomorphism $\phi: S \rightarrow G$ such that $S \in \sclass$, $S$ is not of atomic type and $\phi(S)$ is a proper dense normal subgroup of $G$. 
%\end{corintro}

\subsection{Local composition factors}

We next present some additional algebraic features of compact locally normal subgroups of groups in $\sclass$.

For any compactly generated \tdlc group $G$, we observe (Proposition~\ref{localprime:short}) that each compact open subgroup of $G/K$ has finitely many isomorphism types of composition factors, where $K$ is a compact normal subgroup that can be taken to lie in any given identity neighbourhood.  This observation confirms a conjecture formulated in \cite[\S4]{Willis01} and naturally leads to the notion of the \defbold{local prime content} of $G/K$, which is the unique finite set $\eta = \lpc(G/K)$ of primes such that every compact open subgroup of $G/K$ is virtually pro-$\eta$ and has an infinite pro-$p$ subgroup for each $p \in \eta$.

For groups in $G \in \sclass$, we obtain additional control over the local prime content of compact locally normal subgroups of $G$.  In particular, the local prime content of $G$ can be recovered from any non-trivial locally normal subgroup.  We can control the presence of non-abelian composition factors of compact open subgroups in a similar manner.

\begin{thmintro}[See Theorem~\ref{localprimetopsimp}]\label{thmintro:algebraicLN}
Let $G \in \sclass$. 
\begin{enumerate}[(i)]
\item If a non-trivial compact locally normal subgroup of $G$ is a pro-$\pi$ group for some set of primes $\pi$, then every compact open subgroup of $G$ is virtually pro-$\pi$. In particular, for all $p \in \lpc(G)$, each closed locally normal subgroup $L \neq \triv$ has an infinite pro-$p$ subgroup.

\item If a non-trivial compact locally normal subgroup of $G$ is prosoluble, then every compact open subgroup of $G$ is virtually prosoluble.
\end{enumerate}
\end{thmintro}

Theorem~\ref{thmintro:algebraicLN}  was inspired by a result of M. Burger and S. Mozes, who obtained a similar statement in the case of locally primitive tree automorphism groups, see  \cite[Prop.~2.1.2]{BurgerMozes}.

\subsection{Micro-supported actions}\label{sec:MicroSupp}

An  action of a  group $G$ by homeomorphisms on a (possibly connected) topological space $X$ is called \textbf{micro-supported}\index{micro-supported}   if for every non-empty open subset $Y$ of $X$, the pointwise stabiliser of the complement $X \setminus Y$ acts non-trivially on $Y$. A   subset $V$ of $X$ is called \defbold{compressible}\index{compressible} if it is non-empty and if for any non-empty open subset $Y$, there exists $g \in G$ such that $gV \subset Y$.  Numerous   
natural  transformation groups, like homeomorphism groups of various topological spaces (e.g.   closed manifolds, the rationals, the Cantor space, \dots) or diffeomorphism groups of manifolds, happen to be   micro-supported and to have a compressible open set. This has been exploited repeatedly in the literature to show that many of those transformation groups are simple or almost simple, see D.~Epstein's paper~\cite{Epstein} and references therein, or to show \textit{reconstruction theorems}, see M.~Rubin's paper~\cite{Rubin} and references therein\footnote{In \cite[Definition~2.3]{Rubin}, the term \emph{regionally disrigid} is used to qualify what we have called a \emph{micro-supported} action}. A related result, known as \textbf{Higman's simplicity criterion}\index{Higman's simplicity criterion}, valid for abstract groups, may be consulted in \cite[Proposition~C10.2]{BieriStrebel}.

The following result, whose short and self-contained proof will be provided in Section~\ref{sec:SimplicityCriterion} below, provides a uniform explanation  that the conjunction of these two properties naturally yields simple groups. A closely related statement (with formally stronger hypotheses) appears in \cite[Proposition~4.3]{Nekra2013}.

\begin{propintro}\label{prop:SimplicityCriterion}
Let $G$ be a subgroup of the  homeomorphism group of a Hausdorff topological space $X$. If the $G$-action is micro-supported and has a   compressible open set, then the intersection $M$ of all non-trivial normal subgroups of $G$ is  non-trivial. 

If in addition the $M$-action   admits a  compressible open set, then $M$ is simple. 
\end{propintro}

It turns out that examples of transformation groups satisfying the hypotheses of Proposition~\ref{prop:SimplicityCriterion} may also be found among  non-discrete \tdlc groups. The prototypical case is provided by the action of the full automorphism group of a regular locally finite tree $T$ on the set of ends of $T$. The fact that the latter group is almost simple  was first observed by J.~Tits \cite{Tits70}; we refer to Appendix~\ref{sec:Examples} for many other related examples of groups in $\sclass$.

Our next result  shows that for groups $G \in \sclass$, all micro-supported actions on compact totally disconnected spaces are controlled by the $G$-action on the centraliser lattice $\lcent(G)$. Moreover, somewhat surprisingly, the existence of a compressible open set happens to be automatic for micro-supported actions. In order to formulate  precise statements, we recall that the lattices $\lcent(G)$ and $\ldlat(G)$  are both Boolean algebras by \cite[{Theorem~I}]{CRW-Part1}.  By the Stone representation theorem, every Boolean algebra $\mathcal A$ is canonically isomorphic to the lattice of clopen sets of a \textbf{profinite space}\index{profinite space}, \ie a compact zero-dimensional space, which is called the \textbf{Stone space}\index{Stone space} of $\mathcal A$, denoted by $\mfS(\mathcal A)$ and can be constructed as the set of ultrafilters on $\mathcal A$. We shall next see that the dynamics of the $G$-action on $\mfS(\lcent(G))$ is  rich and can be exploited to shed light on the global algebraic properties of $G$.

We also need to  recall basic definitions from topological dynamics. An action of a locally compact group $G$ on a compact space $\Omega$  by homeomorphisms is called \defbold{continuous} if the corresponding homomorphism of $G$ to the homeomorphism group of $\Omega$, endowed with the topology of uniform convergence, is continuous. This implies that the $G$-action on $\Omega$ is continuous,   \ie the action map $G \times \Omega \to \Omega$ is continuous. The $G$-action on $\Omega$ is called \defbold{minimal}\index{minimal action}\index{action!minimal} if every orbit is dense. It is called \defbold{strongly proximal}\index{strongly proximal action}\index{action!strongly proximal} if the closure of every $G$-orbit in the space of probability measures on $\Omega$, endowed with the weak-* topology, contains a Dirac mass. 
Suppose that $\Omega$ is totally disconnected and let $K$ be the kernel of the $G$-action.  The action is called  \textbf{weakly decomposable}\index{weakly decomposable}  (resp.  \textbf{locally weakly decomposable}\index{locally weakly decomposable action}\index{action!locally weakly decomposable}\index{weakly decomposable!locally}) if {it is continuous and if} for every clopen proper subset $\Omega'$ of~$\Omega$, the quotient $F/K$ of the pointwise stabiliser $F$ of $\Omega'$ is non-trivial (resp. non-discrete). Thus, for group actions on profinite spaces, `micro-supported' and `weakly decomposable' are synonyms.

\begin{thmintro}[See \S\ref{sec:strongprox}]\label{thmintro:WeaklyDecomposable}
Let $G \in \sclass$ and let $\Omega$ be the Stone space associated with the centraliser lattice $\lcent(G)$. Then:
\begin{enumerate}[(i)]
\item The $G$-action on $\Omega$ is continuous, minimal, strongly proximal and locally weakly decomposable; moreover $\Omega$ contains a compressible clopen subset. 

\item Given a profinite space $X$ with a continuous  $G$-action, the $G$-action on $X$ is  micro-supported if and only if there is a continuous $G$-equivariant surjective map $\Omega \to X$. In particular every continuous micro-supported $G$-action on a profinite space is minimal, strongly proximal, locally weakly decomposable and has a compressible open set. 

\end{enumerate}
\end{thmintro}

Notice that $\Omega = \mfS(\lcent(G))$ is   a singleton if and only if $\lcent(G) = \{0, \infty\}$. It should be emphasised that, although any group $G$ belonging to $\sclass$ is automatically second-countable (this follows from \cite{KK}), the centraliser lattice for $G \in \sclass$ is not necessarily countable, so $\Omega$ is not necessarily countably based.  For example, one obtains an uncountable decomposition lattice for \tdlc groups $G$ such that $\QZ(G)=\triv$ and some compact open subgroup of $G$ splits as a direct product with infinitely many infinite factors: such a compact open subgroup thus has uncountably many direct factors, each of which represents a distinct class in $\ldlat(G)$.  Examples of this kind arise as Burger--Mozes' universal groups acting on trees with local action prescribed by a suitable finite permutation group (see Appendix~\ref{sec:Examples}).

Theorem~\ref{thmintro:WeaklyDecomposable} notably implies that $\lcent(G) \neq \{0, \infty\}$ if and only if $G$ admits a continuous micro-supported action on a profinite space containing more than one point. Whenever this is the case, Theorem~\ref{thmintro:WeaklyDecomposable} has several consequences on the structure of $G$ which we now proceed to describe. The first one relates to Question~\ref{que:amenable} and stems from the incompatibility between amenability and minimal strongly proximal actions:

\begin{corintro}\label{corintro:amen}
Let  $G \in \sclass$. Any closed cocompact amenable subgroup of $G$ fixes a point in $\Omega = \mfS(\lcent(G))$. In particular, if  $\lcent(G) \neq \{0, \infty\}$, then $G$ is not amenable\index{amenable}, and if $G$ contains a closed cocompact amenable subgroup, then the $G$-action on $\Omega$ is transitive.
\end{corintro}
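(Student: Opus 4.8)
The plan is to deduce the corollary from Theorem~\ref{thmintro:WeaklyBranch}(i), which tells us that the $G$-action on $\Omega = \mfS(\lcent(G))$ is continuous, minimal and strongly proximal. The substantive assertion is the first one, that a closed cocompact amenable subgroup $H \leq G$ fixes a point of $\Omega$; granting this, the two further statements follow at once. The only genuine subtlety is that strong proximality is a property of the whole $G$-action, whereas amenability only gives information about $H$, and the cocompactness of $H$ is precisely the bridge between the two.

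To produce the fixed point, I would first exploit the amenability of $H$. Since the continuous $G$-action on $\Omega$ restricts to a continuous $H$-action, the induced affine $H$-action on the weak-$*$ compact convex space $\mathrm{Prob}(\Omega)$ of probability measures on $\Omega$ has a fixed point by the fixed point characterisation of amenability; that is, there is an $H$-invariant measure $\mu \in \mathrm{Prob}(\Omega)$. Next, choosing a compact set $K \subseteq G$ with $G = KH$ (possible since $H$ is cocompact) and writing $g = kh$, the invariance $h\mu = \mu$ gives $g\mu = k\mu$; hence the full $G$-orbit of $\mu$ equals $\{k\mu : k \in K\}$, which is compact, and therefore closed, by continuity of the action on $\mathrm{Prob}(\Omega)$. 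Strong proximality of the $G$-action then forces this closed orbit to contain a Dirac mass $\delta_\omega$, so $k\mu = \delta_\omega$ for some $k \in K$, and thus $\mu = \delta_{k^{-1}\omega}$ is itself a Dirac mass. As $\mu$ is $H$-invariant, its support $\{k^{-1}\omega\}$ is an $H$-fixed point of $\Omega$.

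The consequences are then immediate. If $\lcent(G) \neq \{0,\infty\}$ then $\Omega$ has more than one point; were $G$ amenable, applying the above with $H = G$ would give a global fixed point, contradicting minimality, so $G$ is not amenable. Finally, if $G$ contains a closed cocompact amenable subgroup $H$, let $\omega$ be an $H$-fixed point; the stabiliser $\Stab_G(\omega)$ is closed (the action being continuous) and contains $H$, hence is itself cocompact, so the orbit $G\omega \cong G/\Stab_G(\omega)$ is a continuous image of a compact space, thus compact and therefore closed in $\Omega$. Being also dense by minimality, it is all of $\Omega$, so the action is transitive. I expect the middle step --- recognising that cocompactness collapses the $G$-orbit of the $H$-invariant measure onto the compact set $\{k\mu : k \in K\}$, which is exactly what makes strong proximality applicable --- to be the crux, the remaining ingredients being standard facts about amenable groups and about continuous actions of compact sets.
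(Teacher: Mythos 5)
Your proof is correct. For comparison: the paper disposes of this corollary in one line, citing Theorem~\ref{thmintro:WeaklyBranch} together with Proposition~\ref{prop:StronglyProx:amen}, and that proposition is itself proved by quoting Furstenberg (Proposition 4.4 of \cite{Furstenberg}): every compact minimal strongly proximal $G$-space is an equivariant image of $G/A$ for any closed cocompact amenable subgroup $A$, so the image of the coset $A$ is an $A$-fixed point and transitivity is inherited from $G/A$. What you have done is reprove, from first principles, precisely the special case of Furstenberg's theorem that is needed: amenability of $H$ yields an invariant measure $\mu$, cocompactness collapses the orbit $G\mu$ to the compact (hence weak-$*$ closed) set $K\mu$, and strong proximality --- which, in the paper's definition, asks for a Dirac mass in the \emph{closure} of the orbit --- then puts a Dirac mass in the orbit itself, forcing $\mu$ to be Dirac with $H$-fixed support. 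Your route to transitivity also differs: instead of reading it off the surjection $G/A \to \Omega$, you note that the stabiliser of the fixed point contains $H$, hence is cocompact, so the orbit is compact, therefore closed, and dense by minimality. Both arguments are sound; yours buys self-containedness (nothing beyond the fixed-point characterisation of amenability is imported), while the paper's buys brevity and situates the statement inside Furstenberg's boundary theory. The one hypothesis you use repeatedly and should flag explicitly is joint continuity of the $G$-action on the space of probability measures: it is what makes the fixed-point theorem applicable and makes $k \mapsto k\mu$ continuous, and it follows from the continuity of $G \to \mathrm{Homeo}(\Omega)$ in the uniform-convergence topology supplied by Theorem~\ref{thmintro:WeaklyBranch}(i) (via Lemma~\ref{lem:SmoothActionContinuous}).
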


Here is another consequence of the dynamical properties highlighted in Theorem~\ref{thmintro:WeaklyDecomposable},  obtained by a standard ping-pong argument. 

\begin{corintro}\label{corintro:free}
Let  $G \in \sclass$ be such that $\lcent(G) \neq \{0, \infty\}$. Then $G$ contains a non-abelian discrete free subsemigroup.\index{free subsemigroup}
\end{corintro}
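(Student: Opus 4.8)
The plan is to deduce the statement from Theorem~\ref{thmintro:WeaklyBranch} by a ping-pong argument on $\Omega=\mfS(\lcent(G))$. Recall the ping-pong criterion for free semigroups: if $a,b$ are homeomorphisms of a set and $X_a,X_b$ are disjoint nonempty subsets with $a(X_a\cup X_b)\subseteq X_a$ and $b(X_a\cup X_b)\subseteq X_b$, then the subsemigroup $\langle a,b\rangle^+$ is free on $\{a,b\}$; indeed, reading a word from the right shows that any word maps $X_a\cup X_b$ into the block indexed by its leftmost letter, so the leftmost letter is determined, and one finishes by left-cancellation and induction on length. Such a semigroup is non-Abelian, since $ab$ and $ba$ send $X_a\cup X_b$ into $X_a$ and into $X_b$ respectively. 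So it suffices to produce $a,b\in G$ and two disjoint nonempty clopen subsets of $\Omega$ in this configuration. Since $\lcent(G)\neq\{0,\infty\}$, the space $\Omega$ has more than one point; as the $G$-action is minimal and strongly proximal, $\Omega$ has no isolated point (an isolated point forces, by minimality, that all points are isolated, whence $\Omega$ is finite, and no non-Dirac measure can be compressed to a point mass under a finite action), so $\Omega$ is an infinite perfect profinite space and every nonempty clopen set is infinite.

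The key step is a compression lemma: using the compressible clopen subset $V\subseteq\Omega$ furnished by Theorem~\ref{thmintro:WeaklyBranch}(i), I would show that $V$ can be compressed into an arbitrarily prescribed nonempty clopen set. By definition of compressibility, the orbit closure $\overline{GV}$ in the hyperspace of closed subsets of $\Omega$ contains a singleton $\{p\}$. This orbit closure is closed and $G$-invariant, hence contains $g\{p\}=\{gp\}$ for every $g\in G$, and therefore the closure of $\{\{gp\}:g\in G\}$. Since the embedding $x\mapsto\{x\}$ of $\Omega$ into the hyperspace is a homeomorphism onto its image and $\overline{Gp}=\Omega$ by minimality, it follows that every singleton $\{x\}$ with $x\in\Omega$ lies in $\overline{GV}$. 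Unwinding the Vietoris topology, this says precisely that for every point $x$ and every clopen neighbourhood $N$ of $x$ there is $g\in G$ with $gV\subseteq N$.

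With the compression lemma in hand the construction of the generators is immediate. Choose two disjoint nonempty clopen subsets $X_a,X_b\subseteq V$ (possible because $V$ is infinite) and pick points $x_a\in X_a$, $x_b\in X_b$. Applying the lemma to the clopen neighbourhoods $X_a$ of $x_a$ and $X_b$ of $x_b$ yields $a,b\in G$ with $aV\subseteq X_a$ and $bV\subseteq X_b$. Writing $U=X_a\cup X_b\subseteq V$ we then have $aU\subseteq aV\subseteq X_a\subseteq U$ and $bU\subseteq bV\subseteq X_b\subseteq U$, with $X_a\cap X_b=\emptyset$, so the ping-pong criterion applies and $\langle a,b\rangle^+$ is a non-Abelian free subsemigroup of $G$.

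It remains to arrange discreteness, and this is the point I expect to require the most care: the bare configuration above only bounds the diameter of $wV$ by that of the block indexed by the leftmost letter of $w$, so it does not by itself prevent distinct words from accumulating in $G$. To fix this I would strengthen the choice of generators, using strong proximality (together with the compression lemma) to take $a$ and $b$ to be genuinely contracting elements, i.e.\ homeomorphisms that contract $U$ by a definite factor $\lambda<1$ in a compatible ultrametric, with disjoint attracting clopen sets $X_a,X_b$. Then every word $w$ contracts $V$ by at most $\lambda^{|w|}$, so that along any convergent sequence of distinct semigroup elements the diameters of the images of $V$ tend to $0$ (the lengths being forced to infinity), and continuity of the $G$-action on $\Omega$ for uniform convergence would make a limit in $G$ collapse the infinite clopen set $V$ to a point, which is impossible; hence $\langle a,b\rangle^+$ is discrete. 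The main obstacle is thus not the freeness, which drops out of the compression lemma, but the production of such uniformly contracting elements realising the north--south dynamics needed for discreteness.
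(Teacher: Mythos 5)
Your freeness argument is correct and is essentially the paper's own ping-pong: the paper (Proposition~\ref{prop:freemonoid}) produces $g,h\in G$ with $g\alpha<\alpha$ and $h\alpha<\alpha\setminus g\alpha$ and runs exactly the leftmost-letter cancellation you describe, and your compression lemma via the Vietoris hyperspace is a legitimate substitute for the paper's minorising/skewering machinery (Lemma~\ref{lem:CompressibleMinorising} together with Lemma~\ref{lem:skewer}).

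The genuine gap is discreteness. You leave it resting on the construction of elements that contract a clopen set by a definite factor $\lambda<1$ in a compatible ultrametric, and you never produce such elements; nothing in Theorem~\ref{thmintro:WeaklyBranch} (minimality, strong proximality, compressibility, weak branching) yields uniform contraction of a whole clopen set, and extracting it would require substantial extra work. Moreover the detour is unnecessary, because you are ignoring the one structural fact that makes discreteness immediate: the $G$-action on the Boolean algebra $\lcent(G)$ of clopen subsets of $\Omega$ is \emph{smooth}, i.e.\ the setwise stabiliser $\Stab_G(V)$ of any clopen set $V$ is an \emph{open} subgroup of $G$. (This holds by construction for $\lcent(G)$, since the stabiliser of a class contains the open normaliser of any locally normal representative; alternatively it follows from the continuity assertion in Theorem~\ref{thmintro:WeaklyBranch}(i) via Lemma~\ref{lem:SmoothActionContinuous}(i).) In your own setup, distinct words $w\neq v$ in the semigroup satisfy $wV\neq vV$: if the leftmost letters differ, then $wV\subseteq X_a$ and $vV\subseteq X_b$ are nonempty and disjoint; if they agree, cancel on the left and induct, the base case using that a nonempty word sends $V$ into $X_a$ or into $X_b$, neither of which contains $V$. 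Hence $v^{-1}w\notin\Stab_G(V)$, so the semigroup meets each left coset of the open subgroup $\Stab_G(V)$ at most once; since $\{x\Stab_G(V)\mid x\in G\}$ is a partition of $G$ into open sets, the semigroup is discrete. This coset argument is precisely how the paper concludes discreteness in Proposition~\ref{prop:freemonoid}.
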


We recall that a \textbf{Polish group}\index{Polish group} is a topological group whose underlying topology is homeomorphic to a separable complete metric space. Every second countable locally compact group is a Polish group, but not conversely. 
Another consequence of Theorem~\ref{thmintro:WeaklyDecomposable} is that we can apply results from \cite{CRW-Part1} to obtain topological rigidity results for groups in $\sclass$:

\begin{corintro}\label{corintro:TopolRigidity}
Let $G \in \sclass$. If  $\lcent(G) \neq \{0, \infty\}$, then the topology of $G$ is the unique $\sigma$-compact locally compact group topology on $G$ and the unique Polish group topology on $G$. In particular every automorphism of $G$ is continuous.  

If in addition $G$ is abstractly simple (for instance if $\ldlat(G)  \neq \{0, \infty\}$ {by Theorem~\ref{thmintro:AbstractSimplicity} below}), then there are exactly two locally compact group topologies on $G$, namely the original one and the discrete topology.
\end{corintro}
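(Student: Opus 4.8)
The plan is to treat both assertions through the ``graph'' technique, reducing the comparison of two locally compact group topologies to the open mapping theorem together with a single automatic-continuity statement for the boundary action. Throughout, write $\tau_0$ for the given topology of $G$ and set $\Omega = \mfS(\lcent(G))$; since $\lcent(G) \neq \{0,\infty\}$, $\Omega$ is not a singleton, and by Theorem~\ref{thmintro:WeaklyBranch} the action $\rho \colon G \to \mathrm{Homeo}(\Omega)$ is $\tau_0$-continuous, minimal, strongly proximal and weakly branch. Its kernel is a $\tau_0$-closed normal subgroup, hence trivial by topological simplicity (it cannot be all of $G$, as $\Omega$ is not a point), so $\rho$ is injective.

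First I set up the reduction. Let $\tau$ be a second $\sigma$-compact locally compact group topology on the abstract group $G$, and let $H = \overline{\Gamma}$ be the closure of the diagonal $\Gamma = \{(g,g) : g \in G\}$ inside $(G,\tau_0) \times (G,\tau)$. Then $H$ is a closed subgroup of a $\sigma$-compact locally compact group, hence itself $\sigma$-compact and locally compact, and the coordinate projections $p_0 \colon H \to (G,\tau_0)$ and $p \colon H \to (G,\tau)$ are continuous surjective homomorphisms, hence open by the open mapping theorem. One has $\ker p_0 = \{e\}\times N$ and $\ker p = N'\times\{e\}$, where $N \trianglelefteq G$ is $\tau$-closed and $N' \trianglelefteq G$ is $\tau_0$-closed; a conjugation computation using $\Gamma \subseteq H$ shows both are normal in the abstract group $G$. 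By topological simplicity of $(G,\tau_0)$ we get $N' \in \{\{e\}, G\}$. If $N' = \{e\}$ then $p$ is a continuous open bijection, hence an isomorphism of topological groups, and $p_0 \circ p^{-1} = \id$ exhibits $\id \colon (G,\tau) \to (G,\tau_0)$ as continuous; a second application of the open mapping theorem makes it a homeomorphism, so $\tau = \tau_0$. Everything therefore reduces to excluding the transverse case $N' = G$, in which $\Gamma$ is dense in the product.

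The heart of the matter is the following automatic-continuity claim: \emph{the abstract action $\rho$ is continuous for $\tau$ as well}. Granting this, the continuous maps $\rho \circ p_0$ and $\rho \circ p$ from $H$ to $\mathrm{Homeo}(\Omega)$ agree on the dense subgroup $\Gamma$, hence on $H$; injectivity of $\rho$ then forces $p_0 = p$ on $H$, so $H = \Gamma$ and the graph of $\id \colon (G,\tau_0)\to(G,\tau)$ is closed, whence the closed graph theorem concludes. To prove the claim it suffices to show that each compact open subgroup $U$ of $(G,\tau_0)$ is $\tau$-open, because the $\tau_0$-open setwise stabilisers of clopen subsets of $\Omega$ contain such a $U$ and their $\rho$-preimages form a neighbourhood basis of the identity for the topology $\rho$ induces. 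Here $\sigma$-compactness enters: $G$ is a countable union of $\tau_0$-cosets of $U$, so by the Baire category theorem applied to $(G,\tau)$ the subgroup $U$ is $\tau$-nonmeager, whence $U = U^{-1}U$ is $\tau$-open by Pettis' theorem \emph{provided $U$ has the Baire property in $\tau$}. \textbf{Verifying this regularity of $U$ for the alien topology $\tau$ is the main obstacle}, since $U$ is visible only through $\tau_0$. I expect to resolve it using the rigidity of the micro-supported action: the nontrivial compact locally normal subgroups $\rist_G(V)$ attached to clopen $V \subsetneq \Omega$ (which exist because the action is weakly branch) pin down $U$ up to commensuration, and together with triviality of $\QZ(G)$ and the absence of soluble locally normal subgroups (Theorem~\ref{thmintro:QZ}) this should furnish the required regularity; strong proximality, via Corollary~\ref{corintro:amen}, supplies the complementary non-amenability input excluding an invariant-measure degeneration in the transverse case.

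Finally I deduce the stated consequences. Uniqueness of the $\sigma$-compact topology yields continuity of every automorphism $\phi$ at once: the push-forward $\phi_*\tau_0$ is again a $\sigma$-compact locally compact group topology, so $\phi_*\tau_0 = \tau_0$, i.e. $\phi$ is a homeomorphism. For the last assertion, assume in addition that $G$ is abstractly simple (which holds when $\ldlat(G) \neq \{0,\infty\}$ by Theorem~\ref{thmintro:AbstractSimplicity}, as $\mfS(\ldlat(G))$ is a quotient of $\Omega$), and let $\tau$ be an arbitrary locally compact group topology. The identity component $(G,\tau)^\circ$ is characteristic, hence normal in the abstract group $G$, so abstract simplicity forces it to be $\{e\}$ or $G$; the connected alternative is excluded by confronting the infinite profinite compact open subgroups of $(G,\tau_0)$ with the Lie structure afforded by Gleason--Yamabe. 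Thus $(G,\tau)$ is totally disconnected and locally compact, and van Dantzig's theorem gives a compact open subgroup: if it is trivial then $\tau$ is discrete, and otherwise $(G,\tau)$ is non-discrete, in which case one reduces to the $\sigma$-compact case already treated (the delicate point being to rule out exotic non-$\sigma$-compact topologies, again via compact generation of $\tau_0$ and simplicity) to conclude $\tau = \tau_0$. Hence the only locally compact group topologies on $G$ are $\tau_0$ and the discrete one.
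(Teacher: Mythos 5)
Your overall scheme (graph of the identity in $(G,\tau_0)\times(G,\tau)$, open mapping theorem, reduction to showing the boundary action $\rho\colon G\to\mathrm{Homeo}(\Omega)$ is $\tau$-continuous, which in turn reduces to showing every $\tau_0$-compact open subgroup $U$ is $\tau$-open) is a reasonable skeleton, but the proof has a genuine gap exactly where you flag it: you never establish that $U$ has the Baire property in the alien topology $\tau$, so the Pettis/Baire argument cannot be run. This is not a technicality one can wave at -- for a general locally compact group this step is simply false (e.g.\ wild abstract automorphisms of $(\mathbb{R},+)$ transport the usual topology to a different $\sigma$-compact locally compact group topology), so any proof must exploit the specific structure of $G$, and the sentences ``I expect to resolve it using the rigidity of the micro-supported action \dots this should furnish the required regularity'' are a statement of hope, not an argument. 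The same applies to the second paragraph: ruling out the connected alternative via Gleason--Yamabe, and ruling out non-$\sigma$-compact non-discrete topologies, are both left as admitted ``delicate points''. In particular, for a non-discrete locally compact $\tau$, van Dantzig gives a $\tau$-compact open subgroup $V$, which is an infinite subgroup commensurated by the abstract group $G$; relating such a $V$ back to the $\tau_0$-structure is precisely the hard content, and nothing in your text does it.

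For comparison, the paper's proof is a two-line reduction to the rigidity theorem of Part I (\cite[Theorem~\ref{Part1-intro:rigid}]{CRW-Part1}), which is where all of the work you are missing actually lives. The inputs the paper verifies are: (a) the $G$-action on $\lcent(G)$ is non-trivial by Theorem~\ref{thmintro:WeaklyBranch}, hence \emph{faithful} by topological simplicity, which is what the Part I theorem needs for the $\sigma$-compact uniqueness statement; and (b) for the abstractly simple case, $\lnorm(G)^G=\{0,\infty\}$ by Theorem~\ref{thmintro:FixedPointsSimple}(\ref{it:AbsSimple}), which is the hypothesis under which the Part I theorem pins down \emph{all} locally compact group topologies as the original one or the discrete one. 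Your proposal never brings in the structure-lattice fixed-point theory (commensurated compact locally normal subgroups and Theorem~\ref{thmintro:FixedPointsSimple}), which is the mechanism by which an arbitrary second topology is compared with $\tau_0$; without it, or without reproving the Part I rigidity theorem, the argument does not close.
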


Notice that the corresponding result does \textit{not} hold for all simple Lie groups, since the field of complex numbers admits discontinuous automorphisms; we refer to \cite{Kramer} for a comprehensive statement in the case of semi-simple Lie groups.

\subsection{Contraction groups and abstract simplicity}

Let $G$ be topological group. Recall that the \textbf{contraction group}\index{contraction group}\index{con(g)@$\con(g)$} of $g$ is defined by 
$$\con(g) = \{ x \in G \; | \; \lim_{n \to \infty} g^n x g^{-n} = e\}.$$ 
In every known example of $G \in \sclass$, some element of $G$ has non-trivial contraction group; whether this is true for all $G \in \sclass$ is Question~\ref{que:anisotropic} above.
 
In simple Lie groups or in simple algebraic groups over local fields, the contraction group of every element is known to coincide with the unipotent radical of some parabolic subgroup, and is thus always closed. On the other hand, recent results indicate that many non-linear examples of groups in $\sclass$ have non-closed contraction groups (see \cite{BRR} and \cite{Marquis}), while in some specific cases the closedness of contraction groups yields connections with finer algebraic structures related to linear algebraic groups (see \cite{CDM2}). 

As a partial answer to Question~\ref{que:anisotropic}, we prove the following:

\begin{thmintro}[See \S\ref{sec:mono}]\label{thmintro:con}
Let $G \in \sclass$. Then  $\lcent(G) \neq \{0, \infty\}$ if and only if $G$ has a closed subgroup of the form $R = \prod_{i \in \bZ}K_i \rtimes \langle g \rangle$, where $K_i$ is an infinite compact locally normal subgroup of $G$ and $g$ acts by shifting indices.
\end{thmintro}

\begin{corintro}\label{corintro:con}
Let $G \in \sclass$. If $\lcent(G) \neq \{0, \infty\}$, then the contraction group of some element of $G$ is not closed (and hence non-trivial). 
\end{corintro}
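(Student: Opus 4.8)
The plan is to derive this directly from Theorem~\ref{thmintro:con}. Since $\lcent(G) \neq \{0, \infty\}$, that theorem supplies a continuous injective homomorphism $\phi \colon W \to G$, where $W = (\prod_{\mathbf Z} K) \rtimes \mathbf Z$ is the unrestricted wreath product, whose image is a closed subgroup of $G$ and which maps the factor $K$ onto a non-trivial locally normal subgroup; in particular $K \neq \{e\}$, so I may fix $a \in K$ with $a \neq e$. Writing $B = \prod_{\mathbf Z} K$ for the base group, $t$ for a generator of the top group $\mathbf Z$, and $\sigma$ for the shift automorphism of $B$ induced by conjugation by $t$, I set $g = \phi(t)$ and aim to show that $\con_G(g)$ is not closed by exhibiting a sequence inside it whose limit escapes it.

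First I would carry out the computation inside $W$. Conjugation by $t^m$ acts on $B$ as $\sigma^m$, which shifts coordinates, so an element $b \in B$ lies in $\con_W(t)$ precisely when $\sigma^m(b) \to e$ in the product topology, i.e.\ when its coordinates tend to the identity of $K$ in the direction from which $\sigma$ pulls, say as the index tends to $-\infty$. Concretely, I would take $b^{(n)} \in B$ equal to $a$ on the coordinates $-n, \dots, -1$ and to $e$ elsewhere; each $b^{(n)}$ has finite support and hence lies in $\con_W(t)$. In the product topology the $b^{(n)}$ converge to the element $b^{(\infty)} \in B$ equal to $a$ on all negative coordinates and to $e$ on the non-negative ones --- this is where the \emph{unrestricted} nature of the wreath product is essential, as $b^{(\infty)}$ has infinite support. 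A direct inspection shows $\sigma^m(b^{(\infty)}) \to c$, where $c \in B$ is the constant sequence with all coordinates equal to $a$; since $c \neq e$, the element $b^{(\infty)}$ does \emph{not} lie in $\con_W(t)$.

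I would then transfer this picture through $\phi$, using only its continuity and injectivity. By continuity, $\phi(\sigma^m(b^{(n)})) = g^m \phi(b^{(n)}) g^{-m} \to e$ for each fixed $n$, so $\phi(b^{(n)}) \in \con_G(g)$; and $\phi(b^{(n)}) \to \phi(b^{(\infty)})$ in $G$. On the other hand $g^m \phi(b^{(\infty)}) g^{-m} = \phi(\sigma^m(b^{(\infty)})) \to \phi(c)$, and $\phi(c) \neq e$ because $\phi$ is injective and $c \neq e$; as limits in the Hausdorff group $G$ are unique, the sequence $g^m \phi(b^{(\infty)}) g^{-m}$ does not converge to $e$, whence $\phi(b^{(\infty)}) \notin \con_G(g)$. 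Thus $\con_G(g)$ contains every $\phi(b^{(n)})$ but not their limit $\phi(b^{(\infty)})$, so it fails to be closed. Finally, a non-closed subgroup is certainly non-trivial, which gives the parenthetical conclusion.

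The bulk of the difficulty is carried by Theorem~\ref{thmintro:con}; granting it, the corollary is essentially a one-line dynamical computation in the shift. The only points that require care are the bookkeeping of the shift direction (so that the $b^{(n)}$ genuinely lie in $\con_W(t)$ while their limit does not) and the observation that injectivity of $\phi$ is exactly what prevents the limit $\phi(c)$ from collapsing to $e$. Notably, no open mapping theorem, nor any identification of $\con_G(g)$ with a contraction group internal to the image, is needed: continuity and injectivity of $\phi$ suffice.
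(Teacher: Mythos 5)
Your argument is correct, but it reaches the conclusion by a different route than the paper does. The paper never deduces Corollary~\ref{corintro:con} from the \emph{statement} of Theorem~\ref{thmintro:con}: the two results are proved simultaneously, and the non-closedness of the contraction group comes from Proposition~\ref{goodshrink}(ii), whose proof places the compact group $\prod_{i \in \bZ} L_i$ (with $L_i = g^i \rist_W(\beta) g^{-i}$) inside the nub $\nub(g)$ of the skewering element and then cites \cite[Theorem~3.32]{BaumgartnerWillis}, i.e.\ the general fact that a non-trivial nub forces $\con(g)$ to be non-closed. You instead treat Theorem~\ref{thmintro:con} as a black box and run an explicit limit computation in the unrestricted wreath product: the finitely supported elements $b^{(n)}$ lie in the contraction group of the shift, their limit $b^{(\infty)}$ (supported on a half-line) does not, and continuity together with injectivity of $\phi$ transfer this to $G$ --- your observation that only uniqueness of limits in the Hausdorff group $G$, and not closedness of $\phi(W)$ or any comparison between $\con_W(t)$ and $\con_G(g)$, is needed to keep $\phi(b^{(\infty)})$ out of $\con_G(g)$ is exactly right, and the shift-direction bookkeeping is harmless since one may always pass from $g$ to $g^{-1}$. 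What your approach buys: it is elementary and self-contained, avoiding nub theory and the external citation entirely, and it shows that the corollary is a purely formal consequence of Theorem~\ref{thmintro:con} as stated. What the paper's approach buys: it extracts the finer structural information that $\nub(g) \neq 1$ (indeed that the nub contains the whole product $\prod_{i \in \bZ} L_i$), from which non-closedness follows by the general equivalence, and it obtains both the theorem and the corollary from a single construction rather than in sequence.
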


In relation with the question of abstract simplicity, we observe that the combination of Proposition~\ref{prop:SimplicityCriterion} and Theorem~\ref{thmintro:WeaklyDecomposable} directly implies that a group in $\sclass$ with a non-trivial centraliser lattice has a smallest dense normal subgroup which is abstractly simple. Below, we shall obtain an alternative proof of the following  stronger statements by combining the main result of  \cite{CRW-TitsCore}  with the conclusions of Corollary~\ref{corintro:con}. 

\begin{thmintro}[See \S\ref{sec:mono}]\label{thmintro:PropertyS1}
Let $G \in \sclass$.  Suppose that at least one of the following holds:
\begin{enumerate}[(i)]
\item $\lcent(G) \neq \{0, \infty\}$;
\item Some compact open subgroup of $G$ is finitely generated as a profinite group.
\end{enumerate}
Then $G$ has property \sdn.  Consequently, $G$ is abstractly simple if and only if every non-trivial commensurated compact subgroup of $G$ is open. 
\end{thmintro}

\begin{thmintro}[See \S\ref{sec:abs_simple}]\label{thmintro:AbstractSimplicity}
Let $G \in \sclass$.  Suppose that at least one of the following holds:
\begin{enumerate}[(i)]
\item $\ldlat(G) \neq \{0, \infty\}$;
\item There is a compact open subgroup $U$ of $G$ such that $U$ is finitely generated as a profinite group and $\overline{[U,U]}$ is open in $G$;
\item Some non-trivial compact locally normal subgroup of $G$ is finitely generated as a profinite group and every non-trivial commensurated compact subgroup of $G$ is open.
\end{enumerate}
Then $G$ is abstractly simple.\index{abstract simplicity}
\end{thmintro}

\subsection*{Acknowledgement}We are grateful to Simon Smith for communicating to us the details of his construction of uncountably many isomorphism classes in $\sclass$ from \cite{SmithNew} prior to publication.  We also thank the anonymous referee for a very thorough and helpful report, with many good suggestions for improvements.

%\addtocontents{toc}{\protect\setcounter{tocdepth}{2}}
\section{Preliminaries}

In this section we recall some definitions and results from \cite{CM}, \cite{CRW-Part1} and \cite{CRW-TitsCore} that will be used below. 

\subsection{Open compact subgroups and locally normal subgroups}

\begin{defn}
\label{defn:locnorm}
Let $G$ be a totally disconnected, locally compact (\tdlc) group.  Write $\mcB(G)$ for the set of compact open subgroups of $G$, and let $U \in \mcB(G)$.\index{B(G)@$\mcB(G)$}

Let $H$ be a subgroup of $G$.  We say that $H$ is \textbf{commensurate}\index{commensurate} to a subgroup $K \leq G$ if the indices $[H: H \cap K]$ and $[K : H \cap K]$ are both finite. Write $[H]$ for the class of all subgroups $K$ of $G$ such that $K \cap U$ is commensurate with $H \cap U$.  Say $H$ and $K$ are \defbold{locally equivalent}\index{locally equivalent} if $[H]=[K]$.  There is a natural partial ordering on local equivalence classes, given by $[H] \ge [K]$ if $H \cap K \cap U$ is commensurate to $K \cap U$.

A \defbold{locally normal}\index{locally normal subgroup} subgroup of $G$ is a subgroup $H$ such that $\N_G(H)$ is open.  The \defbold{structure lattice}\index{structure lattice} $\lnorm(G)$ of $G$ is the set of local equivalence classes of closed locally normal subgroups of $G$.\end{defn}

The partial order on local equivalence classes given in Definition~\ref{defn:locnorm} may be reformulated, for $\alpha,\beta\in \lnorm(G)$, as $\alpha \le \beta$ if $H \le K$ for some $H \in \alpha$ and $K \in \beta$. 

As noted in \cite{CRW-Part1}, the structure lattice is a modular lattice, admitting an action of the group $\Aut(G)$ of topological group automorphisms by conjugation, and is a local invariant of the group in that $\lnorm(G) = \lnorm(H)$ for any open subgroup $H$ of $G$.  If $G$ is not discrete, there are two `trivial' elements of $\lnorm(G)$, namely the class of the trivial group, which we will denote $0$, and the class of the compact open subgroups, which we will denote $\infty$.  More generally the symbols $0$ and $\infty$ will be used to denote the minimum and maximum elements respectively of a bounded lattice.  When discussing subsets of $\lnorm(G)$, we will regard a subset as non-trivial if it contains an element other than $0$ and $\infty$.

\subsection{Topological countability}

A topological space is \defbold{$\sigma$-compact} if it is a union of countably many compact subspaces; \defbold{first-countable} if at every point, there is a countable base of neighbourhoods; and \defbold{second-countable} if there is a countable base for the topology.

The following well-known facts (the first two of which are easily verified) will for the most part allow us to restrict attention to totally disconnected locally compact \emph{second-countable} (\tdlcsc)\index{t.d.l.c.s.c.} groups.

\begin{lem}\label{lem:KK}Let $G$ be a \tdlc group.
\begin{enumerate}[(i)]
\item $G$ is second-countable if and only if it is $\sigma$-compact and first-countable.

\item If $G$ is compactly generated, then $G$ is $\sigma$-compact. 

\item (See \cite{KK}) If $G$ is $\sigma$-compact, then for every identity neighbourhood $U$ in $G$, there exists $K \subseteq U$ such that $K$ is a compact normal subgroup of $G$ and $G/K$ is second-countable.
\end{enumerate}
\end{lem}

We note in particular the following.

\begin{cor}
Every $G \in \sclass$ is second-countable.
\end{cor}

\subsection{Quasi-centralisers}

\begin{defn}\label{cqdef}Let $G$ be a \tdlc group and let $H$ and $K$ be subgroups of $G$.  The \defbold{quasi-centraliser}\index{quasi-centraliser}\index{QCH(K)@$\QC_H(K)$} of $K$ in $H$ is given by
\[ \QC_H(K) : = \bigcup_{U \in \mcB(G)} \CC_H(K \cap U),\]
where $\mcB(G)$ is the set of compact open subgroups of $G$. {Notice that the quasi-centraliser $\QC_H(K)$ depends only on the local equivalence class of $K$; we shall therefore use the notation 
$$\QC_H([K]) = \QC_H(K).$$ 
If the class $[K]$ is fixed under the conjugation action of $H$, then the quasi-centraliser $\QC_H(K)$ is normal in $H$. A typical example is provided by the  
\defbold{quasi-centre}\index{quasi-centre}\index{QZ(G)@$\QZ(G)$} $\QZ(H)$ of $H$, which is just $\QC_H(H)$.}
\end{defn}

\begin{defn}Let $G$ be a \tdlc group.  A closed subgroup $H$ is \defbold{C-stable}\index{C-stable} in $G$ if $\QC_G(H) \cap \QC_G(\CC_G(H))$ is discrete.  Say a \tdlc group $G$ is \defbold{locally C-stable}\index{locally C-stable} if all closed locally normal subgroups of $G$ are C-stable in $G$.\end{defn}

\begin{prop}[See {\cite[Theorem~3.19]{CRW-Part1}}]\label{cstab}
Let $G$ be a \tdlc group.  Then $G$ is locally C-stable if and only if $\QZ(G)$ is discrete and every non-trivial abelian compact locally normal subgroup of $G$ is contained in $\QZ(G)$. 

If in addition $\QZ(G)=\triv$, then for every closed locally normal subgroup $H$ of $G$, we have $\QZ(H)=\triv$ and $\QC_G(H) = \CC_G(H)$.
\end{prop}

\subsection{Local decomposition}

\begin{defn}
Let $G$ be a \tdlc group.  Define the \defbold{local decomposition lattice}\index{local decomposition lattice}\index{LD(G)@$\ldlat(G)$} $\ldlat(G)$ of $G$ to be the subset of $\lnorm(G)$ consisting of elements $[K]$ where $K$ is a closed direct factor of some compact open subgroup of $G$. The \defbold{complementation map} on $\ldlat(G)$ sends $[K]$ to $[L]$ where $L\in \lnorm(G)$ satisfies $K\cap L = \triv$ and $KL$ is open. 
\end{defn}

\begin{defn}\label{def:LC}
Let $G$ be a locally C-stable \tdlc group.  Define the map $\bot: \lnorm(G) \rightarrow \lnorm(G)$ to be given by $\alpha \mapsto [\QC_G(\alpha)]$.  Define the \defbold{centraliser lattice}\index{centraliser lattice}\index{LC(G)@$\lcent(G)$} $\lcent(G)$ be the set $\{\alpha^\bot \mid \alpha \in \lnorm(G)\}$ together with the map $\bot$ restricted to $\lcent(G)$, partial order inherited from $\lnorm(G)$ and binary operations $\wedge_c$ and $\vee_c$ given by:
\[ \alpha \wedge_c \beta = \alpha \wedge \beta \]
\[ \alpha \vee_c \beta = (\alpha^\bot \wedge \beta^\bot)^\bot.\]
In general we will write $\vee$ instead of $\vee_c$ in contexts where it is clear that we are working inside the centraliser lattice.\end{defn}

\begin{thm}[{\cite[Theorems~4.5 and~5.2]{CRW-Part1}}]\label{thm:Boolean-part1}
Let $G$ be a \tdlc group.
\begin{enumerate}[(i)]

\item Suppose that $G$ has trivial quasi-centre. Then $\ldlat(G)$ is a Boolean algebra and $\bot$ induces the complementation map on $\ldlat(G)$.

\item Suppose in addition that $G$ has no non-trivial compact abelian locally normal subgroups.  Then $\lcent(G)$ is a Boolean algebra, $\bot$ induces the complementation map on $\lcent(G)$ and $\ldlat(G)$ is a subalgebra of $\lcent(G)$.

\end{enumerate}
\end{thm}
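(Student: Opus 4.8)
The plan is to reduce each Boolean-algebra claim to its two defining conditions — complementation and distributivity — and to supply both by exploiting the orthocomplementation $\bot\colon \alpha \mapsto [\QC_G(\alpha)]$. I would treat (ii) first, since there $\bot$ is available directly on $\lcent(G)$, and deduce (i) afterwards. Throughout part (ii) I would invoke Proposition~\ref{cstab}: the hypotheses (trivial quasi-centre and no non-trivial abelian locally normal subgroup) make $G$ locally C-stable, so that every locally normal subgroup of $G$ has trivial quasi-centre.

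First I would check that $\bot$ is an antitone involution on $\lcent(G)$. Antitonicity, $\alpha \le \beta \Rightarrow \beta^\bot \le \alpha^\bot$, is immediate from the definition of the quasi-centraliser, and the inequality $\alpha \le \alpha^{\bot\bot}$ is the standard property of the closure operator $\alpha \mapsto \alpha^{\bot\bot}$; together these give $\alpha^{\bot\bot\bot} = \alpha^\bot$, so $\bot$ restricts to an involution on its image $\lcent(G)$, which is exactly the set of $\bot$-closed elements. Next I would verify that $\bot$ is a complementation. The meet identity $\alpha \wedge_c \alpha^\bot = 0$ is where local C-stability enters: the class $\alpha \wedge \alpha^\bot$ is represented by $K \cap \QC_G(K)$, which is contained in $\QZ(K)$ and hence trivial by Proposition~\ref{cstab}. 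The join identity is then purely formal: by the definition of $\vee_c$, the involution, and $0^\bot = \infty$, one obtains $\alpha \vee_c \alpha^\bot = (\alpha^\bot \wedge \alpha^{\bot\bot})^\bot = (\alpha \wedge \alpha^\bot)^\bot = \infty$.

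The main obstacle is distributivity. Modularity of $\lnorm(G)$ does not transfer, since $\lcent(G)$ carries the modified join $\vee_c$; and in any case an orthocomplemented (even complemented modular) lattice need not be Boolean, as subspace lattices and the diamond $M_3$ show. The extra input must therefore be genuinely group-theoretic. I would extract it from the fact that two locally normal subgroups each lying in the other's quasi-centraliser genuinely commute, so that $\alpha \vee_c \beta$ is represented by an internal product of commuting subgroups; this yields both the de Morgan identity $(\alpha \vee_c \beta)^\bot = \alpha^\bot \wedge_c \beta^\bot$ and the compatibility relation $\alpha = (\alpha \wedge_c \beta) \vee_c (\alpha \wedge_c \beta^\bot)$ for all $\alpha,\beta \in \lcent(G)$. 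In an orthocomplemented lattice, compatibility of \emph{every} pair of elements is precisely the condition that forces distributivity, and hence the Boolean property; this last implication is then purely lattice-theoretic. Proving the compatibility relation for all pairs is the crux, and I expect it to be the hardest step.

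For (i) only $\QZ(G)=1$ is assumed, so I would argue directly with direct factors. If $U = K \times L$ is a decomposition of an open compact subgroup, then any second complement $L'$ of $K$ in $U$ has the form $L' = \{\phi(l)\,l : l \in L\}$ for a homomorphism $\phi\colon L \to \Z(K)$; since each $\phi(l)$ centralises both $K$ and $L$, it centralises the open subgroup $U$ and so lies in $\QZ(G)=1$. Thus the complement is unique, and in particular $[L]$ depends only on $[K]$, giving a well-defined complementation on $\ldlat(G)$. I would then check that the meet and join of two direct factors are again represented by direct factors, so that $\ldlat(G)$ is a sublattice of the modular lattice $\lnorm(G)$; being complemented and modular with unique complements, it is distributive, hence Boolean. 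Finally, under the stronger hypotheses of (ii), the same direct factors are $\bot$-closed — one verifies $[K] = [K]^{\bot\bot}$ with $[K]^\bot = [L]$ — so $\ldlat(G) \subseteq \lcent(G)$ is closed under $\wedge_c$, $\vee_c$ and $\bot$, i.e. a subalgebra, which completes the proof.
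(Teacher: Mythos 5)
A preliminary remark: the present paper contains no proof of this statement at all --- it is quoted verbatim from Part I (\cite{CRW-Part1}), so the only question is whether your outline would stand as a proof on its own. It would not, because at both places where the genuine mathematical content lies you defer rather than argue. In part (ii) the formal layer is fine: $\bot$ is an antitone Galois connection on classes of compact subgroups (even this needs a Baire-category argument to get uniformity, i.e.\ that a compact $H \le \QC_G(K)$ has an open subgroup centralising an open subgroup of $K$), the identity $\alpha \wedge_c \alpha^\bot = 0$ does follow from local C-stability via $K \cap \QC_G(K) = \QZ(K) = 1$, and the lattice-theoretic endgame you invoke (an ortholattice in which every pair of elements is compatible is Boolean) is a correct classical fact. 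But the compatibility relation $\alpha = (\alpha \wedge_c \beta) \vee_c (\alpha \wedge_c \beta^\bot)$, which you yourself flag as the crux, \emph{is} the theorem; it does not follow from the observation that commuting locally normal subgroups represent $\vee_c$ by an internal product. Unwound, it is the assertion that any element quasi-centralising both $K \cap L$ and $\CC_K(L)$ must quasi-centralise $K$, equivalently $[K]^\bot = ([K]\wedge[L])^\bot \wedge ([K]\wedge[L]^\bot)^\bot$; this requires a genuine commutator argument exploiting local C-stability (the whole structure collapses when abelian locally normal subgroups are present, since then $A \le \QC_G(A)$), and nothing in your outline supplies it. A proposal whose self-identified hardest step is left as "I expect it to be the hardest step" is a plan, not a proof.

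Part (i) has the same defect in a different spot. Your uniqueness argument only shows that the internal direct complement of $K$ inside a \emph{fixed} decomposition $U = K \times L$ is unique (indeed $L = \CC_U(K)$ once $\Z(U) \le \QZ(G) = 1$); what the reduction to "uniquely complemented modular lattices are Boolean" needs is uniqueness of complements in the lattice $\ldlat(G)$, i.e.\ that any $[M] \in \ldlat(G)$ with $[K]\wedge[M]=0$ and $[K]\vee[M]=\infty$ equals $[L]$, where $M$ may be a direct factor of a quite different open compact subgroup; modularity alone does not give this (complements in modular lattices are far from unique, as $M_3$ shows). Likewise the step "the meet and join of two direct factors are again represented by direct factors", which you propose merely to "check", is where the actual work sits: it rests on the refinement property of direct decompositions of profinite groups with trivial centre, e.g.\ $K_1 = (K_1 \cap K_2) \times (K_1 \cap \CC_U(K_2))$ whenever $U = K_1 \times \CC_U(K_1) = K_2 \times \CC_U(K_2)$ and $\Z(U)=1$, and this must be proved before $\ldlat(G)$ is even known to be a sublattice of $\lnorm(G)$. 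Until these two statements are established, both halves of your argument have nothing to stand on.
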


\subsection{Decomposing locally compact groups into simple pieces}

In this article, we adopt the general convention that a \textbf{maximal}\index{maximal} subgroup of the group $G$ is one that is maximal amongst the \emph{proper} subgroups of $G$, and a \textbf{minimal}\index{minimal} subgroup is one that is minimal amongst the \emph{non-trivial} subgroups of $G$.  A similar convention applies when discussing more restrictive classes of subgroups, such as closed normal subgroups.

Topologically simple groups appear naturally in the upper structure of compactly generated \tdlc groups.  The following is an analogue of the well-known fact that a non-trivial finitely generated group has a simple quotient.

\begin{thm}[{\cite[Theorem~A]{CM}}]\label{cmquotthm}Let $G$ be a compactly generated locally compact  group.  Then exactly one of the following holds.
\begin{enumerate}[(i)]
\item $G$ has an infinite discrete quotient.
\item $G$ has a cocompact closed normal subgroup that is connected and soluble.
\item $G$ has a cocompact closed normal subgroup $N$ such that $N$ has no infinite discrete quotient, but $N$ has exactly $n$ non-compact topologically simple quotients, where $0 < n < \infty$.\end{enumerate}
\end{thm}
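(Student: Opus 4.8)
The plan is to first verify that the three cases are mutually exclusive, and then prove exhaustiveness by organising everything around the dichotomy ``$G$ has a non-compact topologically simple quotient, or it does not''. Exclusivity is routine. If $R$ is a cocompact connected soluble normal subgroup, then any discrete quotient of $G$ kills $R$ (a connected group has no non-trivial discrete quotient) and hence factors through the compact group $G/R$, so it is finite, ruling out (i); and any topologically simple quotient either kills $R$ and is then a quotient of the compact group $G/R$, or it receives the non-trivial soluble normal image of $R$ as a dense subgroup and is therefore soluble, so in neither case is it non-compact simple, ruling out (iii). Finally (iii) excludes (i), since an infinite discrete quotient of $G$ would restrict to a quotient of the cocompact subgroup $N$ whose image has finite index in the discrete target: that image is then an infinite discrete quotient of $N$ unless it is finite, in which case the quotient factors through the compact group $G/N$ and is finite. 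For exhaustiveness I assume from now on that (i) fails, i.e.\ $G$ has no infinite discrete quotient; note that this makes $G$ itself an admissible choice of $N$ in (iii), so it suffices to show that \emph{either} $G$ admits between one and finitely many non-compact topologically simple quotients, \emph{or} $G$ has a cocompact connected soluble normal subgroup.

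The heart of the matter is the totally disconnected case, where (ii) degenerates to ``$G$ is compact''. First I would control the residually discrete quotient: the discrete residual $\Res(G)$ (the intersection of all open normal subgroups) has the property that $G/\Res(G)$ is residually discrete and compactly generated, hence compact-by-discrete by the SIN structure of such groups; since $G$ has no infinite discrete quotient, $G/\Res(G)$ is in fact profinite, so $\Res(G)$ is cocompact. If $\Res(G)$ is trivial then $G$ is compact and we are done. Otherwise I want to produce, and count, the non-compact topologically simple quotients. Existence comes from a descent through cocompact closed normal subgroups: a subgroup minimal among cocompact closed normal subgroups has a quotient that is topologically simple modulo its compact quasi-centre, yielding a non-compact topologically simple quotient $S_1 = G/M_1$. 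For the bound, distinct non-compact topologically simple quotients $S_i = G/M_i$ have $M_i$ maximal proper closed normal, so $\overline{M_iM_j} = G$ for $i \neq j$, and the subgroups $N_i = \bigcap_{j\neq i}M_j$ pairwise commute modulo $\bigcap_i M_i$ while $N_i$ maps onto $S_i$ and into $S_j$ trivially. Thus the image of $N_i$ in any finite sub-product is exactly the $i$-th coordinate factor, giving a genuine surjection $G \twoheadrightarrow \prod_{i\in F} S_i$ for every finite $F$; compact generation then forbids infinitely many such independent non-compact commuting factors, forcing $n<\infty$, and $n\ge 1$ by the existence step. This lands us in (iii) with $N = G$.

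The decisive difficulty, and the only place where compact generation is genuinely used, is the descent step: I must know that a descending chain of cocompact closed normal subgroups has cocompact intersection, so that minimal cocompact closed normal subgroups exist. This fails without compact generation, and it also fails without the standing exclusion of (i) — the chain $n!\,\mathbb{Z}$ in $\mathbb{Z}$ shows both at once — so the argument must feed the Cayley--Abels graph of $G$ together with the absence of infinite discrete quotients into a compactness argument (each $G/N$ is profinite and the inverse limit over the chain is compact, which forces the intersection to be cocompact). Establishing this chain condition, and with it the finiteness bound above, is to my mind the technical crux around which the entire totally disconnected case is organised.

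For a general $G$ I would peel off the identity component. The quotient $Q = G/G^\circ$ is a compactly generated \tdlc group that inherits the absence of infinite discrete quotients, and every non-compact topologically simple quotient of $Q$ is one of $G$. If $G$ has such a quotient, the argument of the previous paragraph applied directly to $G$ again yields case (iii) with $N=G$. If $G$ has no non-compact topologically simple quotient, then by the totally disconnected case $Q$ must be compact, so $G^\circ$ is cocompact. Now I invoke the solution to Hilbert's fifth problem \cite{MZ}: modulo arbitrarily small compact normal subgroups, $G^\circ$ is a connected Lie group whose semisimple part, having no non-compact simple quotient, is compact. Taking $R$ to be the solvable radical of the connected group $G^\circ$ (its largest connected soluble normal subgroup), the quotient $G^\circ/R$ has neither a connected soluble normal subgroup nor a non-compact simple quotient, hence is compact; so $R$ is a connected soluble cocompact normal subgroup of $G^\circ$, and being characteristic in the characteristic subgroup $G^\circ$ it is normal in $G$ with $G/R$ compact, giving case (ii). The mild subtlety here is to extract a genuinely \emph{connected and soluble} subgroup rather than a compact-by-(connected soluble) one; passing to the solvable radical of $G^\circ$ and absorbing the compact semisimple part into the ``compact'' direction of cocompactness is exactly what achieves this.
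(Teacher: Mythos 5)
First, a point of order: the paper contains no proof of this statement at all --- Theorem~\ref{cmquotthm} is quoted directly from \cite{CM} (Theorem A there) --- so your proposal has to be measured against Caprace--Monod's argument rather than anything in the present text. Measured that way, it has a fatal gap at the very first step of the exhaustiveness argument, namely the reduction ``since $G$ is itself an admissible candidate for $N$, it suffices to show that either $G$ has between one and finitely many non-compact topologically simple quotients, or $G$ has a cocompact connected soluble normal subgroup.'' That dichotomy is false. Take any $S \in \sclass$ (say $S = \PSL_2(\bQ_p)$) and let $G = (S \times S) \rtimes \bZ/2\bZ$, with the involution swapping the factors. The closed normal subgroups of $S \times S$ are $1$, $S \times 1$, $1 \times S$ and $S \times S$; only the first and last are swap-invariant, and a closed normal subgroup of $G$ meeting $S \times S$ trivially centralises it and is therefore trivial. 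Hence the only closed normal subgroups of $G$ are $1$, $S \times S$ and $G$. Consequently $G$ has no infinite discrete quotient (its only discrete quotients are $\bZ/2\bZ$ and $1$), no non-compact topologically simple quotient whatsoever ($G$ itself is not simple, and its proper quotients are finite), and --- being totally disconnected and non-compact --- no cocompact connected soluble normal subgroup. The theorem still holds for this $G$, but only because case (iii) is witnessed by the \emph{proper} subgroup $N = S \times S$, whose two simple quotients come from the kernels $S \times 1$ and $1 \times S$: subgroups normal in $N$ but not in $G$. This is the whole point of the theorem's formulation --- passing to a cocompact closed normal subgroup strictly enlarges the supply of closed normal subgroups --- and your ``descent'' step commits exactly the forbidden move when it converts a simple quotient of a minimal cocompact closed normal subgroup into ``a non-compact topologically simple quotient $S_1 = G/M_1$''. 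The error then propagates: your exclusivity argument for (ii) versus (iii) also only discusses simple quotients of $G$, and your connected-case assembly infers ``$Q = G/G^\circ$ is compact'' from ``$G$ has no non-compact simple quotient'', which my example refutes. In \cite{CM} the subgroup $N$ is produced intrinsically (essentially by transfinitely iterating the discrete residual, using the chain condition on cocompact closed normal subgroups that you rightly identify as the technical crux), and existence and finiteness of simple quotients are then proved for that $N$, not for $G$.

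There is a second, independent gap in the counting step: from maximal closed normal subgroups $M_i$ of the relevant group with $\overline{M_iM_j}$ equal to the whole group, you cannot conclude a ``genuine surjection'' onto $\prod_{i \in F}S_i$. All that follows is that the image of $N_i = \bigcap_{j \neq i}M_j$ in $S_i$ is a non-trivial normal subgroup, hence \emph{dense}; whether such an image is closed (equivalently, everything) is precisely the abstract-versus-topological simplicity problem that the introduction of this very paper singles out as open --- ``the image of $N_2$ in $G/N_1 = S_1$ is a non-trivial normal subgroup, but it need not be closed \textit{a priori}'' --- and which motivates its study of the structure lattice. So the finiteness of $n$ cannot be extracted from surjections onto products; in \cite{CM} it comes from the same chain-condition and quasi-product machinery (the circle of ideas behind Proposition~\ref{cmsoc}) applied to honest quotients of $N$. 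Both pillars of your exhaustiveness proof therefore need to be replaced, essentially by the actual architecture of \cite{CM}.
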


%The following is a motivation for studying monolithic \tdlc groups.
In particular, Theorem~\ref{cmquotthm} provides information on \textit{maximal} closed normal subgroups of $G$. The following related result is concerned with \textit{minimal} ones.  A version of this result was asserted as part of \cite[Proposition~2.6]{CM}; however the statement and proof in the original published version contained an error.  To avoid confusion, we give a proof here of a corrected version of the proposition that will be sufficient for the purposes of the present paper.

\begin{prop}\label{cmsoc}
Let $G$ be a compactly generated \tdlc group which possesses no non-trivial discrete or abelian normal subgroup, and such that there exists an open subgroup $U$ of $G$ for which $\bigcap_{g \in G}gUg\inv = \triv$.  Then every non-trivial closed normal subgroup of $G$ contains a minimal one, and the set $\mcM$ of minimal closed normal subgroups is finite.
\end{prop}
%I replaced this proposition with an easy version that is sufficient for our purposes.

\begin{proof}
By \cite{Abels} (see also Proposition~\ref{prop:Abels} below), there is a locally finite connected graph $\Gamma$ on which $G$ has a continuous vertex-transitive proper action.  The hypothesis that an open subgroup of $G$ has trivial core ensures that we can choose $\Gamma$ so that $G$ acts faithfully; the hypothesis that $G$ has no non-trivial discrete normal subgroup ensures that no non-trivial normal subgroup acts freely on $\Gamma$.  Fix $v \in V\Gamma$ and write $N(v)$ for all vertices in $\Gamma$ at distance at most $1$ from $v$.  Let $A$ be the stabiliser of $v$ in $G$ and let $B$ be the subgroup fixing pointwise the set $N(v)$.  Note that the assumptions on the action ensure that $A$ is a compact subgroup of $G$ and $B$ is an open subgroup of $A$.

Let $\mcC$ be a family of non-trivial closed normal subgroups of $G$.  Suppose that $\mcC$ is filtering, that is, for any finite subset $\{K_1,\dots,K_n\}$ of $\mcC$, there exists $K \in \mcC$ such that $K \le \bigcap^n_{i=1}K_i$.  We claim that $\mcC$ has non-trivial intersection.

Let $K \in \mcC$.  Since $K$ acts faithfully but not freely on $\Gamma$, there exist adjacent vertices $w,x \in V\Gamma$ and $L \le K$ such that $L$ fixes $x$ but not $w$.  Since $G$ acts vertex-transitively and $K$ is normal, by conjugating in $G$ we can assume $x =v$ and take $L = K \cap A$; since $L$ does not fix $w$ we have $K \cap A \nleq B$.  In particular, we see that $K$ has non-empty intersection with the compact set $A \smallsetminus B$.  If we let $R = \bigcap_{K \in \mcC}K$, it now follows by compactness that $R \cap A \nleq B$, so in particular $R$ is non-trivial and the claim is proved.

Since $\mcC$ can in particular be any chain of non-trivial closed normal subgroups of $G$, we conclude by Zorn's lemma that every non-trivial closed normal subgroup of $G$ contains a minimal one.

It remains to suppose that the set $\mcM$ of minimal closed normal subgroups is infinite and obtain a contradiction.  Let $\mcF$ be the set of finite subsets of $\mcM$.  For each $F \in \mcF$, let $M_F = \overline{\langle M \in \mcM \mid M \notin F\rangle}$.  Notice that distinct minimal closed normal subgroups of $G$ commute; since centralisers are closed, it follows that $M_F \le \CC_G(M)$ for all $M \in F$.  Now $\mcC = \{M_F \mid F \in \mcF\}$ is a filtering family of non-trivial closed normal subgroups of $G$, so the intersection $S = \bigcap_{F \in \mcF}M_F$ is non-trivial.  We see that $S \le \CC_G(M)$ for all $M \in F$ and all $F \in \mcF$, so that $S \le \CC_G(M)$ for all $M \in \mcM$, and yet $S$ is in the closed subgroup generated by the elements of $\mcM$.  It follows that $S$ is a non-trivial abelian normal subgroup of $G$, giving the required contradiction.
\end{proof}

\subsection{Further preliminaries}

There is one more lemma from \cite{CRW-Part1} that we will use frequently.

\begin{lem}[{\cite[Lemma~7.4]{CRW-Part1}}]\label{profcomm}
Let $U$ be a first-countable profinite group and let $K$ be a closed subgroup of $U$.  Then $U$ commensurates $K$ if and only if $U$ normalises an open subgroup of $K$.
\end{lem}

\addtocontents{toc}{\protect\setcounter{tocdepth}{2}}

\section{Connections between topological and abstract simplicity}

Our focus in this paper is on \tdlc groups that have restrictions imposed on their normal subgroups; in particular, our methods apply to the class of compactly generated topologically simple \tdlc groups.  A recurring theme is the difference between topological simplicity and abstract simplicity; indeed, as pointed out in the introduction, it is not known whether there are any compactly generated \tdlc groups that are topologically but not abstractly simple.

Our standard assumption is that the group we are working with is a compactly generated, topologically simple \tdlc group.  However, the condition of topological simplicity can sometimes be relaxed, and we will do so where this is convenient.  In addition, in some cases results will pertain not just to the given \tdlc group, but to any sufficiently large subgroup (for instance dense subgroups).  Such results are of interest outside the theory of \tdlc groups, as there is a strong connection between dense subgroups of \tdlc groups and the theory of Hecke pairs, that is, groups with a specified commensurated subgroup.  (For a general discussion of this connection, see \cite{Reid&Wesolek}.)  An example of this connection is given by Proposition~\ref{prop:Hecke} below.

\subsection{Dense embeddings into topologically simple \tdlc groups}
  
\begin{defn} A \defbold{Hecke pair}\index{Hecke pair} of groups $(\Gamma,\Delta)$ is a group $\Gamma$, together with a subgroup $\Delta$ such that $g\Delta g\inv$ is commensurate to $\Delta$ for all $g \in \Gamma$.  
%The pair $(\Gamma,\Delta)$ is \defbold{proper}\index{proper Hecke pair}\index{Hecke pair!proper} if $\Delta$ is a proper subgroup of $\Gamma$.
\end{defn}
%%%%%%%%%

Each \tdlc group $G$ and compact open subgroup $U$ form a Hecke pair.  Conversely, each Hecke pair $(\Gamma,\Delta)$ canonically determines two totally disconnected groups and compact open subgroups.

\begin{thm}[{\cite[Theorem~7.1]{Belyaev}}; {\cite[Theorem~1.6]{Reid&Wesolek}}]\label{thm:Hecke:complete}
Let $(\Gamma,\Delta)$ be a Hecke pair of groups. Then there are \tdlc groups and homomorphisms 
$$
\beta_\Delta: \Gamma \to \hat{\Gamma}_{\Delta}\mbox{ and }
\beta_{\Gamma\ssl\Delta} : 
\Gamma \to \Gamma\ssl\Delta
$$ with the following properties.
\begin{enumerate}[(i)]
\item $\Delta$ is the inverse image under $\beta_\Delta$ (resp.~$\beta_{\Gamma \ssl \Delta}$) of a compact open subgroup $U\leq \hat{\Gamma}_{\Delta}$ (resp. $U\leq \Gamma\ssl\Delta$). 
\item Given any \tdlc group~$H$ and homomorphism $\beta : \Gamma\to H$ with dense image such that $\Delta = \beta^{-1}(U)$ for some compact open subgroup $U$ of~$H$, there are unique continuous quotient maps $\psi_1 : \hat{\Gamma}_{\Delta} \to H$ and $\psi_2 : H \to \Gamma \ssl \Delta$ with compact kernels such that the following diagram commutes.
\[
\xymatrixcolsep{3pc}\xymatrix{
& \Gamma \ar_{\beta_\Delta}[ld] \ar^{\beta}[d]  \ar^{\beta_{\Gamma \ssl \Delta}}[rd]& \\
\hat{\Gamma}_\Delta  \ar_{\psi_1}[r] & H \ar_{\psi_2}[r]& \Gamma \ssl \Delta. }
\]
\end{enumerate}
 \end{thm}
%%%%
The maps $\beta_{\Delta}$ and $\beta_{\Gamma\ssl\Delta}$ are called respectively the \textbf{Belyaev} and \textbf{Schlichting} completions of the Hecke pair~$(\Gamma,\Delta)$. The Belyaev completion has the additional property that any homomorphism $\phi \colon \Gamma\to H$ such that $\overline{\phi(\Delta)}$ is profinite lifts to a homomorphism $\hat{\phi} \colon \hat{\Gamma}_{\Delta}\to H$ such that $\phi = \hat{\phi}\circ \beta_{\Delta}$. Moreover, $\hat{\phi}$ is a quotient map if and only if ${\phi(\Gamma)}$ is dense and $\overline{\phi(\Delta)}$ is open. If~$H$ has no compact normal subgroups, then $\Ker\hat{\phi}$ contains the compact kernel of the quotient map to $\Gamma\ssl\Delta$ and $\phi$ factors through the Schlichting completion. If~$\hat{\phi}$ is a quotient map, then $\bar{\phi}$ is as well, that is,~$H$ is a quotient of~$\Gamma\ssl\Delta$.

The Belyaev and Schlichting completions of $(\Gamma,\Delta)$ need not be topologically simple even if~$\Gamma$ is. The Schlichting completion of the Hecke pair $(\PSL_n(\bQ),\PSL_n(\bZ))$, for instance, is isomorphic to the product $\prod_{p \in \Pi}\PSL_n(\bQ_p)$, where $\Pi$ is the set of prime numbers. Proposition~\ref{prop:Hecke} implies that a finite number of simple quotients may be recovered nevertheless when~$\Gamma$ is finitely generated. The argument requires, not simplicity, but only a condition on how~$\Gamma$ embeds into its completion and that condition will be introduced first. The condition is based in the following ideas, which also recur in later sections.
%%%%%

Let $G$ be a topological group. The \defbold{monolith}\index{monolith}\index{Mon(G)@$\Mon(G)$} $\Mon(G)$ is the intersection of all non-trivial closed normal subgroups of $G$, see~\cite{CM}, and~$G$ is said to be \defbold{monolithic}\index{monolithic} if $\Mon(G)>1$.  If $G$ is monolithic then its monolith is topologically characteristically simple; conversely, if $G$ is a topologically characteristically simple \tdlc group, then $G \rtimes \Aut(G)$ (with $G$ embedded as an open subgroup) is monolithic.  
% and \defbold{strongly monolithic}\index{monolith!strongly}\index{strongly monolith} if $\Mon(G)$ is open.  For many purposes, monolithic \tdlc groups can be considered strongly monolithic by a change of topology
{
The following consequence of Proposition~\ref{cmsoc} illustrates that monolithic groups appear naturally in the structure of general \tdlc groups. 
%%%%%%%%%%
\begin{cor} \label{cor:MonolithicQuotient}
Let $G$ be a compactly generated \tdlc group which possesses no non-trivial discrete, compact or abelian normal subgroup, and let $\mcM$ be as in Proposition~\ref{cmsoc}.  Then for any $M \in \mcM$, the quotient $G/\CC_G(M)$ has monolith ${\overline{M\CC_G(M)}/\CC_G(M)} > 1$.
\end{cor}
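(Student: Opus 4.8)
The plan is to set $C = \CC_G(M)$ — a closed subgroup of $G$ which is normal because $M$ is normal — and to show that inside $\bar G := G/C$ the image $\overline{MC}/C$ of $M$ is simultaneously contained in every non-trivial closed normal subgroup and is itself non-trivial, thereby identifying it as $\Mon(\bar G)$. The only structural fact I would use repeatedly is the dichotomy coming from the minimality of $M$ in Proposition~\ref{cmsoc}: for any closed normal subgroup $N$ of $G$, the intersection $M \cap N$ is again a closed normal subgroup of $G$ contained in $M$, so either $M \cap N = \{e\}$ or $M \le N$. Applied to $N = M'$ with $M' \in \mcM \setminus \{M\}$ this gives $M \cap M' = \{e\}$, hence $[M,M'] \le M \cap M' = \{e\}$ and $M' \le C$; thus $C$ contains the proper subgroup $\overline{\langle M' \mid M' \in \mcM \setminus\{M\}\rangle}$ furnished by Proposition~\ref{cmsoc}, which places the socle factors other than $M$ inside the kernel $C$.

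For the containment step, let $N/C$ be an arbitrary non-trivial closed normal subgroup of $\bar G$; that is, $N$ is a closed normal subgroup of $G$ with $C \le N$ and $N \neq C$. The dichotomy gives either $M \le N$ or $M \cap N = \{e\}$. In the second case $[M,N] \le M \cap N = \{e\}$, so $N \le \CC_G(M) = C$, contradicting $N \neq C$. Hence $M \le N$, whence $MC \le N$ and, since $N$ is closed, $\overline{MC} \le N$. Passing to the quotient, $\overline{MC}/C \le N/C$; as $N/C$ was arbitrary, this yields $\overline{MC}/C \le \Mon(\bar G)$.

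It then remains to check that $\overline{MC}/C$ is itself a non-trivial closed normal subgroup of $\bar G$; granting this, the containment step forces $\Mon(\bar G) = \overline{MC}/C$, so $\bar G$ is monolithic with the stated monolith. Closedness and normality are immediate, since $M$ and $C$ are normal in $G$ (so $MC$ is a normal subgroup and $\overline{MC}$ a closed normal subgroup of $G$ containing $C$). Non-triviality is equivalent to $M \not\le C = \CC_G(M)$, i.e. to $M$ being non-abelian, and this is the one step that is not purely formal: it is here that the structural nature of the socle must enter, the point being that a minimal closed normal subgroup $M$ arising in this setting does not centralise itself. I expect this non-triviality — equivalently, the non-abelianness of $M$ — to be the main obstacle, whereas the containment assertion is simply the routine minimal-normal-subgroup intersection argument carried out above.
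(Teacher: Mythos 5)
Your containment argument is precisely the paper's proof, just organised in the contrapositive order: the paper takes the preimage $N$ of a non-trivial closed normal subgroup of $G/\CC_G(M)$, notes that $N \supsetneq \CC_G(M)$ forces $[N,M] \neq 1$, hence $N \cap M \neq 1$ (both being normal), hence $M \le N$ by minimality, hence $\overline{M\CC_G(M)} \le N$ --- and then stops, declaring that the desired result follows. (Your first paragraph, placing the other elements of $\mcM$ inside $\CC_G(M)$, is correct but not needed.) In particular, the step you single out as the main obstacle --- non-triviality of $\overline{M\CC_G(M)}/\CC_G(M)$, equivalently non-abelianness of $M$ --- is not addressed in the paper's proof at all. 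So relative to the paper you are missing nothing; you have in fact put your finger on a gap in the paper's own argument.

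Moreover, your instinct that this step is ``not purely formal'' is right in the strongest sense: it cannot be deduced from the stated hypotheses. Consider $G = \bQ_p \rtimes \bZ$, where the generator $t$ of $\bZ$ acts on $\bQ_p$ by multiplication by $p$. This is a compactly generated \tdlc group (generated by $\bZ_p \cup \{t^{\pm 1}\}$); it has no non-trivial compact normal subgroup (such a subgroup would be one of the groups $p^k\bZ_p$, none of which is $t$-invariant) and no non-trivial discrete normal subgroup (a discrete normal subgroup meets $\bQ_p$ trivially because $\bQ_p$ has no non-trivial discrete subgroups, hence it centralises $\bQ_p$, hence lies in $\CC_G(\bQ_p) = \bQ_p$, hence is trivial). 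Its unique minimal closed normal subgroup is $M = \bQ_p$, which is abelian, so $\CC_G(M) = M$ and $G/\CC_G(M) \cong \bZ$, which is not monolithic in the paper's sense. Thus for abelian $M$ the corollary as literally stated fails, and what your second paragraph (equivalently, the paper's proof) establishes in general is only that every non-trivial closed normal subgroup of $G/\CC_G(M)$ contains $\overline{M\CC_G(M)}/\CC_G(M)$. The statement is harmless where the paper actually invokes it, namely in the proof of Theorem~\ref{localprime} via Proposition~\ref{prop:HelpLocalPrime}: there $G$ is assumed locally C-stable, and if $M$ were abelian then $M \cap U$ (for $U$ a compact open subgroup) would be a non-trivial abelian locally normal subgroup of $G$ --- non-trivial since $M$ is non-discrete --- contradicting Proposition~\ref{cstab}. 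Under that extra hypothesis $M$ is non-abelian and your remaining step goes through; without it, no proof of non-triviality exists, and the corollary should be read with that caveat.
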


\begin{proof}
Let $N$ be the pre-image in $G$ of a non-trivial closed normal subgroup of the quotient $G/\CC_G(M)$. Then $N$ contains properly $\CC_G(M)$. In particular $N$ does not commute with $M$, and so $N \cap M$ is non-trivial. By the minimality of $M$, this implies that $N$ contains $M$. Hence $N$ contains the closure $\overline{M\CC_G(M)}$. Thus every non-trivial closed normal subgroup of $G/\CC_G(M)$ contains the closure of the image of $M$. It remains to check that $M$ has non-trivial image in the quotient $G/\CC_G(M)$. This is indeed the case, since $M$ is non-abelian and thus is not contained in its own centraliser.
\end{proof}

}

By definition, the image of the monolith  $\Mon(G)$ in any quotient of $G$ is either injective or trivial. 
Our purposes require to consider   other subgroups $\Gamma \leq G$ enjoying that property.  We   say that $\Gamma$ is \defbold{relatively simple}\index{relatively simple}\index{simple!relatively} in $G$  if for every closed normal subgroup~$N$ of~$G$ the map $\Gamma\to G/N$ is either injective or trivial. We say that $\Gamma$ is \defbold{relatively just-infinite}\index{just-infinite!relatively} in $G$ if for every closed normal subgroup~$N$ of~$G$ the map $\Gamma\to G/N$ is either injective or has finite image.  

Clearly, if $\Gamma$ or $G$ is topologically simple (resp. just-infinite), then $\Gamma$ is relatively simple (resp. relatively just-infinite) in $G$.   
The next proposition, whose straightforward proof is omitted, implies that the converse need not hold. 

\begin{prop}\label{prop:mono_embed}
Let $\Gamma\leq G$ be relatively simple (resp. relatively just-infinite). Then every non-trivial subgroup of $\Gamma$ is relatively simple (resp. relatively just-infinite) in $G$. Moreover for every closed normal subgroup $N$ of $G$, the quotient group  $\Gamma N/N$ is  relatively simple (resp. relatively just-infinite) in $G/N$.
\end{prop}

%%%%%%%%%%
Since the group $\PSL_n(\bQ)$ is simple, it is a relatively simple subgroup of its   Schlichting completion of $(\PSL_n(\bQ),\PSL_n(\bZ))$. Therefore, by Proposition~\ref{prop:mono_embed}, so are the embeddings of the subgroups~$\PSL_n(\bZ[1/m])$ for each $m>0$ even though they are not simple. 
We are interested in the general case when the group $\Gamma$ embeds as a relatively simple or relatively just-infinite subgroup of the Belyaev or Schlichting completions of $(\Gamma,\Delta)$; in this case the embedding has the additional property of being dense.

\begin{lem}\label{psdenselem}
Let $G$ be a non-discrete \tdlc group with a dense subgroup $\Gamma$.  

If $\Gamma$ is relatively simple (resp. relatively just-infinite), then every discrete  quotient of $G$ is trivial (resp. finite).
\end{lem}

\begin{proof}
Let $O$ be an open normal subgroup of $G$.  Then $O \cap \Gamma > 1$ since $\Gamma$ is dense and $G$ is non-discrete. Hence the image of $\Gamma$ in $G/O$ is not injective, and is thus trivial (resp. finite). The result follows since $\Gamma$ is dense, and thus has dense image in $G/O$.
\end{proof}

%%%%%%%%%
\begin{prop}\label{prop:Hecke}
Let $\Gamma$ be a group with an infinite commensurated subgroup $\Delta < \Gamma$ of infinite index. Suppose that $\Gamma$ is generated by finitely many cosets of that subgroup (e.g. $\Gamma$ is finitely generated). 

\begin{enumerate}[(i)]
%\item If $\Gamma$ is   simple (resp. just-infinite), then $\Gamma$ embeds densely as a relatively simple (resp. relatively just-infinite) subgroup of a non-discrete \tdlc group such that the image of~$\Delta$ is relatively compact.

\item If $\Gamma$ is just-infinite, then $\Gamma$ embeds as a dense subgroup of some compactly generated monolithic \tdlc group $G$ whose monolith is cocompact and a \textbf{quasi-product}\index{quasi-product} of finitely many pairwise isomorphic groups in $\sclass$ (we refer to \cite{CM} for the definition of a quasi-product, a notion which will not be used anywhere else in the present paper). 

\item If $\Gamma$ is hereditarily just-infinite, then a finite index subgroup of $\Gamma$ embeds as a dense subgroup of some compactly generated monolithic \tdlc group $G$ whose monolith is cocompact and belongs to $\sclass$. 

\item If $\Gamma$ is simple, then $\Gamma$ embeds as a dense subgroup of some group $G \in \sclass$. 

%\item If  $\Gamma$ is simple and $\Delta$ is infinite of infinite index, then $\Gamma$ embegenerated by finitely many cosets of $\Delta$. Suppose further that~$\Gamma$ is embedded densely and irreducibly into the \tdlc group $G$ and that the closure of~$\Delta$ under this embedding is profinite. Then $G$ is compactly generated, and there are exactly $n$ closed normal subgroups $N_1,\dots,N_n$ of $G$ such that $G/N_i$ is topologically simple, where $0 < n < \infty$.  Each such quotient of $G$ admits a dense irreducible embedding of $\Gamma$.
%\label{prop:Heckeii}
\end{enumerate}
\end{prop}

\begin{proof}
We assume that $\Gamma$ is just-infinite. Then Theorem~\ref{thm:Hecke:complete} yields a non-discrete \tdlc group $H= \hat{\Gamma}_{\Delta}$ with a dense embedding of $\Gamma$ as a relatively just-infinite subgroup. 
Moreover since $\Gamma$  is generated by finitely many cosets of $\Delta$, we see  that $H$ is compactly generated, and  since $\Delta$ is of infinite index in $\Gamma$, the group  $H$ is not compact. By \cite[Proposition~5.2]{CM}, the group $H$ has a closed normal subgroup $Q$ such that the quotient $H/Q$ is not compact, and moreover every closed normal subgroup of $H$ properly containing $Q$ is cocompact (in the terminology of \cite{CM}, the group $H/Q$ is \emph{just-non-compact}\index{just-non-compact}).  By Lemma~\ref{psdenselem}, the group $H$ has no infinite discrete quotient. In particular $H/Q$ is non-discrete. Since $H/Q$ is totally disconnected and non-compact, it does does not have a connected cocompact normal subgroup either. Therefore \cite[Theorem~E]{CM} ensures that $H/Q$ is a monolithic group whose monolith is the quasi-product of finitely many pairwise isomorphic groups in $\sclass$. This proves~(i). 

Let now $H_1$ be the finite index open normal subgroup of $H$ containing $Q$ which is the kernel of the $H$-action on the quasi-factors $N_1/Q, \dots, N_n/Q$ of the monolith of $H/Q$. Since the quasi-factors commute pairwise, we have $N_i \leq H_1$ for all $i$. Moreover, it follows from Corollary~\ref{cor:MonolithicQuotient} that $G= H_1/C_{H_1}(N_1)$ is monolithic, whose monolith is cocompact and belongs to $\sclass$. Assuming now in addition that $\Gamma$ is hereditarily just-infinite, we see that the intersection $\Gamma_1 = H_1 \cap \Gamma$ has finite index in $\Gamma$, and is thus just-infinite and dense in $H_1$. Therefore $\Gamma_1$ is relatively just-infinite in $H_1$, and thus maps injectively onto a dense subgroup of $G$. This proves~(ii). 

Finally, in case $\Gamma$ is simple, every finite quotient of $\Gamma$ is trivial, which implies that every profinite quotient of $H$ is trivial as well. This forces the just-non-compact group $H/Q$ to be topologically simple and, hence, to belong to $\sclass$. 
\end{proof}
%%%%

Proposition~\ref{prop:Hecke}  is well illustrated by the Hecke pair $(\PSL_n(\bZ[1/m]),\PSL_n(\bZ))$. It may be verified, for any~$m>1$, that $\PSL_n(\bZ[1/m])$ is generated by finitely many cosets of~$\PSL_n(\bZ)$ and, moreover, $\PSL_n(\bZ[1/m])$ is   embedded, for any set of primes~$\pi$, as a relatively simple subgroup in $\prod_{p \in \pi}\PSL_n(\bQ_p)$ by restricting the embedding of the simple group~$\PSL_n(\bQ)$. This embedding of~$\PSL_n(\bZ[1/m])$ in~$\prod_{p \in \pi}\PSL_n(\bQ_p)$ is dense if and only if $\pi$ is a set of prime divisors of~$m$. The topologically simple groups afforded by Proposition~\ref{prop:Hecke} are then the groups~$\PSL_n(\bQ_p)$, $p\in\pi$.

Proposition~\ref{prop:Hecke}(ii) highlights the relevance of the study of the class $\sclass$ to the study of commensurated subgroups of hereditarily just-infinite groups. In particular the investigation of  $\sclass$ is relevant to the \emph{Margulis--Zimmer conjecture}\index{Margulis--Zimmer conjecture} according to which a lattice in a connected centreless simple Lie group  of rank~$\geq 2$ has no commensurated subgroup other than the finite or the finite index subgroups (see \cite{ShalomWillis} for a thorough discussion of that conjecture as well as partial results on non-uniform lattices). 

\begin{rem}\label{rem:amenable}
Proposition~\ref{prop:Hecke} is also a potential source of new topologically simple \tdlc groups. After all, we have seen that the well-known groups $\PSL_n(\bQ_p)$ arise in this way. Should there exist a finitely generated, simple, amenable group with an infinite and proper commensurated subgroup, then this construction will produce a non-discrete, topologically simple, amenable \tdlc group. Similarly, should there exist a finitely generated simple group $\Gamma$ with an infinite and proper commensurated subgroup $\Delta$ such that every element of $\Gamma$ has finite order, then there would be a non-discrete, topologically simple, anisotropic \tdlc group.
\end{rem}

\begin{rem}
A variation on Proposition~\ref{prop:Hecke} was recently used by Ph.~Wesolek in combination with the normal subgroup structure theory of \tdlc groups to show that in a finitely generated just-infinite branch group, every commensurated subgroup is either finite or of finite index (see \cite{Wesolek_branch}). 
\end{rem}

\subsection{Characterising abstract simplicity in second-countable \tdlc groups}

From now on, it will be convenient to restrict attention to second-countable \tdlc groups.  As noted in the preliminaries, this is no real restriction for most applications in the present paper.

The main result of this section characterises abstract simplicity of \tdlcsc groups.  We will return to the named properties \sdn and \sur later in the article.

\begin{thm}\label{baireabs}
Let $G$ be a non-discrete \tdlcsc group.  Then $G$ is abstractly simple if and only if it has the following three properties:

\sop $G$ has no proper open normal subgroups.

\sdn Every non-trivial normal subgroup of $G$ contains an infinite commensurated compact locally normal subgroup of $G$.

\sur Every infinite commensurated compact subgroup of $G$ is open.

If $G$ is abstractly simple then the following stronger version of \sur holds:

Suppose $U$ is an open subgroup of $G$, and $K$ is a normal subgroup of $U$ (not necessarily closed) that is commensurated by $G$.  Then $K$ has countably many cosets in $G$ and $\overline{K}$ is an open subgroup of $G$.
\end{thm}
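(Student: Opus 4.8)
The plan is to reduce everything to the final ``stronger $\sur$'' statement and to prove that via the relative profinite completion together with a Baire category argument. Both equivalences of the main statement are then formal. For the reverse implication, suppose the three conditions hold and let $N \neq \{e\}$ be a normal subgroup of $G$; by the second condition $N$ contains an infinite commensurated locally normal subgroup $L$, by $\sur$ the subgroup $L$ is open, so $N$ is an open normal subgroup, whence $N = G$ by the first condition, and $G$ is abstractly simple. For the forward implication, assume $G$ abstractly simple: an open normal subgroup is non-trivial (as $G$ is non-discrete) hence equals $G$, giving the first condition; the only non-trivial normal subgroup $G$ contains any compact open subgroup, which is infinite, commensurated and locally normal, giving the second; and $\sur$ is the special case of the stronger $\sur$ in which $K$ is an infinite (hence compact, already closed) commensurated locally normal subgroup with $U = \N_G(K)$. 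So it remains to prove the stronger $\sur$, where I may assume $K \neq \{e\}$ and $K \neq G$ (else $\overline K = G$ is open of index $1$).

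For the stronger $\sur$, I would pass to the relative profinite completion of the Hecke pair $(G,K)$, realised concretely as the closure $\hat G$ of the image of the permutation action $\rho\colon G \to \Sym(G/K)$; by uniqueness this is the localised profinite completion $\hat G_K$ of Theorem~\ref{thm:Hecke:complete}. Here $\hat U := \overline{\rho(K)} = \Stab_{\hat G}(eK)$ is compact open in $\hat G$, the image $\rho(G)$ is dense, $\rho^{-1}(\hat U) = K$, and $\ker\rho = \Core_G(K) = \bigcap_{g}gKg\inv$. The latter is a normal subgroup of $G$ contained in $K \neq G$, so abstract simplicity forces $\ker\rho = \{e\}$; thus $\rho$ is injective and I identify $G$ with a dense subgroup of $\hat G$ meeting $\hat U$ exactly in $K$, that is $K = \hat U \cap G$.

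Next I would transfer $\sigma$-compactness to $\hat G$ to bound the index. Since $\rho(G)$ is dense and $\hat U$ is open, the set $\rho(G)\hat U$ is clopen and hence equals $\hat G$; writing $G = \bigcup_n C_n$ with $C_n$ compact gives $\hat G = \bigcup_n \rho(C_n)\hat U$, a countable union of compact sets, so $\hat G$ is $\sigma$-compact and the open subgroup $\hat U$ has countable index. The natural injection $G/K \hookrightarrow \hat G/\hat U$ is onto by density, so $[G:K] = [\hat G:\hat U]$ is countable. Finally, since $K \le \overline K$ we have $[G:\overline K] \le [G:K]$ countable, so $G$ is a countable union of left cosets of the closed subgroup $\overline K$; as $G$ is locally compact, hence a Baire space, one of these closed translates has non-empty interior, whence $\overline K$ has non-empty interior and, being a subgroup, is open.

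The step I expect to be the main obstacle is the control of the index $[G:K]$: because $K$ need not be open, the conjugation action of a compact open subgroup of $G$ on $G/K$ need not have finite orbits, so the index cannot be bounded inside $G$ directly. The completion is precisely the device that makes $K$ ``open'' (as $\hat U$), after which $\sigma$-compactness can be exploited; abstract simplicity enters only to guarantee that this completion is faithful, and the concluding passage from countable index to openness is then a routine Baire argument.
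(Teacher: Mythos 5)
Your reduction of the two equivalences to the ``stronger \sur'' statement is fine, and the completion is set up correctly: $\hat U = \overline{\rho(K)} = \Stab_{\hat G}(eK)$ is compact open because $K$ is commensurated (so $K$ has finite orbits on $G/K$), $\rho^{-1}(\hat U) = K$, $\ker\rho = \Core_G(K)$ is trivial by abstract simplicity, and $G/K \to \hat G/\hat U$ is a bijection. The fatal gap is the transfer of $\sigma$-compactness. You claim $\hat G = \bigcup_n \rho(C_n)\hat U$ is a countable union of \emph{compact} sets, but $\rho(C_n)\hat U$ is a union of left cosets of the compact open subgroup $\hat U$, hence compact if and only if it is a \emph{finite} union of such cosets, i.e.\ if and only if $C_n$ lies in finitely many left cosets of $K$. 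Nothing in your argument gives this: since $K$ need not be open, the map $\rho$ is not continuous (its continuity would require the subgroups $gKg^{-1}$ to be open), so compactness of $C_n$ yields nothing in $\hat G$; and ``each compact set meets only finitely many cosets of $K$'' is essentially the countability of $[G:K]$ that you are trying to prove. Worse, the step is false as a general principle about such Hecke pairs: take $G = \bigl(\prod_{i \in \bZ} F\bigr) \rtimes \bZ$ with $F$ finite simple and $\bZ$ acting by shift, $U = \prod_i F$, $K = \bigoplus_i F$. Then $K$ is normal in $G$ (hence commensurated and normal in the open subgroup $U$) and $G$ is compactly generated, yet $\rho(K)=1$, $\hat G \cong G/K$ is an uncountable discrete group, and $\rho(U)\hat U \cong U/K$ is not compact. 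Your argument invokes abstract simplicity only to get faithfulness of $\rho$, and neither simplicity nor core-triviality is used at the transfer step, so the argument cannot be repaired without injecting simplicity where the real work happens: in bounding the index.

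That is exactly what the paper's proof does, via Lemma~\ref{countsize}(ii). Since $\N_G(K) \supseteq U$ is open and $G$ is $\sigma$-compact, $K$ has only countably many $G$-conjugates; since $K$ is commensurated, each conjugate --- and then, inductively, each finite product of conjugates --- is covered by finitely many left cosets of $K$ (here one uses that a right coset $Kg$ lies in finitely many left cosets $g'K$). Hence the normal closure $L$ of $K$, written as an increasing union of finite products of conjugates, is covered by \emph{countably} many left cosets of $K$ (it is ``$K$-meagre''). Abstract simplicity now enters decisively: $L \supseteq K > 1$ forces $L = G$, so $[G:K]$ is countable, and your concluding Baire argument (which is correct, and the same as the paper's) gives that $\overline{K}$ is open. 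If you wish to keep the completion language, you may, but you must still prove this meagreness of the normal closure in $G$ in order to conclude that $[\hat G : \hat U]$ is countable; the completion itself cannot manufacture that bound.
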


Given a non-discrete \tdlc group $G$, it is obvious that \sop, \sdn and \sur together imply abstract simplicity of $G$, and that \sop and \sdn are necessary for abstract simplicity. The non-trivial part of the proof is to show that \sur is also necessary when $G$ is second-countable. In particular, it will follow that given $G \in \sclass$, then $G$ is abstractly simple if and only if it satisfies both \sdn and \sur.

Let us first recall that by Lemma~\ref{profcomm}, given any first-countable profinite group $U$ and closed subgroup $K$ of $U$, then $U$ commensurates $K$ if and only if $U$ normalises an open subgroup of $K$.  In other words:

\begin{lem}\label{lem:comm:locnorm}
Given a first-countable \tdlc group $G$ and a commensurated compact subgroup $K$ of $G$, then there is a finite index closed subgroup of $K$ that is locally normal in $G$.  In particular, $G$ satisfies \sur if and only if $G$ has no non-trivial fixed points in its action on $\lnorm(G)$.
\end{lem}

A simple observation shows that a \tdlc group with property \sur cannot have many closed normal subgroups:

\begin{lem}\label{lem:simpleSU}
Let $G$ be a \tdlc group with property \sur. Then any closed normal subgroup of $G$ is discrete or open. 
\end{lem}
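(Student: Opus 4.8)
The plan is to reduce the dichotomy to a single application of property \sur. Let $N$ be a closed normal subgroup of $G$. If $N$ is discrete there is nothing to prove, so I would assume $N$ is non-discrete and aim to show it is open by locating inside it an infinite commensurated locally normal subgroup. Fix an open compact subgroup $U \in \mcB(G)$ (such subgroups form a basis of identity neighbourhoods in a \tdlc group) and set $K := N \cap U$.

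The first step is to check that $K$ is locally normal. It is compact, being a closed subgroup of the compact group $U$. It is moreover normalised by $U$: for $u \in U$ one has $uKu\inv = uNu\inv \cap uUu\inv = N \cap U = K$, using $N \trianglelefteq G$. Since $U$ is open, $\N_G(K) \supseteq U$ is open, so $K$ is a locally normal subgroup and $[K] \in \lnorm(G)$.

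The second step is to check that $K$ is commensurated by $G$, i.e.\ that $[K]$ is a $G$-fixed point. For any $g \in G$, normality of $N$ gives $gKg\inv = gNg\inv \cap gUg\inv = N \cap gUg\inv$. As $U$ and $gUg\inv$ are both open compact subgroups, their intersection $U \cap gUg\inv$ has finite index in each of them; intersecting with $N$ then shows that $K = N \cap U$ and $gKg\inv = N \cap gUg\inv$ both contain $N \cap U \cap gUg\inv$ with finite index, so they are commensurate. Hence $[K]$ is fixed by the conjugation action of $G$ on $\lnorm(G)$. Finally, because $N$ is non-discrete, $K = N \cap U$ must be infinite (were it finite, $N$ would possess a finite open subgroup and thus be discrete). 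Property \sur therefore forces $K$ to be open, and consequently $N \supseteq K$ is open, as required.

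The only point that demands care — the nearest thing to an obstacle — is arranging for the \emph{one} subgroup $K = N \cap U$ to play both roles simultaneously: it must be locally normal so that its class genuinely belongs to $\lnorm(G)$, and commensurated so that \sur actually applies to it. Both facts hinge on the normality of $N$ together with the elementary observation that any two open compact subgroups of a \tdlc group are commensurate, which makes conjugation by an arbitrary element of $G$ alter $U$ only within its commensurability class.
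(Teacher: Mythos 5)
Your proof is correct and follows essentially the same route as the paper: both arguments observe that $K = N \cap U$ is a locally normal subgroup commensurated by $G$ (the paper phrases the commensuration via the fact that conjugation by $G$ preserves $N$ and hence commensurates its compact open subgroups, while you verify it by the explicit computation with $N \cap U \cap gUg^{-1}$), and then apply \sur together with the dichotomy that $N \cap U$ is finite exactly when $N$ is discrete. No gaps; the extra care you take in checking local normality and the finite-index estimates is just an expanded version of the paper's argument.
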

\begin{proof}
Let $N$ be a closed normal subgroup of $G$ and let $U$ be a compact open subgroup of $G$. The conjugation action of $G$ on $N$ is by automorphisms of the locally compact group and therefore commensurates the compact open subgroup $N \cap U$. Then property \sur implies that $N \cap U$ is either finite or open, thereby yielding the desired conclusion.
\end{proof}

In the present context, it is useful to have a notion of `size' of certain subsets of a \tdlc group.  Recall that infinite compact groups are homogeneous Baire spaces, so in particular they are perfect sets and uncountable.  Moreover, a closed subgroup of a compact group that is not open has empty interior, and therefore has uncountably many cosets.

\begin{defn}\label{def:LocalSize}
Let $G$ be a \tdlc group and let $K$ be a subgroup of $G$ such that $\N_G(K)$ is open.  Say a subset $X$ of $G$ is \defbold{$K$-meagre}\index{meagre} if $X$ is contained in the union of countably many left cosets of $K$ and \defbold{$K$-large}\index{large} if it contains a coset of a subgroup of $K$ of finite index.  Say $K$ is the \defbold{local size}\index{local size} of $X$ (or $X$ is \defbold{$K$-sized}\index{sized@$K$-sized}) if $X$ is $K$-meagre and $K$-large.

The notions of $K$-meagre, $K$-large and $K$-sized are determined by the commensurability class of $K$, so given $\alpha \in \lnorm(G)$, we can define $\alpha$-meagre, $\alpha$-large and $\alpha$-sized to mean $K$-meagre, $K$-large and $K$-sized respectively, where $K$ is some (any) compact representative of~$\alpha$.  Say $X$ is \defbold{locally sized}\index{locally sized} if $X$ has local size $\alpha$ for some $\alpha \in \lnorm(G)$.\end{defn}

The following well-known and easily proved fact will be used without further comment:

\begin{lem}Let $G$ be a group and let $H$ be a commensurated subgroup of $G$.  Then every right coset of $H$ is contained in the union of finitely many left cosets, and vice versa.\end{lem}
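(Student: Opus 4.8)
The plan is to prove both halves by a counting argument: for a fixed coset I will identify the set of cosets of the opposite type that it meets with a coset space of the intersection $H \cap gHg\inv$ (or $H \cap g\inv H g$), and then invoke commensuration to see that this index is finite. Recall that $H$ being commensurated means precisely that $H \cap gHg\inv$ has finite index in $H$ for every $g \in G$; applying this to $g\inv$ in place of $g$ also gives that $H \cap g\inv H g$ has finite index in $H$. These two finiteness statements are all that the proof will use.

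First I would fix a right coset $Hg$ and consider the map $\phi \colon H \to \{xH \mid x \in G\}$ sending $h \mapsto hgH$; its image is exactly the set of left cosets of $H$ that meet $Hg$. The key computation is that $h_1 gH = h_2 gH$ holds if and only if $(h_2 g)\inv(h_1 g) \in H$, i.e. $h_2\inv h_1 \in gHg\inv$, which combined with the automatic membership $h_2\inv h_1 \in H$ says exactly that $h_1$ and $h_2$ lie in the same left coset of $H \cap gHg\inv$ in $H$. Hence the fibres of $\phi$ are the left cosets of $H \cap gHg\inv$, so the image of $\phi$ has cardinality $[H : H \cap gHg\inv]$, which is finite. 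Thus $Hg$ is contained in the union of finitely many left cosets of $H$.

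The converse is entirely symmetric: for a fixed left coset $gH$, I would count the right cosets $Hx$ it meets by noting that $H g h_1 = H g h_2$ if and only if $gh_1 h_2\inv g\inv \in H$, i.e. $h_1 h_2\inv \in g\inv H g$, so that the number of right cosets met equals $[H : H \cap g\inv H g]$, again finite by commensuration. I expect no genuine obstacle here; the only point requiring care is the consistent bookkeeping of left versus right cosets and of which conjugate $gHg\inv$ or $g\inv H g$ governs each direction, so that the finiteness hypothesis is applied to the correct index in each case.
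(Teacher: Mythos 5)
Your proof is correct and rests on exactly the same mechanism as the paper's: the left cosets of $H$ meeting $Hg$ are parametrised by the cosets of $H \cap gHg\inv$ in $H$, whose number is finite by commensuration (the paper phrases this via a finite transversal $T$ of $H \cap gHg\inv$ in $H$ and the containment $Hg = \bigcup_{t \in T} tKg \subseteq \bigcup_{t \in T} tgH$, which is your fibre-counting argument in different packaging). The only cosmetic difference is that you spell out the symmetric second direction with $H \cap g\inv Hg$, which the paper leaves implicit.
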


%\begin{proof}Let $g \in G$ and let $K = H \cap gHg^{-1}$; then $|H:K|$ is finite, so there is a finite left transversal $T$ to $K$ in $H$.  Since $Kg \subseteq gH$, we see that $Hg = \bigcup_{t \in T}tKg$ is contained in the finite union $\bigcup_{t \in T}tgH$.\end{proof}

\begin{lem}\label{countsize}Let $G$ be a \tdlcsc group.
\begin{enumerate}[(i)]
\item Let $X$ be a locally sized subset of $G$.  Then $X$ is $\alpha$-sized for exactly one $\alpha \in \lnorm(G)$, which is both the least element of $\lnorm(G)$ for which $X$ is $\alpha$-meagre, and the greatest element of $\lnorm(G)$ for which $X$ is $\alpha$-large.
\item Let $K$ be a subgroup of $G$ with open normaliser, and suppose that $G = \Comm_G(K)$.  Then the product of any two $K$-meagre subsets of $G$ is $K$-meagre.  Given a {(not necessarily closed)} $K$-meagre subgroup $H$ of $G$ such that $\N_G(H)$ is open, then $H$ is contained in a $K$-meagre normal subgroup of $G$.

\item Let $\alpha \in \lnorm(G)$. % be such that~$\alpha$ has a representative whose normaliser is open. 
Then the following are equivalent:
\begin{enumerate}[(a)]
\item $G = \Stab_G(\alpha)$;
\item $G$ has an $\alpha$-sized normal subgroup;
\item Every $\alpha$-meagre subgroup of $G$ whose normaliser is open is contained in an $\alpha$-sized normal subgroup of $G$.\end{enumerate}
\end{enumerate}\end{lem}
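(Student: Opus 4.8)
The plan is to prove the three parts in order, since part (iii) will rely on (i) and (ii). Throughout I fix a compact open subgroup $U \in \mcB(G)$ and work with representatives of the classes in $\lnorm(G)$ that are contained in $U$; the hypothesis that normalisers are open lets me intersect any relevant locally normal subgroup with $U$ without changing its commensurability class.

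For part (i), the key observation is a monotonicity dictionary between the partial order on $\lnorm(G)$ and the meagre/large relations. First I would check that if $[H] \le [K]$ then every $H$-meagre set is $K$-meagre (a coset of $H$ is covered by the single coset of $K$ it lies in, after passing to a common compact open subgroup) and, dually, every $K$-large set is $H$-large. The content is then the converse control: if $X$ is both $\alpha$-meagre and $\alpha$-large, and also $\beta$-meagre, then $\beta \ge \alpha$; and if $X$ is $\gamma$-large then $\gamma \le \alpha$. This forces uniqueness, because an $\alpha$-large set contains a coset of a finite-index subgroup of a representative of $\alpha$, hence uncountably many cosets of any strictly smaller class, contradicting meagreness at that smaller class; here I would invoke the Baire-category remark that a non-open closed subgroup of a compact group has uncountably many cosets. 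So $\alpha$ is simultaneously the least element for which $X$ is meagre and the greatest for which $X$ is large, and these two descriptions coincide.

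For part (ii), the product statement is a counting argument: if $X \subseteq \bigcup_i x_i K$ and $Y \subseteq \bigcup_j y_j K$, then $XY \subseteq \bigcup_{i,j} x_i K y_j K$, and since $G = \Comm_G(K)$ each right coset $K y_j$ meets only finitely many left cosets of $K$ (by the lemma immediately preceding), so $K y_j K$ is a finite union of left cosets of $K$; hence $XY$ is $K$-meagre. For the second assertion, given a $K$-meagre subgroup $H$ with open normaliser, I would build a normal subgroup from $H$ by taking the subgroup generated by all $G$-conjugates of $H$. The $\sigma$-compactness lets me write $G$ as a countable union of compact sets, hence cover $G$ by countably many cosets of $U$; since $U$-conjugation produces only finitely many classes (smoothness/openness of $\N_G(H)$), the set of distinct conjugates $gHg\inv$ is at most countable. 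Each conjugate is $K$-meagre (conjugation by a fixed element preserves the commensurability class of $K$ because $G$ commensurates $K$), and by the product part a countable product of $K$-meagre sets generating the normal closure stays inside a $K$-meagre set. The main obstacle is precisely this bookkeeping: ensuring that the normal closure, which is generated by a countable family of $K$-meagre subgroups, is itself covered by countably many cosets of $K$ rather than uncountably many. This should follow from the product lemma together with the fact that a countable union of countable families remains countable, but the argument must be arranged so that only finitely many factors of $K$ are accumulated at each stage.

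For part (iii), I would prove the cycle (a)$\Rightarrow$(c)$\Rightarrow$(b)$\Rightarrow$(a). The implication (c)$\Rightarrow$(b) is immediate by applying (c) to a representative of $\alpha$ itself, which is $\alpha$-meagre (indeed $\alpha$-sized) with open normaliser. For (b)$\Rightarrow$(a), if $N$ is an $\alpha$-sized normal subgroup then $N$ is commensurated (being normal) and its commensurability class is fixed by $G$, so passing to the localisation $G_\alpha = G_{(\alpha)}$ I would argue that $N$ being $\alpha$-large forces $G$ to commensurate a representative of $\alpha$, whence $G = \Comm_G(K) = G_\alpha$ for $K \in \alpha$. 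For the crucial (a)$\Rightarrow$(c), assume $G = G_\alpha$, so $G$ commensurates a representative $K$ of $\alpha$; given an $\alpha$-meagre subgroup $H$ with open normaliser, part (ii) applies (its hypothesis $G = \Comm_G(K)$ is exactly $G = G_\alpha$) to place $H$ inside a $K$-meagre, i.e. $\alpha$-meagre, normal subgroup $N$. It remains to see that $N$ is also $\alpha$-large, and here I would use that $N$ contains $H$ together with the fact that one can arrange the normal closure to contain a representative of $\alpha$: since every non-zero class has infinite rigid stabiliser and $G = G_\alpha$ commensurates $K$, the normal subgroup produced will contain a finite-index piece of $K$, making it $\alpha$-large and hence, by part (i), $\alpha$-sized. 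I expect the delicate point to be showing $\alpha$-largeness of the constructed normal subgroup without inadvertently making it too large (open); the $\sigma$-compactness and the meagreness bound from (ii) are what keep it pinned to the class $\alpha$.
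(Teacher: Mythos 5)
Parts (i) and (ii) of your proposal are correct and essentially coincide with the paper's argument. In (i) you run the Baire-category step contrapositively (a closed non-open subgroup of a compact group has uncountably many cosets, so an $\alpha$-large set cannot be $\beta$-meagre unless $\alpha \le \beta$), where the paper runs it directly (a countable cover of a representative $A$ by cosets of $B$ forces $|A:A\cap B|<\infty$); this is the same argument. In (ii), the ``main obstacle'' you worry about is not actually an obstacle: the normal closure of $H$ is the union, over all finite sequences $(i_1,\dots,i_n)$ drawn from the countable set of $G$-conjugates $H_i$ of $H$, of the products $H_{i_1}\cdots H_{i_n}$; each such product is $K$-meagre by iterating the product lemma (only finitely many factors at each stage, exactly as you say), there are only countably many finite sequences, and the union is visibly a normal subgroup since each $H_i$ is a subgroup and conjugation permutes the $H_i$. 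The paper achieves the same end with a slightly slicker device, enumerating the conjugates as $H_1,H_2,\dots$ with every conjugate appearing infinitely often and taking $L=\bigcup_i H_1H_2\cdots H_i$, so that $L$ is an increasing union and the subgroup property is immediate; either bookkeeping is fine. Your (b)$\Rightarrow$(a) in part (iii) is also right in substance, though the detour through the localisation is unnecessary: the clean statement is that $N=gNg^{-1}$ is simultaneously $\alpha$-sized and $g\alpha$-sized, so $g\alpha=\alpha$ by the uniqueness in (i), which is exactly how the paper argues.

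The genuine gap is in your step (a)$\Rightarrow$(c) of part (iii). After applying part (ii) to $H$ to get an $\alpha$-meagre normal subgroup $N\supseteq H$, you assert that $N$ ``will contain a finite-index piece of $K$'' because ``every non-zero class has infinite rigid stabiliser and $G=G_\alpha$ commensurates $K$''. This is not a valid justification: rigid stabilisers belong to the centraliser-lattice/Stone-space machinery and play no role here, and the assertion itself is false, since the normal subgroup produced by part (ii) is built entirely out of conjugates of $H$ and need not contain any finite-index subgroup of $K$ (take $H=\{1\}$, whose normal closure is trivial, or any locally normal $H$ with $[H]\wedge\alpha=0$). The repair is easy, and is presumably what the paper's one-line proof of (iii) intends: feed $K$ into the construction. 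For instance, apply part (ii) to $\langle H,K\rangle$, which is $\alpha$-meagre (a countable union of finite products of the $K$-meagre sets $H$ and $K$) and whose normaliser contains the open subgroup $\N_G(H)\cap \N_G(K)$; or apply part (ii) separately to $H$ and to $K$ and take the product of the two resulting normal subgroups. Either way one obtains a normal subgroup containing $H$ that is $\alpha$-meagre by the product lemma and contains $K$, hence is $\alpha$-large, and therefore $\alpha$-sized by part (i). Your closing worry about the subgroup becoming ``too large (open)'' is vacuous for the same reason: once it is $\alpha$-meagre, part (i) forbids it from being $\beta$-large for any $\beta\not\le\alpha$, so it cannot be open unless $\alpha=\infty$.
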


\begin{proof}(i) Let $\alpha, \beta \in \lnorm(G)$ and suppose that~$X$ is $\alpha$-large and $\beta$-meagre.  Then, since these properties are invariant under left translation, it may be supposed that~$X$ contains a representative,~$A$, of~$\alpha$ and~$A$ is $\beta$-meagre because~$X$ is. Hence~$A$ is the union of countably many left cosets of $A \cap B$, where~$B$ is any representative of~$\beta$. Since $A$ is a Baire space, it follows that some (and hence all) left cosets of $A \cap B$ are open in $A$, and then, since~$A$ is compact, that $|A:A \cap B|$ is finite. Hence $\alpha \le \beta$ and the local size $\alpha$ of $X$ (if it exists) must be both the least element of $\lnorm(G)$ for which $X$ is $\alpha$-meagre and the greatest element of $\lnorm(G)$ for which $X$ is $\alpha$-large.

(ii) Let $X$ and $Y$ be $K$-meagre sets.  Then $XY$ is contained in the union of countably many sets of the form $xKyK$ for $x,y\in G$ and in turn, since~$K$ is commensurated by~$G$, each such set is the union of finitely many left cosets of $K$.  Hence $XY$ is $K$-meagre.

Repeating the argument, we see that the product of any finite set of $K$-meagre sets is $K$-meagre.  Now let $H$ be a $K$-meagre subgroup of $G$ such that $\N_G(H)$ is open.  By second-countability, there are only countably many cosets of $\N_G(H)$ in $G$, hence only countably many $G$-conjugates of $H$.  Hence we may make a countable list $H_1,H_2,H_3,\dots$ consisting of $G$-conjugates of $H$ such that every $G$-conjugate appears infinitely often.  Conjugates of~$K$-meagre sets are~$K$-meagre, since $K$ is commensurated by $G$.  In particular, we see that $H_i$ is $K$-meagre for all $i$.  Now notice that the set
\[ L = \bigcup_{i \in \bN}(H_1H_2\dots H_i)\]
is a normal subgroup of $G$ containing $H$ and by construction $L$ is $K$-meagre.

(iii) It is clear that (c) implies (b) because~$\alpha$ has a representative with open normaliser and every representative of~$\alpha$ is $\alpha$-meagre; (b) implies (a) by part (i) because the $\alpha$-sized  normal subgroup is also $g.\alpha$-sized for every $g\in G$; and (a) implies (c) by applying part (ii) with~$H$ any representative of~$\alpha$.\end{proof}

\begin{proof}[Proof of Theorem \ref{baireabs}]Suppose $G$ has the given properties.  Let $H$ be a non-trivial normal subgroup of $G$.  Then $H$ contains a non-trivial commensurated compact locally normal subgroup $L$ of $G$ by property \sdn; by property \sur, $L$ and hence $H$ is open, so $H = G$ since $G$ has no proper open normal subgroups by property \sop.

Conversely, suppose $G$ is abstractly simple.  Certainly, $G$ contains a compact open subgroup $U$; in particular $U$ is infinite, commensurated and locally normal.  Since $G$ is the only non-trivial normal subgroup of $G$, it follows that $G$ satisfies \sop and \sdn.  To prove \sur, let $K$ be a commensurated compact subgroup of $G$.  By applying Lemma~\ref{lem:comm:locnorm} and replacing $K$ with a subgroup of finite index we may assume $K$ is locally normal in $G$.  Then by Lemma~\ref{countsize} (ii), there is a normal subgroup $L$ of $G$ that contains $K$, such that $L$ is $K$-meagre.  Since $G$ is abstractly simple, it follows that $G = L$, so $G$ is $K$-meagre, in other words, $K$ has countable index in $G$.  The Baire Category Theorem then ensures that ${K}$ is open in $G$, proving that $G$ satisfies \sur as required.
\end{proof}

Say a locally compact topological group $G$ is \defbold{unrefinable}\index{unrefinable} if $G$ is non-discrete, and the only locally compact group topology on $G$ that properly refines the given topology is the discrete one.  Recall the following from \cite{CRW-Part1}:

\begin{lem}[{\cite[Theorem~IV]{CRW-Part1} and Lemma~\ref{profcomm}}]\label{reflem}Let $G$ be a first-countable \tdlc group.  Then for every commensurated compact subgroup $K$ of $G$, there is a closed subgroup of finite index in $K$ that is locally normal in $G$.  Consequently, there is a natural bijection between the elements of $\lnorm(G)$ that are fixed by the action of $G$, and refinements of the topology of $G$ that are  locally compact and compatible with the group structure.\end{lem}

It is clear that for any \tdlc group $G$, if $G$ is unrefinable then $G$ has \sur.  By Lemma~\ref{reflem}, the converse holds for \tdlcsc groups.  Theorem~\ref{baireabs} implies that this applies in particular to abstractly simple \tdlcsc groups.

\begin{cor}\label{unrefcor}Let $G$ be a non-discrete \tdlcsc group.  Then $G$ is unrefinable if and only if $G$ has property \sur.  In particular, every abstractly simple \tdlcsc group is unrefinable.\end{cor}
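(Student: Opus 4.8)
The plan is to read the equivalence straight off the bijection of Proposition~\ref{refprop}, handling the two implications separately, and then to deduce the final assertion by feeding Theorem~\ref{baireabs} into that equivalence. Throughout I take $G$ to be non-discrete, as the definition of unrefinability requires.

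First I would show that an unrefinable group has \sur, arguing contrapositively and \emph{without} invoking first-countability. Suppose \sur fails, so that $G$ has a fixed point $\alpha \in \lnorm(G) \setminus \{0,\infty\}$, and fix a representative $K \in \alpha$. Since $\alpha$ is $G$-fixed we have $\Comm_G(K) = G$, so the localisation $G_{(\alpha)}$ is carried by $G$ itself and endows $G$ with a locally compact group topology refining the given one, in which $K$ is open and compact. Because $\alpha \neq \infty$ the subgroup $K$ is not open in the original topology (else $[K]=\infty$), so the refinement is proper; because $\alpha \neq 0$ the subgroup $K$ is infinite (else $[K]=0$), so $K$ open forces the refined topology to be non-discrete. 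Hence $G$ admits a proper non-discrete locally compact group refinement and is not unrefinable.

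For the converse I would use first-countability to deploy the full force of Proposition~\ref{refprop}. Assume $G$ has \sur and let $\tau'$ be a locally compact group topology properly refining the given topology $\tau$. By the bijection of Proposition~\ref{refprop}, $\tau'$ corresponds to a $G$-fixed element $\alpha \in \lnorm(G)$, and \sur forces $\alpha \in \{0,\infty\}$. Identifying the two endpoints with their associated topologies — $\infty$, the class of open compact subgroups, with $\tau$ itself, and $0$, the trivial class, with the discrete topology — I would rule out $\alpha = \infty$ since $\tau'$ is a proper refinement, leaving $\alpha = 0$ and hence $\tau'$ discrete. As $G$ is non-discrete, this is precisely unrefinability, completing the equivalence.

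Finally, for the `in particular' clause I would let $G$ be a non-discrete abstractly simple second-countable \tdlc group; second countability supplies both $\sigma$-compactness and first countability, so Theorem~\ref{baireabs} applies and yields property \sur, whence the equivalence just established gives unrefinability. The only genuinely delicate point in the whole argument is matching the endpoints $0$ and $\infty$ of $\lnorm(G)$ with the discrete and the original topologies under the bijection of Proposition~\ref{refprop}; everything else is bookkeeping. It is worth stressing the asymmetry I expect here: the forward implication needs no countability hypothesis, since it is powered by the explicit localisation $G_{(\alpha)}$, whereas the converse rests on the \emph{surjectivity} of the bijection, which is exactly where first-countability enters.
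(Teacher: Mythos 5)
Your proposal is correct and follows essentially the same route as the paper: the forward implication via the localisation $G_{(\alpha)}$ (valid without first-countability), the converse by reading the fixed-point/refinement bijection of Proposition~\ref{refprop}, and the final clause by combining this equivalence with Theorem~\ref{baireabs}. The only difference is that you spell out the bookkeeping (matching $0$ and $\infty$ with the discrete and original topologies) that the paper leaves implicit.
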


\section{Some local properties of compactly generated \tdlc groups}

\subsection{Cayley--Abels graphs}\label{sec:CayleyAbels}

We recall the now classical construction, due to Abels, of a family of connected locally finite graphs associated with any compactly generated \tdlc group.  (See \cite[Beispiel 5.2]{Abels}, or \cite[\S11]{Mon} for more details.)

\begin{prop}\label{prop:Abels}
Let $G$ be a compactly generated \tdlc group, let $U$ be a compact open subgroup of $G$, let $A$ be a compact symmetric subset of $G$ such that $G = \langle U, A \rangle$ and let $D$ be a dense symmetric subset of $G$.
\begin{enumerate}[(i)]
\item There exists a finite symmetric subset $B$ of $G$ such that $B \subseteq D$ and 
\[
BU = UB = UBU = UAU.
\]
\item For any subset $B$ satisfying part (i), then $G = \langle B \rangle U$ and the coset space $G/U$ carries the structure of a locally finite connected graph, invariant under the natural $G$-action, where $gU$ is adjacent to $hU$ if and only if $gU \neq hU$ and $Ug\inv hU \subseteq UBU$.
\end{enumerate}
\end{prop}

\begin{proof}
(i) Since $AU$ is compact, it is contained in a union of finitely many right cosets of $U$, so $UAU$ is a union of finitely many right cosets of $U$; say $UAU = \bigcup_{b \in B_1}Ub$ for $B_1$ finite.  Similarly, $UAU = \bigcup_{b \in B_2}bU$ for $B_2$ finite.  Since all the right and left cosets of $U$ are open, we are free to choose $B_1$ and $B_2$ to be subsets of $D$.  Now set
\[
B = B_1 \cup B\inv_1 \cup B_2 \cup B\inv_2;
\]
it is straightforward to verify that $B$ has the required properties.

(ii) It is clear that the given adjacency relation is well-defined, and the fact that $B = B\inv$ ensures that it is a symmetric relation.  Moreover, given $g,h,k \in G$, we see that $U(kg)\inv (kh)U = Ug\inv h U$, so $kgU$ is adjacent to $khU$ if and only if $gU$ is adjacent to $hU$, in other words, $G$ preserves the graph structure.  Since $UB = UAU$ and $G = \langle U,A \rangle$, we have $G = \langle U, B \rangle$; the fact that $G =  \langle B \rangle U$ follows by repeatedly applying the equation $UB = UBU$.  Given $h \in \langle B \rangle$, there is clearly a path from $U$ to $hU$, ensuring that the graph defined on $G/U$ is connected.

Since $G$ acts transitively on $G/U$ and preserves adjacency, to see that the graph is locally finite it is enough to see that the trivial coset $U$ has finitely many neighbours.  Indeed, $U$ is adjacent to $gU$ for $g \in G \smallsetminus U$ if and only if $gU \subseteq UBU$.  But $UBU = BU$ is the union of finitely many left cosets of $U$, so there are only finitely many possibilities for $gU$.
\end{proof}

The edge set of the graph provided by Proposition~\ref{prop:Abels} depends on the set $UBU$ and will be called \defbold{a Cayley--Abels graph}\index{Cayley--Abels graph} associated with $(G, U)$. All Cayley--Abels graphs of~$G$ are quasi-isometric.

The next lemma in turn uses this information to strengthen considerably the observation that, if~$G$ is non-discrete, compactly generated and topologically simple and~$U$ is a compact open subgroup of~$G$, then finitely many conjugates of $U$ suffice to generate $G$.  
\begin{lem}\label{lem:CptGenNC}
Let $G$ be a compactly generated \tdlc group and let $L$ be a (possibly non-closed) locally normal subgroup. Suppose the abstract normal closure $D = \lla L \rra$ is dense in $G$. Then there are~$g_1$, \dots, $g_n$ in~$D$ such that $D = \langle L\cup  g_1Lg_1^{-1}\cup \dots \cup g_nLg_n^{-1} \rangle$.
\end{lem}

\begin{proof}
Let $U$ be a compact open subgroup that normalises~$L$. Since $D$ is dense in $G$, by Proposition~\ref{prop:Abels} we can find a finite set $\Sigma\subset D$ such that $G = \langle \Sigma \rangle U$. Since~$U$ normalises~$L$, the conjugation action of $\langle \Sigma \rangle$ is thus transitive on the $G$-conjugacy class of $L$. Hence the subgroup $ \langle \Sigma \cup L \rangle$ of~$D$ contains the entire conjugacy class of $L$, which, since $D$ is generated by that conjugacy class, implies that $D = \langle \Sigma \cup L \rangle$. Since it belongs to~$D$, each element of~$\Sigma$ is a product of a finite number of conjugates of elements of~$L$. We may therefore choose~$g_1, \dots, g_n \in \langle \Sigma \rangle$ such that $\Sigma \subset \langle L\cup g_1Lg_1^{-1}\cup \dots \cup g_nLg_n^{-1}\rangle $, and the claim follows.
\end{proof}

\subsection{Quasi-centralisers of commensurated subgroups}

We now prove two related propositions.  The second may be be viewed as a generalisation of \cite[Theorems~4.8 and 4.9]{BEW}.

\begin{prop}\label{prop:TrivialQC}
Let $G$ be a \tdlcsc group, let $N$ be a minimal closed normal subgroup of $G$ and let $H$ be an infinite closed subgroup of $N$ such that $H$ is commensurated by $G$ and $\QZ(H)=\triv$.  Then the quasi-centraliser $\QC_G(H)$ is a closed normal subgroup that intersects $N$ trivially. In particular, if $G$ is monolithic and $\QZ(N)=\triv$, then $\QC_G(H)=\QC_G(N)=\triv$. 
\end{prop}

\begin{proof}
The hypothesis that $\QZ(H)=\triv$ ensures that $H$ is non-discrete.  By~Lemma~\ref{profcomm}, there exists a compact open subgroup $U$ of $G$ such that $H \cap U$ is locally normal in $G$.  Since   $\QZ(H \cap U) = \QZ(H)=\triv$, it   follows from \cite[Lemma~3.8(i)]{CRW-Part1} that $\QC_G(H \cap U) \cap \N_G(H \cap U) = \CC_G(H \cap U)$.  In particular, since $\N_G(H \cap U)$ is open in $G$ and $\CC_G(H \cap U)$ is closed, we deduce that $\QC_G(H \cap U) = \QC_G(H)$ is closed in $G$. Making the abbreviation $Q = \QC_G(H) \cap N$, we deduce that   $Q$ is closed.  Moreover $\QC_G(H)$ is normal in $G$, since $G$ commensurates $H$. Therefore, if $Q$ is non-trivial, we must have  $Q = N$ by minimality of $N$.  Recalling that $H \leq N$ by hypothesis, we obtain $H \leq Q \leq \QC_G(H)$, so $H = \QC_H(H) = \QZ(H)$,   which is absurd as $H$ is infinite and $\QZ(H)=\triv$.  From this contradiction we deduce that $\QC_G(H) \cap N=\triv$.  The final conclusion is clear.
\end{proof}

\begin{prop}\label{prop:TrivialQC:2}
Let $G \in \sclass$. Then every infinite compact subgroup that is commensurated by $G$ has trivial quasi-centraliser.  In particular, $\QZ(G)=\triv$.
\end{prop}
\begin{proof}
Let $H$ be an infinite compact locally normal subgroup that is commensurated by $G$.  Invoking Lemma~\ref{profcomm} as in Proposition~\ref{prop:TrivialQC}, we may assume that $H$ is locally normal.

Let $Q = \QC_G(H)$.  We see that $Q$ is normal in $G$, since $G$ fixes $[H]$.  Suppose that $Q \ne 1$.  Then $Q$ is dense in $G$, since $G$ is topologically simple.

Let $U$ be a compact open subgroup of $G$ that normalises~$H$.  Then $G$ is generated by $UX$ where $X$ is a finite subset of $G$ which, by Proposition~\ref{prop:Abels}, may be chosen to be a subset of $Q$.  Since $X$ is finite, there is an open subgroup $K$ of finite index in $H$ that is centralised by $X$. Choose~$V$ open and normal in~$U$ such that $L = V\cap H \leq K$. Then $L$ is normalised by all elements of $U \cup X$, and is a non-trivial compact normal subgroup of $G$. Since $G$ is topologically simple, we conclude that $G$ is compact, hence profinite.  But the only topologically simple profinite groups are the finite simple groups, which are discrete.  As $G$ is assumed non-discrete, we have a contradiction.  Thus $Q$ must be trivial as required.  The final conclusion follows by considering $\QC_G(U)$ where $U$ is a compact open subgroup of $G$.
\end{proof}

\subsection{The local prime content}\label{section:localprime}

Cayley--Abels graphs can be applied to obtain a restriction on the primes involved in the compact open subgroups of a compactly generated \tdlc group.

\begin{defn}Let $G$ be a \tdlc group.  The \defbold{local prime content}\index{local prime content}\index{eta(G)@$\lpc(G)$} $\lpc(G)$ of $G$ is the set of primes $p$ for which $G$ contains an infinite pro-$p$ subgroup.\end{defn}

\begin{prop}\label{localprime:short}  Let $G$ be a non-discrete compactly generated \tdlc group.  Then $G$ has a compact normal subgroup $K$, which can be chosen to be a subgroup of any given open neighbourhood of the identity, such that for every compact open subgroup $U$ of $G$, the composition factors of $UK/K$ are of bounded order.  In particular, the set $\eta = \lpc(G/K)$ is finite and $UK/K$ is virtually pro-$\eta$ for every compact open subgroup $U$.
\end{prop}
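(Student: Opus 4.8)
The plan is to run everything through the Cayley--Abels graph of Proposition~\ref{prop:Abels}. Since $G$ is \tdlc, I would first pick a compact open subgroup $U_0 \subseteq V$, and set $K := \Core_G(U_0) = \bigcap_{g \in G} gU_0 g\inv$. This is a compact subgroup (it lies in $U_0$) that is normal in $G$ and contained in $V$, as required. The point of passing to $K$ is that the quotient $\overline G := G/K$ then acts \emph{faithfully} on the Cayley--Abels graph $\Gamma$ of $(G,U_0)$, which is connected, locally finite and vertex-transitive of some degree $d$; the base vertex $v_0$ has stabiliser $\overline{U_0} = U_0/K$. Everything reduces to controlling composition factors of compact open subgroups of $\overline G$ in terms of $\Gamma$, since any $UK/K$ is such a subgroup.

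The combinatorial heart is the bound for vertex stabilisers. For the pointwise stabilisers $D_k$ of the balls $B(v_0,k)$ inside $\overline{U_0}$, an element of $D_k$ fixes the sphere $S(v_0,k)$ pointwise and hence, for each $x\in S(v_0,k)$, merely permutes the (at most $d$) neighbours of $x$ on the next sphere. Thus $D_k/D_{k+1}$ embeds into a product $\prod_{x\in S(v_0,k)}\Sym(m_x)$ with all $m_x\le d$. Here I would invoke the elementary fact that a subgroup of a direct product that projects onto each factor has all its composition factors among those of the factors (a Goursat/subdirect-product induction); replacing the codomain by the product of the projections makes the embedding subdirect, so every composition factor of $D_k/D_{k+1}$ has order $\le d!$. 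Crucially, faithfulness of the $\overline G$-action forces $\bigcap_k D_k = 1$, so $\overline{U_0} = \varprojlim \overline{U_0}/D_k$ and therefore \emph{every} composition factor of $\overline{U_0}$ — indeed of any closed subgroup of it — has order $\le d!$.

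Extending this to an arbitrary compact open $U$ is where I expect the real difficulty. Writing $C = UK/K$, the faithful action gives $C$ finite orbits, and filtering $C$ by the pointwise stabilisers of the balls $B(O,n)$ around a $C$-orbit $O = Cv_0$ (these are normal in $C$ and have trivial intersection), the successive ``higher'' quotients are bounded by $d!$ exactly as above, while the bottom quotient is the orbit action $C|_O \le \Sym(O)$. The point-stabiliser of $C|_O$ is a quotient of $\Stab_C(v_0) \le \overline{U_0}$, hence has composition factors of order $\le d!$; but this alone does not bound $C|_O$ (a regular action would escape it). This is genuinely the main obstacle: a naive $\Sym(O)$ bound is useless because $|O|$ is not uniformly bounded, and simple examples (e.g.\ $\bigl(\prod_{n<N}\Alt(n)\bigr)\rtimes\bZ$) show that the surviving contribution really is controlled only through the choice $K\subseteq V$. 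The hard part will be to combine commensurability of $C$ with the vertex stabiliser $\overline{U_0}$ together with the fact that the ``large'' composition factors have been absorbed into $K$, so as to obtain a single uniform bound on the order of composition factors of all $UK/K$.

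Granting the bound, the two ``In particular'' clauses are comparatively easy. For finiteness of $\lpc = \lpc(G/K)$: if $p\in\lpc(G/K)$ there is an infinite pro-$p$ subgroup $P\le\overline G$; it has a finite orbit, so some point stabiliser $\Stab_P(w)$ is infinite, and it sits inside a vertex stabiliser (a conjugate of $\overline{U_0}$). Hence $C_p$ occurs as a composition factor of $\overline{U_0}$, forcing $p\le d$, so $\lpc(G/K)\subseteq\{p:p\le d\}$ is finite. For the last clause, a profinite group with composition factors of bounded order is pro-$\pi$ for the finite set $\pi$ of primes occurring, and removing the finitely-occurring primes outside $\lpc$ shows the vertex stabilisers are virtually pro-$\lpc$; since $(U\cap U_0)K/K$ is a finite-index subgroup of $UK/K$ lying in a vertex stabiliser, and being virtually pro-$\lpc$ is inherited by finite-index subgroups and passes up finite extensions, each $UK/K$ is virtually pro-$\lpc$ as claimed.
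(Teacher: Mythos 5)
Your setup and your bound for vertex stabilisers coincide with the paper's proof: the paper also takes $K=\Core_G(V)$, the kernel of the $G$-action on the Cayley--Abels graph of $(G,V)$, and filters a compact open subgroup by pointwise stabilisers of spheres, bounding the successive quotients via the valency $d$. The ``main obstacle'' you describe, however, is not an obstacle but a misreading of the quantifier. The statement does not assert one bound uniform over all compact open subgroups $U$; it asserts that \emph{for each} $U$ the composition factors of $UK/K$ are of bounded order, i.e.\ that $UK/K$ has only finitely many isomorphism types of composition factors --- this is exactly how the same result is phrased in Theorem~\ref{thmintro:Primes}. The uniform statement you were trying to prove is in fact false, as the paper itself points out right after Theorem~\ref{thmintro:Primes}: Neretin's group lies in $\sclass$ (being topologically simple and non-compact, it forces $K=1$) and contains a copy of every finite group; since the property ``all composition factors of order at most $N$'' passes to closed subgroups of profinite groups (intersect a composition series of each finite quotient with the subgroup), its compact open subgroups have composition factors of unbounded order as $U$ varies. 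So no amount of further work, and no choice of $K$, could close the ``gap'' you identified.

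Under the correct reading, your own decomposition already finishes the proof. In your filtration of $C=UK/K$ by the pointwise stabilisers $C_n$ of the balls $B(O,n)$ around the finite orbit $O=Cv_0$, compactness and the triviality of $\bigcap_n C_n$ show that $C$ is the inverse limit of the finite quotients $C/C_n$, so every composition factor of $C$ is a composition factor either of some $C_n/C_{n+1}$ (order at most $d!$ by your subdirect-product argument) or of the single finite group $C/C_0\le\Sym(O)$, which contributes only finitely many isomorphism types; no bound on $|O|$ is needed. Your deductions of the two ``in particular'' clauses use only the uniform $d!$ bound for closed subgroups of vertex stabilisers, so they stand as written. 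Incidentally, the paper's write-up is loose at precisely the point that worried you: it runs the sphere argument as if an arbitrary $U$ fixed the base vertex and claims the uniform bound $(d-1)!$, which as above can only be valid for subgroups of vertex stabilisers; your orbit-ball filtration together with the observation that the top quotient is a single finite group is the correct way to handle general $U$, and it is no harder.
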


\begin{proof}
By Lemma~\ref{lem:KK} we may assume that $G$ is second-countable.  Let $O$ be an open neighbourhood of the identity in $G$.  Then by van Dantzig's theorem, $O$ contains a compact open subgroup $V$ of $G$.  Let $\Gamma$ be a Cayley--Abels graph associated to $(G,V)$.  Let $K$ be the kernel of the action of $G$ on $\Gamma$.  Then $K = \bigcap_{g \in G} gVg^{-1}$, so $K$ is compact and $K \subseteq O$.  From now on we may assume $K=\triv$. Vertex transitivity of the action of~$G$ on~$\Gamma$ implies that all vertices have the same valency,~$d$.

Let $U$ be a compact open subgroup of $G$ and consider the action of $U$ on $\Gamma$. Let $U_0 = U \cap V$ and  for all $n >0$, let $U_n$ be the pointwise stabiliser of the $n$-sphere   around the base vertex $v_0 = V$ in $U_0$.  Thus the collection $\{U_n\}_{n \ge 0}$ is a descending chain of open normal subgroups of $U_0$ with trivial intersection.  For each $n \ge 0$, the finite group $U_n/U_{n+1}$ fixes pointwise the $n$-sphere around $v_0$ and acts faithfully on the $n+1$-sphere.  Since $\Gamma$ is $d$-regular, the orbits of $U_n/U_{n+1}$ on the $n+1$-sphere have size at most $d$, so every composition factor of $U_1$ embeds in $\Sym(d)^m$ for some $m$ and thus has order dividing $d!$.  In particular, $U_1$ is a pro-$\pi$ group where $\pi$ is the set of primes less than $d+1$. Hence~$\eta\subseteq \pi$ and is finite.  For each of the finitely many primes $p \in \pi \smallsetminus \eta$, the $p$-Sylow subgroup $P_p$ of $U_1$ is finite, so there exists an open normal subgroup $V_p$ of $U_1$ that intersects $P_p$ trivially.  Sylow's theorem then implies $V_p$ is pro-$p'$, and by intersecting finitely many such open subgroups of $U_1$, we obtain an open pro-$\eta$ subgroup $V$, which is then also of finite index in $U$. Hence~$U$ is virtually pro-$\eta$.
\end{proof}

In order to obtain stronger results of this kind, we recall some vocabulary from finite group theory, adapted to the profinite setting.

\begin{defn}
Let $G$ be a profinite group.  For a set of primes $\pi$, the \defbold{$\pi$-core}\index{pi-core@$\pi$-core}\index{Opi(G)@$O_\pi(G)$} $O_\pi(G)$ is the largest normal pro-$\pi$ subgroup of $G$, and the  \defbold{$\pi$-residual}\index{pi-residual@$\pi$-residual}\index{Opj(G)@$O^\pi(G)$} $O^\pi(G)$ is the smallest closed normal subgroup of $G$ such that $G/O^\pi(G)$ is pro-$\pi$. The \defbold{prosoluble core}\index{prosoluble core}\index{Oinfty(G)@$O_\infty(G)$} $O_\infty(G)$ is the largest normal prosoluble subgroup of $G$, while the  \defbold{prosoluble residual}\index{prosoluble residual}\index{Ojnfty(G)@$O^\infty(G)$} $O^\infty(G)$ is the smallest closed normal subgroup of $G$ such that $G/O^\infty(G)$ is prosoluble.
\end{defn}

\begin{lem}\label{corelem}Let $G$ be a \tdlc group, let $U$ and $V$ be compact open subgroups of $G$ and let $\pi$ be a set of primes.  Then $O_\pi(U)$ is commensurate with $O_\pi(V)$; similarly, $O_{\infty}(U)$ is commensurate with $O_{\infty}(V)$.  Consequently
\[
G = \Comm_G(O_\pi(U)) = \Comm_G(O_{\infty}(U)).
\]
\end{lem}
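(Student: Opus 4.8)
The plan is to reduce the statement to the case of a \emph{normal} finite-index subgroup, where the argument is transparent, and then bootstrap back and deduce the commensuration claim. Since $U$ and $V$ are open compact subgroups, their intersection $U \cap V$ is open, hence of finite index, in each of $U$ and $V$. As commensurability is an equivalence relation on subgroups, it therefore suffices to prove the local assertion that $O_\pi(W)$ is commensurate with $O_\pi(U)$ whenever $W$ is a finite-index subgroup of a profinite group $U$: applying this to $W = U \cap V$ inside both $U$ and $V$ yields $O_\pi(U)$ commensurate with $O_\pi(U \cap V)$ commensurate with $O_\pi(V)$, as required.

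The key step is the case of a normal finite-index subgroup $N \trianglelefteq A$. Here two observations suffice. First, $O_\pi(N)$ is characteristic in $N$, and since $N$ is normal in $A$, a characteristic subgroup of a normal subgroup is normal; thus $O_\pi(N)$ is a normal pro-$\pi$ subgroup of $A$, whence $O_\pi(N) \le O_\pi(A)$. Second, $O_\pi(A) \cap N$ is the intersection of two normal subgroups of $A$ and is pro-$\pi$, so it is a normal pro-$\pi$ subgroup of $N$ and hence $O_\pi(A) \cap N \le O_\pi(N)$; moreover it has finite index in $O_\pi(A)$ because $N$ has finite index in $A$. Sandwiching $O_\pi(A) \cap N \le O_\pi(N) \le O_\pi(A)$ with the outer two of finite index shows $O_\pi(N)$ has finite index in $O_\pi(A)$, so the two are commensurate. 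To pass from this to an arbitrary finite-index subgroup $W \le U$, I would replace $W$ by its normal core $W_0 = \Core_U(W) = \bigcap_{u \in U} u W u\inv$, which is a finite-index normal subgroup of $U$ contained in $W$ and normal in $W$. Applying the normal case to $W_0 \trianglelefteq U$ and to $W_0 \trianglelefteq W$ and using transitivity gives that $O_\pi(U)$, $O_\pi(W_0)$ and $O_\pi(W)$ are pairwise commensurate.

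For the final consequence, fix $g \in G$: conjugation by $g$ is a topological isomorphism $U \to gUg\inv$ carrying the largest normal pro-$\pi$ subgroup to the largest normal pro-$\pi$ subgroup, so $g\, O_\pi(U)\, g\inv = O_\pi(gUg\inv)$. Since $gUg\inv$ is again an open compact subgroup of $G$, the first part gives that $O_\pi(gUg\inv)$ is commensurate with $O_\pi(U)$, that is, $g \in \Comm_G(O_\pi(U))$; as $g$ was arbitrary, $G = \Comm_G(O_\pi(U))$. The only genuine subtlety to flag is that $O_\pi(W)$ need \emph{not} be normal in $U$ when $W$ is merely a finite-index (rather than normal) subgroup, which is precisely why one must descend to the normal core; once one is in the normal situation, the ``characteristic-in-normal-is-normal'' observation does essentially all of the work, so no step should present a serious obstacle.
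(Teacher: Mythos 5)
Your proof is correct and uses essentially the same ingredients as the paper's: the paper takes $W$ to be an open subgroup of $U \cap V$ that is normal in $V$ (a core, exactly as in your reduction) and chains $O_\pi(U) \cap W \le O_\pi(W) \le O_\pi(V)$ using precisely your two observations — intersecting $O_\pi(U)$ with a smaller subgroup lands inside that subgroup's $\pi$-core, and a characteristic subgroup of a normal subgroup is normal — then concludes by symmetry. Your modular repackaging (normal finite-index case first, then normal cores and transitivity of commensurability) is just a slightly longer route through the same argument, and your treatment of the consequence $G = \Comm_G(O_\pi(U))$ via $gO_\pi(U)g\inv = O_\pi(gUg\inv)$ is the intended one.
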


\begin{proof}Let $W$ be an open subgroup of $U \cap V$ that is normal in $V$.  Then $O_\pi(U) \cap W$ is pro-$\pi$ and normal in $W$, so contained in $O_\pi(W)$.  In turn $O_\pi(W)$ is pro-$\pi$ and normal in $V$, so $O_\pi(W) \le O_\pi(V)$.  Hence $O_\pi(V)$ contains an open subgroup of $O_\pi(U)$ and {\it vice versa\/} by symmetry. Hence $O_\pi(U)$ is commensurate with $O_\pi(V)$.

The proof of the analogous statement about the prosoluble residual is the same.
\end{proof}

Before proving the main result of this section, Theorem~\ref{localprime}, we recall an observation of Wielandt (originally made in the context of finite groups) about the normalisers of $\pi$-residuals and prosoluble residuals (compare Lemma~2.2.3 from \cite{BurgerMozes}).

\begin{lem}\label{lem:subnormal}
Let $G$ be a profinite group and let $S$ be a subnormal subgroup of $G$.  Then for each set of primes $\pi$, the group $O_\pi(G)$ normalises $O^\pi(S)$. Similarly $O_\infty(G)$ normalises $O^\infty(S)$. 
\end{lem}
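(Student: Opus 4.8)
The plan is to prove both assertions simultaneously by induction on the subnormal defect $n$ of $S$ in $G$, treating the prosoluble case by the identical argument with the pair $(O_\pi, O^\pi)$ replaced by $(O_\infty, O^\infty)$ throughout. Write $R = O^\pi(S)$ and $P = O_\pi(G)$; the goal is to show that $P$ normalises $R$, equivalently that the smallest $P$-invariant closed subgroup $\tilde R := \overline{\langle R^P \rangle}$ containing $R$ equals $R$ itself.

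If $n \le 1$ then $S$ is normal in $G$, and since $R = O^\pi(S)$ is characteristic in $S$ it is normal in $G$; thus $P$ normalises it and there is nothing more to do. For the inductive step, fix a subnormal series $S = S_0 \trianglelefteq S_1 \trianglelefteq \dots \trianglelefteq S_n = G$ and set $M = S_{n-1} \trianglelefteq G$. Then $S$ is subnormal of defect $n-1$ in $M$, so by induction the group $T := O_\pi(M)$ normalises $R$. In particular $RT$ is a subgroup, and being the continuous image of the compact set $R \times T$ it is closed, with $R$ normal in $RT$ and $RT/R$ pro-$\pi$.

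The crux is to locate $\tilde R$ inside $RT$. Because both $M$ and $P$ are normal in $G$, we have $[M, P] \le M \cap P$, and $M \cap P$ is a normal pro-$\pi$ subgroup of $M$, so $[M,P] \le O_\pi(M) = T$. Since $R \le M$, this gives $[R, P] \le T$, whence for every $p \in P$ and $r \in R$ we get $r^p = r[r,p] \in RT$; thus $R^p \le RT$ for all $p$, and therefore $\tilde R \le RT$. In particular $R$ is normal in $\tilde R$, and the quotient $\tilde R/R \le RT/R$ is pro-$\pi$.

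It remains to upgrade ``$R$ is normal in $\tilde R$'' to ``$R$ is characteristic in $\tilde R$'', which is where the defining property of the $\pi$-residual enters, and which I expect to be the main obstacle. First one checks the idempotency $O^\pi(R) = R$: indeed $O^\pi(R)$ is characteristic in $R$, hence normal in $S$, and $S/O^\pi(R)$ is an extension of the pro-$\pi$ group $R/O^\pi(R)$ by the pro-$\pi$ group $S/R$, hence pro-$\pi$, forcing $R \le O^\pi(R)$ by minimality; consequently $R$ has no non-trivial pro-$\pi$ quotient. Now apply $O^\pi$ to $\tilde R$: since $\tilde R/R$ is pro-$\pi$ we get $O^\pi(\tilde R) \le R$, while $R/(R \cap O^\pi(\tilde R))$ embeds into the pro-$\pi$ group $\tilde R/O^\pi(\tilde R)$ and so is pro-$\pi$, forcing $R = R \cap O^\pi(\tilde R)$, i.e. $R \le O^\pi(\tilde R)$. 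Thus $R = O^\pi(\tilde R)$ is characteristic in the $P$-invariant group $\tilde R$, so $P$ normalises $R$, completing the induction. Everything outside this last identification is a routine commutator computation once the idempotency $O^\pi(R) = R$ is in hand.
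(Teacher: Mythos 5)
Your proof is correct, and it takes a genuinely different route from the paper's, though both belong to the same Wielandt-style family. The paper forms the auxiliary group $H = O_\pi(G)S$, intersects a subnormal series of $S$ with $H$, and proves by induction along that series that the residual is \emph{constant}: $O^\pi(S) = O^\pi(H_i)$ for every term $H_i$. The key computation there is the Dedekind-type identity $H_i = O_\pi(H_i)S$, which makes each quotient $H_i/H_{i-1}$ a quotient of $O_\pi(H_i)$, so extension-closure of the class gives $O^\pi(H_i) \le O^\pi(H_{i-1})$; the conclusion follows because $O^\pi(H)$ is characteristic in $H$ and $O_\pi(G) \le H$. You instead induct on the defect itself, apply the inductive hypothesis to the pair $(M,S)$ with $M = S_{n-1}$, and control conjugation by $P = O_\pi(G)$ via the commutator containment $[M,P] \le M \cap P \le O_\pi(M)$, which traps $\tilde R = \overline{\langle R^P\rangle}$ inside $R\,O_\pi(M)$; the idempotency $O^\pi(O^\pi(S)) = O^\pi(S)$ then identifies $R$ as $O^\pi(\tilde R)$, characteristic in the $P$-invariant group $\tilde R$. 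Both arguments use exactly the same closure properties of the class (closed subgroups, quotients, extensions within profinite groups) and the same endgame (a characteristic subgroup of a $P$-invariant subgroup is $P$-invariant); what differs is the intermediate object and the statement carried through the induction. The paper's route yields the sharper byproduct $O^\pi(S) = O^\pi(O_\pi(G)S)$ and avoids needing idempotency of the residual; yours keeps the induction hypothesis equal to the lemma being proved (arguably a cleaner induction), uses only elementary commutator calculus for normal subgroups, and gives the extra information that the $O_\pi(G)$-conjugates of $O^\pi(S)$ generate merely a pro-$\pi$ (resp.\ prosoluble) extension of $O^\pi(S)$. Your supporting steps (closedness of $RT$ as a product of compact subgroups, the second-isomorphism embedding, characteristicity of residuals) are all valid in the profinite category, so there is no gap.
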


\begin{proof}
Let $*$ stand for either a set of primes $\pi$, or $\infty$, and let $\mcC$ be the class of pro-$\pi$ groups or the class of pro-(finite soluble) groups respectively.  Let $H = O_*(G)S$.  Since 
$$
S/(S \cap O^*(H)) \cong SO^*(H)/O^*(H)
$$ 
and $\mcC$ is closed under taking closed subgroups, we see that $S/(S \cap O^*(H))$ is in $\mcC$. Hence $O^*(S) \le O^*(H)$.  We claim that in fact $O^*(S) = O^*(H)$. 

Since $S$ is subnormal in $G$, it is also subnormal in $H$, and there is a series 
$$S = H_0 \lhd H_1 \lhd \dots \lhd H_n = H.$$

{In order to prove the claim, observe first that, since $H = O_*(G)S$ and since $S \leq H_i$ for all $i$, we have $H_i = H_i \cap O_*(G) S = O_*(H_i) S$ for all $i$. We now proceed to prove the claim  by induction on $n$.}  By the inductive hypothesis, we may assume $O^*(S) = O^*(H_{n-1})$.  Now $O^*(H_{n-1})$ is characteristic in $H_{n-1}$, hence normal in $H_n$.  Thus we have a quotient $H_n/O^*(H_{n-1})$ of $H_n$, which is an extension of $A = H_n/H_{n-1}$ by $B = H_{n-1}/O^*(H_{n-1})$.  {Since $H_n = O_*(H_n) S$ and $S \leq H_{n-1}$,} we see that $A$ is isomorphic to a quotient of $O_*(H_n)$, which is in $\mcC$, so $A \in \mcC$, whilst $B \in \mcC$ by the definition of $O^*(H_{n-1})$.  Since $\mcC$ is closed under extensions within the class of profinite groups, we have $H_n/O^*(H_{n-1}) \in \mcC$, so $O^*(H_n) \le O^*(H_{n-1})$ and hence $O^*(H_n) = O^*(H_{n-1}) = O^*(S)$. 

Since $H_n = H$ and since $O^*(H)$ is normal in~$H$ and~$O_*(G)$ is a subgroup of $H$, it follows that~$O_*(G)$ normalises~$O^*(S)$.
\end{proof}

It is a fact well-known and frequently used that, if~$H$ and~$K$ are subgroups of~$G$ that normalise each other and have trivial intersection, then~$H$ centralises~$K$. The following lemma, recalled from \cite{CRW-Part1}, is the local version of this fact.

\begin{lem}[{\cite[Lemma~3.9]{CRW-Part1}}]\label{finint}Let $G$ be a \tdlc group and let $H$ and $K$ be closed subgroups of $G$ such that $\QZ(H)=\triv$ and such that ${|H \cap K|}$, ${|H:\N_H(K)|}$ and ${|K:\N_K(H)|}$ are all finite.  Then $H$ centralises a finite index open subgroup of $K$.
\end{lem}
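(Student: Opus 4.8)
The plan is to put $A=\N_H(K)$ and $B=\N_K(H)$. By hypothesis these have finite index and are closed (being $H\cap\N_G(K)$ and $K\cap\N_G(H)$, intersections with the closed normalisers $\N_G(K)$, $\N_G(H)$), hence open in $H$ and $K$ respectively. Since $B$ is open in $K$, it suffices to prove that $B$ is centralised by the whole of $H$; the essential difficulty is that $H$ itself need not normalise $K$, only the finite-index subgroup $A$ does, so one cannot let $H$ act on $K$ directly.

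I would first check, by direct manipulation, that $A$ and $B$ normalise each other: for $a\in A,\ b\in B$ one has $aba^{-1}\in K$ because $a$ normalises $K$, and $aba^{-1}$ normalises $H$, so $aba^{-1}\in B$; symmetrically $bab^{-1}\in A$. It follows that every commutator $[a,b]$ lies in $H$ (write $[a,b]=a^{-1}(b^{-1}ab)$ with $b^{-1}ab\in A\le H$) and in $K$ (write $[a,b]=(a^{-1}b^{-1}a)b$ with $a^{-1}b^{-1}a\in B\le K$), hence in the finite group $F:=H\cap K$. The hypothesis $\QZ(H)=1$ now enters for the first time, to show $F=1$: since $A$ normalises both $H$ and $K$ it normalises $F$, so conjugation defines a continuous homomorphism $A\to\Aut(F)$ with finite target, whose kernel $\CC_A(F)$ is closed of finite index in $A$ and therefore open in $H$. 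For each $x\in F$ this open subgroup centralises $x$, so $\CC_H(x)$ is open and $x\in\QZ(H)=1$. Thus $H\cap K=1$, and in particular $[a,b]=1$ for all $a\in A,\ b\in B$, i.e.\ $B$ centralises $A$.

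The decisive step, where I expect the real work to lie, is an automorphism-rigidity statement that lifts information from the finite-index subgroup $A$ to all of $H$: \emph{if $\phi\in\Aut(H)$ restricts to the identity on the open subgroup $A$, then $\phi=\id$.} To prove this, fix $h\in H$ and set $A_h=A\cap h^{-1}Ah$, which is again open in $H$. For $a\in A_h$ both $a$ and $hah^{-1}$ lie in $A$, so applying $\phi$ to $hah^{-1}$ gives $\phi(h)a\phi(h)^{-1}=hah^{-1}$, which says that $h^{-1}\phi(h)$ centralises $A_h$. Since $A_h$ is open, anything centralising it has open centraliser in $H$ and hence lies in $\QZ(H)=1$; therefore $h^{-1}\phi(h)=1$ and $\phi(h)=h$. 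As $h$ was arbitrary, $\phi=\id$.

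Finally I would apply the rigidity statement to the conjugation automorphisms: each $b\in B$ normalises $H$ and so induces $\beta_b\in\Aut(H)$, and because $B$ centralises $A$ we have $\beta_b|_A=\id$; the rigidity statement forces $\beta_b=\id$, i.e.\ $b$ centralises $H$. Hence $B\le\CC_K(H)$, so $H$ centralises the open subgroup $B$ of $K$, as required. The whole argument hinges on the two appearances of $\QZ(H)=1$: one to eliminate the finite intersection $H\cap K$, and the crucial one inside the rigidity lemma, which is exactly what permits the passage from the normalising subgroup $A$ to the entire group $H$.
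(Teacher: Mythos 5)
Your proof is correct, and it is essentially the argument behind the cited result: the paper itself gives no proof here (the lemma is imported from Part I), but the route you follow — commutators of $A=\N_H(K)$ with $B=\N_K(H)$ landing in the finite intersection $H\cap K$, triviality of $\QZ(H)$ forcing $H\cap K=1$, and then the rigidity statement (an automorphism of $H$ fixing an open subgroup pointwise is trivial when $\QZ(H)=1$) to pass from the finite-index subgroup $A$ to all of $H$ — is exactly the standard one. One minor economy: the mutual-normalisation step is not needed to place $[a,b]$ in $H\cap K$, since for $a\in\N_H(K)$ and $b\in\N_K(H)$ one has directly $[a,b]=a^{-1}(b^{-1}ab)\in H$ (as $b$ normalises $H$) and $[a,b]=(a^{-1}b^{-1}a)b\in K$ (as $a$ normalises $K$).
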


\begin{cor}\label{finintcor}
Let $G$ be a \tdlc group and let $H$ and $K$ be non-trivial compact locally normal subgroups of $G$ such that $\QZ(H) = \QC_G(K) = \triv$.  Then $H \cap K$ is infinite.\end{cor}

\begin{proof}
Since $H$ and $K$ are locally normal, we see that ${|H:\N_H(K)|}$ and ${|K:\N_K(H)|}$ are finite.  Suppose also that ${H \cap K}$ is finite.  By Lemma~\ref{finint}, $H$ centralises an open subgroup of $K$, that is, $H \le \QC_G(K)$.  Since $\QC_G(K)$ is trivial but $H$ is not, we have a contradiction.
\end{proof}

Theorems~\ref{localprime} and~\ref{localprimetopsimp} are the culmination of this section. Before proceeding to them, we record the following subsidiary fact separately. 

{

\begin{prop}\label{prop:HelpLocalPrime}
Let $G$ be a non-trivial compactly generated \tdlc group.  Suppose that $G$ has trivial quasi-centre, no non-trivial compact normal subgroups and no non-trivial compact abelian locally normal subgroups.  Let $H$ be a non-trivial closed locally normal subgroup of $G$. Then there is a minimal closed normal subgroup $M$ of $G$ such that $H \cap M$ is non-discrete.
\end{prop}

\begin{proof}
By Proposition~\ref{cmsoc}, every non-trivial closed normal subgroup of $G$ contains a minimal one, and the set $\{M_1,\dots, M_n\}$ of minimal closed normal subgroups of $G$ is finite.  Proposition~\ref{cstab} ensures that $\QC_G(M_i) = \CC_G(M_i)$ for all $i$.  The intersection $\bigcap^n_{i=1}\QC_G(M_i)$ is thus a closed normal subgroup of~$G$. This intersection centralises, and therefore does not contain, each of the non-abelian subgroups $M_i$.  Hence~$\bigcap^n_{i=1}\QC_G(M_i)$ is trivial.

By Proposition~\ref{cstab} we have $\QZ(H)=\triv$, so $H$ is non-discrete.  Since~$\bigcap^n_{i=1}\QC_G(M_i) = \triv$, there is $i \in \{1,\dots,n\}$ such that $\QC_G(M_i)$ does not contain an open subgroup of $H$.  In particular, no open subgroup of $H$ centralises any open subgroup of $M_i$.  Given a compact open subgroup $U$ of $G$, then $H' = H \cap U$ and $M'_i = M_i \cap U$ are both compact and locally normal in $G$.  It then follows that the indices $|H': \N_{H'}(M'_i)|$ and $|M'_i:\N_{M'_i}(H)|$ are both finite, whilst $\QZ(H')=\triv$.  By Lemma~\ref{finint} it follows that $H' \cap M'_i$ is an infinite compact group, so $H \cap M_i$ is non-discrete.
\end{proof}
}

\begin{thm}\label{localprime}
Let $G$ be a non-trivial compactly generated \tdlc group.  Suppose that $G$ has trivial quasi-centre, no non-trivial compact normal subgroups and no non-trivial compact abelian locally normal subgroups. 
Let $\{M_1,\dots, M_n\}$ be the set of minimal non-trivial closed normal subgroups of $G$ (which is indeed finite by Proposition~\ref{cmsoc}) and let $\eta_i = \lpc(M_i)$ for $1 \le i \le n$. 

Then there are non-empty subsets $\epc_i \subseteq \eta_i$ for $1 \le i \le n$ such that given any non-trivial compact locally normal subgroup $H$ of $G$ and any set of primes $\xi$, $O_\xi(H)$ is non-trivial if and only if there is some $i \in \{1,\dots,n\}$ such that $\xi \supseteq \epc_i$ and $H \cap M_i >1$. Moreover, if~$\xi = \bigcup^n_{i=1}\epc_i$, then $O_\xi(H)$ is infinite for all such~$H$.
\end{thm}

\begin{proof}
Recall that by Proposition~\ref{cstab}, every closed locally normal subgroup of $G$ has trivial quasi-centre.

Fix $i \in \{1,\dots,n\}$.  Let $\mcD_i$ be the collection of those sets, $\xi$, of primes such that for some compact open subgroup $U$ of $G$, the $\xi$-core $O_\xi(U \cap M_i)$ is infinite.  Notice that if $\xi \in \mcD_i$, then in fact $O_\xi(U \cap M_i)$ is infinite for every compact open subgroup $U$.  Let $U$ be a compact open subgroup of $G$ and let $\xi_1,\xi_2 \in \mcD_i$.  Then the compact subgroups $O_{\xi_1}(U \cap M_i)$ and $O_{\xi_2}(U \cap M_i)$ are locally normal in~$G$, and thus both have trivial quasi-centre, and $O_{\xi_1}(U \cap M_i)$ and $O_{\xi_2}(U \cap M_i)$ normalise each other.  At the same time, $O_{\xi_1}(U \cap M_i)$ and $O_{\xi_2}(U \cap M_i)$ both have trivial quasi-centraliser in $M_i$ by Proposition~\ref{prop:TrivialQC} because, as Lemma~\ref{corelem} shows, they are commensurated in~$G$.  By applying Corollary~\ref{finintcor} to the subgroups $O_{\xi_1}(U \cap M_i)$ and $O_{\xi_2}(U \cap M_i)$ of the group $M_i$, it follows that
\[
O_{\xi_1}(U \cap M_i) \cap O_{\xi_2}(U \cap M_i) = O_{\xi_1 \cap \xi_2}(U \cap M_i)
\]
is infinite. This shows that  $\mcD_i$ is closed under finite intersections.  Moreover, $\mcD_i$ contains $\lpc(G)$, which is a finite set by Proposition~\ref{localprime:short}. Thus the poset $\mcD_i$ has a smallest element $\epc_i \subseteq \lpc(G)$, which is necessarily non-empty by the definition of $\mcD_i$.

Now let $H$ be a non-trivial compact locally normal subgroup of $G$, let $\xi$ be a set of primes and fix a compact open subgroup $U < G$ containing $H$ as a normal subgroup.

Suppose there is some $i$ such that $i \in \{1,\dots,n\}$ such that $\xi \supseteq \epc_i$ and $H \cap M_i >1$.  Then $\QC_{M_i}(O_\xi(U \cap M_i))$ is trivial, as above, and hence $O_\xi(U \cap M_i) \cap H$ is infinite by Corollary~\ref{finintcor}. Since $O_\xi(U \cap M_i) \cap H$ is a normal pro-$\xi$ subgroup of $H$, it is contained in~$O_\xi(H)$ and we have thus shown that~$O_\xi(H)$ is infinite.  Conversely, suppose that $O_\xi(H)$ is non-trivial.  Then by Proposition~\ref{prop:HelpLocalPrime}, there exists $i \in \{1,\dots,n\}$ such that $O_\xi(H) \cap M_i$ is infinite, so in particular $H \cap M_i > 1$.  At the same time, $O_\xi(H)$ is a characteristic subgroup of $H$, and hence a normal subgroup of $U$, so $O_\xi(H) \cap M_i$ is an infinite normal pro-$\xi$ subgroup of $U \cap M_i$.  Thus $O_{\xi}(U \cap M_i)$ is infinite, in other words $\xi \in \mcD_i$, ensuring that $\xi \supseteq \epc_i$.  We have now shown that $O_\xi(H)$ is non-trivial if and only if there is some $i \in \{1,\dots,n\}$ such that $\xi \supseteq \epc_i$ and $H \cap M_i >1$.

Finally, suppose that $\xi = \bigcup^n_{i=1}\epc_i$.  By Proposition~\ref{prop:HelpLocalPrime} there exists $i \in \{1,\dots,n\}$ such that $H \cap M_i$ is non-discrete.  Since $\xi \supseteq \epc_i$, the previous paragraph implies $O_{\xi}(H)$ is infinite, as required.
\end{proof}

We can prove a stronger result for locally C-stable \tdlc groups that are in $\sclass$.  In fact it will be seen later, in Theorem~\ref{thm:noqz}, that every group in~$\sclass$ is locally C-stable, so the following theorem applies to all groups in $\sclass$.

\begin{thm}\label{localprimetopsimp}
Let $G$ be a non-trivial compactly generated, topologically simple, locally C-stable \tdlc group. Then the following hold.
\begin{enumerate}[(i)]
\item Every non-trivial compact locally normal subgroup~$H\leq G$ is virtually pro-$\lpc(G)$ and, if~$H$ is virtually pro-$\xi$ for some set of prime numbers~$\xi$, then~$\xi\supseteq\lpc(G)$.
\item If~$G$ contains a non-trivial prosoluble locally normal subgroup, then every compact open $U\leq G$ is virtually prosoluble. 
\end{enumerate}
\end{thm}

\begin{proof}
Let us first note that $G$ satisfies the hypotheses of Theorem~\ref{localprime}: we have $\QZ(G)=\triv$ by Proposition~\ref{prop:TrivialQC:2} and the absence of non-trivial compact abelian locally normal subgroups is by Proposition~\ref{cstab}.

For (i), by Theorem~\ref{localprime} there is $\epc\subseteq \lpc(G)$ such that, given any non-trivial compact locally normal subgroup $H$ of $G$ and any set of primes $\xi$, $O_\xi(H)$ is non-trivial if and only if $\xi \supseteq \epc$. We have to show that $\epc= \lpc(G)$; it suffices to show that some compact open subgroup $U$ of $G$ is virtually pro-$\epc$.  Set $L = O_{\epc}(U)$.

For (ii), we take $U$ to be a compact open subgroup and set $L = O_{\infty}(U)$; in this case we may assume that $L$ is non-trivial, and our aim is to show that $U$ is virtually prosoluble.

In both cases $L$ is a non-trivial compact locally normal subgroup.  From this point onwards the proofs of (i) and (ii) are similar: let $\star$ stand for $\epc$ for the proof of (i), and $\infty$ for the proof of (ii).

Lemma~\ref{lem:CptGenNC} implies that the normal closure of $L$ is generated by a finite set of conjugates of $L$. Let thus $g_1, \dots, g_n \in G$ be such that $D= \langle L \cup g_1 L g_1\inv \cup \dots g_n L g_n\inv \rangle$ is dense and normal in $G$. Set $g_0 = 1$ and $U_i = g_i U g_i\inv$ for all $i=0, \dots, n$. Let also $V_0 $ be an open normal subgroup of $U_0$ contained in $ \bigcap_{i=0}^n U_i$. 
For all $i >0$, define inductively a group $V_i$ as the normal core of $V_{i-1}$ in the group $U_i$. Thus $V_i \lhd U_i$ and for all $i \in \{0, 1, \dots, n\}$,   we get a subnormal chain 
$$
V_n \lhd V_{n-1} \lhd \dots \lhd V_i \lhd  U_i.
$$
From Lemma~\ref{lem:subnormal}, we infer that $O^{\star}(V_n)$ is normalised by $O_{\star}(U_i)$ for all $i = 0, \dots, n$. Notice that $O_{\star}(U_i) = g_i L g_i\inv$. We deduce that $ O^{\star}(V_n)$ is normalised by $D$. Since $ O^{\star}(V_n)$ is compact, its normaliser in $G$ is closed. Therefore  $ O^{\star}(V_n)$ is normal in $G$ since $D$ is dense. 

Topological simplicity of~$G$ then implies that $ O^{\star}(V_n)$ is trivial. In other words $V_n$ is a pro-${\epc}$ group for part (i), respectively a prosoluble group for part (ii). Since $V_n$ is open in $U$ by construction, this proves that $U$ is virtually pro-${\epc}$ or prosoluble respectively, as required.
\end{proof}

\section{Properties of the structure lattice}

\subsection{The orbit join property}
\label{sec:orbit_join}

Recall that $\sclass$ is the class of non-discrete compactly generated topologically simple \tdlc groups.  We have seen (Theorem \ref{baireabs}) that if $G \in \sclass$ is abstractly simple, then $G$ has no non-trivial fixed points in $\lnorm(G)$, and thus all $G$-orbits on $\lnorm(G) \smallsetminus \{0,\infty\}$ are infinite.  However, it turns out that the orbits of $G$ on $\lnorm(G)$ are still `compact' in a certain sense: each orbit has a least upper bound, which is the join of finitely many elements of the orbit.  In the case that $G$ satisfies \sur, it follows that every compact locally normal subgroup $H$ of $G$ is relatively large, in the sense that there exists a finite set of $G$-conjugates of $H$ whose product has non-empty interior.  Even without assuming \sur, we see that in the conjugation action of $G$ on $\lnorm(G)$, the lattice $\lnorm(G)^G$ of fixed points plays an important role.  We work here in the more general setting of Hecke pairs~$(G,U)$ and, in this setting, a subgroup~$K\leq G$ is \textbf{bounded} if it is contained in a finite number of~$U$-cosets and is \textbf{locally normal} if~$N_G(K)\cap U$ has finite index in~$U$.

\begin{lem}\label{boxlem}
Let $(G,U)$ be a Hecke pair such that $G$ is generated by finitely many cosets of $U$. Let $\kappa$ be a set of bounded locally normal subgroups which is invariant under the $U$-action by conjugation, and such that $G = \langle U, \kappa \rangle$.  Then there is a finite set $\{K_1,\dots,K_n\}\subseteq \kappa$ which is a union of $U$-orbits under the conjugation action, such that   $G = \langle U, K_1, \dots, K_n \rangle$.  Moreover, setting $V =  \bigcap^n_{i=1}\N_U(K_i)$ and let $L_i = V \cap K_i \leq U$ for all $i$, we have the following:
\begin{enumerate}[(i)]
\item for each $i$, $L_i$ is a subgroup of finite index in $K_i$; 
\item $L_i$ normalises both $K_j$ and $L_j$ for all pairs $(i,j)$; and
\item $L = \prod^n_{i=1} L_i$ is a normal subgroup of $U$ that is commensurated by $G$.
\end{enumerate}
\end{lem}

\begin{proof}We have $G = \langle U,X \rangle$ for some finite set $X$ where, since $G = \langle U, \kappa \rangle$, we can choose $X = \{x_1,\dots,x_n\}$ and $\{K_1,\dots,K_n\} \subseteq \kappa$ such that $x_i \in K_i$.  Then $G = \langle U, K_1,\dots, K_n \rangle$.  Since every element of $\kappa$ is locally normal, it has only finitely many conjugates under the action of $U$, and so we may assume that $\{K_1,\dots,K_n\}$ is a union of $U$-conjugacy classes by enlarging~$n$. Set $V = \bigcap^n_{i=1}\N_U(K_i)$ and let $L_i = V \cap K_i$. Clearly $|K_i:L_i|$ is finite, and $L_i$ normalises $K_j$ and $L_j$ for all pairs $(i,j)$.  Furthermore, the conjugation action of $U$ preserves the set $\{L_1,\dots,L_n\}$ because $V$ is normal in $U$. Hence $L = \prod^n_{i=1} L_i$ is a normal subgroup of $U$. Since~$L$ is a finite index subgroup of~$K_iL$ for each $i$, it follows that $K_iL$ commensurates $L$ and hence that $\Comm_G(L) \ge \langle U, K_1L,\dots,K_nL\rangle = G$. 
\end{proof}

We now prove a version of Theorem~\ref{thmintro:FixedPointsJoin} and its corollary.

\begin{thm}\label{boxcor}
Let $G$ be in~$\sclass$. %a compactly generated topologically simple \tdlc group.  
{Then, for each $\alpha \in \lnorm(G)$, there is a unique smallest fixed point,~$\alpha^*\in \lnorm(G)^{G}$, that is greater than or equal to~$\alpha$}. Moreover, there is a finite subset $\{g_1,\dots,g_n\}$ of $G$ such that $\alpha^* = \bigvee^n_{i=1}g_i\alpha$.
\end{thm}

\begin{proof}
Choose a compact locally normal subgroup~$K$ of~$G$ such that $[K] = \alpha$ and let $U$ be a compact open subgroup of~$G$.  Then $G = \langle U, \kappa \rangle$, where $\kappa$ is the $G$-conjugacy class of~$K$.  Since $\N_G(K)$ is open, $U$ acts on $\kappa$ by conjugation with finite orbits, and the compactness of~$K$ guarantees that $|M: M \cap U| < \infty$ for all $M \in \kappa$.  The conditions of Lemma~\ref{boxlem} are therefore satisfied and so there is a finite subset $\{K_1,\dots,K_n\} \subseteq \kappa$ and finite index subgroups $L_i$ of $K_i$ such that $L = \prod^n_{i=1} L_i$ is a normal subgroup of $U$ that is commensurated by $G$.  
{In the present setting, the subgroups $L_i$ provided by Lemma~\ref{boxlem} are} 
closed and so $L$ is compact.  Now $\gamma = [L]$ is fixed by the conjugation action of~$G$ on $\lnorm(G)$ and we can express $\gamma$ as
\[ \gamma = \bigvee^n_{i=1}[L_i] = \bigvee^n_{i=1}[K_i] = \bigvee^n_{i=1}g_i\alpha,\]
where $g_1,\dots,g_n$ are elements of $G$ such that $K_i = g_iKg\inv_i$.

Suppose that $\beta \in \lnorm(G)^{G}$ with $\beta \geq \alpha$. Then $\beta = g_i \beta \geq g_i \alpha$ for all $i = 1, \dots, n$ and so~$\beta \geq \gamma$. Hence $\gamma$ is the unique minimal element of the set $\{\beta \in \lnorm(G)^G \; | \; \beta \geq \alpha\}$.
\end{proof}

\begin{cor}\label{cor:FixedPointsGeneration}
Let $G \in \sclass$ such that $G$ satisfies \sur.  Let $H$ be a non-trivial compact locally normal subgroup of $G$.  Then there exist $g_1, \dots, g_n \in G$ such that 
\[
G = \langle g_1Hg\inv_1, g_2Hg\inv_2, \dots, g_nHg\inv_n \rangle.
\]
In particular, if $H$ is topologically finitely generated, then so is $G$.
\end{cor}

\begin{proof}
Since $G$ is topologically simple, we have $G = \overline{\lla H \rra}$.  By Proposition~\ref{prop:TrivialQC:2}, we have $\QZ(G)=\triv$, so $H$ must be infinite.  Applying Lemma~\ref{boxlem}, we produce a set $\{L_1,\dots,L_d\}$ of infinite subgroups of $G$, such that for all $1 \le i \le d$, there exists $g_i \in G$ such that $L_i \le g_iHg\inv_i$, and such that $L = \prod^n_{i=1} L_i$ is a commensurated compact locally normal subgroup of $G$.  Since $G$ has \sur, in fact $L$ is an open subgroup of $G$.  Now since $\lla H \rra$ is dense in $G$ and $G$ is compactly generated,  we can find a finite subset $\{g_{d+1},\dots,g_n\}$ such that
\[
G = \langle g_{d+1}Hg\inv_{d+1}, g_{d+2}Hg\inv_{d+2}, \dots, g_{n}Hg\inv_{n} , L \rangle.
\]
Since $L \le \langle g_iHg\inv_{i} \mid 1 \le i \le d\rangle$, we conclude that the set $\{g_iHg\inv_i \mid 1 \le i \le n\}$ generates $G$ as claimed.

If $H = \overline{\langle X \rangle}$ for some finite set $X$, then $G$ is topologically generated by finitely many conjugates of $X$, so $G$ is topologically finitely generated. 
\end{proof}

\subsection{On the non-existence of virtually abelian locally normal subgroups}\label{sec:noqz}

We would like to investigate (topologically) simple \tdlc groups by means of the Boolean algebras defined in \cite{CRW-Part1}, as we have better control over their structure than we do for the structure lattice as a whole.  Recall that to ensure the centraliser and  local decomposition lattices are indeed Boolean algebras, we needed to assume certain local properties concerning centralisers of locally normal subgroups, the strongest of which was that $G$ should not have any non-trivial compact abelian locally normal subgroups.  Fortunately, this last condition turns out to be true for a class of groups that includes every non-discrete compactly generated topologically simple \tdlc group.

{For the sake of clarity, we shall first state and prove the absence of virtually abelian locally normal subgroups in groups belonging to the class $\sclass$. We will then generalise this to the framework of Hecke pairs.

\begin{thm}\label{thm:noqz} \
Let $G \in \sclass$.  Then $\QZ(G)=\triv$ and $G$ has no non-trivial abelian locally normal subgroups. In particular $G$ is locally C-stable by Proposition~\ref{cstab}.
\end{thm}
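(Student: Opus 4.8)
The plan is to derive both conclusions from Corollary~\ref{cor:TrivialQC:2} and the orbit-join property of Proposition~\ref{boxcor}. The assertion $\QZ(G)=1$ is immediate from Corollary~\ref{cor:TrivialQC:2}, since $G$ is non-discrete; the substance lies in ruling out non-trivial abelian locally normal subgroups, after which Proposition~\ref{cstab} gives local C-stability. I would record at the outset that, because $\QZ(G)=1$ and every finite locally normal subgroup of $G$ lies in the quasi-centre, every non-trivial locally normal subgroup of $G$ is automatically infinite.

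First I would dispose of the commensurated case, which is the model for the contradiction: if $A$ is an infinite abelian locally normal subgroup with $\Comm_G(A)=G$, then $A \le \CC_G(A) \le \QC_G(A)$, whereas $\QC_G(A)=1$ by Corollary~\ref{cor:TrivialQC:2}, forcing $A=1$. The real difficulty is that an arbitrary abelian locally normal subgroup need \emph{not} be commensurated by $G$, and bridging this gap is where the orbit-join property is needed. So I would assume, for contradiction, that $A$ is a non-trivial (hence infinite) abelian locally normal subgroup and set $\alpha=[A]$. Since $G$ is compactly generated and topologically simple, $\Mon(G)=G$ is open and compactly generated, so $G$ is strongly monolithic and Proposition~\ref{boxcor} applies with $\lnorm(G)^{\Mon(G)}=\lnorm(G)^G$. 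Its proof, via Lemma~\ref{boxlem}, produces a locally normal subgroup $L=\prod_{i=1}^n L_i$ that is commensurated by $G$, where each $L_i$ is a finite-index subgroup of a conjugate $g_iAg_i^{-1}$; in particular each $L_i$ is abelian, and the $L_i$ normalise one another, so each is normal in $L$ and $L$ is a product of finitely many abelian normal subgroups.

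The key step is then to observe that $L$ is topologically nilpotent. In any finite continuous quotient of $L$, the images of $L_1,\dots,L_n$ are abelian normal subgroups whose product is the whole quotient, so by Fitting's theorem that quotient is nilpotent of class at most $n$, a bound independent of the quotient; passing to the inverse limit shows $L$ is nilpotent of class at most $n$. As $L$ is non-trivial and nilpotent, its centre $Z(L)$ is non-trivial. But $Z(L)\le \CC_G(L)\le \QC_G(L)$, while $L$ is an infinite locally normal subgroup commensurated by $G$, so $\QC_G(L)=1$ by Corollary~\ref{cor:TrivialQC:2}. This contradiction establishes that no non-trivial abelian locally normal subgroup exists, and Proposition~\ref{cstab} then yields that $G$ is locally C-stable.

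The main obstacle is precisely the passage from the given, possibly non-commensurated, abelian subgroup $A$ to a commensurated subgroup whose internal structure is still rigid enough to force a non-trivial centraliser. The orbit-join property supplies such a subgroup $L$, but only at the cost of replacing ``abelian'' by ``product of finitely many abelian normal subgroups''; the point that makes the argument close is that this structure still entails nilpotency with a \emph{uniform} class bound, so that the clash between ``nilpotent implies non-trivial centre'' and ``infinite commensurated locally normal implies trivial quasi-centraliser'' goes through.
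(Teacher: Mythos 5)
Your proof is correct and follows essentially the same route as the paper: deduce $\QZ(G)=1$ from Corollary~\ref{cor:TrivialQC:2}, use Lemma~\ref{boxlem}/Proposition~\ref{boxcor} to replace the abelian locally normal subgroup by a commensurated locally normal product $L=\prod_{i=1}^n L_i$ of abelian subgroups normalising each other, invoke Fitting's theorem to get $\Z(L)>1$, and contradict the triviality of $\QC_G(L)$ from Corollary~\ref{cor:TrivialQC:2}. Your extra care in justifying Fitting's theorem for profinite groups via the uniform class bound in finite quotients is a detail the paper leaves implicit, but the argument is the same.
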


In particular, we recover  \cite[Theorem 2.2]{Willis}, which asserts that the compact open subgroups of $G$ cannot be soluble. 

\begin{rem}
It is shown in \cite{Willis} that there are non-discrete topologically simple \tdlc groups whose compact open subgroups are virtually abelian, so the condition of compact generation is essential in Theorem~\ref{thm:noqz}.
\end{rem}

\begin{proof}[Proof of Theorem~\ref{thm:noqz}]
That $\QZ(G)=\triv$ has already been proved in Proposition~\ref{prop:TrivialQC:2}. Let now $K$ be a non-trivial abelian locally normal subgroup. Then $\overline{K}$ is also an abelian locally normal subgroup, so we may assume that $K$ is closed.  Moreover, $K$ is non-discrete since $\QZ(G)=\triv$, so by intersecting $K$ with a compact open subgroup, we may assume that $K$ is compact. By Theorem~\ref{boxcor}, the group $G$ has a commensurated compact locally normal subgroup $L$ of the form $L = \prod_{i=1}^n L_i$, where the $L_i$ are locally normal, abelian (in fact they are each conjugate to an open subgroup of $K$), and normal in $L$. By Fitting's theorem, it follows that $L$ is nilpotent; in particular $\Z(L) > 1$. On the other hand $L$ is an infinite, compact, commensurated, locally normal subgroup of $G$, and thus has a trivial quasi-centraliser by Proposition~\ref{prop:TrivialQC:2}. This is a contradiction. 
\end{proof}

We have now proved Theorem~\ref{thmintro:QZ}; we can now also deduce Theorem~\ref{thmintro:algebraicLN} from Theorem~\ref{localprimetopsimp}, since the only missing ingredient was the fact that all groups in $\sclass$ are locally C-stable.

The conclusion of Theorem~\ref{thm:noqz} may be extended to compactly generated topologically  \emph{characteristically} simple \tdlc groups.

%Recall that a subgroup of a locally compact group is called \textbf{locally elliptic}\index{locally elliptic subgroup} if it is the directed union of its compact subgroups.

\begin{lem}[See for instance {\cite[Lemma~8.2.3]{RZ}}]\label{lem:profchar}
Let $G$ be a profinite group that is topologically characteristically simple.  Then $G$ is a direct product of finite simple groups.  In particular, $\QZ(G)$ is dense in $G$.
\end{lem}

\begin{prop}\label{noqzchar}Let $G$ be a compactly generated, topologically characteristically simple \tdlc group.  Then exactly one of the following holds:
\begin{enumerate}[(i)]
\item $G$ is finite.
\item $G$ is countably infinite and discrete.
\item $G$ is an infinite profinite group and $\QZ(G)$ is dense.
\item $G$ is locally C-stable and $\QZ(G)=\triv$.
\end{enumerate}
\end{prop}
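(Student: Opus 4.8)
The plan is to distinguish cases according to whether $G$ is discrete and whether it is compact, matching these to the four outcomes, and to feed the compact and the topologically simple pieces into Corollary~\ref{profchar} and Theorem~\ref{thm:noqz} respectively. A device I will use repeatedly is the following: \emph{a compactly generated topologically characteristically simple \tdlc group possessing a non-trivial compact normal subgroup is compact.} Indeed, compact normal subgroups are closed under products and so form a directed family; the closure $R$ of their union is a non-trivial closed characteristic subgroup, hence equals $G$. Fixing a compact open subgroup $U$ and covering a compact generating set by the directed family of open sets $R_iU$, compactness produces a single member $R_{i_0}$ with $G=\langle R_{i_0},U\rangle=R_{i_0}U$, which is compact.

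The degenerate cases are then immediate. If $G$ is discrete it is finitely generated, hence finite (case~(i)) or countably infinite (case~(ii)). If $G$ is compact and non-discrete it is an infinite profinite group, and Corollary~\ref{profchar} gives that $\QZ(G)$ is dense, which is case~(iii). These outcomes are pairwise incompatible and each is incompatible with local C-stability, since a discrete group has $\QZ(G)=G$ and a non-discrete compact one has dense $\QZ(G)$, whereas case~(iv) requires $\QZ(G)=1$. It remains to show that a group $G$ which is neither discrete nor compact is locally C-stable.

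So assume $G$ is neither discrete nor compact; by Proposition~\ref{cstab} it suffices to prove $\QZ(G)=1$ and that $G$ has no non-trivial abelian locally normal subgroup. By the device, $G$ has no non-trivial compact normal subgroup. \emph{Granting $\QZ(G)=1$} (the crux, discussed below), there is also no non-trivial discrete normal subgroup, as any such subgroup lies in $\QZ(G)$; hence Proposition~\ref{cmsoc} applies, yielding finitely many minimal closed normal subgroups $M_1,\dots,M_n$. These pairwise commute and meet trivially, and the closure of the subgroup they generate is characteristic and non-trivial, hence equals $G$, so $G=\overline{M_1\cdots M_n}$. Each $M_i$ is topologically simple: a closed normal subgroup of $M_i$ is normalised by $M_i$ and by every $M_j$, hence by $G$, so is trivial or all of $M_i$ by minimality. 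By Corollary~\ref{cor:MonolithicQuotient} the quotient $S_i:=G/\CC_G(M_i)$ is monolithic with monolith $\overline{M_i\CC_G(M_i)}/\CC_G(M_i)=G/\CC_G(M_i)$, i.e. $S_i$ is topologically simple; being a non-discrete compactly generated quotient, $S_i\in\sclass$, so Theorem~\ref{thm:noqz} applies to it. Now $\Z(G)=\bigcap_i\CC_G(M_i)$ is characteristic, and $G$ is non-abelian (an abelian $M_i$ would be a finite simple, hence non-trivial compact normal, subgroup), so $\Z(G)=1$. The image in $S_i$ of an abelian locally normal subgroup $A\le G$ is abelian locally normal in $S_i$, hence trivial by Theorem~\ref{thm:noqz}; thus $A\subseteq\bigcap_i\CC_G(M_i)=1$, and $G$ is locally C-stable.

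The principal obstacle is therefore the crux used above: \emph{ruling out a dense quasi-centre} for such a $G$. Since $\overline{\QZ(G)}$ is characteristic, it is $\{1\}$ or $G$, and one must exclude the latter; the natural route is the auxiliary fact that a compactly generated \tdlc group with dense quasi-centre has a compact open normal subgroup, whence the device would force $G$ compact, a contradiction. (One may instead try to extract a non-trivial compact normal subgroup directly from the dense quasi-centre via the action on a Cayley--Abels graph.) Finally, for the \emph{moreover}: the locally elliptic radical of $G$ is closed and characteristic, so equals $\{1\}$ or $G$; were it $G$, then $G$, being compactly generated and locally elliptic, would be compact by the directed-union argument (excluded in case~(iv)), while in case~(ii) a finitely generated locally finite group is finite (excluded). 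Hence the radical is trivial, so $G$ has no non-trivial locally elliptic, in particular no non-trivial compact, normal subgroup.
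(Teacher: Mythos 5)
Your overall architecture is viable, and where it is complete it genuinely differs from the paper's proof: the paper invokes \cite[Corollary D]{CM} as a black box to decompose $G$ into finitely many topologically simple closed normal subgroups $M_1,\dots,M_n$ with dense product (a result needing no prior control of discrete or compact normal subgroups), whereas you manufacture that decomposition internally from Proposition~\ref{cmsoc} and Corollary~\ref{cor:MonolithicQuotient}, at the price of first having to exclude non-trivial discrete and compact normal subgroups. Your compactness ``device'' correctly disposes of the compact ones, the simplicity of the $M_i$ and of the quotients $S_i=G/\CC_G(M_i)$ is argued correctly, the use of Theorem~\ref{thm:noqz} to kill abelian locally normal subgroups matches the paper's own endgame, and your treatment of the ``moreover'' clause via the locally elliptic radical is essentially the paper's.

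The genuine gap is exactly the point you yourself flag as ``the crux'': you never prove $\QZ(G)=1$, i.e.\ you never exclude a dense quasi-centre. You only assert that ``the natural route is the auxiliary fact that a compactly generated \tdlc group with dense quasi-centre has a compact open normal subgroup,'' without proving that fact, and then hedge with a second unexecuted alternative. This is not a peripheral omission: in your dependency structure, $\QZ(G)=1$ is a prerequisite for invoking Proposition~\ref{cmsoc} at all (it is what rules out discrete normal subgroups), so the entire non-compact, non-discrete case is conditional on it. The auxiliary fact is true, and it is in fact Lemma~\ref{noqzlem} of the paper: taking $U$ compact open and $X$ a finite subset of $\QZ(G)$ with $G=\langle UX\rangle$ (which exists since $\QZ(G)$ is dense and $G$ is compactly generated), the subgroup $V=\bigcap_{x\in X}\CC_U(x)$ has finite index in $U$, and the core of $V$ in $U$ is then a compact open subgroup normalised by $U$ and centralised by $X$, hence normal in $G$; the paper uses this lemma for precisely this purpose in the proof of Theorem~\ref{abssim}. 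So your gap is fillable by a three-line argument, but as written the central difficulty of the proposition is deferred rather than resolved. Note how the paper's own proof sidesteps the issue entirely: since \cite[Corollary D]{CM} applies with no hypotheses on normal subgroups, the simple pieces $M_i$ come first, and $\QZ(G)=1$ is then derived as a \emph{consequence}, by pushing a quasi-central element into each simple quotient $G/\CC_G(M_i)$ and applying Corollary~\ref{cor:TrivialQC:2}; in your route it must instead be an input, which is why the missing step carries all the weight.
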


\begin{proof}
If $G$ is compact (which is to say that $G$ is profinite, since we have assumed $G$ is totally disconnected), it follows that $\QZ(G)$ is dense by Lemma~\ref{lem:profchar}.  If $G$ is infinite and discrete, then it is finitely generated, hence countable.  From now on we may assume $G$ is neither compact nor discrete.

By \cite[Corollary D]{CM}, there is a finite set of topologically simple closed normal subgroups $\{M_1,\dots,M_n\}$, such that the product $\prod^n_{i=1}M_i$ is a dense subgroup of $G$, and $M_i \cap M_j = \triv$ for $i$ and $j$ distinct.  Let $C_i = \CC_G(M_i)$; note that $C_i$ contains $M_j$ for all $j \not=i$.  We claim that $\Z(G)=\triv$: otherwise, $G$ would be abelian and the $M_i$ would have to be cyclic of prime order, but then $G$ would itself be finite and hence discrete, a case we have already removed.  Thus $M_i \cap C_i = \triv$ for $1 \le i \le n$.  It follows that $G/C_i$ is topologically simple, since every non-trivial closed normal subgroup of $G/C_i$ has non-trivial intersection with $M_iC_i/C_i$ and is therefore dense in $G/C_i$.  Thus $G/C_i$ is a locally C-stable \tdlc group with trivial quasi-centre by Theorem~\ref{thm:noqz}.  Since the properties of being quasi-central and being abelian locally normal pass to quotients, it follows that $C_i$ contains $\QZ(G)$ and all abelian locally normal subgroups of $G$ for all $i$.  Since $K = \bigcap^n_{i=1} C_i$ is a characteristic subgroup of $G$, it is trivial.  We conclude that the quasi-centre and all abelian locally normal subgroups of $G$ are trivial, hence $G$ is locally C-stable by Proposition~\ref{cstab}.
\end{proof}

In the case of an arbitrary compactly generated \tdlc group $G$, as a result of Theorem~\ref{thm:noqz}, there is an interesting interaction between the \textbf{$\VA$-regular radical}\index{A-regular radical@$\VA$-regular radical}\index{RA(G)@$\R_\VA(G)$} of $G$, denoted by $\R_\VA(G)$ (see \cite[Theorem~III]{CRW-Part1}) and the quotients of $G$ occurring in  Theorem~\ref{cmquotthm} (which was borrowed from  \cite[Theorem~A]{CM}).

\begin{cor}Let $G$ be a compactly generated \tdlc group.
\begin{enumerate}[(i)]
\item Let $N$ be a cocompact normal subgroup of $G$ and $R$ be a closed normal subgroup of $N$ such that $N/R$ is non-discrete and topologically simple.  Then ${N \cap \R_\VA(G) \le R}$.  Defining $N_2 = \overline{N\R_\VA(G)}$ and $R_2 = \{g \in N_2 \mid  \forall h \in N: \; [g,h] \in R\}$, we have: $N_2/\R_\VA(G)$ is a cocompact normal subgroup of $G/\R_\VA(G)$; $R_2$ is a closed normal subgroup of $N_2$ which contains $\R_\VA(G)$; $N_2/R_2$ is non-discrete, topologically simple and isomorphic to a quotient of $G/\R_\VA(G)$; and the homomorphism $\phi: N/R \rightarrow N_2/R_2$ given by $\phi(gR) = gR_2$ is injective and has dense image. All non-discrete simple quotients of cocompact normal subgroups of $G$ are thus accounted for by the quotient $G/\R_\VA(G)$.
\item Suppose $G$ is non-compact and  $\VA$-regular (\ie $G = \R_\VA(G)$).  Then $G$ has an infinite discrete quotient.\end{enumerate}
\end{cor}

\begin{proof}
(i) Since $N$ is cocompact in $G$, it is compactly generated, and hence so is the topologically simple quotient $N/R$. From Theorem~\ref{thm:noqz}, we deduce that $\R_\VA(N/R)=\triv$, so $\R_\VA(N) \leq R$. By \cite[{Proposition~6.15}]{CRW-Part1}, we have $ \R_\VA(G) \cap N \leq \R_\VA(N)$, and hence ${N \cap \R_\VA(G) \le R}$.

Let $N_2 = \overline{N\R_\VA(G)}$ and $R_2 = \{g \in N_2 \mid \forall h \in N: \; [g,h] \in R\}$.  Then $N_2$ is cocompact in $G$, since it contains $N$, and so $N_2/\R_\VA(G)$ is cocompact in $G/\R_\VA(G)$.  Since the centre of $N/R$ is trivial, we see that $N \cap R_2 = R$, and hence that the homomorphism $\phi$ is injective.  On the other hand, we have $[N,\R_\VA(G)] \le N \cap \R_\VA(G) \le R$, and so $\R_\VA(G) \le R_2$ and $\phi$ has dense image.  It remains to show that $N_2/R_2$ is topologically simple. Let $K/R_2$ be a proper closed normal subgroup of $N_2/R_2$.  Then $K$ does not contain $N$, and it follows that $\phi\inv(K/R_2)$ is a proper closed normal subgroup of $N/R$ and, by topological simplicity of $N/R$, must be equal to $\triv$.  Thus $K/R_2$ and $NR_2/R_2$ are normal subgroups of $N_2/R_2$ with trivial intersection.  We conclude that $K/R_2$ centralises $NR_2/R_2$, that is, $[g,h] \in R_2$ for all $g \in K$ and $h \in N$.  Since $[g,h] \in N$ as well in this case, in fact $[g,h] \in N \cap R_2 = R$ and so $K \le R_2$.  Thus $K/R_2 = \triv$, as required.

(ii) Follows from (i) and Theorem~\ref{cmquotthm}.
\end{proof}

We now proceed to generalise Theorem~\ref{thm:noqz} to the abstract framework of Hecke pairs. }

\begin{thm}\label{noqz:Hecke}
Let $(G,U)$ be a Hecke pair such that $U < G$ and such that $U$ is infinite.  Suppose that $G$ is generated by finitely many cosets of $U$, and also that $G = \langle U,N \rangle$ for any non-trivial normal subgroup $N$ of $G$.

{Then the following assertions hold for any non-trivial subgroup $K$   of $U$ such that $\N_U(K)$ has finite index in $U$:
\begin{enumerate}[(i)]
\item $K$ is not virtually abelian.

\item  If $K$ is commensurated by $G$, then $\CC_G(K)=\triv$.
\end{enumerate}
}
\end{thm}

\begin{proof}
Let $K$ be a non-trivial subgroup of $U$ such that $|U:\N_U(K)|$ is finite.

Let us first prove (ii) in case $K$ is infinite. Suppose for a contradiction that  $C = \CC_G(K)>1$.  
Then $G$ is generated by $U$ together with a finite set $\{g_1Cg\inv_1,\dots,g_nCg\inv_n\}$ of conjugates of $C$, and without loss of generality we may assume that $g_1 = 1$.  Moreover $U$ has finite orbits on the set of conjugates of $K$, since $\N_U(K)$ is a commensurated subgroup of $G$ that has finite index in $U$.  Hence we may take $\{g_1Kg\inv_1,\dots,g_nKg\inv_n\}$ to be a $U$-invariant set.  Consequently the group $L = \bigcap^n_{i=1}g_iKg\inv_i$ is normal in $G$, since it is normalised by $U$ and centralised by $\langle g_1Cg\inv_1,\dots,g_nCg\inv_n \rangle$.  Since  { $L \leq U$ (because $L \leq  g_1 Kg_1^{-1} = K \leq U$)}, we conclude that $L=1$, so $K$ cannot have been an infinite commensurated subgroup of $G$.  This proves (ii)  in the case where $K$ is infinite.

If $K$ is finite, we see that $L = \CC_U(K)$ has finite index in $U$ and is thus an infinite commensurated subgroup of $G$. What we have just proved ensures that $\CC_G(L) = \triv$, so that $K = \triv$, a contradiction. This completes the proof of (ii).

This implies  that $\CC_G(V)=\triv$ for every subgroup $V$ of $G$ that is commensurate to $U$.

We now prove (i), and assume thus that $K$ is virtually abelian. As noted previously, part (ii) implies that $K$ must be infinite. Moreover, the hypotheses imply that $U$ has trivial core in $G$. In particular $U$ is residually finite, so there is an injective map $\pi: U \rightarrow \hat{U}$ from $U$ to its profinite completion.  We see that $\overline{\pi(K)}$ has an abelian closed subgroup of finite index; by a compactness argument, there exists an open normal subgroup $W$ of $\hat{U}$ such that $\overline{\pi(K)} \cap W$ is abelian.  The group $K' = \pi\inv(W) \cap K$ is then an abelian subgroup of finite index in $K$, such that $K'$ is normalised by $\N_U(K)$. Therefore, in order to prove (i), we may assume henceforth that $K$ is abelian and derive a contradiction.  We have $G = \langle U, \kappa \rangle$ where $\kappa$ is the set of $G$-conjugates of $K$.  By Lemma \ref{boxlem}, there is a set $\{L_1,\dots,L_n\}$ of subgroups of $G$, each a finite index subgroup of a conjugate of $K$, such that $L_i$ and $L_j$ normalise each other for all all pairs $(i,j)$ and $L = \prod^n_{i=1} L_i$ is an infinite normal subgroup of $U$ that is commensurated by $G$.  Now each $L_i$ is abelian, so $L$ is nilpotent by Fitting's theorem, and hence $\CC_G(L) \ge \Z(L) > 1$.  This is impossible, as we have already shown that any infinite normal subgroup of $U$ that is commensurated by $G$ must have trivial centraliser.
\end{proof}

It is straightforward to deduce Theorem~\ref{thm:noqz} above from Theorem~\ref{noqz:Hecke}, thereby providing an alternative proof of the former. The following particularisation  of  Theorem~\ref{noqz:Hecke}, whose proof is straightforward, is another special case of independent interest:

\begin{cor}\label{noqzcor}
Let $G$ be a finitely generated simple group with an infinite commensurated subgroup $U$.  Then $\CC_G(U)=\triv$, and $U$ does not have any virtually abelian normal subgroups except for the trivial group.
\end{cor}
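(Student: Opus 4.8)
The plan is to obtain this corollary as an immediate consequence of Theorem~\ref{noqz} applied to the Hecke pair $(G,U)$. Since Theorem~\ref{noqz} already isolates exactly the two conclusions we want, the only work is to check that its four hypotheses hold for $(G,U)$, and then to feed in the correct choices of the auxiliary subgroup $K$.

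First I would dispose of the degenerate possibility $U=G$: in that case $\CC_G(U)=\Z(G)$, which is trivial because an infinite simple group has trivial centre (a simple group with non-trivial centre is cyclic of prime order, hence finite), and the only normal subgroups of $U=G$ are $1$ and $G$, the latter being infinite simple and therefore not virtually abelian; so both assertions hold and I may henceforth assume $U<G$. Now I verify the hypotheses of Theorem~\ref{noqz}. Since $U$ is commensurated by $G$, the pair $(G,U)$ is a Hecke pair with $U<G$ infinite. Because $G$ is finitely generated, say $G=\langle x_1,\dots,x_m\rangle$, the finitely many cosets $U, Ux_1,\dots,Ux_m$ generate $G$ (as the subgroup they generate contains $U$ and each $x_i$, hence all of $G$), so $G$ is generated by finitely many cosets of $U$. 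Finally, as $G$ is simple its only non-trivial normal subgroup is $G$ itself, whence $\langle U,N\rangle=G$ for every non-trivial normal subgroup $N$ of $G$; so the last hypothesis holds trivially.

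With the hypotheses in place, the two conclusions drop out by specialising $K$. For $\CC_G(U)=1$, I would apply Theorem~\ref{noqz}(ii) with $K=U$: this $K$ is non-trivial since $U$ is infinite, it satisfies $\N_U(K)=U$ which has finite index (namely $1$) in $U$, and it is commensurated by $G$ by hypothesis; hence the theorem yields $\CC_G(U)=1$. For the statement on normal subgroups, let $N \lhd U$ be any non-trivial normal subgroup; then $\N_U(N)=U$ has finite index in $U$, so Theorem~\ref{noqz}(i) applied with $K=N$ shows that $N$ is not virtually abelian. Equivalently, $U$ has no non-trivial virtually abelian normal subgroup.

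I expect no genuine obstacle here — the argument is pure bookkeeping, exactly as the phrase ``whose proof is straightforward'' anticipates. The only two points that warrant a moment's care are the translation of finite generation of $G$ into the condition that $G$ is generated by finitely many cosets of $U$ (where one must remember to count the trivial coset $U$ itself among the generating cosets), and the harmless dichotomy $U=G$ versus $U<G$ needed because Theorem~\ref{noqz} is stated for a proper subgroup.
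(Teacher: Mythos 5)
Your proof is correct and follows exactly the route the paper intends: Corollary~\ref{noqzcor} is presented as a straightforward particularisation of Theorem~\ref{noqz}, obtained precisely by checking its hypotheses for the Hecke pair $(G,U)$ (simplicity giving $\langle U,N\rangle = G$, finite generation giving generation by finitely many cosets) and then specialising $K$ to $U$ for the centraliser statement and to an arbitrary non-trivial normal subgroup of $U$ for the virtually abelian statement. Your separate treatment of the degenerate case $U=G$ is harmless extra bookkeeping.
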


\subsection{Five possible types of structure lattice}\label{sec:types}

Our aim in this subsection is to prove Theorem~\ref{thmintro:types}.  We first make an easy observation about the structure of $\lnorm(G) \smallsetminus \{0\}$.

An \textbf{upper subset}\index{upper subset} of a poset $X$ is a subset $Y$ such that for all $y \in Y$ and $x \in X$, if $y \leq x$ then $x \in Y$.  A \textbf{filter}\index{filter} of a lattice is an upper subset that is also closed under meets.

\begin{lem}\label{lem:lnorm_filter}
Let $G$ be a non-discrete locally C-stable \tdlc group and let $\mcF := \lnorm(G) \smallsetminus \{0\}$.  Then $\mcF$ is closed under meets  %(equivalently, $\mcF$ is a filter in $\lnorm(G)$) 
if and only if $\lcent(G) = \{0,\infty\}$.
\end{lem}

\begin{proof}
$\mcF$ is clearly an upper subset of  $\lnorm(G)$, so it is a filter if and only if it is closed under meets.

If $\lcent(G)$ is non-trivial, then there exists an infinite compact locally normal subgroup $K$ of $G$ such that $\CC_G(K)$ is infinite.  Then $\alpha = [K]$ and $\beta = [\CC_G(K)]$ are elements of $\mcF$ such that $[K] \wedge [\CC_G(K)] = [\Z(K)] = 0$, recalling that $G$ has no non-discrete abelian locally normal subgroups by Proposition~\ref{cstab}.  Thus $\mcF$ is not closed under meets.  Conversely, if there exist $\alpha, \beta \in \mcF$ such that $\alpha \wedge \beta = 0$, then we have $0 < \beta \le \alpha^\bot < \infty$, so $\alpha^\bot$ is an element of $\lcent(G)$ other than $0$ and $\infty$.
\end{proof}

For $G \in \sclass$, the set of non-zero \emph{fixed points} of $G$ acting on $\lnorm(G)$ forms a filter, even if $\lcent(G)$ is non-trivial.

\begin{lem}\label{lnormfix}Let $G$ be a non-discrete \tdlcsc group.
\begin{enumerate}[(i)]
\item If $G$ is topologically simple, then $\lnorm(G)^G \smallsetminus \{0\}$ is an upper subset of $\lnorm(G)$.
\item If $G$ is monolithic, locally C-stable and has trivial quasi-centre, then $\lnorm(G)^G \smallsetminus \{0\}$ is closed under meets.
\end{enumerate}
Hence if $G$ is topologically simple and locally C-stable, then $\lnorm(G)^G \smallsetminus \{0\}$ is a filter on $\lnorm(G)$.\end{lem}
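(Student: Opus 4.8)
The plan is to establish (i) and (ii) separately and then combine them, the content of the final clause being simply that an upper subset closed under finite meets is a filter.

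For (i), fix $\alpha \in \lnormf(G)$ and $\beta \in \lnorm(G)$ with $\alpha \le \beta$. Since $\beta \ge \alpha > 0$ it is non-zero, so everything reduces to showing that $\beta$ is fixed by $G$, i.e. that some representative of $\beta$ is commensurated. I would choose locally normal representatives $H \in \alpha$ and $K \in \beta$ with $H \le K$, and, replacing $K$ by $K \cap \N_G(H)$ (a finite-index, hence commensurate, subgroup of $K$ since $\N_G(H)$ is open and $K$ is compact), arrange $H \unlhd K$. Because $[H]$ is $G$-fixed we have $\Comm_G(H) = G$, and because $G$ is topologically simple with $H \neq 1$, the abstract normal closure $D = \lla H \rra$ is dense in $G$. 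As $\N_G(K)$ is open and $D$ is dense, $G = D\,\N_G(K)$, and for $g = dv$ with $d \in D$, $v \in \N_G(K)$ one has $gKg^{-1} = dKd^{-1}$; hence $K$ is commensurated by $G$ if and only if it is commensurated by $D$. Since $\Comm_G(K)$ is a subgroup containing the open subgroup $\N_G(K)$, it is open and therefore closed, so it suffices to show that $\Comm_G(K)$ is dense, equivalently that $D \le \Comm_G(K)$.

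The main obstacle is exactly this last inclusion. What comes for free is only that $\Comm_G(K)$ contains a finite-index subgroup of every conjugate of $H$: for $g \in G$ the group $gHg^{-1} \cap K \supseteq gHg^{-1} \cap H$ has finite index in $gHg^{-1}$ (as $H$ is commensurated), and $gHg^{-1} \cap K \le K \le \Comm_G(K)$. Upgrading this to $D = \lla H \rra \le \Comm_G(K)$ — equivalently, producing a non-trivial abstract normal subgroup of $G$ inside $\Comm_G(K)$, whose closure is then all of $G$ by topological simplicity — is where the real difficulty lies. I expect to carry it out either by analysing the normal core $\bigcap_{g \in G}\Comm_G(gKg^{-1})$, which is a closed normal subgroup and hence trivial or everything, and ruling out the trivial case, or by a compactness/minimality argument on the family of finite-index subgroups above combined with Lemma~\ref{lem:lnormf:intersect}.

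For (ii), assume $G$ is monolithic and locally C-stable, write $M = \Mon(G)$, and note $\QZ(G)=1$, so every non-trivial locally normal subgroup is infinite. I would first record the auxiliary fact that any infinite commensurated locally normal $A$ meets $M$ infinitely: pick a compact open $U$ with $A \unlhd U$; then $A$ and $U \cap M$ normalise each other, so if $A \cap M = 1$ then $[A, U \cap M] \le A \cap M = 1$, giving $A \le \CC_G(U \cap M) \le \QC_G(M) = 1$ by Corollary~\ref{cor:TrivialQC}, a contradiction, whence $A \cap M \neq 1$ and so is infinite. Now take $\alpha, \beta \in \lnormf(G)$. The meet $\alpha \wedge \beta$ is again commensurated: for representatives $A, B \unlhd U$ one has $[A \cap B] = \alpha \wedge \beta$ and $g(A \cap B)g^{-1} = gAg^{-1} \cap gBg^{-1}$ is commensurate to $A \cap B$. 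It remains to see $\alpha \wedge \beta \neq 0$. If $\alpha \wedge \beta = 0$, choose $A, B \unlhd U$ with $A \cap B$ finite, hence trivial (as $\QZ(G)=1$); then $[A,B] \le A \cap B = 1$, so $A \le \CC_G(B) \le \CC_G(B \cap M) \le \QC_G(B \cap M)$. But $B \cap M$ is an infinite commensurated locally normal subgroup of the monolith, so $\QC_G(B \cap M) = 1$ by Corollary~\ref{cor:TrivialQC}, while $A \cap M \le A$ is infinite — a contradiction. Hence $\alpha \wedge \beta \in \lnormf(G)$. Finally, if $G$ is both topologically simple and locally C-stable then it is monolithic (with $M = G$), so (i) and (ii) apply together: $\lnormf(G)$ is an upper subset of $\lnorm(G)$ closed under finite meets, which is precisely the assertion that it is a filter on $\lnorm(G)$.
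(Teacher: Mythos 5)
Your part (ii) and the concluding deduction are sound: observing that $B\cap M$ is an infinite locally normal subgroup of the monolith commensurated by $G$, and killing the putative centraliser with Corollary~\ref{cor:TrivialQC}, is in substance the paper's own argument (the paper first meets everything with $\mu=[M\cap U]$, but the content is identical). The genuine gap is in part (i), and you acknowledge it yourself: after the correct reduction to showing $D=\lla H\rra\le \Comm_G(K)$, you note that what comes for free is only that a \emph{finite-index} subgroup of each conjugate of $H$ lies in $\Comm_G(K)$ (and indeed commensuration does not pass up from finite-index subgroups), and you then defer the decisive step to an argument you "expect to carry out". Neither proposed remedy is executed, and the second one (compactness plus Lemma~\ref{lem:lnormf:intersect}) does not visibly engage the problem. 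As written, (i) is a plan, not a proof.

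The single missing observation — which also makes your first remedy (the normal core) work — is this: every $\gamma\ge\alpha$ admits a locally normal representative containing $H$ \emph{itself}. Concretely, if $L$ is any locally normal representative of $\gamma$, then $H\cap L$ has finite index in $H$, and $L':=H\cdot(L\cap \N_G(H))$ is a group (as $L\cap\N_G(H)$ normalises $H$), compact, commensurate to $L$ (since $L\cap\N_G(H)$ has finite index in $L$ and $H\cap L$ has finite index in $H$), and locally normal (its normaliser contains the open subgroup $\N_G(H)\cap\N_G(L)$). Since elements of a group normalise that group, $H\le L'$ gives $H\le \Stab_G(\gamma)$. Now apply this to every element of the $G$-invariant upper set $\{\gamma\mid\gamma\ge\alpha\}$: for each $g\in G$ we have $g\beta\ge g\alpha=\alpha$, hence $H\le\Stab_G(g\beta)=g\Comm_G(K)g^{-1}$. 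Therefore the normal core $\bigcap_{g\in G}g\Comm_G(K)g^{-1}$ contains $H\neq 1$; it is normal and closed (each $g\Comm_G(K)g^{-1}$ is open, hence closed), so topological simplicity forces it to be all of $G$, i.e. $\Comm_G(K)=G$ and $\beta$ is fixed. This is exactly the paper's proof of (i), phrased there as: the kernel of the $G$-action on $\{\gamma\mid\gamma\ge\alpha\}$ is a closed normal subgroup containing a representative of $\alpha$ (their $K$ is your $H$), hence equals $G$.
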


\begin{proof}Let $\mcF = \lnorm(G)^G \smallsetminus \{0\}$.

(i)
Let $K$ be a compact locally normal subgroup of $G$ such that $[K] \in \mcF$.  Then $G$ acts on the set $\mcR$ of elements of $\lnorm(G)$ above $[K]$; the kernel $R$ of this action is closed and normal in $G$.  Moreover $R$ contains $K$, since for every $\beta > [K]$, there is a representative $L$ of $\beta$ containing $K$.  Hence $R \ge \overline{\lla K \rra}$.  Since $G$ is topologically simple, we conclude that $R = G$, so $G$ acts trivially on $\mcR$.  Thus $\beta > [K]$ implies $\beta \in \mcF$.

(ii) 
Note that the hypotheses ensure $\QZ(H)=\triv$ for every non-trivial closed locally normal subgroup $H$ of $G$ by Proposition~\ref{cstab}.  We suppose that $\alpha, \beta \in \mcF$ are such that $\alpha \wedge \beta = 0$; it suffices to derive a contradiction.  

Let $M$ be the monolith of $G$ and let $\mu = [M] \in \lnorm(G)$.  Then $\mu \in \lnorm(G)^G$, so $\gamma \wedge \mu \in \lnorm(G)^G$ for all $\gamma \in \mcF$. Moreover we have $\gamma \wedge \mu > 0$ for all $\gamma \in \mcF$, since otherwise $M$ would have a non-trivial quasi-centraliser in $G$, contradicting Proposition~\ref{prop:TrivialQC}.  In particular, we infer that $\alpha \wedge \mu$ and $\beta \wedge \mu$ both belong to $\mcF$.  Since $\alpha \wedge \beta = 0$, there are infinite compact subgroups $K$ and $L$ of $M$ such that $K \in \alpha \wedge \mu$, $L \in \beta \wedge \mu$, $K$ and $L$ normalise each other and $K \cap L = \triv$, so $L \le \CC_M(K)$.  The desired contradiction follows by  {Proposition~\ref{prop:TrivialQC}}.
In particular $\alpha \wedge \beta  > 0$, so $\alpha \wedge \beta \in \mcF$.
\end{proof}

We immediately derive an important property of the action of $G$ on $\lcent(G)$.

\begin{cor}\label{lnormfix:lcent}
Let $G$ be a topologically simple, locally C-stable \tdlcsc group.  Then $\lcent(G)^{G} = \{0,\infty\}$.  
%In other words, if $\mcA$ is a $G$-invariant subalgebra of $\lcent(G)$ such that $|\mcA|>2$, then $G$ acts faithfully on $\mcA$.
\end{cor}

\begin{proof}
Let $\alpha \in \lcent(G)^{G}$ and suppose that $\alpha \not\in \{0,\infty\}$.  Since the map $\bot \colon \lcent(G) \rightarrow \lcent(G)$ is invariant under the $G$-action, it follows that $\alpha^\bot$ is also fixed by $G$; at the same time,  $\alpha^\bot \not\in \{0,\infty\}$.  Thus $\alpha$ and $\alpha^\bot$ are non-zero elements of $\lnorm(G)^G$.  It follows from Lemma~\ref{lnormfix}(ii) that $\alpha \wedge \alpha^\bot > 0$; however, $\alpha \wedge \alpha^\bot = 0$ by Theorem~\ref{thm:Boolean-part1}.
%
%Now let $\mcA$ is a $G$-invariant subalgebra of $\lcent(G)$ such that $|\mcA|>2$.  Then $\mcA \not\subseteq \lcent(G)^G$, so $G$ acts non-trivially on $\mcA$.  Since $G$ is topologically simple, in fact $G$ acts faithfully on $\mcA$.
\end{proof}

Theorem~\ref{thmintro:types} is now straightforward to prove.

\begin{proof}[Proof of Theorem~\ref{thmintro:types}]
It is clear that the five types are mutually exclusive.

Let $G \in \sclass$.  Since $G$ is non-discrete, we have $|\lnorm(G)| \ge 2$.  By Theorem~\ref{thm:noqz}, we have $\QZ(G)=\triv$ and $G$ has no non-trivial abelian locally normal subgroup, so $\lcent(G)$ and $\ldlat(G)$ are Boolean algebras by Theorem~\ref{thm:Boolean-part1}.

Suppose $|\lnorm(G)| = 2$.  Then $\lnorm(G) = \{0,\infty\}$, so every compact locally normal subgroup of $G$ is finite or open.  Moreover, $G$ has no non-trivial finite locally normal subgroups since $\QZ(G)=\triv$.  In particular, given any compact open subgroup $U$ of $G$, then every non-trivial closed locally normal subgroup of $U$ is open, in other words $U$ is h.j.i.  Thus $G$ is locally h.j.i.  From now on we assume $|\lnorm(G)| > 2$.

Suppose that $\lcent(G) = \{0,\infty\}$.  Then $\mcF$ is a filter by Lemma~\ref{lem:lnorm_filter}.  If $\mcF$ is a principal filter then the least element of $\mcF$ must be fixed by $G$, by uniqueness; consequently by Lemma~\ref{lnormfix}, $G$ acts trivially on $\mcF$ and hence on $\lnorm(G)$.  The fact that $G$ has non-trivial fixed points on $\lnorm(G)$ ensures that $G$ does not satisfy \sur, so by Theorem~\ref{baireabs}, $G$ is not abstractly simple.  Thus $G$ is of atomic type in this case.   If instead $\mcF$ is a non-principal filter, then $G$ is of NPF type.

Suppose that $\lcent(G)$ is non-trivial.  If $\ldlat(G) = \{0,\infty\}$ then $G$ is weakly decomposable, otherwise $G$ is locally decomposable.

Given the list of types, it is clear that we can recover the type of $G$ from the isomorphism type of the poset $\lnorm(G)$.
\end{proof}

\subsection{Dense normal subgroups of topologically simple groups}\label{sec:fixedpoints}

In this subsection, $G$ will be a non-discrete \tdlcsc group.  Recall that $\lnorm(G)^G$ is the set of fixed points of the action of $G$ on $\lnorm(G)$, in other words the set of local equivalence classes of commensurated compact locally normal subgroups of $G$.  Evidently $\lnorm(G)^G$ is a sublattice of $\lnorm(G)$ and contains both $0$ and $\infty$.  By Theorem~\ref{baireabs}, if $G$ is abstractly simple then $\lnorm(G)^G = \{0,\infty\}$.  Thus if $G$ is topologically simple, the existence of a non-trivial element of $\lnorm(G)^G$ implies the existence of a proper dense normal subgroup of $G$.  Our goal is to obtain restrictions on this situation by describing the structure of the dense normal subgroups that could arise.

Before stating our results, we recall some terminology and results that will be used in this subsection and the next.

\begin{defn}
Let $G$ be a \tdlc group and let $\mu$ be a commensurability class (or subset of a commensurability class) of compact subgroups of $G$.  Let $K \in \mu$ and consider $\Comm_G(K)$.  There is then a unique group topology $\mathcal{T}_{(\mu)}$ on $\Comm_G(K)$ such that the inclusion $L \rightarrow \Comm_G(K)$ is continuous and open, for every $L \in \mu$ (both the group $\Comm_G(K)$ and the topology are uniquely determined by the pair $(G,\mu)$).  The \defbold{localisation}\index{localisation}\index{localised topology}\index{Gmu@$G_{(\mu)}$} $G_{(\mu)}$ of $G$ at $\mu$ is then the topological group $(\Comm_G(K),\mathcal{T}_{(\mu)})$.
\end{defn}

The existence of the topology $\mathcal{T}_{(\mu)}$ follows from the fact that in $\Comm_G(K)$, every left coset of an open subgroup of $L$ is a finite union of right cosets of an open subgroup of $L$, and vice versa.  The uniqueness of the topology follows from the general observation that a group topology is uniquely determined by its restriction to a neighbourhood of the identity.
%Thus $G_{(\mu)}$ is isomorphic to $\Comm_G(K)$ as an abstract group. The group topology , and the required properties of the pair $(G_{(\mu)}, \phi)$ ensure that existence  $\mathcal T_{(\mu)}$ is given by the conjugacy class of $K$ in $\Comm_G(K)$. The topology e existence and uniqueness of $G_{(\mu)}$ is ensured by Theorem~\ref{thm:Hecke:complete}.  
%Generally speaking, we will simply regard $G_{(\mu)}$ as a subgroup of $G$ (identifying it with $\Comm_G(K)$), except that it is equipped with a refinement of the induced topology.
Starting with a first countable \tdlc group $G$, one sees that the \tdlc groups with the same group structure as $G$ but a finer topology are precisely the groups $G_{(\mu)}$, where $\mu$ is a $G$-invariant commensurability class of compact subgroups of $G$.  Indeed, we recall by Lemma~\ref{reflem} that it suffices to consider $G$-invariant commensurability classes of compact \emph{locally normal} subgroups of $G$.

\begin{defn}
Following H.~Abels \cite{Abels-CptPresent}, we say that a locally compact group $G$ is \textbf{compactly presented}\index{compactly presented} if there is a surjective homomorphism $\theta \colon  F_X \rightarrow G$, where $F_X$ is the abstract free group on the set $X$, so that $\theta(X)$ is compact in $G$ and the kernel of $\theta$ is generated by words in the alphabet $X \cup X\inv$ of bounded length.
\end{defn}

Many well known properties of finitely presented discrete groups carry over to compactly presented locally compact groups (see \cite[Chapter 8]{CornulierHarpe} for a detailed account). A sufficient condition for $G$ to be compactly presented is that $G$ admits some continuous, proper cocompact action on a simply connected proper geodesic metric space (see \cite[Corollary~8.A.9]{CornulierHarpe}). In particular any centreless simple Lie group $G$ is compactly presented, since the coset   space $G/K$ modulo a  maximal compact subgroup $K$ is contractible.

\begin{lem}[{\cite[Proposition~8.A.10]{CornulierHarpe}}]\label{lem:CompactlyPresented:DiscreteExtension}
Let $\varphi \colon \tilde G \to G$ be a continuous surjective homomorphism of locally compact groups, whose kernel $ \Ker(\varphi)$ is discrete. If $G$ is compactly presented and $\tilde G$ is compactly generated, then $\Ker(\varphi)$ is finitely generated as a normal subgroup of $\tilde G$. 
\end{lem}

%\begin{proof}
%Follows from Theorem 2.1 in \cite{Abels-CptPresent}. Alternatively, a proof of the lemma may be consulted in Proposition~8.A.10 from \cite{CornulierHarpe}. 
%\end{proof}

Let $\alpha \in \lnorm(G)^G \smallsetminus \{0\}$, let $L$ be a compact locally normal representative of $\alpha$ and let $D = \lla L \rra$, equipped with the $\mathcal T_{(\alpha)}$-subspace topology.  Then $D$ is a $\mathcal T_{(\alpha)}$-open subgroup of $G$.  Moreover, equipping the direct product $G \times D$ with the product topology, then by \cite[Proposition~7.9]{CRW-Part1}, the map 
\[ G \times D \rightarrow D; \; (g,d) \mapsto gdg\inv \]
is continuous.  This ensures that the natural semidirect product $D \rtimes G$ is a topological group when equipped with the product topology, which allows us to analyse the structure of the embedding $D \hookrightarrow G$, especially when $D$ is dense in $G$ (as is certainly the case if $G$ is topologically simple).

\begin{prop}\label{prop:compression_factoring}
Let $G$ be a non-discrete \tdlcsc group and  $L$ be an infinite commensurated compact locally normal subgroup of $G$. Let $\alpha = [L]$ and   $D = \lla L \rra$ be the abstract normal closure of $L$ in $G$, equipped with the $\mathcal T_{(\alpha)}$-subspace topology. Let $\psi \colon D \rightarrow G$ be the natural inclusion map and let $U$ be a compact open subgroup of $\N_G(L)$ containing $L$.  Suppose that $D$ is dense in $G$.

Then there is a \tdlc group $\tilde{G}$, a closed continuous injective homomorphism   $\iota \colon D \rightarrow \tilde{G}$, and a quotient homomorphism $\pi \colon \tilde{G} \rightarrow G$ such that
\begin{enumerate}[(1)]
\item $\psi=\pi \circ \iota$;
\item $\iota(D)$ is a closed cocompact normal subgroup of $\tilde{G}$, with $\tilde{G}/\iota(D) \cong U/L$;
\item $\Ker(\pi)$ is discrete and centralises $\iota(D)$;
\item Every element of $\Ker(\pi)$ lies in a finite conjugacy class of $G$;
\item $\tilde{G} = \overline{\iota(D)\Ker(\pi)}$;
\item If $\Ker(\pi)$ or $U/L$ is virtually abelian, then $D \ge [G,G]$.
\end{enumerate}

Furthermore, if $G$ is compactly presented, then in addition, $\Ker(\pi)$ is finitely generated.  Consequently, in this case $\Ker(\pi)$ is centralised by a finite index subgroup of $\tilde{G}$ (so $\Ker(\pi)$ is virtually abelian), and moreover $U/L$ is centre-by-finite, finite-by-abelian, and topologically finitely generated.
\end{prop}

\begin{proof}
Form $G^{\rtimes} = D \rtimes U$ where $U$ acts on $D$ by conjugation, and equip $G^{\rtimes}$ with the product topology. By \cite[Proposition~7.9]{CRW-Part1}, the action of $U$ on $D$ is  continuous, so that $G^{\rtimes}$ is a topological group. There is also a natural inclusion $\iota_1 \colon D \rightarrow G^{\rtimes}$, which is a closed embedding with cocompact image.

Given a subgroup $K$ of $U$, let 
\[
\Delta_K := \{(k\inv,k) \in G^{\rtimes} \mid k \in K \cap D\}.
\]
The set $\Delta_K$ is a subgroup of $G^{\rtimes}$ that centralises $\iota_1(D)$, and if $\psi^{-1}(K)$ is compact, then $\Delta_K$ is compact.  If $K$ is normal in $U$, it follows further that $\Delta_K$ is normal in $G^{\rtimes}$. In particular, the set $\Delta_L$ is a compact normal subgroup of $G^{\rtimes}$.

We now set
\[ 
\tilde{G} := G^{\rtimes}/\Delta_{L}
\]
let $\rho\colon G^{\rtimes}\rightarrow\tilde{G}$ be the usual projection.  Note that $\rho$ is a quotient map with compact kernel, so it is a closed map.  Set $\iota:=\rho\circ \iota_1$; it is immediately clear that $\iota$ is continuous and injective.  Moreover, $\iota$ is a closed map, since $\iota_1$ and $\rho$ are both closed.  Thus $\iota$ is a closed embedding.  

The homomorphism $\pi \colon \tilde{G}\rightarrow G$ is defined by 
\[
\pi\left((d,u)\Delta_{L}\right) = \psi(d)u.
\]
The image of $\pi$ is dense and contains the open subgroup $U$ of $G$, hence the map is surjective. It is easy to check this map is also continuous. Since $G$ is second countable, so are  $G^{\rtimes}$ and $\tilde G$. By \cite[Corollary to Theorem~8]{Arens}, a continuous surjective homomorphism between second countable locally compact groups is open. Therefore, the homomorphism  $\pi$ is open, hence a quotient map, and $G$ isomorphic to the quotient $\tilde{G}/\Ker(\pi)$.  Setting $\tilde{U}:=\rho(U)$, the group $\tilde{U}$ is a compact open subgroup of $\tilde{G}$ since $\rho(U) = \rho(U\Delta_L)$ and since the group $U\Delta_L = L\rtimes U$ is  open in $G^{\rtimes}$. Furthermore, $\tilde{U}\cap \Ker(\pi)=\triv$, hence $\Ker(\pi)$ is discrete.

We now check the desired properties hold of $\tilde{G}$, $\iota$ and $\pi$. Part (1) is immediate. That $\iota(D)$ is cocompact follows since $\rho$ induces a continuous surjective map $G^{\rtimes}/\iota_1(D)\rightarrow \tilde{G}/\iota(D)$. We thus have verified (2).   

The kernel of $\pi$ is exactly $K:=\Delta_{U}/\Delta_{L}$.  In particular, we see that $K$ is discrete. The group $\Delta_{U}$ is centralised by $\iota_1(D)$, hence $K$ is centralised by $\iota(D)$, verifying (3).  We conclude that $\CC_{\tilde{G}}(K)$ is cocompact in $\tilde{G}$. On the other hand, $K$ is a discrete normal subgroup of $\tilde{G}$, so $\CC_{\tilde{G}}(x)$ is open in $\tilde{G}$ for every $x \in K$.  Consequently, for every $x \in K$, then $\CC_{\tilde{G}}(x)$ is both cocompact and open, so it has finite index in $\tilde{G}$, verifying (4).

Since $\pi$ is a quotient map and $\psi(D)$ is dense in $H$, we see that $\pi\inv(\psi(D))$ is dense in $\tilde{G}$.  In other words, $\iota(D)\Ker(\pi)$ is dense in $\tilde{G}$, verifying (5).

We see from properties (2) and (5) that $\Ker(\pi)$ is isomorphic to a dense subgroup of $U/L$, so $\Ker(\pi)$ is virtually abelian if and only if $U/L$ is virtually abelian.  Suppose that $U/L$ is virtually abelian, in other words, there exists a compact open subgroup $V$ of $U$ such that $L \le V$ and $V/L$ is abelian.  We now perform the same construction of $\tilde{G}$ as before, with $V$ in place of $U$.  Now since $V/L$ is abelian, property (2) ensures that $\tilde{G}/\iota(D)$ is abelian, so $\iota(D)$ contains the derived group of $G$; since $G = \pi(\tilde{G})$, it follows that $D = \pi(\iota(D)) \ge [G,G]$, proving (6).

Now suppose that $G$ is compactly presented.  In particular, $G$ is compactly generated, so $D$ is compactly generated by Lemma~\ref{lem:CptGenNC}.  Since $\iota_1(D)$ is cocompact in $\tilde{G}$, it follows that $\tilde{G}$ is compactly generated.  We can now apply Lemma~\ref{lem:CompactlyPresented:DiscreteExtension} to conclude that $\Ker(\pi) = \lla X \rra$ for a finite subset $X$ of $G$.  By part (4), each element of $X$ only has finitely many $\tilde{G}$-conjugates, so $\Ker(\pi) = \langle Y \rangle$ for a finite set $Y$.    Let $C = \bigcap_{y \in Y} \CC_{\tilde G}(y)$, so that $C$ centralises $\Ker(\pi)$. Then $C$ is closed in $\tilde{G}$ (since every centraliser is closed) and of finite index (since each element of $Y$ has a finite conjugacy class).  It follows that $C$ is open in $\tilde{G}$ and $C\iota(D)/\iota(D)$ is an open subgroup of $\tilde{G}/\iota(D)$ of finite index.  By property (5), the image of $\Ker(\pi)$ in $\tilde{G}/\iota(D)$ is dense, so $C\iota(D)/\iota(D)$ has dense centraliser in $\tilde{G}/\iota(D)$; consequently, it is central in $\tilde{G}/\iota(D)$.  Thus $\tilde{G}/\iota(D)$ is centre-by-finite; since $\tilde{G}/\iota(D) \cong U/L$, it follows that $U/L$ is centre-by-finite.  Since $\Ker(\pi)$ has dense image in $\tilde{G}/\iota(D)$, it follows that $\tilde{G}/\iota(D)$ is topologically finitely generated, so $U/L$ is topologically finitely generated.

By a classical result of I. Schur \cite{Schur} (see also \cite{Rosenlicht} for a short proof of a stronger result), every centre-by-finite group is finite-by-abelian.  Thus $U/L$ is also finite-by-abelian in the case that $G$ is compactly presented.
\end{proof}

We can now establish a particular circumstance in which there are no non-trivial fixed points in the structure lattice of $G$.

\begin{cor}\label{cor:FixedPointsSimple}
Let $G\in \sclass$.  Assume that $G$ is compactly presented. If $G$ is abstractly perfect, or if some compact open subgroup $
U$ of $G$ is such that $U/\overline{[U, U]}$ is finite, then $\lnorm(G)^G =\{0, \infty\} $.
\end{cor}

\begin{proof}
Let $\alpha \in \lnorm(G)^G \smallsetminus \{0\}$, let $L$ be a compact locally normal representative of $\alpha$ and let $D = \lla L \rra$.  Then by Proposition~\ref{prop:compression_factoring}, we have $D \ge [G,G]$ and $U/L$ is finite-by-abelian.  If $G = [G,G]$, it follows that $D = G$.  If $U/\overline{[U,U]}$ is finite, then $U/L$ must be finite, so $L$ is open in $G$ and hence $D = G$, since $D$ is dense in $G$.  In either case, $G = \lla L \rra$.  Hence $G$ is $L$-meagre by Lemma~\ref{countsize}, in other words, the index of $L$ in $G$ is countable.  By the Baire Category Theorem, it follows that $L$ is open in $G$, so $\alpha = \infty$.  This proves that $\lnorm(G)^G =\{0, \infty\}$ under the given hypotheses.
\end{proof}

Given $G \in \sclass$ and $D$ as in Proposition~\ref{prop:compression_factoring}, it is not clear if $D \in \sclass$.  However, we can show that the closure in the $\mathcal T_{(\alpha)}$-topology of the derived group of $D$ belongs to the class $\sclass$.

\begin{prop}\label{compression:simple}
Let $G \in \sclass$, let $\alpha \in \lnorm(G)^G \smallsetminus \{0\}$, let $L$ be a compact locally normal representative of $\alpha$, and let $D = \lla L \rra$ be the abstract normal closure of $L$ in $G$, equipped with the $\mathcal T_{(\alpha)}$-subspace topology.  Let $S = \overline{[D,D]}$, where the closure is taken with respect to the topology of $D$.  Then $S \in \sclass$.  Moreover, $S = \lla M \rra$ for some infinite commensurated compact locally normal subgroup $M$ of $G$ contained in $L$, and the topologies $\mathcal T_{(\alpha)}$ and $\mathcal T_{(\beta)}$ coincide on $S$, where $\beta = [M]$.
\end{prop}

\begin{proof}
Form the semidirect product $G^{\rtimes} = D \rtimes G$, equipped with the product topology.  As observed in the proof of Proposition~\ref{prop:compression_factoring}, $G^{\rtimes}$ is a topological group.  In particular, given any $\mathcal T_{(\alpha)}$-closed subgroup $K$ of $D$ such that $\N_G(K)$ is dense in $G$, then $K \rtimes \triv$ is a closed subgroup of $G^{\rtimes}$ that is normalised by a dense subgroup of $\triv \rtimes G$, hence $K \rtimes \triv$ is normalised by all of $\triv \rtimes G$ (since the normaliser of any closed subgroup is closed), and so $K$ is normal in $G$.

By Theorem~\ref{thm:noqz}, the group $G$ has no non-trivial virtually abelian locally normal subgroups.  Consequently, both $L$ and $\overline{[L,L]}$ are non-trivial, hence non-(virtually abelian).  This ensures that $S \ge \overline{[L,L]}$ is non-discrete.  We now claim that any closed subgroup $K$ of $D$ that is normalised by $S = \overline{[D,D]}$ must satisfy $K \ge S$.  This will ensure in particular that $S$ is topologically simple.

Let $K$ be a closed subgroup of $D$ such that $S \le \N_D(K)$.  Since $S$ is a closed normal subgroup of   $D$, we deduce from the first paragraph of the proof above that  $S$ is normal in $G$.  Since $S$ is non-trivial and $G$ is topologically simple, it follows that $S$ itself is dense in $G$.  In turn, this means that $K$ is normalised by a dense subgroup of $G$, so  using again the first paragraph above, we infer that $K$ is normal in $G$, and hence $K$ is dense in $G$.

In $G^{\rtimes}$, both $K^* := K \rtimes \triv$ and $\Delta_K := \{(k,k\inv) \in G^{\rtimes} \mid k \in K\}$ are closed normal subgroups.  Since $K$ is dense in $G$, we observe that $K^*\Delta_K$ is a dense normal subgroup of $G^{\rtimes}$.  Moreover, $\Delta_K$ centralizes $D^* := D \rtimes \triv$.  Thus in the quotient $G^{\rtimes}/K^*$, we see that $D^*/K^*$ is a subgroup with dense centraliser, so it is central in $G^{\rtimes}/K^*$.  In particular, $D^*/K^*$ is abelian.  Since $D^*/K^* \cong D/K$, it follows that $D/K$ is abelian, that is, $[D,D] \le K$.  Since $K$ is closed by assumption, in fact $S \le K$, and the claim is proven.

Let $M$ be a compact  subgroup of $S$ relatively open for the $\mathcal T_{(\alpha)}$-topology, and such that $M \le L$.  Then $S = \lla M \rra$, since $S$ is topologically simple. Moreover, since $M$ is relatively $\mathcal T_{(\alpha)}$-open, it follows that  the topologies $\mathcal T_{(\alpha)}$ and $\mathcal T_{(\beta)}$ coincide on $S$, where $\beta = [M]$. Since the conjugation action of $G$ respects the topology of $D$, and hence also the topology of its closed subgroup $S$, we see that $M$ is commensurated in $G$.  By Lemma~\ref{profcomm}, by replacing $M$ with a finite index open subgroup of $M$, we may ensure that $M$ is locally normal in $G$.  Finally, we conclude from Lemma~\ref{lem:CptGenNC} that $S$ is compactly generated.  Thus $S$ is in $\sclass$.
\end{proof}

Suppose that there is a compact open subgroup $U$ of $G$ that is topologically finitely generated.  In this case, we can use a special case of far-reaching results due to N.~Nikolov and D.~Segal~\cite{NS} (the proof of which relies on the classification of the finite simple groups) to restrict the structure of dense normal subgroups of $G$, and hence that of commensurated compact locally normal subgroups.

\begin{thm}[Nikolov--Segal\footnote{We attribute this result to Nikolov--Segal as it is easily derived from the results in \cite{NS}, although not explicitly stated there in this form; we include such a derivation for clarity.}]\label{thm:NS}
Let $P$ be a topologically finitely generated profinite group having finitely many isomorphism types of composition factors. Then any dense normal subgroup of $P$ contains the derived group $[P, P]$, which is closed in $P$. 
\end{thm}
\begin{proof}
Following \cite{NS}, we denote by $P_0$ the intersection of all open normal subgroups $T$ of $P$ such that there is a non-abelian finite simple group $S$ for which $P/T$ is isomorphic to some subgroup of $\mathrm{Aut}(S)$ containing $\mathrm{Inn}(S)$.  The hypothesis that $P$ has finitely many types of composition factors then imposes a bound on $|P:T|$ for such open subgroups $T$.  We recall (\cite[Proposition~2.5.1]{RZ}) that a topologically finitely generated profinite group has only finitely many open subgroups of a given index; thus $P_0$ is open in $P$.

Let now $N$ be a dense normal subgroup of $P$. 
We proceed as in the discussion preceding \cite[Corollary~1.8]{NS}. Since $N$ is dense and $P_0$ is open in $P$, {the image of $N$ in $P/P_0$ is onto, and the image of $N$ in $P/\overline{[P, P]}$ is dense. Using the fact that} $P$ is topologically finitely generated, we may then find a finite set of elements $y_1, \dots, y_r \in N$ such that $P_0 \overline{\langle{y_1, \dots y_r} \rangle} = \overline{ [P, P] \langle{y_1, \dots y_r} \rangle}=P$. Invoking \cite[Theorem~1.7]{NS}, we  then infer  that $[P_0, P] $ is entirely contained in $N$. Therefore $[P, P] = [NP_0, P] \leq N [P_0, P] N \leq N$, hence $[P, P]$ is entirely contained in $N$, as desired. 

The fact that  $[P, P]$ is closed follows from \cite[Corollary~5.9]{NS}. 
\end{proof}

We recall from Proposition~\ref{localprime:short} that if $G$ is a compactly generated \tdlc group whose only compact normal subgroup is the trivial one, then every compact open subgroup of $G$ has  finitely many isomorphism types of composition factors. The hypotheses of Theorem~\ref{thm:NS} are thus naturally satisfied in that context. We point out the following consequence,  part of which  was first noticed in a conversation with Nikolay Nikolov several years ago (see the unpublished preprint \cite[Proposition 4]{Nikolov}). 

\begin{cor}\label{cor:NS:dense_normal}
Let $G$ be a \tdlcsc group.  Let $U$ be a compact open subgroup of $G$, and suppose that $U$ is topologically finitely generated with finitely many types of composition factors. 
\begin{enumerate}[(i)]
\item Let $D$ be a dense normal subgroup of $G$.  Then $D \ge [U,U]$.
\item Let $L$ be a commensurated compact subgroup of $G$, and suppose that $D = \lla L \rra$ is dense in $G$ and $L$ is normal in $U$.  Then $L \cap [U,U]$ has finite index in $[U,U]$ and $D \ge [G,G]$.
\item Suppose $G$ is topologically simple. If $G$ is not compactly generated, assume also that  $U$ is non-abelian. Then $[G,G] = \lla [U,U] \rra$ is the unique smallest dense normal subgroup of $G$.  If $[U,U]$ has finite index in $U$ or $G = [G,G]$, then $G$ has no proper dense normal subgroups.
\end{enumerate}
\end{cor}

\begin{proof}
Let now $D$ be a dense normal subgroup of $G$.  Then $D \cap U$ is a dense normal subgroup of $U$, so part (i) follows by Theorem~\ref{thm:NS}.

Now suppose that $D = \lla L \rra$, where $L$ is a commensurated compact subgroup of $G$ that is normal in $U$.  By part (i), we see that $[U,U] \le D$.  Moreover, $D$ is $L$-meagre by Lemma~\ref{countsize}, so $[U,U]$ is $L$-meagre.  Since $[U,U]$ is closed by Theorem~\ref{thm:NS}, in fact $[U,U]$ is contained in the union of finitely many cosets of $L$.

In particular, we see that $U/L$ is virtually abelian, so $D \ge [G,G]$ by Proposition~\ref{prop:compression_factoring}, finishing the proof of (ii).

Suppose  $G$ is topologically simple. If $G$ is compactly generated, then $U$ cannot be abelian by Theorem~\ref{thm:noqz}. Thus the locally normal subgroup  $L = [U,U]$ is non-trivial.  By part (i), the group $D = \lla L \rra$ is contained in every dense normal subgroup of $G$; moreover, since $G$ is topologically simple, $D$ is dense in $G$, so it is the unique smallest dense normal subgroup of $G$.  By part (ii), we have $D \ge [G,G]$; clearly $D \le [G,G]$, so in fact $D = [G,G]$.  If $G = [G,G]$ then $D = G$; alternatively, if $[U,U]$ has finite index in $U$, then $D$ is open and dense in $G$, so $D =G$.  In either case, the minimality of $D$ ensures that there are no proper dense normal subgroups of $G$, so $G$ is abstractly simple.
\end{proof}

\subsection{Commensurated open subgroups}

We have seen in Proposition~\ref{compression:simple} how a non-trivial commensurated compact locally normal subgroup in a group $G \in \sclass$ gives rise to another group $S \in \sclass$ embedded in $G$ as a dense normal subgroup. The intersection of $S$ with the compact open subgroups of $G$ are thus commensurated open subgroups of $S$. The algebraic structure of those open subgroups may be described using Proposition~\ref{prop:compression_factoring}. In order to do this, it is convenient to invoke the following definition: a topological group is called an \textbf{\FCbar-group}\index{FC-group@\FCbar-group} if the conjugacy class of each element has compact closure.

\begin{lem}\label{lem:FCbar}
Let $G$ be a \tdlcsc group, let $U < G$ be a compact open subgroup and $L$ be a non-trivial closed normal subgroup of $U$ which is commensurated by $G$. Let $D = \lla L \rra$ and set $\alpha = [L]$; suppose $D$ is dense in $G$. Then, with respect to the topology $\mathcal T_{(\alpha)}$, the subgroup $D \cap U < D$ is  open, \FCbar, and  commensurated by $D$. 
\end{lem}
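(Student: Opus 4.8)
The plan is to verify the three asserted properties in turn; the first two are essentially formal, and the \FCbar{} property is the substance of the lemma, where I would lean on Proposition~\ref{prop:U/L}. Throughout, recall that $\mathcal{T}_{(\alpha)}$ is the topology for which the representatives of $\alpha$, and in particular $L$ itself, are open compact subgroups. Since $L \leq D \cap U$ and $D = \lla L \rra$ contains the $\mathcal{T}_{(\alpha)}$-open subgroup $L$, the group $D$ is $\mathcal{T}_{(\alpha)}$-open in $G$; and as $D \cap U$ also contains $L$, it is $\mathcal{T}_{(\alpha)}$-open in $D$. This settles openness. For commensuration, note that $D$ is normal in $G$, so for every $g \in D$ we have $g(D \cap U)g^{-1} = D \cap gUg^{-1}$. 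As $U$ is an open compact subgroup of $G$, so is $gUg^{-1}$, and any two open compact subgroups of a \tdlc group are commensurate; intersecting a pair of commensurate subgroups with $D$ preserves commensurability (finite index is inherited by intersection), so $D \cap gUg^{-1}$ is commensurate to $D \cap U$. Hence $D \cap U$ is commensurated by $D$.

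The main work is the \FCbar{} property, taken with respect to $\mathcal{T}_{(\alpha)}$. The key point I would exploit is that $D = \lla L \rra \le \LN(G,\alpha)$, so by Proposition~\ref{prop:U/L}(i) the image $N$ of $D \cap U$ in the profinite group $U/L$ (here $L \trianglelefteq U$) is contained in $\QZ(U/L)$. Consequently, for each $x \in D \cap U$ the image $\bar x \in N$ has open centraliser in $U/L$ and hence finite $U/L$-conjugacy class. A fortiori the $(D\cap U)$-conjugacy class of $x$ projects into a finite subset $\{c_1 L, \dots, c_k L\}$ of $U/L$, where the $c_i$ may be chosen in $D \cap U$; thus this conjugacy class is contained in $\bigcup_{i=1}^k c_i L \subseteq D$. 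Since $L$ is $\mathcal{T}_{(\alpha)}$-open and compact, each coset $c_i L$ is $\mathcal{T}_{(\alpha)}$-compact, so the conjugacy class of $x$ has compact closure in $(D, \mathcal{T}_{(\alpha)})$. This is exactly the \FCbar{} condition.

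The genuine obstacle, and the only step requiring care, is the \FCbar{} argument: one must pass correctly between the abstract conjugacy class of $x$ in $D \cap U$ and the finite conjugacy class of $\bar x$ in $U/L$, and one must observe that $\mathcal{T}_{(\alpha)}$-compactness of a subset of $D$ is equivalent to its being covered by finitely many cosets of the open compact subgroup $L$. Both of these reduce to the already-established fact (Proposition~\ref{prop:U/L}, which itself uses Proposition~\ref{prop:ProfiniteCountableDenseNormal}) that $D \cap U$ projects onto a countable dense normal subgroup of $U/L$ lying in its quasi-centre, so no new input beyond the cited results is needed.
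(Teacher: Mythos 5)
Your proposal is correct and follows essentially the same route as the paper: openness and commensuration by $D$ are handled by the same elementary observations, and the \FCbar{} property is deduced from Proposition~\ref{prop:U/L}(i) exactly as you do, by noting that the image of $D\cap U$ in $U/L$ lies in $\QZ(U/L)$, so every conjugacy class of $D\cap U$ is covered by finitely many cosets of the $\mathcal{T}_{(\alpha)}$-compact subgroup $L$ and hence has compact closure. The paper's proof is simply a condensed version of this same argument.
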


\begin{proof}
It is clear that $D \cap U$ is $\mathcal T_{(\alpha)}$-open. Since $G$ commensurates $U$, it follows that $D$ commensurates $D \cap U$. We observe that if $\pi$ is the quotient map given in Proposition~\ref{prop:compression_factoring}, then the construction of $\pi$ given in the proof ensures that the quotient $(D\cap U)/L$ is isomorphic to $\Ker(\pi)$ as an abstract group.  In particular, by Proposition~\ref{prop:compression_factoring}(3), $(D\cap U)/L$ is a group all of whose conjugacy classes are finite. Since $L$ is compact, this implies that $D \cap U$ is indeed \FCbar{} with respect to $\mathcal T_{(\alpha)}$. 
\end{proof}

We now show that, conversely, an open commensurated \FCbar-subgroup of a group $G \in \sclass$ naturally yields another group of $\sclass$ that admits non-trivial fixed points in its structure lattice. In order to do so, we need a few basic facts on \FCbar-groups, due to U\v sakov. We recall that a subgroup of a locally compact group is called \textbf{locally elliptic}\index{locally elliptic subgroup} if it is the directed union of its compact subgroups. 

\begin{prop}\label{prop:Ushakov}
Let $G$ be a locally compact \FCbar-group.
\begin{enumerate}[(i)]
\item $G$ is the directed union of its compactly generated closed normal subgroups. 

\item The set of compact subgroups of $G$ is a directed set (for the relation of inclusion), so that its union $R$ is a closed characteristic locally elliptic subgroup of $G$ such that $G/R$ is torsion-free abelian. In particular $G$ is amenable.

\item If $G$ is compactly generated, then $R$ is compact.
\end{enumerate}

\end{prop}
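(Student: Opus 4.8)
The three assertions repackage U\v sakov's structure theory of locally compact \FCbar-groups, and I will indicate how each follows, treating the compactly generated case as the technical core. For \emph{(i)}, observe that for $g \in G$ the closure $\overline{g^G}$ of the conjugacy class is compact by hypothesis, so the closed normal closure $\lla g \rra = \overline{\langle g^G\rangle}$, being the smallest closed subgroup containing the compact set $\overline{g^G}$, is compactly generated. More generally, for a finite subset $F \subseteq G$ the closed normal closure of $F$ is topologically generated by the compact set $\bigcup_{g\in F}\overline{g^G}$, hence is a compactly generated closed normal subgroup of $G$. As $F$ ranges over the finite subsets these subgroups are directed by inclusion and their union is $G$ (as $g \in \lla g \rra$), proving (i).

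Before (ii) and (iii) I record two reductions. A closed subgroup $H \le G$ is again an \FCbar-group, since $h^H \subseteq h^G$ forces $\overline{h^H}$ to be compact; likewise every Hausdorff quotient is an \FCbar-group, and the class is preserved by automorphisms. Put $R := \{x \in G \mid \overline{\langle x\rangle}\text{ is compact}\}$; this coincides with the union of all compact subgroups (if $\overline{\langle x\rangle}$ is compact then $x$ lies in the compact subgroup $\overline{\langle x\rangle}$, and conversely). Described this way $R$ is manifestly invariant under all topological automorphisms, so it is characteristic, in particular normal.

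The crux is \emph{(iii)}: if $G$ is compactly generated then $R$ is compact, is the unique maximal compact normal subgroup, and $G/R$ is torsion-free abelian. This is a topological analogue of Schur's theorem (a finitely generated discrete FC-group is centre-by-finite, so has finite commutator subgroup). In the totally disconnected case, which is the one needed for Lemma~\ref{lem:FCbar}, I would argue through the Cayley--Abels graph: after replacing $G$ by $G/\bigcap_{g\in G}gUg\inv$ for a compact open subgroup $U$ (a quotient that is still compactly generated and \FCbar) one may assume $G$ acts faithfully on the locally finite connected graph $\Gamma$ of Proposition~\ref{prop:Abels}, and by Lemma~\ref{lem:fg} this action is vertex-transitive. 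A relatively compact subset of $\Aut(\Gamma)$ is exactly a pointwise-bounded one, so by transitivity $\overline{g^G}$ is compact if and only if $\sup_{w\in V(\Gamma)} d(w,gw) < \infty$, i.e.\ $g$ is elliptic; an element with an unbounded orbit has unbounded displacement and hence a non-relatively-compact conjugacy class. Thus every element of $G$ is elliptic, and it remains to see that a compactly generated \FCbar-group all of whose elements are elliptic is compact. This last step, which bottoms out in the finite Schur theorem applied to the finite graph quotients $U_n/U_{n+1}$, is where the main technical difficulty lies. The general (not totally disconnected) case is handled by reducing to Lie groups via Gleason--Yamabe and using that a connected Lie group with relatively compact conjugacy classes is a direct product of a vector group with a compact group; this is precisely U\v sakov's theorem.

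Granting (iii), I deduce \emph{(ii)}. Given compact subgroups $A,B$, the closed subgroup $\overline{\langle A,B\rangle}$ is compactly generated and \FCbar, so by (iii) its elliptic radical is a compact subgroup containing both; hence the compact subgroups of $G$ form a directed set and $R$ is their directed union, so $R$ is locally elliptic, and closedness follows from (iii) applied to the directed system of part (i). For the quotient, each $\overline{\langle x\rangle}$ is a monothetic locally compact group, hence compact or isomorphic to $\bZ$; consequently $x^n \in R$ forces $x \in R$, so $G/R$ is torsion-free. To see $G/R$ is abelian, apply (iii) to $D = \lla g \rra$: its abelianisation is topologically generated by the image of $g$, hence monothetic, so the torsion-free abelian quotient $D/R(D)$ is cyclic and every commutator $[g,y]$, being trivial there, lies in the compact group $R(D) \le R$; as such commutators generate $[G,G]$ we get $[G,G] \le R$. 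Finally $R$ is locally elliptic, hence a directed union of compact (so amenable) groups and thus amenable, while $G/R$ is abelian hence amenable, so $G$ is amenable as an extension of an amenable group by an amenable group; and when $G$ is compactly generated (iii) gives $R$ compact, completing the proof.
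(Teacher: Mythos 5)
Your proof of (i) is essentially the paper's own: the paper observes that every element lies in the compactly generated closed normal subgroup topologically generated by the (compact) closure of its conjugacy class, and that these subgroups form a directed family. For (ii) and (iii), however, the paper gives no argument at all: it simply cites U\v sakov \cite{Ushakov}. You instead attempt a proof, and your argument for the crucial part (iii) contains a genuine error.

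The error is the identification, on the Cayley--Abels graph $\Gamma$, of bounded displacement with ellipticity: you assert that $\sup_{w} d(w,gw)<\infty$ means ``$g$ is elliptic'' and conclude that every element of a compactly generated \FCbar-group is elliptic, which would make $G$ itself compact after your reduction. This is false: take $G=\bZ$ (a discrete, hence compactly generated, abelian FC-group) acting on its Cayley graph. The generator has displacement exactly $1$ at every vertex, yet its orbits are unbounded and it generates a non-compact subgroup; here $R=\{0\}$ and $G/R\cong\bZ$. Your own computation $d(v_0,hgh^{-1}v_0)=d(h^{-1}v_0,gh^{-1}v_0)$ shows that bounded displacement characterises precisely the \FCbar-condition on the element, so in an \FCbar-group the assertion ``every element has bounded displacement'' is a tautology and carries no information; ellipticity (relative compactness of $\overline{\langle g\rangle}$) is a strictly stronger property, and it is exactly membership in $R$. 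What (iii) claims is that these elliptic elements form a \emph{compact} subgroup when $G$ is compactly generated --- a statement your route cannot reach, since it would instead prove the false statement that all of $G$ is compact. (Your treatment of the connected case, ``this is precisely U\v sakov's theorem,'' is in any case circular.) There is also a soft spot in your deduction of (ii): the abelianisation of $\lla g\rra$ is topologically generated by the image of the whole conjugacy class $\overline{g^G}$, not by the image of $g$ alone, so it need not be monothetic. The fatal problem, though, is (iii); citing U\v sakov, as the paper does, or reproving his theorem by a correct argument, is what is required.
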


\begin{proof}
It is clear from the definition that every element is contained in a compactly generated closed normal subgroup. The fact that the collection of all those subgroups is directed (for the relation of inclusion) is straightforward. This proves (i). The assertions (ii) and (iii) are proved in \cite{Ushakov}. An alternative (and easier to find) reference is \cite{Wang1971}. 
\end{proof}

\begin{lem}\label{lem:OpenComm}
Let $G \in \sclass$. Let $O < G$ be an open   \FCbar-subgroup which is commensurated by $G$. Then there exist a   group $H \in \sclass$, a continuous injective homomorphism with dense image $\varphi \colon G \to H$, and a non-trivial commensurated compact locally normal subgroup   $M < G$ such that $\overline{\varphi(O)}$ is a compact open subgroup of $H$ and $\varphi(M)$ is a commensurated compact locally normal subgroup of $H$.
\end{lem}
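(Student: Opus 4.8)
The plan is to realise $H$ as a \emph{coarsening} of $G$ in which $O$ becomes relatively compact, then to pass to a topologically simple quotient; the subgroup $M$ will be extracted from the locally elliptic radical of $O$. First I would record the commensuration bookkeeping. By Proposition~\ref{prop:Ushakov}, $O$ has a closed characteristic locally elliptic radical $R$ (the union of its compact subgroups), with $O/R$ torsion-free abelian, and $O$ has a compact open subgroup $W\le R$. Since $R$ is characteristic in $O$ and $O$ is commensurated by $G$, intersecting with the finite-index subgroup $O\cap gOg^{-1}$ shows $gRg^{-1}$ is commensurate to $R$; the same computation shows every open finite-index subgroup of $O$ is commensurated by $G$. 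Thus the finite-index open subgroups of $O$ form a $G$-invariant filter, and the Hecke pair $(G,O)$ admits a relative profinite completion $\theta\colon G\to\bar G:=\hat G_O$ (Theorem~\ref{thm:Hecke:complete}) in which $P:=\overline{\theta(O)}$ is a compact open profinite subgroup and $\theta(G)$ is dense. The kernel of $\theta$ is the $G$-normal core of the finite-index subgroups of $O$, hence a closed normal subgroup of $G$ contained in $O\subsetneq G$; being proper it is trivial by topological simplicity, so $\theta$ is injective.

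Next I would manufacture the simple quotient. The key point is that coarsening preserves compact generation here: if $K$ is a compact generating set of $G$, then $\theta^{-1}(P)$ is an open subgroup of $G$ containing $O$ (by continuity), so $K$ meets only finitely many of its cosets, whence $\theta(K)$ lies in finitely many cosets of $P$ and $\overline{\theta(K)}$ is a compact generating set of $\bar G$. The embedding of $(\theta(G),\theta(G)\cap P)$ into $\bar G$ is dense and monolithic: any closed normal $N\trianglelefteq\bar G$ with $N\cap\theta(G)\neq 1$ pulls back to a non-trivial, hence dense, normal subgroup of the topologically simple group $G$, forcing $N\supseteq\overline{\theta(G)}=\bar G$. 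By Lemma~\ref{lem:fg} applied in $\bar G$ the group $\theta(G)$ is generated by finitely many cosets of $\theta(G)\cap P$, so Proposition~\ref{prop:Hecke}(ii) produces a topologically simple, compactly generated \tdlc quotient $q\colon\bar G\to H$. Setting $\varphi=q\circ\theta$, the same normal-subgroup argument gives $\ker q\cap\theta(G)=1$, so $\varphi$ is injective; it is continuous with dense image, and $H$ is non-discrete by Lemma~\ref{psdenselem}, so $H\in\sclass$. Moreover $\varphi(O)=q(\theta(O))$ is dense in the compact open subgroup $q(P)=\overline{\varphi(O)}$ of $H$, so $\varphi(O)$ has compact closure.

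It then remains to pin down $M$. Guided by the forward construction in Lemma~\ref{lem:FCbar} — where the commensurated locally normal subgroup $L$ sits inside the \FCbar-subgroup $D\cap U$ and becomes a proper commensurated locally normal subgroup of the ambient simple group — I would take $M$ to be a compact subgroup of $O$ that is \emph{normal in $O$}: then $\varphi(M)$ is normalised by the open subgroup $q(P)=\overline{\varphi(O)}$ of $H$, so $\overline{\varphi(M)}$ is automatically a commensurated locally normal subgroup of $H$. The natural candidate is $M=R$ when $R$ is compact (e.g. when $O$ is compactly generated, by Proposition~\ref{prop:Ushakov}(iii)), and otherwise a compact open subgroup of $R$ taken normal in a compact open subgroup of $O$; in all cases $M$ is commensurated by $G$ by the first step, and $M\neq 1$ since $O$ is non-discrete.

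The main obstacle is to discharge the two remaining requirements on $M$ simultaneously: that $M$ be \emph{locally normal in $G$} (i.e. $\N_G(M)$ open) and that $\varphi(M)$ be \emph{proper and infinite} in $H$ (so that it yields a genuine non-trivial fixed point of $\lnorm(H)^H$, matching the role of $R$ against the torsion-free abelian quotient $O/R$). Local normality in $G$ is immediate in the case — the one relevant to the applications — where $O$ is open in $G$, since then a compact open subgroup of $O$ is a compact open subgroup of $G$; for general closed $O$ I would secure it by a core argument, replacing $M$ with the normal core in $O$ of $U\cap O$ for a compact open $U\le G$ and using the \FCbar-structure (precompactness of conjugacy classes, Proposition~\ref{prop:Ushakov}) to guarantee the core remains infinite. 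Properness of $\varphi(M)$, i.e. that $[\varphi(M)]\neq\infty$, is the subtle quantitative point and is where the torsion-free abelian quotient $O/R$ is used, ensuring $\theta(M)$ is not dense in $P$. Apart from this bookkeeping, the substantive content — that coarsening $G$ along $O$ keeps the completion compactly generated and admits a topologically simple quotient into which $G$ still embeds — is controlled entirely by the topological simplicity of $G$ through Proposition~\ref{prop:Hecke}.
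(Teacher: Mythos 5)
Your construction of $H$ and $\varphi$ is essentially the paper's: both pass to the relative profinite completion of the Hecke pair $(G,O)$ (Theorem~\ref{thm:Hecke:complete}) and then invoke Proposition~\ref{prop:Hecke}(ii) to obtain a compactly generated, topologically simple \tdlc group $H$ into which $G$ maps continuously, injectively and densely, with $V := \overline{\varphi(O)}$ compact and open. That part of your argument is sound (the paper also starts, as you do, from Proposition~\ref{prop:Ushakov}, though it takes $R$ to be the maximal compact subgroup of a compactly generated closed normal subgroup of $O$, which is automatically compact and normal in $O$, rather than the possibly non-compact locally elliptic radical).

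The genuine gap is in the construction of $M$, at precisely the point you dismiss as automatic. You take $M$ to be a compact subgroup normal in $O$ and claim that $\varphi(M)$, being normalised by the compact open subgroup $V$ of $H$, is ``automatically a commensurated locally normal subgroup of $H$''. Normality in a compact open subgroup gives local normality, but it does \emph{not} give commensuration by all of $H$: a locally normal subgroup is commensurated only when its class is a fixed point of the $H$-action on $\lnorm(H)$, and producing such a fixed point is exactly the non-trivial content of the lemma (were it automatic, every group in $\sclass$ with non-trivial structure lattice would have non-trivial fixed points, which is what the surrounding results are designed to probe). The paper closes this gap with the orbit-join machinery: since $H$ is compactly generated, Lemma~\ref{boxlem} and Proposition~\ref{boxcor}, applied to the locally normal subgroup $L = \varphi(R)$ of $H$, produce a finite product $\tilde M$ of conjugates of an open subgroup of $L$ that is normal in $V$ \emph{and} commensurated by $H$. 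A second step you omit is needed to transport $\tilde M$ back to $G$: since $\varphi(G)$ is dense and $\N_H(L)$ is open, $\varphi(G)$ acts transitively on the $H$-conjugacy class of $L$, so $\tilde M \le \lla L \rra \le \varphi(G)$, and hence $M := \varphi^{-1}(\tilde M)$ is a well-defined finite product of compact subgroups of $G$. This pullback also repairs the other weak point of your argument, namely local normality of $M$ in $G$ when $O$ is not open: $\N_G(M) \supseteq \varphi^{-1}(V)$ is open because $\varphi$ is continuous and $\tilde M$ is normal in $V$, so no ``core argument'' inside $O$ is required. Finally, your worry about properness of $\varphi(M)$ (that $[\varphi(M)] \neq \infty$) is a red herring: the statement only asks that $M$ be non-trivial, and in the intended application (Proposition~\ref{prop:NoFPconj}) one in fact wants to conclude that $\varphi(M)$ \emph{is} open.
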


\begin{proof}
Since $O$ is non-discrete and closed in $G$, it contains a non-trivial element which generates a cyclic subgroup with compact closure. By Proposition~\ref{prop:Ushakov}, this compact subgroup is contained in a closed, compactly generated normal subgroup of $O$, say $P$. By construction, the largest compact subgroup of $P$ afforded by Proposition~\ref{prop:Ushakov}, say $R$, is non-trivial, and normal in $O$.

By hypothesis $(G, O)$ is a Hecke pair. By Theorem~\ref{thm:Hecke:complete} and Proposition~\ref{prop:Hecke}, there exists a  group $H \in \sclass$ and a   homomorphism $\varphi \colon G \to H$ with dense image such that  $V = \overline{\varphi(O)}$ is a compact open subgroup of $H$. Since $O$ is open, the homomorphism $\varphi$ is continuous.  Since moreover $H$ is non-trivial and $G$ is topologically simple, it follows that $\varphi$ is injective.

Now $K = \varphi(R)$ is a non-trivial compact subgroup of $V$ normalised by $\varphi(O)$. Therefore $K$ is a non-trivial compact locally normal subgroup of $H$. Since $\varphi(G)$ is dense in $H$, it acts transitively on the conjugacy class of $K$. In particular $\varphi(G)$ contains the normal closure $\lla K \rra$ of $K$ in $H$. By Lemma~\ref{boxlem}, there is a finite set $\{K_1, \dots, K_n\}$ of conjugates of $K$ in $H$ and for each $i$, an open subgroup $L_i \leq K_i$ such that the product $L = L_1 \dots L_n$ is closed normal subgroup of $V$ which is commensurated by $H$.  
Since $L$ is contained in $\lla K \rra$, it is also contained in $\varphi(G)$. Let $M = \varphi\inv(L)$. Then $M$ is indeed compact in $G$ (because it is a finite product of compact subgroups of $G$). Since $L$ is normal in $V$ and commensurated by $H$, it follows that $M$ is normalised by $O$ and commensurated by $G$, so that $M$ is indeed a commensurated compact locally normal subgroup of $G$.
\end{proof}

The problem of determining whether all groups in $\sclass$ have no non-trivial fixed points in their structure lattice may now be reformulated as follows. 

\begin{prop}\label{prop:NoFPconj}
The following assertions are equivalent. 
\begin{enumerate}[(i)]
\item For all $G \in \sclass$, every infinite commensurated compact subgroup is open. 

\item For all $G \in \sclass$, every commensurated open \FCbar-subgroup is compact.

\item For all $G \in \sclass$, the set of compact open subgroups of $G$ is precisely the set of all  open commensurated \FCbar-subgroups of $G$.
\end{enumerate}

\end{prop}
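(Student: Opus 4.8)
The plan is to establish the cycle of implications (iii) $\Rightarrow$ (i), (i) $\Rightarrow$ (ii), (i) $\Rightarrow$ (iii) and (ii) $\Rightarrow$ (i), which together yield the equivalence. Throughout I will use two elementary observations. First, for $G \in \sclass$ every open compact subgroup is automatically a non-discrete (since $G$ is non-discrete), closed, commensurated \FCbar-subgroup: it is compact, hence \FCbar, and any two open compact subgroups are commensurate. This gives the trivial half of the equality in (iii) for free. Second, I will invoke the standard correspondence from Part~I by which a commensurated compact subgroup is, up to commensurability, a locally normal subgroup whose class lies in $\lnorm(G)^G$; this lets me pass between the language of commensurated compact subgroups and that of fixed points of the structure lattice. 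With the first observation, (iii) $\Rightarrow$ (i) is immediate: an infinite commensurated compact subgroup $K$ is non-discrete, closed, commensurated and \FCbar, so by (iii) it is open compact, in particular open.

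The engine for the implications out of (i) is the following consequence of Lemma~\ref{lem:OpenComm}, valid whenever (i) holds: if $G \in \sclass$ and $O < G$ is a non-discrete, closed, commensurated \FCbar-subgroup, then $O$ is compact. Indeed, Lemma~\ref{lem:OpenComm} produces a group $H \in \sclass$ (it is non-discrete, since $\varphi(O)$ is infinite yet has compact closure), a continuous injective homomorphism $\varphi \colon G \to H$ with dense image, and a non-trivial commensurated locally normal subgroup $M < G$ with $\varphi(M)$ commensurated and locally normal in $H$, while $V := \overline{\varphi(O)}$ is open compact. Since $H$ has trivial quasi-centre, $\varphi(M)$ is infinite, so by (i) it is open in $H$; hence $M = \varphi^{-1}(\varphi(M))$ is open compact in $G$. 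As $\varphi(M)$ is then open of finite index in $V \supseteq \varphi(O)$, the subgroup $\varphi(O) \cap \varphi(M)$ has finite index in $\varphi(O)$, and pulling this back along the injective map $\varphi$ shows that $O \cap M$ has finite index in $O$. But $O \cap M$ is a closed subgroup of the compact group $M$, hence compact, so $O$ is compact. Granting this, (i) $\Rightarrow$ (ii) is immediate, since an open commensurated \FCbar-subgroup is closed and non-discrete, hence of the required form. Likewise (i) $\Rightarrow$ (iii) follows: the non-trivial inclusion asks a non-discrete closed commensurated \FCbar-subgroup $O$ to be open compact, and the consequence above gives $O$ compact, after which (i) itself forces the infinite commensurated compact subgroup $O$ to be open.

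For (ii) $\Rightarrow$ (i) I argue by contraposition. If (i) fails, then some $G \in \sclass$ carries a fixed point $\alpha = [K] \in \lnorm(G)^G$ with $0 < \alpha < \infty$, where $K$ is an infinite, non-open, commensurated locally normal subgroup, normal in a compact open $U < G$. Put $D = \lla K \rra$ and $N = N(\alpha)$; by Proposition~\ref{prop:NC} the group $N$, equipped with the localised topology $\mathcal T_{(\alpha)}$, belongs to $\sclass$. Lemma~\ref{lem:FCbar} shows that $D \cap U$ is $\mathcal T_{(\alpha)}$-open, \FCbar{} and commensurated by $D$, and I then pass to $O := N \cap U$. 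This $O$ is $\mathcal T_{(\alpha)}$-open in $N$ (it contains the $\mathcal T_{(\alpha)}$-open compact subgroup $K \cap N$, which represents $\alpha_*$), it is commensurated by $N$ (because $U$ is commensurated by $G \supseteq N$), and it is \FCbar{} because it is a subgroup of the \FCbar-group $D \cap U$ and any $O$-conjugacy class lies inside a $(D \cap U)$-conjugacy class of compact closure. Finally $O$ is not $\mathcal T_{(\alpha)}$-compact: were it so, then, as $\mathcal T_{(\alpha)}$ refines the $G$-topology, $O$ would be compact and hence closed for the coarser $G$-topology, while it is $G$-dense in $U$; this would force $N \cap U = U$, i.e. $U \le N$, and then $U$ would be a $\mathcal T_{(\alpha)}$-compact group containing the $\mathcal T_{(\alpha)}$-open subgroup $K$ of infinite index, which is absurd. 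Thus $O$ is a non-compact, commensurated, open \FCbar-subgroup of $N \in \sclass$, contradicting (ii).

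The step I expect to be the main obstacle is (ii) $\Rightarrow$ (i): transporting everything to the localised topology $\mathcal T_{(\alpha)}$ and checking simultaneously that $O = N \cap U$ is open, \FCbar, commensurated and---crucially---non-compact requires keeping careful track of which topology each property refers to, the non-compactness being precisely where the hypothesis $\alpha < \infty$ (that $K$ is not open) is consumed. The other point demanding care is the correspondence invoked in the first paragraph between commensurated compact subgroups and fixed points of $\lnorm(G)$, which underlies the reduction of $K$ to a locally normal representative and must be cited accurately from Part~I.
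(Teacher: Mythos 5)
Your proof is correct and follows essentially the same route as the paper: the implications out of (i) rest on Lemma~\ref{lem:OpenComm} applied to the completion $\varphi \colon G \to H$ (your direct finite-index computation replacing the paper's Baire-category argument on countable index), and (ii)$\Rightarrow$(i) rests on Lemma~\ref{lem:FCbar} together with the localised topology $\mathcal T_{(\alpha)}$, the paper proving the cycle (i)$\Rightarrow$(iii)$\Rightarrow$(ii)$\Rightarrow$(i). The one noteworthy divergence is in (ii)$\Rightarrow$(i): the paper applies hypothesis (ii) to $D \cap U$ inside $D = \lla L \rra$, which with the $\mathcal T_{(\alpha)}$-topology is only monolithic (Proposition~\ref{prop:NC}(iii)) and need not belong to $\sclass$, whereas your passage to $O = N(\alpha) \cap U$ inside $N(\alpha)$ --- a group which Proposition~\ref{prop:NC} does place in $\sclass$ --- together with your checks of openness, commensuration, the \FCbar{} property and non-compactness of $O$, makes the application of (ii) legitimate and supplies exactly the bridge that the paper's terse wording leaves implicit.
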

\begin{proof}
Assume that (i) holds and that $G \in \sclass$ possesses an open commensurated \FCbar-subgroup $O$. We apply Lemma~\ref{lem:OpenComm}, which affords a group $H \in \sclass$, a dense embedding $\varphi \colon G \to H$ and a non-trivial commensurated compact locally normal subgroup   $M < G$ such that $\varphi(M)$ is a commensurated compact locally normal subgroup of $H$.
By (i) the group  $\varphi(M)$ must be open in $H$, so that $\varphi(O)$ is open with compact closure. Therefore 
$\varphi(O)$ is compact. If $U < O$ is a compact open subgroup of $O$, it has countable index, so that   $\varphi(U)$ is a closed subgroup of countable index in the compact group $\varphi(O)$. That index must therefore be finite. Since $\varphi$ is injective, we infer that  $U$ has finite index in $O$, so that $O$ is indeed compact.  Since every compact open subgroup of $G \in \sclass$ is a commensurated \FCbar-subgroup of $G$, we infer that   (i) indeed implies (iii).

That (iii) implies (ii) is clear.

Assume that (ii) holds and let $G \in \sclass$ with an infinite commensurated compact subgroup. By Lemma~\ref{profcomm}, the commensurability class of that compact subgroup has a closed locally normal representative $L$. By Proposition~\ref{compression:simple}, there is an infinite commensurated compact locally normal subgroup $M \leq L$ whose abstract normal closure $S = \lla M \rra$ in $G$, endowed with  the topology $\mathcal T_{(\beta)}$ with $\beta = [M]$, belongs to the class $\sclass$. Let $U < G$ be a compact open subgroup containing $M$ as a normal subgroup.   By Lemma~\ref{lem:FCbar}, the subgroup $S \cap U$ is a commensurated subgroup of $S$ which is open and \FCbar{} with respect to the topology $\mathcal T_{(\beta)}$. It then follows by  applying (ii) to the group $S \in \sclass$ that $S \cap U$ is $\mathcal T_{(\beta)}$-compact.   Since $M \leq S \cap U$ is $\mathcal T_{(\beta)}$-open by definition of the latter topology, it follows that $[S\cap U : M]$ is finite.  Since $S \cap U$ is dense in $U$, it follows that $M$ has finite index in $U$, and so $M$ is open in $G$.  Thus (ii) implies (i) and the cycle of implications is complete.
\end{proof}

\begin{rem}
Since \FCbar-groups are \{locally elliptic\}-by-\{torsion-free abelian\} by Proposition~\ref{prop:Ushakov}, one may wonder whether Proposition~\ref{prop:NoFPconj} remains true if one replaces open \FCbar-subgroups by open locally elliptic subgroups in (ii). It turns out that this is not the case (assuming that any of the statements (i)--(iii) in Proposition~\ref{prop:NoFPconj} are true); indeed, there are examples  of groups $G \in \sclass$ which act continuously but non-properly by automorphisms on regular locally finite trees (see \cite{Boudec}). The stabiliser of a vertex in $G$ is an open, but non-compact, locally elliptic subgroup, which is commensurated by $G$.
\end{rem}

\section{Dynamics of conjugation of locally normal subgroups}

The conjugation action of a \tdlc group~$G$ on its closed locally normal subgroups has an interesting dynamical property under the conditions that $G$ is compactly generated and locally C-stable, has an identity neighbourhood containing no non-trivial compact normal subgroups, and acts faithfully on the Boolean algebra $\lcent(G)$. We begin this section with the relevant definitions and results for actions of groups on general Boolean algebras, $\mcA$, and profinite spaces, $\Omega$, before proceeding the case when $\mcA$ is $\lcent(G)$ and $\Omega$ is the corresponding Stone space.

\subsection{Rigid stabilisers}

Let $\mcA$ be a Boolean algebra.  Then by the Stone representation theorem, $\mcA$ defines a {profinite space}\index{profinite space} (that is, a compact zero-dimensional space), the \defbold{Stone space}\index{Stone space} $\mfS(\mcA)$ of $\mcA$, whose points are the ultrafilters of $\mcA$ and the topology is generated by subsets of the form $\{\mfp \in \mfS(\mcA) \mid \alpha \in \mfp\}$ with $\alpha \in \mcA$.  Conversely, given a profinite space $\mfX$, the set $\mcA(\mfX)$ of clopen subsets of $\mfX$ form a Boolean algebra.  This correspondence produces a natural isomorphism between $\Aut(\mcA)$ and $\Aut(\mfS(\mcA))$.  In practice we will often find it convenient to abuse notation and treat elements of $\mcA$ as subsets of $\mfS(\mcA)$, identifying $\alpha \in \mcA$ with 
\[\mfS(\alpha) := \{\mfp \in \mfS(\mcA) \mid \alpha \in \mfp\} \in \mcA(\mfS(\mcA)).\]
The expressions `$\alpha \in \mfp$' and `$\mfp \in \alpha$' for $\alpha \in \mcA$ and $\mfp \in \mfS(\mcA)$ can therefore be taken to be synonymous.

\begin{defn}
Let $G$ be a topological group acting on a set ${\mcA}$.  Say that the action of $G$ on ${\mcA}$ is {\defbold{smooth}}\index{smooth action} if every point stabiliser is open. In particular, for every compact open subgroup $U$ of $G$, the orbits of $U$ on ${\mcA}$ are all finite.\end{defn}

The following lemma shows the relevance of smooth actions for  \tdlc groups:

\begin{lem}[{\cite[Lemma~5.11]{CRW-Part1}}]\label{lem:SmoothActionContinuous}
Let $G$ be a \tdlc group, let $\mfX$ be a profinite space and  $\mcA$ a Boolean algebra.

\begin{enumerate}[(i)]
\item If $G$ acts on $\mfX$ by homeomorphisms, and if the $G$-action is continuous with respect to the topology of uniform convergence, then the corresponding $G$-action on the Boolean algebra of clopen subsets of $\mfX$ is smooth.

\item If $G$ acts on $\mcA$ by automorphisms, and if the $G$-action is smooth, then the corresponding $G$-action on the Stone space $\mfS(\mcA)$ is continuous with respect to the topology of uniform convergence.  In particular, the action map $(g,x) \mapsto gx$ is a continuous map from $G \times \mfS(\mcA)$ to $\mfS(\mcA)$.
\end{enumerate}
\end{lem}

\begin{defn}
Let $G$ be a group acting on a set $Z$.  Given a subset $\upsilon \subseteq Z$, the \textbf{rigid stabiliser}\index{rigid stabiliser} of $\upsilon$ is the subgroup\index{rist@$\rist_G(\upsilon)$}
\[ \rist_G(\upsilon) := \{ g \in G \mid gz = z \quad \forall z \in Z \smallsetminus \upsilon\}.\]
\end{defn}

Notice that if $\mcA$ is a subalgebra of $\lcent(G)$ or of $\ldlat(G)$, then $\QC_G(\alpha) \le \rist_G(\alpha^\bot)$ for all $\alpha \in \mcA$. We emphasize that, in the notation $\rist_G(\alpha^\bot)$, we have relied on the (abusive) convention explained above to identify an element of a Boolean algebra with the corresponding subset of the associated Stone space under the Stone correspondence.  In particular, every non-zero element of $\mcA$ has an infinite rigid stabiliser in $G$.  Moreover $\rist_G(\upsilon)$ is a closed subgroup of $G$ for any $\upsilon \subseteq \mfS(\mcA)$, since $\rist_G(\upsilon)$ can be expressed as an intersection of point stabilisers, each of which is closed by the fact that the action of $G$ is continuous.

\begin{defn}
\label{defn:Booleanwkdecomp}
Let $G$ be a \tdlc group acting on a Boolean algebra $\mcA$ with kernel~$K$.  Say that the action is \defbold{weakly decomposable} if it is smooth and the quotient group $\rist_G(\alpha)/K$ is non-trivial for every $\alpha \in \mcA \smallsetminus \{0\}$.  Say the action is \defbold{locally weakly decomposable}   if moreover $\rist_G(\alpha)/K$ is non-discrete for every $\alpha \in \mcA \smallsetminus \{0\}$.
\end{defn}

When $G$ acts faithfully and has trivial quasi-centre, which will often be the case, then a faithful action is weakly decomposable if and only if it is locally weakly decomposable: $\rist_G(\alpha)$ is a normal subgroup of $\stab_G(\alpha)$, which is open in~$G$, so that $\rist_G(\alpha)$ would be in the quasi-centre of~$G$ if it were discrete. Under these conditions, the weakly decomposable condition also ensures that $\rist_G(\alpha)$ is not locally equivalent to $\rist_G(\beta)$ for any two distinct elements $\alpha,\beta \in \mcA$.

In view of Lemma~\ref{lem:SmoothActionContinuous}, the definition of weak decomposability of the the action of~$G$ on a Boolean algebra given in Definition~\ref{defn:Booleanwkdecomp} is equivalent with the definition of weak decomposability of the action of~$G$ on a profinite space given before Theorem~\ref{thmintro:WeaklyDecomposable}. 

\begin{prop}
\label{prop:wkdecompequivalence}
Let $G$ be a \tdlc group, $\mfX$ be a profinite space and $\mcA$ a Boolean algebra.

\begin{enumerate}[(i)]
\item A continuous action of~$G$ on~$\mfX$ by homeomorphisms is (locally) weakly decomposable if and only if the corresponding $G$-action on the Boolean algebra of clopen subsets of $\mfX$ is (locally) weakly decomposable.
\item A smooth action of~$G$ on~$\mcA$ by automorphisms is (locally) weakly decomposable if and only if the corresponding $G$-action on the Stone space $\mfS(\mcA)$ is (locally) weakly decomposable.
\end{enumerate}
\end{prop}

We recall the following result from \cite{CRW-Part1}, which shows that locally weakly decomposable actions appear naturally in the context of the centraliser or local decomposition lattices. 

\begin{prop}[See {\cite[Proposition~5.16]{CRW-Part1}}]\label{prop:WB-part1}
Let $G$ be a locally C-stable \tdlc group such that $\QZ(G)=\triv$ and let $\mcA$ be a $G$-invariant subalgebra of $\lcent(G)$.  Then the $G$-action on $\mcA$ is locally weakly decomposable as soon as it is faithful.  More precisely, if the $G$-action is faithful, then for each $\alpha \in \mcA$ we have
\[ \rist_G(\alpha) = \QC_G(\QC_G((\alpha)) = \CC_G(\QC_G(\alpha)) = \QC_G(\alpha^\bot) \text{ and } \alpha = [\rist_G(\alpha)],\]
where $\QC_G(\alpha)$ denotes the quasi-centraliser of any compact representative of $\alpha$.
\end{prop}

We close this section with an auxiliary assertion which will be used several times in the sequel. We use the following terminology. 
Let $\mcA$ be a Boolean algebra.  A \defbold{partition}\index{partition} $\mcP$ of $\alpha \in \mcA$ is a finite subset of $\mcA$ such that the join of $\mcP$ is $\alpha$ and the meet of any two distinct elements of $\mcP$ is $0$; a partition of $\mcA$ is just a partition of $\infty$ in $\mcA$. A partition  $\mcP_1$ is called \textbf{finer} than (or a \textbf{refinement}\index{refinement}\index{partition!refinement of a} of) a partition $\mcP_2$ if for every $\alpha_1 \in \mcP_1$, there exists $\alpha_2 \in \mcP_2$ with $\alpha_1 \leq \alpha_2$. 

\begin{lem}\label{lem:small_invariant_partition}
Let  $\mcA$ be a Boolean algebra and $G$ be a \tdlc group endowed with a    smooth faithful action on $\mcA$ by automorphisms. For all compact open subgroups $V \leq U \leq G$, there exists a $U$-invariant partition $\mcC$ of  $\mcA$ such that $\bigcap_{\gamma \in \mcC} U_\gamma \leq V$. 
\end{lem}

\begin{proof}
Let $\mathcal T$ be the collection of all $U$-invariant partitions of $\mcA$. We write $\mcP_1 \geq \mcP_2$ if the partition $\mcP_1$ is a refinement of the partition $\mcP_2$.  Since the $G$-action is smooth, every $U$-orbit on $\mcA$ is finite. In particular,   any finite set of partitions of $\mcA$ has a common refinement   that is $U$-invariant. In particular  $\mathcal T$ is a directed set and $\bigcup \mathcal T$ generates $\mcA$. 

Suppose now for a contradiction that for each $\mathcal C \in \mathcal T$, there exists $z_\mcC \in \bigcap_{\gamma \in \mcC} U_\gamma$ with $z_\mcC \not \in V$. Since $U$ is compact and $V$ is open, the net $(z_\mcC)_{\mathcal C \in \mathcal T}$ has a subnet $(z_{\mcC})_{\mcC \in \widetilde{\mathcal T}}$ converging to some   $z \in U \smallsetminus V$, where $\widetilde{\mathcal T}$ is a final subset of the directed set $ \mathcal T$.

We now consider an arbitrary element $\alpha \in \mcA$. By the first paragraph of the proof above, there exists $\mcP \in \widetilde{\mathcal T}$   which contains  a partition of $\alpha$. Therefore $\mathcal C$ contains a partition of $\alpha$ for every partition $\mathcal C \geq \mcP$. In particular,  we have $z_{\mathcal C} \in \bigcap_{\gamma \in \mathcal C} U_\gamma \leq U_\alpha$ for all $\mathcal C \in \mathcal T$ with $\mathcal C \geq \mcP$. Therefore $z = \lim_{\mathcal C \in \widetilde{\mathcal T}} z_{\mathcal C}$ is contained in $U_\alpha$. It follows that $z$ is an element of $U \smallsetminus V$ acting trivially on $\mcA$. This contradicts the hypothesis that the $G$-action on $\mcA$ is faithful.
\end{proof}

\subsection{A simplicity criterion}\label{sec:SimplicityCriterion}

We now give the  proof of the simplicity criterion from Proposition~\ref{prop:SimplicityCriterion}. It relies on the following subsidiary fact. 

\begin{lem}\label{lem:Derived}
Let $A,N$ be subgroups of a group $G$. Assume that $N$ is normal, and contains an element $t \in N$ such that $[A, tAt\inv]= \triv$. Then $N$ contains $[A, A]$. 
\end{lem}
\begin{proof}
Let $a, b, c \in A$. Since $a$ and $b$ commute with $tct\inv$, we have $[a, b] = [a, b tct\inv]$. Setting $c = b\inv$, we infer that $[a, b] = [a, [b, t]]$, which belongs to $N$ since $N$ is normal. 
\end{proof}

\begin{proof}[Proof of Proposition~\ref{prop:SimplicityCriterion}]
We must show that the monolith of $G$ (viewed as a discrete group) is non-trivial. 

We first show that the rigid stabiliser of any non-empty open set  $U \subseteq X$ is non-abelian. Since the $G$-action is micro-supported, we already know that the rigid stabiliser $\rist_G(U)$ is non-trivial. Pick $g \in \rist_G(U)$ and $x \in X$ with $gx \neq x$. Since $X$ is Hausdorff, there exists a neighbourhood $V$ of $x$ contained in $U$ such that $gV \cap V = \varnothing$. The rigid stabiliser $\rist_G(V)$ is non-trivial and contained in $\rist_G(U)$. Moreover the respective supports of $\rist_G(V)$ and $g\rist_G(V)g^{-1} = \rist_G(gV)$ are disjoint, so that the commutator $[g, \rist_G(V)]$ is non-trivial. In particular $\rist_G(U)$ is non-abelian. 

Let now $O$ be a non-empty open compressible subset of $X$. The previous paragraph implies that $\rist_G(O)$ is non-abelian. Let $N$ be any non-trivial normal subgroup of $G$. Then we may find an element $t \in N$ and a non-empty open subset $V$ of $X$ such that $tV \cap V = \varnothing$. Since $O$ is compressible there exists $g \in G$ such that $Q = gO \subset V$. In particular we have $[\rist_G(Q), t\rist_G(Q)t^{-1}] =1$. By Lemma~\ref{lem:Derived}, the group $N$ contains $[\rist_G(Q), \rist_G(Q)]$, hence also  $[\rist_G(O), \rist_G(O)] = g^{-1} [\rist_G(Q), \rist_G(Q)] g$. Therefore the monolith $\Mon(G)$ contains $[\rist_G(O), \rist_G(O)]$ and is thus non-trivial. 

What we have just established implies moreover that the $\Mon(G)$-action on $X$ is micro-supported. If we assume in addition that $\Mon(G)$ has a non-empty compressible open set, then the first part of the proof implies that $\Mon(\Mon(G))$ is non-trivial. Since the monolith is a characteristic subgroup, it follows that $\Mon(\Mon(G))$ is a non-trivial normal subgroup of $G$, which must thus contain $\Mon(G)$. In other words we have $\Mon(\Mon(G)) = \Mon(G)$, which means precisely that $\Mon(G)$ is simple. 
\end{proof}

\subsection{Minorising actions on Boolean algebras}\label{sec:MinorBool}

Recall, for $\Omega$ a topological $G$-space, the following definition from the introduction. Let $\omega \in \Omega$ A subset $F \subset \Omega$ is \textbf{compressible to the point $\omega$}\index{compressible to a point} if there is a basis of neighbourhoods $(V_i)$ of  $\omega \in \Omega$ such that for each $V_i$,  there is $g \in G$ with $gF \subset V_i$. We say that $F$ is \textbf{compressible}  if for every non-empty open set $O$ in $\Omega$, there is $g \in G$ with $gF \subset O$. Remark that $F$ is compressible if and only if  it is compressible to every point. Moreover, if $F$ is compressible to the point $\omega$ and if the $G$-orbit of $\omega$ is dense, then $F$ is compressible. 

In this subsection we introduce the closely related concept of a minorising action, which will play a critical role in obtaining structural properties of groups in $\sclass$ with non-trivial centraliser lattice.

\begin{defn}Let $\mcA$ be a poset with least element $0$ and let $G$ be a group acting on $\mcA$ by automorphisms.  Say a subset $\mcA'$ of $\mcA$ is \defbold{minorising}\index{minorising}\index{minorising!set} if for all $\alpha \in \mcA \smallsetminus \{0\}$ there is some $\beta \in \mcA'$ such that $0 < \beta < \alpha$.  (We emphasize that the inequality is understood to be strict. In particular that if~$\mcA$ has a minorising set, then it cannot have any atoms.)  Say $\mcA'$ is \defbold{minorising under the action of $G$} if $\bigcup_{\alpha \in \mcA'}G\alpha$ is minorising.  We say the action of $G$ on $\mcA$ is \defbold{minorising}\index{minorising!degree} (of \textbf{degree} $d \in \bN$) if there is a finite minorising set for the action (and the minimum number of elements in such a set is $d$).\end{defn}

The following lemma shows the relationship between minorising elements of the Boolean algebra and compressible subsets of the Stone space.

\begin{lem}\label{lem:CompressibleMinorising}
Let $G$ be a group of homeomorphisms of the profinite space $\Omega$, let $\alpha$ be a non-empty clopen subset of $\Omega$ and let $\mcA$ be the Boolean algebra of clopen subsets of $\Omega$.  Let $X$ be the set (possibly empty) of points $\omega \in \Omega$ such that $\alpha$ is $G$-compressible to $\omega$.  Then $X$ is a closed $G$-invariant set; moreover, $\alpha$ is minorising for the action of $G$ on $\mcA$ if and only if $X = \Omega$.
\end{lem}

\begin{proof}
Suppose $\omega \in X$ and let $(V_i)$ be a base of neighbourhoods of $\omega$.  For each $i$, there is some $g_i \in G$ such that $g_i\alpha \subseteq V_i$.  Now let $g \in G$.  Since $g$ is a homeomorphism, the net $(gV_i)$ forms a base of neighbourhoods of $g\omega$, and clearly we have $gg_i\alpha \subseteq gV_i$.  Thus $\alpha$ is $G$-compressible to $g\omega$, so $X$ is $G$-invariant.

Suppose $\omega \not\in X$.  Then by the definition of $X$, there is some neighbourhood $O$ of $\omega$ that does not contain any $G$-image of $\alpha$.  We see that in fact $\alpha$ is not compressible to $\omega'$ for any $\omega' \in O$.  Thus $\Omega \smallsetminus X$ is open, in other words $X$ is closed.

Suppose $X=\Omega$ and let $\beta \in \mcA \smallsetminus \{0\}$. Then $\beta$ is a non-empty open subset of $\Omega$, so $\beta$ is a neighbourhood of some point $\omega \in \beta$.  Hence there exists $g \in G$ such that $g\alpha \subset \beta$.  Hence the $G$-orbit of $\alpha$ is minorising in $\mcA$.

Conversely, suppose that $G\alpha$ is minorising in $\mcA$.  Since $\Omega$ is a profinite space, for every point $\omega \in \Omega$ there is a base of neighbourhoods of $\omega$ consisting of clopen sets.  For every such clopen set $\beta$, there exists $g \in G$ such that $g\alpha < \beta$.  Thus $\alpha$ is $G$-compressible to every point in $\Omega$.
\end{proof}

In the case of minorising actions on Boolean algebras, we can obtain a canonical structure in the Stone space that accounts for the degree of the minorising action.

\begin{lem}\label{minoropen}Let $G$ be a group with a minorising action of degree $d$ on the Boolean algebra $\mcA$.
\begin{enumerate}[(i)]
\item Let $\mcU$ be the set of minimal non-empty $G$-invariant open subsets of $\mfS(\mcA)$.  Then $|\mcU|=d$, and every $G$-invariant subset of $\mfS(\mcA)$ with non-empty interior contains some element of $\mcU$.
\item If $d=1$, then the union of the dense orbits of $G$ on $\mfS(\mcA)$ is a non-empty (hence dense) open set.  If $d > 1$, there are no dense orbits of $G$ on $\mfS(\mcA)$.\end{enumerate}\end{lem}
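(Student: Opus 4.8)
The plan is to work throughout in the Stone space $\mfS(\mcA)$, identifying each $\alpha \in \mcA$ with the clopen set $\mfS(\alpha)$ and the $\Gamma$-action on $\mcA$ with the induced action by homeomorphisms. Fix a minorising set $\{\alpha_1,\dots,\alpha_d\}$ of minimal cardinality and put $W_i = \bigcup_{g \in \Gamma} g\alpha_i$, a nonempty $\Gamma$-invariant open set. The observation that drives everything is that \emph{any} nonempty $\Gamma$-invariant open set $V$ contains some $W_i$: indeed $V$ contains a nonempty clopen set, into which minorisation places some $g\alpha_i$, whence $\alpha_i = g^{-1}(g\alpha_i) \subseteq g^{-1}V = V$ and so $W_i \subseteq V$. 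Since there are only $d$ sets $W_i$, the minimal ones among them are precisely the minimal nonempty $\Gamma$-invariant open sets; this shows at once that $\mcU$ is nonempty and finite with $|\mcU| \le d$, that every nonempty invariant open set (hence, passing to its interior, every invariant set with nonempty interior) contains an element of $\mcU$, that distinct elements of $\mcU$ are disjoint (their intersection is invariant open, so empty), and that $\bigcup\mcU$ is dense (for $\beta \ne 0$ clopen, $\Gamma\beta$ contains some $U \in \mcU$, forcing $\beta \cap U \ne \emptyset$). This already gives the second clause of (i).

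It then remains to prove $|\mcU| \ge d$, for which I would construct a minorising set of size $d' := |\mcU|$. First I would shrink the given set: replacing each $\alpha_i$ by $\alpha_i \cap U$, where $U \in \mcU$ is a piece contained in $W_i$ (the intersection is nonempty since $U \subseteq W_i$ is invariant), keeps the family minorising because $g(\alpha_i \cap U) \subseteq g\alpha_i$. One may thus assume every member lies inside a single piece, and every piece contains a member (a clopen inside a memberless piece could not be minorised). The crux is then that each piece $U$ carries a \emph{degree-one} minorising action, i.e.\ a single clopen $\gamma_U \subseteq U$ whose translates minorise all clopen subsets of $U$; granting this, $\{\gamma_U : U \in \mcU\}$ is minorising on all of $\mcA$ (given $\beta \ne 0$, choose $U$ with $\beta \cap U \ne \emptyset$, a clopen $\epsilon \subseteq \beta \cap U$, and a translate of $\gamma_U$ strictly inside $\epsilon \subseteq \beta$), so $d \le d'$ and $|\mcU| = d$.

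I expect the degree-one claim for a single piece to be the main obstacle, precisely because $U$ is minimal only as an invariant \emph{open} set: minimality yields $\Gamma\epsilon = U$ for every nonempty clopen $\epsilon \subseteq U$, so translates of a fixed clopen \emph{cover} $\epsilon$, whereas minorisation demands one translate lying strictly \emph{inside} $\epsilon$, a compression (strong-proximality-type) phenomenon. My plan is to induct on $d$. When $d' \ge 2$ the pieces split the action into $d'$ disjoint invariant open sets $U$, each inducing on its clopen subsets a minorising action of degree $n_U$ with $\sum_U n_U \le d$ and $n_U \ge 1$, hence $n_U \le d-(d'-1) < d$; since $U$ has no proper nonempty invariant open subset, the inductive hypothesis forces $n_U = 1$ (matching invariant open sets across this restriction is the point needing technical care). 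The genuinely hard case is the irreducible one $d' = 1$, where $\mfS(\mcA)$ is itself the unique minimal invariant open set and is dense, and one must show directly that the degree is $1$. Here I would use the minorising hypothesis dynamically: iterating minorisation inside a fixed clopen produces a strictly descending chain of translates of the finitely many $\alpha_i$, so by pigeonhole some $\alpha_{i_0}$ admits $s \in \Gamma$ with $s\alpha_{i_0} \subsetneq \alpha_{i_0}$; combining this self-contraction with minimality ($\Gamma\alpha_{i_0} = \mfS(\mcA)$) should let one compress a translate of $\alpha_{i_0}$ into an arbitrary nonempty clopen, showing $\{\alpha_{i_0}\}$ is minorising and $d = 1$. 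This is the step I expect to require the most care, and it is exactly the situation governed by Lemma~\ref{lem:CompressibleMinorising}.

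Finally, part (ii) follows from (i). If $d = 1$ then $\mcU = \{W_1\}$ with $W_1$ dense; for $\omega \in W_1$ and any nonempty clopen $\gamma$, density gives a nonempty clopen $\epsilon \subseteq \gamma \cap W_1$, and minimality gives $\Gamma\epsilon = W_1 \ni \omega$, so some $g^{-1}\omega \in \epsilon \subseteq \gamma$; thus every point of $W_1$ has dense orbit. Conversely a dense orbit meets the open set $W_1$, hence lies in $W_1$ by invariance, so the set of points with dense orbit is exactly the nonempty open set $W_1$. If $d > 1$, then $\mcU$ contains two disjoint nonempty open sets $U_1,U_2$; a dense orbit would meet $U_1$, hence be contained in $U_1$ by invariance, and so miss the nonempty open set $U_2$, a contradiction. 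Hence there are no dense orbits when $d > 1$.
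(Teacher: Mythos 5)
Your first half of (i) (the sets $W_i$, the fact that every $\Gamma$-invariant set with non-empty interior contains some $W_i$ and hence some minimal piece, disjointness of the pieces, $|\mcU|\le d$) and your part (ii) are correct and coincide with the paper's proof. The genuine gap is the reverse inequality $|\mcU|\ge d$, which is the heart of the lemma, and your own text flags it: the ``degree-one claim'' for a single piece is never proved, and your inductive scheme cannot supply it. First, the restriction of the action to a piece $U\in\mcU$ does not take place in a Boolean algebra: $U$ is open but in general not clopen (it is a union of possibly infinitely many translates), so the clopen sets contained in $U$ form only a poset without top element or complementation, and the inductive hypothesis --- a statement about minorising actions on Boolean algebras and their Stone spaces --- does not apply to it; this is exactly the ``matching invariant open sets across this restriction'' issue you mention, and it is not merely technical. (A related slip: $\alpha_i\cap U$ need not lie in $\mcA$; one must shrink to $\alpha_i\wedge g\alpha_j$ for a suitable translate instead.) Second, and more seriously, your induction bottoms out at the case $d'=1$, where the argument is only a hope: the pigeonhole does give a skewering element $s\alpha_{i_0}\subsetneq\alpha_{i_0}$ (this much is fine, compare Lemma~\ref{lem:skewer}), but a single self-contraction together with the covering property $\bigcup_{g\in\Gamma} g\alpha_{i_0}\supseteq U$ does not produce a translate of $\alpha_{i_0}$ lying \emph{inside} an arbitrary non-zero $\beta$: iterating $s$ yields a strictly decreasing chain $s^n\alpha_{i_0}$ whose intersection may perfectly well have non-empty interior, and nothing forces translates of $\alpha_{i_0}$ alone to become small. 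So the case on which everything rests is missing.

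The paper closes this gap with a direct construction that bypasses both the restriction problem and the dynamics, and this is the idea you are missing. Choose a point $\mfp_i$ in each minimal piece $\upsilon_i$, $1\le i\le t$. Since the translates of each $\alpha_j$ ($1\le j\le d$) cover every piece, one can choose $x_j\in\Gamma$ and $k_j\in\{1,\dots,t\}$ with $\mfp_{k_j}\in x_j\alpha_j$, arranging that every index $i\le t$ occurs among the $k_j$. Now set $\beta_i=\bigcap\{x_j\alpha_j\mid 1\le j\le d,\ k_j=i\}$: this is a finite intersection of clopens, hence an element of $\mcA$, and it is non-zero because it contains $\mfp_i$. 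If $\gamma\in\mcA$ satisfies $0<g\alpha_j<\gamma$ for some $g\in\Gamma$, then $0<gx_j^{-1}\beta_{k_j}\le g\alpha_j<\gamma$, so $\{\beta_1,\dots,\beta_t\}$ is a minorising set of size $t$, and minimality of $d$ forces $t\ge d$, hence $t=d$. In short: instead of trying to compress one fixed $\alpha_{i_0}$ into arbitrary clopens by iteration, translate \emph{all} the $\alpha_j$ assigned to a given piece so that they pass through one common point of that piece and intersect them; the intersection inherits the minorising property of the whole family at once.
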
 

\begin{proof}
(i) Let $G$ be a group with a minorising action on the Boolean algebra $\mcA$ and let $\{\alpha_1,\dots,\alpha_d\}$ be a minorising set for the action of minimal size.  For each~$1 \le i \le d$, let $\upsilon_i$ be the union of all $G$-translates of $\alpha_i$ in $\mfS(\mcA)$. We claim that~$\mcU = \left\{\upsilon_1,\dots, \upsilon_d\right\}$. 

Suppose that~$B \subseteq \mfS(\mcA)$ has non-empty interior. Then there is~$\beta\in \mcA$ contained in~$B$ and the minorising property implies that there are $i\in\{1,\dots, d\}$ and~$g\in G$ such that $g\alpha_i \leq \beta$. If~$B$ is also $G$-invariant, it hence follows that~$\upsilon_i$ is contained in~$B$. Once it is shown that the sets $\{\upsilon_1,\dots,\upsilon_d\}$ are pairwise disjoint, it will follow that every minimal open $G$-invariant subset of~$\mfS(\mcA)$ is one of the sets~$\upsilon_i$, and conversely. Let us suppose for a contradiction that~$\upsilon_i\cap \upsilon_j \ne \varnothing$. Then, by definition of~$\upsilon_i$ and~$\upsilon_j$, there are $g_i,g_j\in G$ such that~$g_i\alpha_i \cap g_j\alpha_j \ne \varnothing$. Since~$g_i\alpha_i \cap g_j\alpha_j$ is open, there are $k\in \{1,\dots, d\}$ and $g\in G$ such that $g\alpha_k \subseteq g_i\alpha_i \cap g_j\alpha_j$. Hence the set obtained by removing~$\alpha_i$ and~$\alpha_j$ from~$\{\alpha_1,\dots,\alpha_d\}$ and including~$\alpha_k$ is minorising for the action of~$G$. That contradicts minimality of the set unless $i = j = k$. Hence the elements of~$\left\{\upsilon_1,\dots, \upsilon_d\right\}$ are pairwise disjoint and this set is equal to~$\mcU$. 

(ii) Suppose $d=1$. Then $\mcU = \{\upsilon_1\}$ and for any open set $\alpha$ in $\mfS(\mcA)$ and any point $\mfp \in \upsilon_1$, there exists $g \in G$ with $g\mfp \in \alpha$ by the definition of $\upsilon_1$.  Therefore the $G$-orbit of every element of $\upsilon_1$ is dense. Since $\upsilon_1$ is open and non-empty, no element outside $\upsilon_1$ can have a dense orbit. 

Suppose now that $d>1$. Then $\upsilon_1$ and $\upsilon_2$ are disjoint by the above, and both are non-empty and $G$-invariant. Therefore no element can have a dense orbit. 
\end{proof}

%%%%%%%%%
%%%%%%%%%

\subsection{Skewering automorphisms of Boolean algebras}

The minorising property ensures the existence of \emph{skewering} elements, defined as follows.

\begin{defn}
Let $\mcA$ be a poset and let $g$ be an automorphism of $\mcA$.  Say $g$ is \defbold{skewering}\index{skewering element} if there is some $\alpha \in \mcA$ such that $g\alpha < \alpha$. Again, we emphasize that the inequality is understood to be strict. 
\end{defn}

\begin{lem}\label{lem:skewer}
Let $\mcA$ be a poset with least element $0$ and let $G$ be a group acting on $\mcA$, such that the action is minorising.  Let $\mcC$ be a finite minorising set under the action of $G$ such that $|\mcC|$ is minimised.  Then for each~$\alpha \in \mcC$ and $0 < \beta \le \alpha$, there is~$g \in G$ such that $g\alpha < \beta$.  In particular, there are elements of $G$ whose action on $\mcA$ is skewering.
\end{lem}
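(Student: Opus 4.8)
The plan is to exploit the non-redundancy of $\mcC$ to manufacture a single element of $\mcA$ that can be minorised \emph{only} by $\Gamma$-translates of $\alpha$, and then to push the whole problem down beneath that element by two applications of the minorising property.

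First I would unpack what non-redundancy gives. Since $\mcC \setminus \{\alpha\}$ is not minorising under $\Gamma$, by definition the union $\bigcup_{\gamma \in \mcC \setminus \{\alpha\}} \Gamma\gamma$ fails the minorising condition, so there is a witness $\omega \in \mcA \setminus \{0\}$ admitting no $\delta$ in that union with $0 < \delta < \omega$. Equivalently: whenever $g\gamma$ (with $g \in \Gamma$ and $\gamma \in \mcC$) satisfies $0 < g\gamma < \omega$, one must have $\gamma = \alpha$. I would also record the small point that $0 \notin \mcC$, since otherwise $0$ could be deleted without affecting the minorising property, contradicting non-redundancy; hence every element of $\mcC$ is nonzero, which is what legitimises taking $\beta = \alpha$ at the very end.

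Next I would invoke that $\mcC$ itself \emph{is} minorising. Applying this to $\omega$ yields $\delta \in \bigcup_{\gamma \in \mcC}\Gamma\gamma$ with $0 < \delta < \omega$; by the previous paragraph $\delta$ is forced to be a $\Gamma$-translate of $\alpha$, say $\delta = h\alpha$ with $h \in \Gamma$, so $0 < h\alpha < \omega$. I then transport $\beta$ by $h$: as $\Gamma$ acts by order-automorphisms (which fix the least element $0$ and preserve the order), the hypotheses $\beta \le \alpha$ and $\beta \neq 0$ give $0 < h\beta \le h\alpha < \omega$, so in particular $h\beta < \omega$ strictly. Applying the minorising property once more, this time to $h\beta$, produces $\delta'$ with $0 < \delta' < h\beta$; since $\delta' < h\beta < \omega$, the same dichotomy forces $\delta' = k\alpha$ for some $k \in \Gamma$, whence $0 < k\alpha < h\beta$.

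Finally, applying the automorphism $h^{-1}$ to the inequality $k\alpha < h\beta$ yields $h^{-1}k\,\alpha < \beta$, so $g := h^{-1}k$ is the desired element. The ``in particular'' clause then follows by taking $\beta = \alpha$ for any $\alpha \in \mcC$ (legitimate as $\alpha \neq 0$), which produces $g$ with $g\alpha < \alpha$, i.e.\ a skewering element. The only genuinely delicate step is the bookkeeping of the first paragraph: correctly negating ``minorising under the action of $\Gamma$'' to extract the witness $\omega$ together with the ``only translates of $\alpha$'' dichotomy, and keeping track of strict versus non-strict inequalities. Once $\omega$ is in hand, the rest is two routine uses of the minorising property combined with the fact that $\Gamma$ acts by order-preserving automorphisms fixing $0$.
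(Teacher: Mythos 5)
Your proof is correct, but it is organised differently from the paper's. The paper applies the minorising property just once, directly to $\beta$: it produces $0 < g\gamma < \beta$ for some $\gamma \in \mcC$ and $g \in \Gamma$, and then shows $\gamma = \alpha$ by contradiction --- if $\gamma \neq \alpha$, then since $g\gamma < \beta \le \alpha$, any element lying strictly above a translate $h\alpha$ also lies strictly above the translate $hg\gamma$, so $\mcC \setminus \{\alpha\}$ would still be minorising, contradicting non-redundancy. Thus in the paper the minoriser of $\beta$ is itself the desired translate of $\alpha$, and the key point is a substitution principle: anything strictly below $\alpha$ can replace $\alpha$ in any minorising set. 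You instead negate non-redundancy first, extracting a witness $\omega$ below which only translates of $\alpha$ can occur, and then spend two applications of the minorising property (to $\omega$, then to $h\beta$) together with the conjugation trick $h$, $h^{-1}$ to move $\beta$ into the region below $\omega$ and pull the answer back; each step checks out, including the transitivity bookkeeping $\delta' < h\beta \le h\alpha < \omega$ that keeps the second minoriser inside the $\alpha$-only region. Both arguments use exactly the same two hypotheses; the paper's is shorter, while yours makes the role of non-redundancy concrete via $\omega$ and runs directly rather than by contradiction in its final phase. Your side remark that $0 \notin \mcC$ (needed so that $\beta = \alpha$ is a legitimate choice in the ``in particular'' clause) is a small point the paper leaves implicit.
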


\begin{proof}By definition, there is some $\gamma \in \mcC$ and $g \in G$ such that $0 < g\gamma < \beta$.  We must have $\gamma = \alpha$ because~$\mcC \smallsetminus \{\alpha\}$ would be a minorising set otherwise.
\end{proof}

Before the next proposition, let us recall some terminology.

Given an automorphism $f$ of $G$, the \defbold{contraction group}\index{contraction group}\index{con(g)@$\con(g)$} $\con(f)$ of $f$ on $G$ is given by
\[ \con(f) = \{ u \in G \mid f^n(u) \rightarrow 1 \text{ as } n \rightarrow +\infty\}.\]

One sees that $\con(f)$ is a subgroup of $G$, although not necessarily a closed subgroup.  Given an element $g \in G$, we define $\con(g)$ to be the contraction group of the automorphism induced by left conjugation.

We define the \defbold{Tits core}\index{Tits core} of a totally disconnected, locally compact group $G$ to be the subgroup
\[ G^\dagger := \langle \overline{\con(g)} \mid g \in G \rangle.\]

The \defbold{nub} $\nub(f)$ of an automorphism $f$ of a \tdlc group $G$ is the largest compact $f$-invariant subgroup $N$ of $G$ such that $f$ does not stabilise any proper open subgroup of $N$; see \cite{WilNub} for more details.

\begin{prop}\label{goodshrink}
Let $G$ be a \tdlc group acting faithfully   on a Boolean algebra $\mcA$ such that the action is locally weakly decomposable.  Let $U$ be a compact open subgroup of $G$.  Suppose that there is some $\alpha \in \mcA$ and $g \in G$ such that $g\alpha < \alpha$; let $\beta = \alpha \smallsetminus g\alpha$.
\begin{enumerate}[(i)]
\item There is a natural number $n_0$ and a closed subset $\kappa$ of $\alpha$ satisfying the following properties: We have $g^{n_0} \beta \subset \kappa$, the group $\rist_U(\kappa)$ is non-trivial and moreover
\[ g\,\rist_U(\kappa)g^{-1} \times \rist_U(g^{n_0}\beta) \le \rist_U(\kappa).\]
 In particular  $\left\{ g^n\rist_U(\kappa)g^{-n}\right\}_{n\geq0}$ is a strictly decreasing sequence of compact subgroups of~$U$. Moreover $\bigcap_{n \ge 0} g^n\rist_U(\kappa)g^{-n}=1$ and $\con(g)$ contains the group $\rist_U(\kappa)$.  

\item There is a closed subgroup of $G$ of the form
\[ \overline{\langle L_0, g \rangle} \cong \prod_{i \in \bZ} L_i \rtimes \langle g \rangle,\]
where $L_i = g^i\rist_W(\beta)g^{-i}$ for some compact open subgroup $W$. In particular, $\nub(g)$ contains the direct product $\prod_{i \in \bZ} L_i$, and hence~$\con(g)$ is not closed.
\end{enumerate}
\end{prop}

\begin{proof} (i)  Let $V = U \cap g\inv Ug$. The action of~$U$ on~$\mcA$ is faithful and smooth. By Lemma~\ref{lem:small_invariant_partition}, there  is a $U$-invariant partition $\mcC$ of $\mcA$ such that $\bigcap_{\gamma \in \mcC} U_{\gamma} \leq V$.  

By hypothesis we have $g\alpha < \alpha$. Therefore, for each $\gamma \in \mcC$ we have either: (a) $g^n\alpha \ge \gamma$ for all $n \ge 0$; or (b) there is~$n_0\geq0$ such that~$g^n\alpha \not\ge \gamma$ for all $n\geq n_0$. Choose a fixed~$n_0$ sufficiently large that either~(a) or~(b) holds for every~$\gamma \in \mcC$.

Let $\xi$ be the interior of $\bigcap_{n \in \bZ} g^n\alpha$ in the Stone space $\mfS(\mcA)$.  Then the set 
$$\kappa := g^{n_0}\alpha \smallsetminus \xi$$ 
is closed.  Since $g\xi = \xi$, we have $g\kappa \subset \kappa$. Furthermore $\kappa \smallsetminus g\kappa = g^{n_0}\alpha \smallsetminus g^{n_0+1}\alpha = g^{n_0}\beta$, so $g\kappa \cup g^{n_0}\beta \subseteq \kappa$.  In particular $\rist_U(\kappa)$ contains $\rist_U(g\kappa) \times \rist_U(g^{n_0}\beta)$ as a subgroup and hence is non-trivial (and even non-discrete) since the $G$-action on $\mcA$ is locally weakly decomposable.  Moreover, given $\gamma \in \mcC$, either~(a) holds, in which case $\gamma \subseteq \xi$, or else~(b) does, in which case $\gamma \not\subseteq g^{n_0}\alpha$. Both cases imply that $\kappa\not\supseteq \gamma$ and hence that $\rist_U(\kappa)$ fixes some $\mfp \in \gamma$. Then $\rist_U(\kappa)$ fixes $\gamma$ because $\mcC$ is a $U$-invariant partition and, hence, the distinct $U$-translates of $\gamma$ are disjoint.  Therefore we have $\rist_U(\kappa) \leq \bigcap_{\gamma \in \mcC} U_{\gamma} \leq V$.    In particular $g\,\rist_U(\kappa)g\inv \le U$ and hence $g\,\rist_U(\kappa)g\inv \le \rist_U(g\kappa)$. Therefore $g\,\rist_U(\kappa)g\inv \times \rist_U(g^{n_0}\beta) \le \rist_U(\kappa)$ as claimed.

Let $R = \bigcap_{n \ge 0}g^n\rist_U(\kappa)g^{-n}$.  Note that {every point that is not fixed by} $R$ is contained in $\bigcap_{n \ge 0} g^n\kappa$. By construction, the intersection $\bigcap_{n \ge 0}g^n\kappa$ has empty interior, so the set of fixed points of $R$ is dense in $\mfS(\mcA)$.  Since $R$ acts by homeomorphisms on $\mfS(\mcA)$, the action of $R$ on $\mfS(\mcA)$ must be trivial.  Since the action of $G$ on $\mfS(\mcA)$ is faithful, we have $R = \triv$ as claimed. Finally, since~$(g^n\rist_U(\kappa)g^{-n})_{n \geq 0}$ is a descending chain of compact subgroups with trivial intersection, we have $\rist_U(\kappa) \le \con(g)$.

(ii) By part (i), we have $\con(g) \geq \rist_U(g^{n_0}\beta) = g^{n_0}\rist_{U'}(\beta) g^{-n_0}$, where $U' = g^{-n_0} U g^{n_0}$. Since $\con(g)$ is normalised by $g$, we infer that $\rist_{U'}(\beta) \leq \con(g)$. Next observe that the conditions that $g\alpha < \alpha$ and $\beta = \alpha \smallsetminus g\alpha$ are equivalent to ${g\inv\alpha^\bot < \alpha^\bot}$ and ${g\inv\beta = \alpha^\bot \smallsetminus g\inv\alpha^\bot}$. Therefore, the argument of part~(i) may be applied to $g\inv$ to conclude that 
$$\rist_W(\beta) \leq \con(g) \cap \con(g^{-1}),$$ 
where $W =g^{-n_0} U g^{n_0} \cap g^{n'_0} U g^{-n'_0}$ for some suitable $n'_0 >0$. 
This implies that every identity neighbourhood in $G$ contains all but finitely many $\langle g \rangle$-conjugates of $\rist_W(\beta)$.

Let $L_i = g^i\rist_W(\beta)g^{-i}$ for each $i \in \bZ$ and let $K =   \overline{\langle \bigcup_{i \in \mathbf Z} L_i\rangle}$. Note that for any distinct $i$ and $j$, the sets $g^i \beta$ and $g^j \beta$ are disjoint, hence the groups $L_i$ and $L_j$  commute. Moreover each  $L_i$ is compact,  and   every identity neighbourhood of $G$ contains all but finitely many $L_i$. It follows that $K$ is compact as well, and that moreover the  natural  homomorphism $\bigoplus_{i \in \bZ}L_i \rightarrow K$  extends  to a continuous homomorphism $\phi \colon \prod_{i \in \bZ}L_i \rightarrow K$. The image of $\phi$ is closed and dense in $K$, hence $\phi$ is surjective.  The kernel $N$ of $\phi$ commutes with each of the factors $L_i$, and is therefore contained in the centre of $\prod_{i \in \bZ}L_i$.  However, the fact that $G$ has a faithful locally weakly decomposable action ensures that $G$ is locally C-stable (see \cite[Theorem~5.18]{CRW-Part1}); hence each of the groups $L_i$ must have trivial centre, so $\prod_{i \in \bZ}L_i$ also has trivial centre.  Hence $N=\triv$, so $\phi$ is an isomorphism of profinite groups.  Since $K$ is compact and $g$ acts on $K$ by shifting the factors of the direct product, it is clear that $g$ does not normalise any proper open subgroup of $K$, so $K \le \nub(g)$.  We conclude that $\con(g)$ is not closed by \cite[Theorem~3.32]{BaumgartnerWillis}.

It remains to show that $\langle K,g \rangle \cong K \rtimes \langle g \rangle$ is a closed subgroup of $G$.  Notice that $\langle g \rangle$ is a discrete subgroup of $G$, since otherwise $\overline{\langle g \rangle}$ would be compact, which implies that $g$ normalises a basis of identity neighbourhoods in $G$ and, hence, that $\con(g)$ is trivial. Let now $H = \overline{\langle K, g \rangle}$ and notice that $K$ is a compact normal subgroup of $H$.  
The image of $\langle g \rangle$ in the quotient $H/K$ is dense since $\langle K \cup g \rangle$ is dense in $H$. Since $\langle g \rangle$ is discrete and torsion-free, it intersects every compact open subgroup trivially. This implies that $H/K$ is discrete and generated by the image of $\langle g \rangle$. Thus $K$ is open in $H$. Therefore we have $H = \langle K,g \rangle$ and $H$ is indeed closed.
\end{proof}
%%%%%%%%%

The fact, seen in the proof, that~$\con(g) \ge \rist_W(\beta)$ for some compact open subgroup $W$ implies the following. 

\begin{cor}\label{cor:goodshrink}
Let $G$ be a \tdlc group acting faithfully on a Boolean algebra $\mcA$, such that the action is locally weakly decomposable. Suppose that there is some $\alpha \in \mcA$ and $g \in G$ such that $g\alpha < \alpha$; let $\beta = \alpha \smallsetminus g\alpha$. Then the Tits core $G^\dagger$ contains $\rist_U(\beta)$ for some compact open subgroup $U$ of $G$.
\end{cor}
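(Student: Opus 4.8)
The plan is to reduce the statement to a containment that is already established inside the proof of Proposition~\ref{goodshrink}. Recall that by definition the Tits core is $G^\dagger = \langle \overline{\con(h)} \mid h \in G \rangle$, so it contains $\overline{\con(h)}$, and \emph{a fortiori} $\con(h)$, for every single element $h \in G$. Consequently it suffices to exhibit one element whose contraction group already contains $\rist_U(\beta)$ for some $U \in \mcB(G)$. The natural candidate is the given skewering element $g$ itself, and Proposition~\ref{goodshrink} was engineered precisely to control $\con(g)$ via rigid stabilisers of translates of $\beta$.

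Concretely, I would recall the two opening steps of the proof of Proposition~\ref{goodshrink}(ii). Part~(i) yields $\rist_U(g^{n_0}\beta) \le \con(g)$ for a suitable natural number $n_0$. Writing $V = g^{-n_0}Ug^{n_0}$, which is again an open compact subgroup of $G$, one has $\rist_U(g^{n_0}\beta) = g^{n_0}\rist_V(\beta)g^{-n_0}$; since $\con(g)$ is normalised by $g$, this gives $\rist_V(\beta) \le \con(g)$. The conclusion is then immediate:
\[
\rist_V(\beta) \le \con(g) \le \overline{\con(g)} \le G^\dagger,
\]
and as $V \in \mcB(G)$, taking $U := V$ proves the corollary.

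There is no genuine obstacle at this stage: all the substantive work—constructing the closed set $\kappa$, verifying $\bigcap_{n \ge 0} g^n\rist_U(\kappa)g^{-n}=1$ using faithfulness, and showing that the relevant rigid stabilisers land inside the contraction group—has already been carried out in Proposition~\ref{goodshrink}. The only point requiring a moment's bookkeeping is the conjugation by $g^{n_0}$, which is what allows one to pass from the rigid stabiliser of the translate $g^{n_0}\beta$ to that of $\beta$ itself; this is handled entirely by the $\langle g\rangle$-invariance of $\con(g)$.
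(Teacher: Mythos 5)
Your proof is correct and follows essentially the same route as the paper's: both invoke Proposition~\ref{goodshrink}(i) to place the rigid stabiliser of the translate $g^{n_0}\beta$ inside $\con(g) \le G^\dagger$, and then undo the translate by the conjugation identity $\rist_U(g^{n_0}\beta) = g^{n_0}\rist_V(\beta)g^{-n_0}$ with $V = g^{-n_0}Ug^{n_0}$. The only cosmetic difference is that you justify the final step via the $g$-invariance of $\con(g)$ (exactly as in the paper's proof of Proposition~\ref{goodshrink}(ii)), whereas the paper's proof of the corollary instead conjugates after passing to $G^\dagger$ and uses normality of $G^\dagger$ in $G$; both are valid.
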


%\begin{proof}Let $V \in \mcB(G)$.  By Proposition~\ref{goodshrink} (i), $G^\dagger$ contains $\rist_V(g^{n_0}\beta)$ for some $n_0 \in \bN$.  We have
%\[ \rist_V(g^{n_0}\beta) = g^{n_0}\rist_U(\beta)g^{-n_0},\]
%where $U = g^{-n_0}Vg^{n_0}$ is a compact open subgroup of $G$.  Since $G^\dagger$ is normal in $G$, it follows that $\rist_U(\beta) \le G^\dagger$.\end{proof}

The existence of a minorising locally weakly decomposable action of $G$ imposes some restrictions on the algebraic structure of $G$.

\begin{prop}\label{prop:freemonoid}
Let $G$ be a \tdlc group acting faithfully on a Boolean algebra $\mcA$, such that the action is locally weakly decomposable, and let $\Gamma$ be a subgroup of $G$.  Suppose that the action of $\Gamma$ on $\mcA$ is minorising of degree $d$.  Then $\Gamma$ contains a free submonoid on $2$ generators that is discrete in $G$.
\end{prop}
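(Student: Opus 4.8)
The plan is to reduce the statement to a classical ping-pong argument for free monoids, producing the two generators directly from the skewering elements supplied by Lemma~\ref{lem:skewer}. Concretely, I aim to find a non-empty clopen set $\alpha \in \mcA$, a splitting $\alpha = \beta_1 \vee \beta_2$ into two non-zero clopen pieces with $\beta_1 \wedge \beta_2 = 0$, and two elements $a, b \in \Gamma$ such that $a\alpha \le \beta_1$ and $b\alpha \le \beta_2$ (as clopen subsets of $\mfS(\mcA)$). Once this configuration is in place, the homomorphism $\phi$ from the free monoid on two letters into $G$ sending the letters to $a$ and $b$ will be shown to be injective with discrete image, which is exactly the assertion.

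To produce the configuration, I would first fix a non-redundant finite minorising set $\mcC$ for the $\Gamma$-action (whose existence is precisely the minorising hypothesis) and choose any $\alpha \in \mcC$. Since a minorising action forces $\mcA$ to have no atoms, and $\mcA$ is a Boolean algebra, $\alpha$ admits a splitting $\alpha = \beta_1 \vee \beta_2$ with $\beta_1, \beta_2 > 0$ and $\beta_1 \wedge \beta_2 = 0$: take $0 < \beta_1 < \alpha$ and set $\beta_2 = \alpha \setminus \beta_1$. Applying Lemma~\ref{lem:skewer} to $\alpha \in \mcC$ with the two choices $0 < \beta_1 \le \alpha$ and $0 < \beta_2 \le \alpha$ then yields $a, b \in \Gamma$ with $a\alpha < \beta_1$ and $b\alpha < \beta_2$; in particular $a\alpha \subseteq \beta_1 \subseteq \alpha$ and $b\alpha \subseteq \beta_2 \subseteq \alpha$.

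For the ping-pong, the key observation (proved by induction on word length, using $\beta_1, \beta_2 \subseteq \alpha$ and that the action is by homeomorphisms) is that for every non-empty positive word $w = s_1 \cdots s_m$, the set $\phi(w)(\alpha)$ is non-empty and contained in $\beta_1$ if $s_1 = a$ and in $\beta_2$ if $s_1 = b$, hence is a non-empty proper subset of $\alpha$. From this I would deduce, for distinct positive words $w \neq w'$, that $\phi(w)^{-1}\phi(w')$ does not fix $\alpha$: cancelling the longest common prefix reduces the comparison to $\phi(v)(\alpha)$ versus $\phi(v')(\alpha)$, where $v, v'$ have distinct first letters or one is empty, and in either case the two images differ (by disjointness of $\beta_1, \beta_2$, respectively by $\beta_{s} \subsetneq \alpha$). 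This gives in particular $\phi(w) \neq \phi(w')$, so $\phi$ is injective and the submonoid generated by $a$ and $b$ is free of rank two.

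Discreteness is where the weakly branch hypothesis enters, and is the only genuinely delicate point. Since the $G$-action on $\mcA$ is smooth, the setwise stabiliser $\Stab_G(\alpha)$ is an open subgroup of $G$. The naive argument — that the submonoid meets $\Stab_G(\alpha)$ only in the identity, hence is discrete — is insufficient for a semigroup, because a quotient $\phi(w)^{-1}\phi(w')$ of two monoid elements need not itself lie in the monoid. Instead I would verify directly that each $\phi(w)$ is isolated, by showing that the open neighbourhood $\phi(w)\Stab_G(\alpha)$ meets the monoid only in $\phi(w)$: if $\phi(w')$ lies in it, then $\phi(w)^{-1}\phi(w') \in \Stab_G(\alpha)$ fixes $\alpha$, which by the computation of the previous paragraph forces $w = w'$. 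Hence the free submonoid is discrete in $G$. I expect this final step — handling discreteness correctly for a semigroup rather than a group — to be the main obstacle, the rest being routine ping-pong bookkeeping.
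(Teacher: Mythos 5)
Your proof is correct and takes essentially the same route as the paper's: two applications of Lemma~\ref{lem:skewer} to an element $\alpha$ of a non-redundant minorising set yield generators whose images of $\alpha$ are disjoint non-zero subsets of $\alpha$, ping-pong on prefixes gives freeness, and discreteness follows because distinct monoid elements lie in distinct cosets of the open stabiliser $\Stab_G(\alpha)$ (the paper phrases this as $|S \cap xG_\alpha| \le 1$ for all $x \in G$). The only cosmetic difference is that the paper obtains the disjoint configuration as $g\alpha < \alpha$ and $h\alpha < \alpha \setminus g\alpha$, rather than pre-splitting $\alpha$ into $\beta_1 \vee \beta_2$ via atomlessness.
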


\begin{proof}
Fix a minorising set $\mcC = \{\alpha_1,\dots,\alpha_d\}$ under the action of $\Gamma$ of smallest possible size.  Let $\alpha = \alpha_1$.  Then there exists $g \in \Gamma$ such that $g\alpha < \alpha$ {by Lemma~\ref{lem:skewer}}.  Moreover, there is $h \in \Gamma$ such that $h\alpha < \alpha \smallsetminus g\alpha$.  Let $S$ be the submonoid of $G$ generated by $g$ and $h$; we claim that $S$ is freely generated by $g$ and $h$ and that $|S \cap xG_\alpha| \le 1$ for all $x \in G$.  If it happened that either~$S$ were not freely generated or $|S \cap xG_\alpha| > 1$, there would be elements $x$ and $y$ of $S$ such that $x\alpha = y\alpha$, where $x$ and $y$ are represented as distinct strings $s$ and $t$ respectively in the alphabet $\{g,h\}$.  By deleting matching prefixes, we can ensure that the leftmost letters of $s$ and $t$ are different, say $s = gs'$ and $t = ht'$.  But then $x\alpha \le g\alpha$ and $y\alpha \le h\alpha$, so $x\alpha \wedge y\alpha \le g\alpha \wedge h\alpha = 0$, contradicting that $x\alpha = y\alpha$.  Thus $S$ is free on $2$ generators, and it is discrete in $G$ because~$\{xG_\alpha \mid x \in G\}$ is an open partition of $G$.
\end{proof}

\subsection{Minorising weakly decomposable actions}

The aim of this section will be to prove Theorem~\ref{cpctminormin}, which gives a sufficient condition for a compactly generated \tdlc group to have a minorising action on its centraliser lattice.  This condition is in particular satisfied by all groups in $\sclass$ with non-trivial centraliser lattice. 

We start by assembling some subsidiary lemmas which will be needed for the proof.

\begin{lem}\label{shiftlem}
Let $G$ be a group acting on a set $\Omega$ and let $\Phi\subseteq \Psi$ be subsets of $\Omega$.  Let $X$ be a symmetric subset of $G$ such that $G = \langle X, {\Stab_G(\Psi)} \rangle$. 
Suppose that 
\[ \bigcup \{ y\Phi \mid y \in G, \ y\Phi \subseteq \Psi \} \subseteq \bigcap_{x \in X} x^{-1}\Psi.\]
Then $y\Phi \subseteq \Psi$ for all $y \in G$.
\end{lem}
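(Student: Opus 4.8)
The plan is to recast the set-theoretic hypothesis as a purely combinatorial closure property of the subset
\[ S := \{ y \in G \mid y\Phi \subseteq \Psi \} \subseteq G, \]
and then observe that $S$ is invariant under the left-multiplication action of $G$ on itself, which forces $S = G$ once it is nonempty. The conclusion to be proved, namely $y\Phi \subseteq \Psi$ for all $y \in G$, is exactly the assertion $S = G$.

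First I would unwind the displayed inclusion. For $x \in X$ and $y \in S$, the chain $y\Phi \subseteq \bigcap_{x' \in X} x'^{-1}\Psi \subseteq x^{-1}\Psi$ is equivalent, after multiplying by $x$, to $(xy)\Phi \subseteq \Psi$, i.e. to $xy \in S$. Thus the hypothesis says precisely that $XS \subseteq S$ (here I must be careful to get the direction of multiplication and the inversion of $x$ right; this is the only genuinely substantive manipulation). On the other hand, the closure under the stabiliser is free: if $g \in \Stab_G(\Psi)$ and $y \in S$, then $gy\Phi \subseteq g\Psi = \Psi$, so $\Stab_G(\Psi)\,S \subseteq S$ automatically.

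Next I would combine these. Put $Y = X \cup \Stab_G(\Psi)$. Since $X$ is symmetric and $\Stab_G(\Psi)$ is a subgroup, $Y$ is symmetric and contains the identity, and by hypothesis $\langle Y \rangle = G$, so every $g \in G$ can be written as a finite product $y_1 \cdots y_k$ with $y_i \in Y$. From $YS \subseteq S$ an immediate induction on $k$ gives $gS \subseteq S$ for all $g \in G$. The base point is $1 \in S$, i.e. $\Phi \subseteq \Psi$, which I take as given (if no translate of $\Phi$ is contained in $\Psi$ then $S = \emptyset$ and the asserted conclusion would be empty, so this is where the standing assumption enters). Then for every $g \in G$ we get $g = g\cdot 1 \in gS \subseteq S$, whence $S = G$, as required. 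The only real point of care is the reformulation $XS \subseteq S$ and the observation that a single element of $S$ (the identity) already propagates to all of $G$ via the symmetric generating set $Y$; there is no analytic content beyond this.
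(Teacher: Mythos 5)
Your argument is correct in substance and is essentially the paper's own proof in different clothing: the paper takes a counterexample $y$ of minimal word length with respect to $X \cup \Stab_G(\Psi)$ and peels off its leftmost letter, which is exactly your induction showing that $S = \{ y \in G \mid y\Phi \subseteq \Psi\}$ is closed under left multiplication by $X$ (via the displayed hypothesis) and by $\Stab_G(\Psi)$ (for free), hence under all of $G = \langle X, \Stab_G(\Psi)\rangle$, symmetry of $X$ being what allows every group element to be written as a positive word in $X \cup \Stab_G(\Psi)$.

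The one point that needs correction is your parenthetical justification of the base point. If $S = \emptyset$ and $\Phi \neq \emptyset$, the conclusion ``$y\Phi \subseteq \Psi$ for all $y \in G$'' is not vacuously true; it is false, since $1 \in G$. Indeed the lemma as literally stated fails in that degenerate case: take $\Phi \neq \emptyset$ and $\Psi = \emptyset$, so that the displayed hypothesis holds vacuously and $\Stab_G(\Psi) = G$, yet no translate of $\Phi$ lies in $\Psi$. So ``taking $1 \in S$ as given'' is an unstated extra hypothesis, not something dischargeable by vacuity. You are in good company here: the paper's proof has exactly the same implicit assumption, since decomposing a minimal counterexample as $y = xz$ with $l(z) < l(y)$ presupposes $l(y) \geq 1$, i.e.\ that the identity is not itself a counterexample. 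In the lemma's one application (Theorem~\ref{cpctminor}, with the conjugation action, $\Phi = \rist_U(\beta)$ and $\Psi = U$) one has $\Phi \subseteq \Psi$, so the base case is available. Note finally that your induction proves slightly more than you claim: any single $y_0 \in S$ propagates, since $g = (gy_0^{-1})y_0 \in (gy_0^{-1})S \subseteq S$; so the honest missing hypothesis is merely $S \neq \emptyset$, not $\Phi \subseteq \Psi$.
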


\begin{proof}
Let $l$ be the word length function on $G$ with respect to $X \cup {\Stab_G(\Psi)}$.  Suppose there is some $y \in G$ such that $y\Phi \not\subseteq \Psi$; assume $l(y)$ is minimal.  Then $y = xz$ where $l(x)=1$ and $l(z) < l(y)$.  The minimality of $l(y)$ ensures $x \not\in {\Stab_G(\Psi)}$, so $x \in X$.  Moreover $z\Phi \subseteq \Psi$, so $z\Phi \subseteq x^{-1}\Psi$  by hypothesis; but then $y\Phi \subseteq \Psi$, a contradiction.
\end{proof}

\begin{lem}\label{strminor}
Let $G$ be a \tdlc group acting faithfully on a Boolean algebra $\mcA$ such that the action is locally weakly decomposable and let $\Gamma$ be a subgroup of $G$.  Suppose the action of $\Gamma$ has a minorising set $\mcC$ of size  $n$.  
%Let $\mcX$ be the set
%\[ \mcX := \{ \rist_U(\alpha) \mid \alpha \in \mcA, \; U \in \mcB(G)\},\]
%ordered by inclusion.  
Then there is a subset $\mcD$ of $\mcA \smallsetminus \{0\}$ of size $n$, such that for each $\delta \in \mcD$ there is $\gamma \in \mcC$ with $\delta \leq \gamma$, and satisfying moreover the following property:
For all   compact open subgroups $U$, $V$ of $G$ and $\gamma \in \mcA \smallsetminus \{0\}$, there exist $y \in \Gamma$ and $\delta \in \mcD$ such that $y\rist_U(\delta)y\inv$ is a subgroup of $\rist_V(\gamma)$ of infinite index.\end{lem}

\begin{proof}
By Lemma~\ref{lem:skewer}, for each $\alpha \in \mcC$ there is~$g_\alpha \in \Gamma$ such that $g_\alpha \alpha < \alpha$. For each~$\alpha\in\mcC$ choose~$n_0$ as in Proposition~\ref{goodshrink} and set
$$ \delta_\alpha = g_\alpha^{n_0}(\alpha \smallsetminus g_\alpha \alpha) \mbox{ and } \mcD = \{ \delta_\alpha \mid \alpha \in \mcC\}.$$  
{By construction $|\mcD| \le n$. Since $\mcD$ is a minorising set under the action of $\Gamma$ on $\mcA$, we infer that $\mcD$ cannot have fewer than $n$ elements, and so $|\mcD|=n$. }

Fix $\gamma \in \mcA \smallsetminus \{0\}$. We must show that, for all compact open subgroups~$U$ and $V$ of $G$, there are $y\in\Gamma$ and $\delta\in \mcD$ such that~$y\,\rist_U(\delta)y^{\inv} <  \rist_V(\gamma)$.  To begin, fix $x \in \Gamma$ and $\alpha \in \mcC$ such that $x\alpha < \gamma$, and let $K := x\,\rist_U(\alpha)x^{-1}\cap V$, so that $K$ is an open subgroup of $x\,\rist_U(\alpha)x^{-1}$. % Write $g = g_\alpha$. Finally, let $\kappa = \kappa_g$ be as in Proposition~\ref{goodshrink}.
Then, since~$x\,\rist_U(\alpha)x^{-1}= \rist_{xUx^{-1}} (x\alpha) < \rist_{xUx^{-1}} (\gamma)$, we see that $K \le \rist_V(\gamma)$.  

By Proposition~\ref{goodshrink} there is a closed subset~$\kappa$ of~$\alpha$ such that $\delta_\alpha \subseteq \kappa$ and such that $\left\{xg_\alpha^n\rist_U(\kappa)g_\alpha^{-n}x^{-1}\right\}_{n \ge 0}$ is a descending chain of closed subgroups of $x\,\rist_U(\alpha)x^{-1}$ with trivial intersection. Hence, ${xg_\alpha^n\rist_U(\delta_\alpha)g_\alpha^{-n}x^{-1} \le K}$ for some~$n$ by a standard compactness argument. Setting $y := xg_\alpha^n$,  it follows that~$y\rist_U(\delta_\alpha)y^{-1} \le \rist_V(\gamma)$. The index $[\rist_V(\gamma): y\rist_U(\delta_\alpha) y^{-1}]$ is infinite because $y\delta_\alpha =  xg_\alpha^n\delta_\alpha \le x\alpha < \gamma$.  The claimed minorising property of $\{\rist_U(\delta) \mid \delta \in \mcD\}$ is thus verified.
\end{proof}

\begin{thm}\label{cpctminormin}
Let $G$ be a non-trivial locally C-stable compactly generated \tdlc group and let $\mcA \subseteq \lcent(G)$ be a $G$-invariant subalgebra.  Suppose that $G$ acts faithfully on $\mcA$ and that some open subgroup of $G$ has trivial core. Then the set of minimal closed   normal subgroups of $G$ is finite; we denote it by $\mcM = \{M_1,\dots,M_d\}$. 
Moreover, the following assertions hold, where for each $1 \le i \le d$, the symbol $\upsilon_i$ denotes the complement of the fixed-point-set of $M_i$ on $\mfS(\mcA)$. 

\begin{enumerate}[(i)]
\item The $G$-action on $\mcA$ is minorising of degree $d$. Indeed, there is a set of $d$ elements in $\mcA$ that is minorising for the $G$-action on $\lcent(G)$. Moreover, the set $\{\upsilon_1,\dots,\upsilon_d\}$ coincides with the set $\mcU$ of all minimal non-empty $G$-invariant open subsets of $\mfS(\mcA)$, as in Lemma~\ref{minoropen}.  In particular, the sets $\upsilon_i$ are pairwise disjoint. 

\item Suppose that $\mcA$ contains the fixed points of the action of $G$ on $\lcent(G)$.  Let $\alpha_i =  [M_i]^{\bot^2} \in \lcent(G)$ for $1 \le i \le d$.  Then $\alpha_i = \overline{\upsilon_i}$, $\alpha_i \in \mcA$ and $\{\alpha_1,\dots,\alpha_d\}$ generates the subalgebra of fixed points of $G$ on $\lcent(G)$.

%\item Let $\alpha_i = \overline{\upsilon_i}$ for $1 \le i \le d$.  Then $\alpha_i$ is clopen, indeed $\alpha_i =  [M_i]^{\bot^2} \in \lcent(G)$, and $\{\alpha_1,\dots,\alpha_d\}$ generates the subalgebra of $\mcA$ consisting of the fixed points of $G$.

\item For each $i \in \{1,\dots,d\}$ there is an infinite compact subgroup $L_i$ of $M_i$ which is locally normal in~$G$ and such that: for every non-trivial closed locally normal subgroup~$K$ of~$G$, there are $i \in \{1,\dots,d\}$ and~$g \in G$ such that~$gL_ig^{-1}$ is a subgroup of~$K$ with infinite index.
\end{enumerate}
\end{thm}

\begin{proof}%[Proof of Theorem~\ref{cpctminormin}]
Let us first note that any discrete normal subgroup of $G$ would act trivially on $\mcA$, so in fact $G$ has no non-trivial discrete normal subgroups.  The locally C-stable condition then ensures that $G$ has no non-trivial abelian normal subgroups by Proposition~\ref{cstab}.  The hypotheses ensure that there is a compact open subgroup $U$ of $G$ such that $\bigcap_{g \in G}gUg\inv = \triv$.  We can thus apply Proposition~\ref{cmsoc}, so that $G$ indeed has only finitely many minimal closed normal subgroups, such that every non-trivial closed normal subgroup contains a minimal one.

(i)
We first aim to show that the action of $G$ on $\mcA$ is minorising.

Since $G$ is compactly generated, we have $G = \langle U, X\rangle$ for a finite set $X$; we may assume $X = X^{-1}$.  Let $V = \bigcap_{x \in X}x^{-1}Ux$.  By Lemma~\ref{lem:small_invariant_partition}, there is a $U$-invariant partition  $\mcC$ 
% = \{\gamma_1,\dots,\gamma_m\}$ 
of $\mcA$ such that $\bigcap_{\gamma \in \mcC} U_{\gamma} \leq V$. 

Let $\beta \in \lcent(G) \smallsetminus \{0\}$. Consider the action of~$G$ on itself by conjugation and let $\Phi = \rist_U(\beta)$ and $\Psi = U$.  Note that $\Phi$ is a non-trivial subgroup.  Since $U$ has trivial core in $G$, there must exist $g \in G$ such that $\rist_U(\beta)$ is not contained in $g\inv Ug$, or equivalently, $U$ does not contain every $G$-conjugate of $\rist_U(\beta)$.  The contrapositive of Lemma~\ref{shiftlem} then ensures the existence of $g \in G$ such that $g\,\rist_U(\beta)g^{-1}$ is contained in $U$ but not $V$.  For this~$g$, there is some $\gamma \in \mcC$ such that $g\,\rist_U(\beta)g^{-1}$, and hence $\rist_U(g\beta)$, does not stabilise $\gamma$.  Since~$\rist_U(g\beta)$ fixes $(g\beta)^\bot$ pointwise and the partition  $\mcC$ of $\mcA$ is $U$-invariant,  we must have $\gamma \cap (g\beta)^\bot = \varnothing = h\gamma \cap (g\beta)^\bot$ where~$h\in \rist_U(g\beta)$ and~$h\gamma = \gamma'$ for some~$\gamma' \in \mcC$ different from $\gamma$. Hence~$\gamma < g\beta$ and, since~$\beta$ was arbitrary, we have shown that $\bigcup_{\gamma \in \mcC}G\gamma$ is minorising for $G$.

Since $G$ is minorising of some finite degree, we can now appeal to Lemma~\ref{minoropen}.  Specifically, to show that the action is minorising of degree $d$, it suffices to prove that $\{\upsilon_1, \dots, \upsilon_d\}$ is the set of minimal open invariant subsets of $\mfS(\mcA)$ as claimed.  By definition, the subsets $\{\upsilon_1, \dots, \upsilon_d\}$ of $\mfS(\mcA)$ are open and $G$-invariant. We need to show that they are pairwise disjoint, and that every non-empty open $G$-invariant subset of $\mfS(\mcA)$ contains some $\upsilon_i$.

Suppose $\upsilon_i \cap \upsilon_j$ is non-empty for some distinct $i,j \in \{1,\dots,d\}$; say $\mfp \in \upsilon_i \cap \upsilon_j$.  There is then $g_i \in M_i$, $g_j \in M_j$ and an open neighbourhood $\delta$ of $\mfp$ contained in $\mfS(\mcA)$, such that both $g_i\delta$ and $g_j\delta$ are disjoint from $\delta$.  In particular, both $g_i\rist_G(\delta)g\inv_i$ and $g_j\rist_G(\delta)g\inv_j$ have support disjoint from that of $\rist_G(\delta)$ and therefore commute with $\rist_G(\delta)$ (since the action is faithful).  By Lemma~\ref{lem:Derived}, we conclude that both $M_i$ and $M_j$ contain the derived group of $\rist_G(\delta)$.  Since $M_i \cap M_j = \triv$, it follows that $\rist_G(\delta)$ is abelian; since $G$ is locally C-stable it then follows by Proposition~\ref{cstab} that $\rist_G(\delta)$ is trivial.  This is impossible, since the action of $G$ on $\mcA$ is weakly decomposable.  Thus the elements of $\{\upsilon_1, \dots, \upsilon_d\}$ are pairwise disjoint.

%Since the non-trivial normal subgroups of $G$ are non-discrete, each $M \in \mcM$ determines a non-trivial element~$[M]$ of the structure lattice~$\lnorm(G)$. Since distinct elements of $\mcM$ commute, we have $ [M_i]^\bot \geq [M_j] $, and so $[M_i] \leq [M_i]^{\bot^2} \leq [M_j]^\bot$ for any two distinct elements $M_i$ and $M_j$ of $\mcM$.  Moreover, by \cite[Proposition~\ref{Part1-latrist}]{CRW-Part1}, some open subgroup of $M_i$ is contained in  $ \rist_G([M_i]^{\bot^2})$. Since $M_i$ is normal in $G$, it follows that $[M_i]$ and $[M_i]^{\bot^2}$ are both fixed by $G$, so that $\rist_G([M_i]^{\bot^2})$ is a closed normal subgroup of $G$. Therefore, the minimality of $M_i$ ensures that $M_i \leq \rist_G([M_i]^{\bot^2})$ for all $i$. In particular  $\upsilon_i\subseteq [M_i]^{\bot^2}$. (We emphasize that $[M_i]$ is an element of $\lnorm(G)$, but not of $\lcent(G)$ a priori, so we cannot view $[M_i]$ as a subset of the Stone space $\mfS(\mcA)$. On the other hand we have $[M_i]^{\bot^2} \in \lcent(G)$ by definition.\footnote{TO CLARIFY: are we implicitly using that $[M_i]^{\bot^2}$ is an element of $\mcA$ here? If not, we should add a word of explanation in this paragraph.})  We infer that  $\upsilon_i \subseteq [M_i]^{\bot^2} \subseteq [M_j]^\bot$ for all $i, j$ distinct. In particular  the elements of $\{\upsilon_1, \dots, \upsilon_d\}$ are pairwise disjoint.   
 
On the other hand,  if $\upsilon$ is a non-empty open $G$-invariant subset of $\mfS(\mcA)$, then $\rist_G(\upsilon)$ is a closed normal subgroup of $G$, which is non-trivial since $\alpha \subseteq \upsilon$ for some $\alpha \in \mcA \smallsetminus \{0\}$. Hence~$\rist_G(\upsilon)$ contains one of the minimal closed normal subgroups,~$M_i$, and it follows that~$\upsilon \supseteq \upsilon_i$.  
%The desired conclusion now follows by Lemma~\ref{minoropen}.

(ii) For $1 \le i \le d$, set $\alpha_i = [M_i]^{\bot^2}$.  Certainly $\alpha_i$ is a fixed point of the action of $G$ on $\lcent(G)$, so by hypothesis $\alpha_i \in \mcA$.   Since distinct elements of $\mcM$ commute, we have $[M_i]^\bot \geq [M_j]$, and so $[M_i] \leq [M_i]^{\bot^2} \leq [M_j]^\bot$ for any two distinct elements $M_i$ and $M_j$ of $\mcM$.  Thus $\{\alpha_1,\dots,\alpha_d\}$ is a pairwise disjoint set of elements of $\mcA$.  By Proposition~\ref{prop:WB-part1}, some open subgroup of $M_i$ is contained in $\rist_G(\alpha_i)$.  Since $M_i$ is non-discrete, $\rist_G(\alpha_i) \cap M_i > \triv$.  Since $\rist_G(\alpha_i)$ is closed and normal in $G$ and $M_i$ is a non-discrete minimal non-trivial closed normal subgroup of $G$, we must in fact have $M_i \le \rist_G(\alpha_i)$.  In particular, it follows that $\upsilon_i \subseteq \alpha_i$.

Since the complement of~$\bigcup_{i=1}^d \upsilon_i$ is~$G$-invariant and does not contain any~$\upsilon_i$, it is nowhere dense in $\mfS(\mcA)$ by (i). Therefore, the open set $\alpha_i \smallsetminus \overline{\upsilon_i}$, which is contained in the complement of $\bigcup_{i=1}^d \upsilon_i$, must be empty. This confirms that $\upsilon_i$ is indeed dense in~$\alpha_i$ for each~$i$.  Let $\mcA'$ be the set of fixed points for the action of~$G$ on $\lcent(G)$. Then~$\alpha_i \in \mcA'$ for all $i$ by construction and $\mcA' \subseteq \mcA$ by hypothesis.  Given $\beta \in \mcA' \smallsetminus \{0\}$, it follows from part (i) that $\upsilon_i \subseteq \beta$ for some $i \in \{1,\dots,d\}$.  Since $\beta$ is closed and $\upsilon_i$ is dense in $\alpha_i$, in fact $\beta \ge \alpha_i$.  Since $\mcA'$ is a subalgebra of $\mcA$, we conclude that $\mcA'$ is the subalgebra generated by $\{\alpha_1,\dots,\alpha_d\}$.

(iii) Let $\mcD {\ = \{\delta_1, \dots, \delta_d\}}$ be as in Lemma~\ref{strminor} (with $\Gamma = G$). Since $\mcD$ is a minorising set for the $G$-action, we must have $\delta_i \subseteq \upsilon_i$ for all $i$ (up to renumbering).  For each $i \in \{1,\dots,d\}$, set $L_i = \overline{[\rist_U(\delta_i),\rist_U(\delta_i)]}$. Note that $L_i$ is infinite by virtue of the fact that $G$ is locally C-stable.  The quasi-centre of $L_i$ is a discrete, hence finite, locally normal subgroup of~$G$. It must thus be trivial by Proposition~\ref{cstab}.

Let $K$ be a non-trivial closed locally normal subgroup of $G$. Then there is a compact open subgroup $V$ of $G$ containing $K$ as a closed normal subgroup. Since the $G$-action on $\mcA$ is faithful, there is a point $\mathfrak p \in \mfS(\mcA)$ which is not fixed by $K$. Therefore, there exist $k \in K$ and a sufficiently small clopen neighbourhood $\gamma \in \mcA$ of $\mathfrak p$ such that $\gamma$ and $k\gamma$ are disjoint. In particular the groups $\rist_V(\gamma)$ and $k\,\rist_V(\gamma)k^{-1}$ commute. It then follows from Lemma~\ref{lem:Derived} that $K$ contains the derived subgroup of $\rist_V(\gamma)$. By Lemma~\ref{strminor}, there is $g \in G$ and $i \in \{1,\dots,d\}$ such that $g\,\rist_U(\delta_i)g^{-1}$ is contained in $\rist_V(\gamma)$, whence $K$ contains $gL_ig^{-1}$.  The construction ensures that the index of $gL_ig^{-1}$ in $K$ is infinite because~$kgL_ig^{-1}k^{-1}$ is infinite, is contained in~$K$ and intersects trivially with $gL_ig^{-1}$.

Let $1 \le i \le d$.  Since $M_i$ is a non-trivial closed locally normal subgroup of $G$, we have $gL_jg<  M_i$ for some $g \in G$ and $1 \le j \le d$; since $M_i$ is normal, $L_j < M_i$.  Given that $\delta_j \subseteq \upsilon_j$ and $L_j$ fixes every point outside of $\delta_j$, we see that $L_j \cap M_i = \triv$ unless $i=j$.  Thus we must have $L_i < M_i$.
\end{proof}

\subsection{Properties of simple \tdlc groups}\label{sec:mono}

We now focus on consequences of Theorem~\ref{cpctminormin} for the structure of groups in $\sclass$.  For clarity we highlight the special case of Theorem~\ref{cpctminormin} when $G \in \sclass$.

\begin{cor}\label{cor:lcent:minorising}
Let $G \in \sclass$ and let $\mathcal A = \lcent(G)$ or $\mathcal A = \ldlat(G)$. Assume that  $|\mathcal A|>2$. Then $G$ has a minorising orbit on $\mathcal A$.  Moreover, there is an infinite compact locally normal subgroup $L$ of $G$ with the following property: for every non-trivial closed locally normal subgroup $K$ of $G$, there exists $g \in G$ such that $gLg^{-1}$ is a subgroup of $K$ of infinite index.
\end{cor}

\begin{proof} 
In view of Corollary~\ref{lnormfix:lcent}, the $G$-action on $\mathcal A$ is faithful. 
Since $G$ is topologically simple, we are in the situation of Theorem~\ref{cpctminormin} where the number of minimal closed normal subgroups is $1$.  The conclusions are now immediate from Theorem~\ref{cpctminormin}.
\end{proof}

\begin{rem}
For the conclusion of Corollary~\ref{cor:lcent:minorising}, in general $L$ cannot be chosen to be a rigid stabiliser of the action of $G$ on $\lcent(G)$, as demonstrated by the following example.  Let $T$ be the regular tree of degree $d\ge 6$, let $G$ be the simple group $\Aut(T)^+$ generated by edge stabilisers in $G$.  Let $e \in ET$ and let $L$ be the fixator of the set of vertices closer to $t(e)$ than $o(e)$.  Then certainly $G \in \sclass$, $\lcent(G)$ is non-trivial, and both $L$ and $K := \overline{[L,L]}$ are infinite non-open compact locally normal subgroups of $G$.  Indeed, considered as an element of $\lcent(G)$, then $[L]$ is minorising under the action of $G$.  However, for all $v \in VT$, the permutation induced by $K_v$ on the neighbours of $v$ is even, whereas elements of $L$ can induce odd permutations on the neighbours of infinitely many vertices.  Thus $L$ is not conjugate in $G$ to a subgroup of $K$; indeed, there does not exist $g \in G$ such that $[gLg\inv] \le [K]$.  Since $[L]$ is minorising in $\lcent(G)$, the same argument shows that $gMg\inv \nleq K$ for all $g \in G$, where $M$ is any compact representative of any given non-zero element of $\lcent(G)$.
\end{rem}

We can prove a stronger version of Corollary~\ref{cor:FixedPointsGeneration} under the additional assumption that $\lcent(G)$ is non-trivial.  In this case we obtain two global invariants of the group $G$ that control the `size' of locally normal subgroups up to conjugation.

\begin{cor}
Let $G\in \sclass$ be such that $|\lcent(G)|>2$.  Suppose that every infinite commensurated compact subgroup of $G$ is open (this is automatic if $G$ is abstractly simple, see Theorem~\ref{baireabs}). Then there are natural numbers $m$ and $n$ and an open subgroup $U$ of $G$ such that the following holds, for every non-trivial closed locally normal subgroup $K$ of $G$:
\begin{enumerate}[(i)]
\item There is a set $\{g_1,\dots,g_m\}$ of $m$ elements of $G$ such that the (not necessarily direct) product $\prod^m_{i=1}g_iKg\inv_i$ contains $U$;
\item There is a set $\{h_1,\dots,h_n\}$ of $n$ elements of $G$ such that
\[
G = \langle h_iKh\inv_i \mid 1 \le i \le n \rangle.
\]
\end{enumerate}
\end{cor}

\begin{proof}
Let $L$ be the compact locally normal subgroup of $G$ given by Corollary~\ref{cor:lcent:minorising}.

By Theorem~\ref{boxcor}, there is a finite set $\{g_1,\dots,g_m\}$ of $G$ such that the product $\prod^m_{i=1}g_iLg\inv_i$ contains a non-trivial commensurated compact locally normal subgroup $U$ of $G$. Then this subgroup is open by hypothesis.  Since $G$ is compactly generated and topologically simple and $U$ is open, $G$ is generated by finitely many conjugates of $U$; since $U$ is contained in a subgroup generated by finitely many conjugates of $L$, finitely many conjugates of $L$ will also suffice to generate $G$.  In other words,
\[
G = \langle h_iLh\inv_i \mid 1 \le i \le n \rangle
\]
for some finite set $\{h_1,\dots,h_n\}$.

We have now proved (i) and (ii) for a specific compact locally normal subgroup $L$.  But in fact the choice of $L$ simultaneously provides a solution for all non-trivial compact locally normal subgroups $K$: given such a $K$ we have $r\inv Lr \le K$ for some $r \in G$, in other words, $L \le rKr\inv$, and therefore $\prod^m_{i=1}g_ir Kr\inv g\inv_i$ contains $U$ and $\{h_irKr\inv h_i\inv \mid 1 \le i \le n\}$ generates $G$.  Thus the group $U$ and the numbers $m$ and $n$ can be chosen independently of the choice of $K$.
\end{proof}

We can now also prove Theorems~\ref{thmintro:FixedPointsAtomic}, \ref{thmintro:con} and \ref{thmintro:PropertyS1}.

\begin{proof}[Proof of Theorem~\ref{thmintro:FixedPointsAtomic}]
Let $G \in \sclass$ and let $\alpha \in \lnorm(G)$.  We suppose that $\alpha$ is a minimal non-zero element of $\lnorm(G)$, in other words $\alpha$ is a minimal element of $\mcF := \lnorm(G) \smallsetminus \{0\}$; this will clearly be the case if $\alpha$ has a \hji representative.

Suppose $\lcent(G)$ is non-trivial.  Then by Corollary~\ref{cor:lcent:minorising}, every infinite compact locally normal subgroup $K$ of $G$ contains an infinite closed locally normal subgroup $L$ of $G$, such that $L$ has infinite index in $K$.  In particular, there cannot be any minimal nonzero element of $\lnorm(G)$, contradicting the existence of $\alpha$.  Thus $\lcent(G)$ is trivial and $\mcF$ is a filter by Lemma~\ref{lem:lnorm_filter}.  Since $\alpha$ is minimal in $\mcF$, we see that in fact $\mcF$ is a principal filter, with $\alpha$ the unique least element of $\mcF$.  If $\alpha = \infty$ then $\lnorm(G) = \{0,\infty\}$, so $G$ is locally \hji; otherwise, we see from Theorem~\ref{thmintro:types} that $G$ is of atomic type.  Thus (ii) is proved.  Theorem~\ref{thmintro:types} also ensures that $G$ acts trivially on $\lnorm(G)$; part (i) follows immediately.

From now on we may suppose that $G$ is of atomic type.  It remains to show that there exists a continuous homomorphism $\phi: S \rightarrow G$ such that $S \in \sclass$, $S$ is not of atomic type and $\phi(S)$ is a proper dense normal subgroup of $G$.  Consider a compact representative $K$ of $\alpha$.  Since $\alpha < \infty$, we see that $K$ is not open in $G$.  However $\alpha$ is fixed by $G$, so $K$ is commensurated in $G$.  Proposition~\ref{compression:simple} then provides a continuous homomorphism $\phi:S \rightarrow G$ such that $S \in \sclass$ and $\phi(S)$ is normal in $G$.  Moreover, $\phi(S) = \lla M \rra_G$ for some infinite compact locally normal subgroup $M$ of $G$.  We see from the construction of $S$ that $0 < [M] \le \alpha$, so by the minimality of $\alpha$, in fact $M$ is a representative of $\alpha$.  We see that $M$ has countable index in $\phi(S)$, but uncountable index in $G$, so $\phi(S) < G$.  In particular, $G$ is not abstractly simple.

Suppose $\lnorm(S)$ has an atom $\beta$.  Then the same argument as for $G$ shows that $\beta$ is unique, and thus the commensurability class of $\phi(K)$ is preserved by conjugation in $G$, where $K$ is any compact representative of $\beta$.  By Lemma~\ref{lem:comm:locnorm} we can find a compact locally normal representative $L$ of $\beta$ such that $\phi(L)$ is locally normal in $G$.  Minimality of $\alpha$ then ensures $[\phi(L)] = \alpha$, which in turn implies that $L$ is open in $S$, and hence $\beta$ is the greatest element of $\lnorm(S)$.  In this case $\lnorm(S) = \{0,\infty\}$, so $S$ is locally \hji; in particular, $S$ is not of atomic type.\end{proof}

\begin{proof}[Proof of Theorem~\ref{thmintro:con} and Corollary~\ref{corintro:con}]
If there is a subgroup $R = \prod_{i \in \bZ}K_i \rtimes \langle g \rangle$ of the form described, it is clear that $\lcent(G)$ is non-trivial, since for instance the centraliser of the factor $K_0$ is neither open nor discrete.

Conversely, suppose $\lcent(G) \neq \{0,\infty\}$.  By Corollary~\ref{cor:lcent:minorising}, $G$ has a minorising orbit on $\lcent(G)$; in particular, there exists $\alpha \in \lcent(G)$ and $g \in G$ such that $g\alpha < \alpha$.  Hence by Proposition~\ref{goodshrink}, there is a closed subgroup $R$ of $G$ of the required form.  Proposition~\ref{goodshrink} also ensures that $\con(g)$ is not closed.
\end{proof}

\subsection{Minimality and strong proximality}\label{sec:strongprox}

Recall that the action of a group $G$ on a compact space $\Omega$ is \textbf{minimal}\index{minimal action} if every orbit is dense. It is called   \textbf{proximal}\index{proximal action} if every pair  $\{\eta, \xi\} \subset \Omega$  is compressible to some $\omega \in \Omega$ (see 
Section~\ref{sec:MinorBool} for the definition of compressibility),  and \textbf{strongly proximal}\index{strongly proximal action} if for any probability measure $\mu$ on $\Omega$, the $G$-orbit $G\mu$ contains Dirac measures in its closure in the space of probability measures on $\Omega$.  Our aim in this section is to establish minimality and strong proximality for the action of $G$ on $\mfS(\lcent(G))$ in the case that $G \in \sclass$, which will lead to proofs of Theorem~\ref{thmintro:WeaklyDecomposable} and its corollaries.

Let us first consider dense orbits in $\mfS(\lcent(G))$.

\begin{thm}\label{fixdense}
Let $G$ be a topologically simple, locally C-stable \tdlcsc group.  Suppose $\mcA$ is a $G$-invariant subalgebra of $\lcent(G)$ for which $|\mcA| > 2$.  Then:
\begin{enumerate}[(i)]
\item The action of $G$ on $\mfS(\mcA)$ has a dense orbit; indeed, every orbit $\omega$ of $G$ on $\mfS(\mcA)$ is either a singleton or dense in $\mfS(\mcA)$.

\item If $G$ is compactly generated, then the action of $G$ on $\mfS(\mcA)$ is minimal.

\item Suppose that $\mcA$ is countable. If  $G$ is abstractly simple, then the action of $G$ on $\mfS(\mcA)$ is minimal.
\end{enumerate}
\end{thm}

\begin{proof}Since $G$ is locally C-stable, $\QZ(G)$ is discrete.  The existence of a non-trivial subalgebra of $\lcent(G)$ ensures that $G$ is not discrete; since $G$ is topologically simple, it follows that $\QZ(G)= \triv$.  By Corollary~\ref{lnormfix:lcent}, the action of $G$ on $\mcA$ is faithful, with $\mcA^G = \{0,\infty\}$.  The action is locally weakly decomposable by Proposition~\ref{prop:WB-part1}.

(i) The action of $G$ on $\mfS(\mcA)$ is faithful because the action on $\mcA$ is faithful.  Let $\omega$ be an orbit of $G$ such that $|\omega| >1$; such an orbit exists since the action of $G$ on $\mfS(\mcA)$ is non-trivial.  Since $G$ is topologically simple, $G$ acts faithfully on $\omega$ and hence $\rist_G(\mfS(\mcA) \smallsetminus \omega)=\triv$. Since the $G$-action is weakly decomposable, we conclude that $\omega$ is dense.

For the proof of parts (ii) and (iii), let $\kappa$ be the set of fixed points of $G$ acting on $\mfS(\mcA)$.  The fact that $G$ acts non-trivially on $\mfS(\mcA)$ ensures that $\kappa$ is not dense.

(ii) Let $\beta \in \mcA$ be such that $\beta > 0$ and $\beta \cap \kappa = \varnothing$; such a $\beta$ exists because $\kappa$ is not dense.  Considering $\beta$ as an element of $\lnorm(G)$, by Theorem~\ref{boxcor} there exists a finite subset $\{g_1,\dots,g_n\}$ of $G$ such that $\gamma = \bigvee^n_{i=1}g_i\beta$ is fixed by $G$.  Thus $\gamma^\bot$ is a fixed point of the action of $G$ on $\lcent(G)$; since clearly $\gamma > 0$, we have $\gamma^\bot < \infty$ and hence $\gamma^\bot = 0$.  We now observe that the set $\{g_1\beta,\dots,g_n\beta\}$ has a least upper bound $\delta$ in $\mcA$, since $\mcA$ is a lattice.  Since $\mcA$ is a subset of $\lnorm(G)$ we have $\gamma \le \delta$, so $\delta^\bot \le \gamma^\bot = 0$.  Thus $\delta = \infty$.  At the same time, since $\kappa$ is $G$-invariant, we have $g_i\beta \cap \kappa = \varnothing$ for all $i$, so $\delta \cap \kappa = \varnothing$, and hence $\kappa = \varnothing$.  We conclude by part (i) that every orbit of $G$ on $\mfS(\mcA)$ is dense, in other words the action is minimal.

(iii) We will prove this part in contrapositive form.  Suppose that the action of $G$ on $\mfS(\mcA)$ is not minimal, that is, not all $G$-orbits are dense.  By part (i), $\kappa$ is non-empty.  Using this fact, we can exhibit a proper non-trivial normal subgroup of $G$ to show that $G$ is not abstractly simple.  Define
\begin{equation}
\label{eq:directed_union} 
N = \bigcup_{\alpha \in \mcA, \alpha \cap \kappa = \varnothing } \rist_G(\alpha), 
\end{equation}
a directed union of subgroups of $G$ that is normal because~$\kappa$ is $G$-invariant and non-trivial because the $G$-action is weakly decomposable.  Thus $N$ is a dense normal subgroup of $G$; it remains to show that $N \neq G$.

Let~$U$ be a compact open subgroup of $G$ and suppose $N \ge U$. Since~$\mcA$ is countable,~\eqref{eq:directed_union} is a countable directed union of closed sets.  The Baire Category Theorem then implies that there is~$\alpha \in \mcA$ intersecting $\kappa$ trivially and such that $\rist_U(\alpha)$ is open in $U$.

Since $\alpha \cap \kappa = \varnothing$ we have $\alpha \ne \infty$ and so $\alpha^\bot > 0$.  In other words, taking $K$ to be a compact representative of $\alpha$, then $\CC_G(K)$ is non-discrete since $[\CC_G(K)] = \alpha^\bot$ by Proposition~\ref{prop:WB-part1}. Then faithfulness of the action of $G$ on $\mcA$ implies that $\rist_U(\alpha) \cap \rist_U(\alpha^\bot) = 1$; since $\rist_U(\alpha)$ is open in $U$, it follows that~$\rist_G(\alpha^\bot)$ is discrete.  This is absurd as we clearly have $\CC_G(K) \le \rist_G(\alpha^\bot)$.  We conclude that $N \not\ge U$ and hence that $N \neq G$ as claimed.
\end{proof}

We now give a sufficient condition for an action to be strongly proximal.

\begin{prop}\label{prop:StronglyProx}
Let $G$ be a topologically simple locally compact group acting continuously by homeomorphisms on a profinite space $\Omega$. Assume that the action is minimal and locally weakly decomposable, and that there is a non-empty compressible open subset. 
Then the $G$-action is strongly proximal.
\end{prop}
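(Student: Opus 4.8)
The plan is to fix an arbitrary probability measure $\mu$ on $\Omega$ and show that the weak-$*$ closure $\overline{G\mu}$ contains a Dirac mass. Since $\Omega$ is profinite and the given compressible open set contains a compressible clopen set (by Lemma~\ref{lem:CompressibleMinorising} together with the fact, recorded in its proof, that every clopen subset of a compressible set is again compressible), I may fix a clopen set $\alpha$ and a point $\omega\in\Omega$ such that for every clopen neighbourhood $V$ of $\omega$ there is $g\in G$ with $g\alpha\subseteq V$. The whole argument then reduces to the single estimate
\[ \sup_{g\in G}(g\mu)(V)=1 \qquad\text{for every clopen neighbourhood } V\ni\omega. \]
Indeed, granting this, choose a basis $(V_k)$ of clopen neighbourhoods of $\omega$ with $V_k\searrow\{\omega\}$ and elements $g_k$ with $(g_k\mu)(V_k)>1-1/k$; any weak-$*$ limit $\nu$ of a subnet of $(g_k\mu)$ satisfies $\nu(W)=1$ for every clopen $W\ni\omega$ (since $V_k\subseteq W$ eventually and $\rho\mapsto\rho(W)$ is continuous for clopen $W$), whence $\nu(\{\omega\})=1$, i.e. $\nu=\delta_\omega\in\overline{G\mu}$.

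To seed the estimate I would use minimality: the set $\bigcup_{g}g\alpha$ is non-empty, open and $G$-invariant, hence equals $\Omega$, so by compactness $\Omega=\bigcup_{i=1}^n g_i\alpha$. Each $g_i\alpha$, and each clopen subset of it, is compressible to the \emph{same} point $\omega$. Consequently every $\rho\in P(\Omega)$ has $\rho(g_i\alpha)\ge 1/n$ for some $i$, and compressing that piece into a prescribed $V\ni\omega$ produces $h\in G$ with $(h\rho)(V)\ge 1/n$. Combined with the reduction above, this already shows $\overline{G\mu}$ contains a measure with an atom of mass $\ge 1/n$ at $\omega$. Passing to a minimal closed $G$-invariant subset $K_0\subseteq\overline{G\mu}$ (Zorn), and using that the maximal-atom functional $\nu\mapsto\sup_{x}\nu(\{x\})$ is upper semicontinuous on the compact set $K_0$, I obtain $\nu^*\in K_0$ realising $p:=\max_{\nu\in K_0}\sup_x\nu(\{x\})\in[1/n,1]$.

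The remaining and main task is to show $p=1$, i.e. that the residual mass \emph{not} captured by a single compression can be driven to zero. I would argue by improvement: writing $\nu^*=p\,\delta_{x^*}+\sigma$ with $\sigma(\Omega)=1-p$, first relocate the atom close to $\omega$ using density of the orbit $Gx^*$, then compress a clopen piece carrying a definite fraction of $\sigma$ into a small neighbourhood of $\omega$, aiming to produce a measure in $K_0$ with an atom exceeding $p$ and so contradict maximality. The hard part is exactly that compressing one clopen set disperses the rest of $\Omega$, so the already-accumulated mass may be scattered; this is the crux of the proof. I expect to resolve it by upgrading the bare compressibility of $\alpha$ to a genuine contraction: minorisation of the $G$-orbit of $\alpha$ (Lemma~\ref{lem:CompressibleMinorising}) together with Lemma~\ref{lem:skewer} furnishes a skewering element $g$ with $g\alpha\subsetneq\alpha$, whose powers contract $\alpha$ while expanding its complement; iterating such contractions and interleaving them with orbit relocations should let one sweep all but an arbitrarily small-measure ``repelling'' part of $\Omega$ into a small neighbourhood of $\omega$, forcing $p=1$. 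Once $p=1$, the corresponding measure in $K_0\subseteq\overline{G\mu}$ is a Dirac mass, and since $\mu$ was arbitrary the action is strongly proximal.
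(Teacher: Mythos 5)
Your reduction to the single estimate $\sup_{g\in G}(g\mu)(V)=1$, the covering bound $\rho(g_i\alpha)\ge 1/n$, and the extraction of a measure $\nu^*$ maximising the atom mass $p$ on a minimal invariant set $K_0$ are all sound, modulo one repair: $\Omega$ need not be metrisable (the paper stresses that $\mfS(\lcent(G))$ can fail to be countably based), so your sequences $(V_k)$ and limit extractions must be replaced by nets indexed by the directed family of clopen neighbourhoods of $\omega$. The genuine gap is exactly where you flag ``the crux'': the improvement step forcing $p=1$ is never carried out, only hoped for, and the mechanism you propose does not resolve the scattering problem you yourself identify. A skewering element $g$ with $g\alpha<\alpha$ pushes the mass of $\alpha$ onto $\bigcap_{n\ge 0}g^n\alpha$, which is merely a closed set, not a point, and it disperses the mass of $\Omega\setminus\alpha$ uncontrollably; likewise, an element compressing a clopen piece of $\sigma$ into a small neighbourhood of $\omega$ has no reason to keep the already-accumulated atom near $\omega$. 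Nothing in ``minorising plus compressible'' alone rules out that each relocation loses as much mass as the compression gains, so the maximality of $p$ is never actually contradicted.

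The tell-tale sign is that your argument nowhere uses the two hypotheses that carry the weight: weak branching and topological simplicity (Lemmas~\ref{lem:CompressibleMinorising} and~\ref{lem:skewer}, which you do invoke, require neither). The paper's proof runs differently: it first invokes \cite[Theorem~4.5]{BT} (see \cite[Prop.~VI.1.6]{Margulis} in the metrisable case) to reduce strong proximality, for compact zero-dimensional flows, to \emph{proximality}, i.e.\ compressibility of every pair $\{\eta,\xi\}$; it then proves proximality by considering the subgroup $N$ generated by all rigid stabilisers $\rist_G(g\alpha)$, $g \in G$ --- normal by construction, non-trivial by weak branching, hence dense by topological simplicity --- and choosing $g\in N$ of minimal word length with $g\xi\in\alpha$. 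Writing $g=r_n\cdots r_1$ with $r_i\in\rist_G(\alpha_i)$, each $r_i$ fixes everything outside $\alpha_i$: this ``local move, no scattering'' control is precisely what your sketch lacks, and the minimal-length choice forces either $g$ to fix $\eta$ (so $g$ maps the pair into the compressible set $\alpha$), or some prefix $g_{i-1}$ to map the pair into a translate $\alpha_i$ of $\alpha$; either way the pair is compressible. To complete your route you would either have to prove pairwise proximality anyway --- which is essentially the paper's argument --- or reprove the upgrade from proximal to strongly proximal from scratch; as written, the central claim $p=1$ is unsupported.
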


\begin{proof}
Every non-empty open subset contains a non-empty clopen subset. Moreover, by minimality every $G$-orbit visits every non-empty clopen subset. Therefore, our hypotheses imply that every point of $\Omega$ admits a compressible clopen neighbourhood.
 
By \cite[Theorem~4.5]{BT} (see also \cite[Prop.~VI.1.6]{Margulis} for the special case when $\Omega$ is metrisable), an action of a group on a compact space is strongly proximal as soon as that action is minimal, proximal and contains a compressible open set. 
Therefore, in order to prove the proposition,   it suffices to show that every pair $\{\eta, \xi\}$ in $\Omega$ is compressible. Given any pair  $\{\eta, \xi\}$, and let $\alpha$ be a compressible clopen neighbourhood of $\eta$.

Let $N$ be the subgroup of $G$ generated by $\bigcup_{g \in G} \rist_G(g\alpha)$. Then $N$ is a non-trivial normal subgroup of $G$, which is thus dense. Since every $G$-orbit on $\Omega$ is dense, so is every $N$-orbit. In particular there is some $g \in N$ such that $g\xi \in \alpha$. Among all the $g \in N$ with $g\xi \in \alpha$,  we choose one   of minimal word length with respect to the generating set $\bigcup_{g \in G} \rist_G(g\alpha)$. Write $g = r_n  \dots r_1$ as a product of elements $r_i \in \rist_G(\alpha_i)$, where $\alpha_i$ belongs to the $G$-orbit of $\alpha$. Set $g_j = r_j\dots r_1$ for all $j =\{1, \dots, n\}$ and set $g_0 = 1$. By the minimality of $n$, we have $g_{j-1}\xi \in \alpha_j$ for all $j>0$, since otherwise we would have $g_j \xi = g_{j-1}\xi$ and hence 
$$ r_n \dots r_{j+1} g_{j-1}\xi = g\xi  \in \alpha,$$
which contradicts the minimality of $n$.

Now we distinguish two cases. Assume first that $\eta \not \in \alpha_j$ for all $j \in \{1, \dots, n\}$. It then follows that $r_j$ fixes $\eta$ for all $j$, and hence so does $g$. Therefore we have $g(\{\eta, \xi\}) = \{\eta, g.\xi\} \subset \alpha$. Since $\alpha $ is compressible, this implies that the pair $\{\eta, \xi\}$ is also compressible, and we are done. 

Assume next that there is some $j>0$ such that $\eta \in \alpha_j$. Let then $i = \min \{j>0 \; | \; \eta \in \alpha_j\}$. Thus $r_j$ fixes $\eta$ for all $j< i$, and hence $g_{i-1}\eta = \eta$. In particular $g_{i-1}\eta \in \alpha_i$. Moreover, we have seen above that $g_{i-1}\xi \in \alpha_i$. Therefore $g_{i-1}(\{\eta, \xi\})  \subset \alpha_i$, and hence  $\{\eta, \xi\}$ is compressible because $\alpha_i$ is so. 

Thus every pair in $\Omega$ is compressible, and the $G$-action is indeed proximal.
\end{proof}

\begin{proof}[Proof of Theorem~\ref{thmintro:WeaklyDecomposable}]
By Theorem~\ref{thmintro:QZ} and Proposition~\ref{cstab}, the group $G$ is locally C-stable. Therefore the hypotheses of Theorem~\ref{fixdense} are fulfilled. 
Therefore the $G$-action on $\Omega= \mfS(\lcent(G))$ is continuous by Lemma~\ref{lem:SmoothActionContinuous}. It is locally weakly decomposable by Proposition~\ref{prop:WB-part1} (in particular it is smooth), and minimal by Theorem~\ref{fixdense}(ii).
Moreover, by Corollary~\ref{cor:lcent:minorising} there is a minorising $G$-orbit in $\lcent(G)$. This implies that the corresponding clopen subset of  $\Omega$ is compressible by Lemma~\ref{lem:CompressibleMinorising}. The strong proximality follows from Proposition~\ref{prop:StronglyProx}.

This proves (i).  Since $G$ is locally C-stable with trivial quasi-centre, the assertion (ii) follows from (i) together with \cite[{Theorem~II(ii)}]{CRW-Part1}.
\end{proof}

\begin{proof}[Proof of Corollary~\ref{corintro:free}]
Follows from Theorem~\ref{thmintro:WeaklyDecomposable} and Proposition~\ref{prop:freemonoid}.
\end{proof}

\begin{proof}[Proof of Corollary~\ref{corintro:TopolRigidity}]
If $\lcent(G)$ is non-trivial, then the $G$-action on $\lcent(G)$ is non-trivial by Theorem~\ref{thmintro:WeaklyDecomposable}, hence faithful since $G$ is topologically simple. The uniqueness of the topology on $G$ amongst $\sigma$-compact locally compact topologies follows from  \cite[{Corollary~7.6}]{CRW-Part1}.  Indeed, the same argument shows that the topology of $G$ is the only Polish topology compatible with the group structure.

If in addition $G$ is abstractly simple, then the only fixed points of $G$ in the structure lattice are the trivial ones by Theorem~\ref{baireabs}; the desired statement therefore also follows from  \cite[{Corollary~7.6}]{CRW-Part1}.
\end{proof}

%Recall that an action of a group on a topological space is \textbf{(topologically) minimal}\index{minimal action} if every orbit is dense.  We note that in the case of minimal actions on compact spaces, skewering is incompatible with amenability.
%
%\begin{prop}\label{prop:nonamenable}
%Let $G$ be a \tdlc group acting smoothly on a Boolean algebra $\mcA$.  Suppose that the $G$-action on the Stone space $\mfS(\mcA)$ is minimal and that there is some $g \in G$ such that $g$ is skewering on $\mcA$.  Then $G$ is not amenable.\end{prop}
%
%\begin{proof}Let $\alpha \in \mcA$ be such that $g\alpha < \alpha$.  Suppose $G$ is amenable.  Then there is a $G$-invariant probability measure $\mu$ on $\mfS(\mcA)$. By minimality, that measure $\mu$ has full support.  In particular, we have $\mu(\alpha \smallsetminus g\alpha)>0$. On the other hand 
%\[ \mu(\alpha) = \mu(g\alpha) + \mu(\alpha \smallsetminus g\alpha) > \mu(g\alpha) = \mu(\alpha),\]
%a contradiction.\end{proof}

We recall that strong proximality may be viewed as an antipodal condition to amenability. In fact we have the following {result due to H.~Furstenberg}. 

\begin{prop}\label{prop:StronglyProx:amen}
Let $G$ be a locally compact  group admitting a closed amenable subgroup $A$ such that $G/A$ is compact. Let also $\Omega$ be a compact $G$-space which is minimal and strongly proximal. Then $A$ fixes a point in $\Omega$, and the $G$-action on $\Omega$ is transitive.  
\end{prop}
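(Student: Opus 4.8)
The plan is to exploit the tension between the amenability of $A$ and the strong proximality of the $G$-action, along the classical lines going back to Furstenberg. First I would produce an $A$-invariant measure. Since $A$ is amenable and acts continuously on the compact space $\Omega$, the induced affine action of $A$ on the space $\mcP(\Omega)$ of probability measures, which is a non-empty convex subset of $C(\Omega)^*$ that is compact for the weak-* topology, must have a fixed point; this yields an $A$-invariant measure $\mu \in \mcP(\Omega)$. The strong proximality of the $G$-action then gives a Dirac mass $\delta_\omega$ in the weak-* closure of $G\mu$, so there is a net $(g_i)$ in $G$ with $g_i\mu \to \delta_\omega$.

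The key structural input is the compactness of $G/A$. Because the quotient map $G \to G/A$ is open and $G/A$ is compact, a standard covering argument produces a compact set $C \subseteq G$ with $G = CA$. Writing $g_i = c_i a_i$ with $c_i \in C$ and $a_i \in A$, the $A$-invariance of $\mu$ gives $g_i\mu = c_i\mu$. Passing to a subnet I may assume $c_i \to c \in C$, and by continuity of the induced action on $\mcP(\Omega)$ I get $c_i\mu \to c\mu$. Comparing the two limits forces $c\mu = \delta_\omega$, hence $\mu = c\inv\delta_\omega = \delta_{c\inv\omega}$ is itself a Dirac mass. Since $\mu$ is $A$-invariant and the action sends Dirac masses to Dirac masses, the point $\omega' := c\inv\omega$ is fixed by $A$, which proves the first assertion.

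For transitivity I would observe that, as $A$ fixes $\omega'$, the orbit map $g \mapsto g\omega'$ factors through a continuous map $G/A \to \Omega$. Since $G/A$ is compact and $\Omega$ is Hausdorff, the orbit $G\omega'$ is compact and therefore closed; minimality of the action makes every orbit dense, so $G\omega' = \overline{G\omega'} = \Omega$ and the action is transitive.

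The only non-formal points, which I expect to be the main places requiring care, are: (a) the extraction of the compact set $C$ with $G = CA$ from the compactness of $G/A$, using the openness of the quotient map; and (b) the joint continuity of the map $G \times \mcP(\Omega) \to \mcP(\Omega)$ in the weak-* topology, which reduces to the norm-continuity of $g \mapsto (x \mapsto f(gx))$ into $C(\Omega)$ for each $f \in C(\Omega)$. Step (b) is precisely where the hypothesis that the $G$-action on $\Omega$ is continuous (in the uniform-convergence sense) enters, and the remaining manipulations with nets are routine.
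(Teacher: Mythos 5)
Your proof is correct, but it takes a different route from the paper, which disposes of the proposition in one line: it cites Proposition 4.4 of Furstenberg \cite{Furstenberg}, asserting that every compact minimal strongly proximal $G$-space is an equivariant image of $G/A$, and both conclusions follow at once from such an equivariant surjection (the image of the base point of $G/A$ is an $A$-fixed point, and transitivity of $G$ on $G/A$ passes to the surjective image). What you have done, in effect, is reconstruct the proof of that cited result: an $A$-invariant measure $\mu$ from amenability, a Dirac mass in $\overline{G\mu}$ from strong proximality, the decomposition $G=CA$ with $C$ compact to force $\mu$ itself to be a Dirac mass, and the factorisation of the orbit map through the compact space $G/A$ for transitivity. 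The trade-off is the usual one: the paper's citation is shorter, while your argument is self-contained and shows exactly where each hypothesis enters. One streamlining worth noting: since $\mu$ is $A$-invariant and $G=CA$, the orbit $G\mu = C\mu$ is the continuous image of a compact set, hence already weak-* compact and closed in the space of probability measures; so the Dirac mass supplied by strong proximality lies in $G\mu$ itself, i.e. $\delta_\omega = g\mu$ for some $g \in G$, and the subnet extraction can be skipped entirely --- this is essentially how Furstenberg's own argument runs.
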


\begin{proof}
By  \cite[Proposition 4.4]{Furstenberg}, for every compact $G$-space $Z$ such that the $G$-action on $Z$ is minimal and strongly proximal, there is a unique $G$-equivariant continuous map $\psi \colon G/A \to Z$. Since the $G$-action on $Z$ is minimal, it follows that the continuous map $\psi$ must be surjective. The image of the trivial coset under $\psi$ is a $G$-fixed point in $Z$, and the transitivity of $G$ on $G/A$ implies the transitivity on $Z$. 
\end{proof}

Corollary~\ref{corintro:amen} now follows immediately from Theorem~\ref{thmintro:WeaklyDecomposable} and Proposition~\ref{prop:StronglyProx:amen}.

\begin{rem}\label{rem:ex_prox}
Let $G$ be a compactly generated, topologically simple \tdlc group and suppose $|\lcent(G)|>2$.  It seems plausible that every non-trivial orbit of $G$ on $\lcent(G)$ is minorising.  This would imply that, for every closed subset~$X\ne \mfS(\lcent(G))$, there is a net $\{g_i\}\subset G$ such that $\lim_i g_iX$ is a singleton. By definition, this means that the action of $G$ on $\mfS(\lcent(G))$ is \textbf{extremely proximal}\index{extremely proximal}. That condition implies in particular   that $G$ contains non-abelian discrete free subgroups by \cite[Theorem~3.4]{Glasner}.  In light of the results we have so far, an equivalent formulation of the property `every non-trivial orbit of $G$ on $\lcent(G)$ is minorising' in the present context is the following:

Let $K$ be a compact locally normal subgroup of $G$ such that $K>1$.  Then there is some $g \in G$ such that $[\CC_G(K), \CC_G(gKg\inv)] = \triv$.
\end{rem}

\subsection{Abstract simplicity}\label{sec:abs_simple}

Our main aim in this subsection is to complete the proofs of Theorems~\ref{thmintro:PropertyS1} and~\ref{thmintro:AbstractSimplicity}.  
We start by recalling the main result from \cite{CRW-TitsCore}.

\begin{thm}[{\cite[Theorem~1.1]{CRW-TitsCore}}]\label{thm:condensenorm}Let $G$ be a \tdlc group and let $D$ be a dense subgroup of $G$ such that $G^\dagger \le \N_G(D)$.  Then $G^\dagger \le D$.  In particular, every dense subnormal subgroup of $G$ contains the Tits core of $G$.
\end{thm}

\begin{proof}[Proof of Theorem~\ref{thmintro:PropertyS1}]
Suppose case (i) holds. By Corollary~\ref{cor:lcent:minorising}, the group $G$ has a minorising orbit on $\lcent(G)$. By Corollary~\ref{cor:goodshrink}, the Tits core $G^\dagger$ contains $K = \rist_U(\beta)$ for some compact open subgroup $U < G$ and some non-zero $\beta \in \lcent(G)$. Therefore $G^\dagger$ also contains an infinite commensurated compact locally normal subgroup $L$ by Lemma~\ref{boxlem}.  Since $G^\dagger$ is contained in every dense normal subgroup of $G$ by Theorem~\ref{thm:condensenorm}, we infer that every dense normal subgroup contains the infinite commensurated compact locally normal subgroup  $L$. Hence property \sdn holds.

Suppose case (ii) holds, that is, there is a topologically finitely generated compact open subgroup $U$ of $G$.  We recall that by Proposition~\ref{localprime:short}, the composition factors of $U$ are of bounded order.  Now, given a dense normal subgroup $N$ of $G$, the intersection $N \cap U$ is a dense normal subgroup of $U$, and must thus contain the closed subgroup $[U, U]$ by Theorem~\ref{thm:NS}. Notice that $[U, U]$ is infinite by Theorem~\ref{thmintro:QZ}, and thus represents a non-zero element of the structure lattice. By Lemma~\ref{boxlem}, it follows that $N$ contains a representative of a non-zero $G$-fixed point in $\lnorm(G)$.  Thus $G$ has property \sdn.

The conclusion about abstract simplicity follows by Theorem~\ref{baireabs}.
\end{proof}

We next obtain a sufficient condition for the Tits core of a \tdlc group to be open, which we can use to prove that a large class of topologically simple groups are in fact abstractly simple via the following consequence of Theorem~\ref{thm:condensenorm}.

\begin{cor}\label{cor:opentw}
Let $G$ be a non-discrete topologically simple \tdlc group such that $G^\dagger$ is open in $G$.  Then $G = G^\dagger$ and $G$ is abstractly simple.\end{cor}

\begin{proof}
We see that $G^\dagger$ is a non-trivial closed normal subgroup of $G$, so $G = G^\dagger$ by topological simplicity.  Now given any non-trivial normal subgroup $N$ of $G$, then $N$ is dense in $G$ by topological simplicity, so $N \ge G^\dagger$ by Theorem~\ref{thm:condensenorm} and hence $N = G$.  Thus $G$ is abstractly simple.
\end{proof}

\begin{prop}\label{denscrit}
Let $G$ be a locally C-stable \tdlc group with $\QZ(G)=\triv$ and let $\mcA$ be a subalgebra of $\ldlat(G)$.  Suppose that the action of $G$ on $\mfS(\mcA)$ is faithful and minimal, and that $g\alpha < \alpha$ for some $g \in G$ and $\alpha \in \mcA$.  Then $G^\dagger$ is open in $G$.
\end{prop}

\begin{proof}
Let $\beta = \alpha \smallsetminus g\alpha$.  By Corollary~\ref{cor:goodshrink} we have $\rist_U(\beta) \le G^\dagger$ for some compact open subgroup $U$ of $G$.  The minimality of the $G$-action on $\mfS(\mcA)$ implies that~$\bigcup_{g \in G} g\beta = \mfS(\mcA)$. Moreover the compactness of~$\mfS(\mcA)$ implies that in fact $\mfS(\mcA) = \bigcup^n_{i=1}g_i\beta$ for some finite set~$\{g_1,\dots,g_n\}$. Hence~$\infty = \bigvee_{i=1}^n g_i\beta$. Recall that~$\gamma = [\rist_V(\gamma)]$ for all compact open subgroups~$V$ and all~$\gamma \in \ldlat(G)$ (see Proposition~\ref{prop:WB-part1}). Since the join in $\ldlat(G)$ corresponds to taking products of representatives, we see that
\[ \langle g_i\rist_U(\beta)g^{-1}_i \mid 1 \le i \le n \rangle \]
is an open subgroup of $G$. Since~$G^\dagger$ contains this subgroup, it is open too.
\end{proof}

\begin{proof}[Proof of Theorem~\ref{thmintro:AbstractSimplicity}]
In all cases, $G$ is locally C-stable by Theorem~\ref{thm:noqz}.

Suppose case (i) holds, that is, $\ldlat(G) \neq \{0,\infty\}$.  Since $G \in\sclass$, the action of $G$ on $\ldlat({G})$ is faithful by Corollary~\ref{lnormfix:lcent}.  By Theorem~\ref{fixdense}(ii), the action of~$G$ on~$\mfS(\ldlat({G}))$ is faithful and minimal. Moreover it has a minorising orbit by Corollary~\ref{cor:lcent:minorising}. In particular, there exists $g \in G$ and $\alpha \in \ldlat({G})$ such that $g\alpha < \alpha$.

It has thus been shown that $G$ satisfies the hypotheses of Proposition~\ref{denscrit}. Hence~$G^\dagger$ is open in $G$, whence~$G^\dagger = G$ and $G$ is abstractly simple by Corollary~\ref{cor:opentw}.

Suppose case (ii) holds and let $U$ be a compact open subgroup $U$ of $G$ such that $U$ is finitely generated as a profinite group and $\overline{[U,U]}$ is open in $G$.  In fact $[U,U] = \overline{[U,U]}$ by Theorem~\ref{thm:NS}, so $[U,U]$ is open in $G$.   We conclude with a similar argument as in the proof of Theorem~\ref{thmintro:PropertyS1}:  any dense normal subgroup $N$ of $G$ must contain $[U,U]$ by Theorem~\ref{thm:NS}, and hence $N$ is open in $G$.  Since $N$ is dense, we have $N = G$.  Thus $G$ has no proper dense normal subgroups; since $G$ is topologically simple by hypothesis, it follows that $G$ is abstractly simple.

Finally, suppose case (iii) holds, that is, $\lnorm(G)^G = \{0, \infty\}$ and some non-trivial compact locally normal subgroup $K$ of $G$ is topologically finitely generated.  It follows that every compact representative of $\alpha = [K]$ is topologically finitely generated, and indeed every representative of $\beta \in \lnorm(G)$ is topologically finitely generated whenever $\beta$ can be represented as a finite join of $G$-conjugates of $\alpha$.  Thus by Theorem~\ref{boxcor}, there must be a non-trivial compact locally normal subgroup $L$ that is topologically finitely generated and commensurated by $G$. Since we assume that $G$ does not have any non-trivial fixed point in the structure lattice, we infer that $L$ is open in $G$.  Thus $G$ is abstractly simple by Theorem~\ref{thmintro:PropertyS1}(ii).
\end{proof}

\begin{rem}\label{rem:veeinNandD}
The decomposition lattice is a sublattice of the structure lattice.  In particular, the least upper bound operation in $\ldlat(G)$ satisfies 
$$
[H_1]\vee[H_2] = [H_1H_2].
$$
This identity plays a crucial role in Proposition~\ref{denscrit}.  By contrast, in $\lcent(G)$ we have only that $[H_1]\vee[H_2] = [H_1H_2]^{\bot^2}$, so for groups $G \in \sclass$ of weakly decomposable type, the strategy employed in Proposition~\ref{denscrit} is not sufficient to show that $G^\dagger$ is open.  Indeed, in this case we do not know if $G^\dagger$ is necessarily open (although it is certainly non-trivial).
\end{rem}

\appendix

\section{Known sources of examples}\label{sec:Examples}

Until very recently, the only known results on the class $\sclass$ focused on specific families of examples studied with the help of the extra structure provided by the very construction of these families. Although the purpose of this paper is to lay the foundations of a study of general groups in $\sclass$, for reference we list the sources of examples of particular groups in $\sclass$ that we are aware of at the time of writing. 

\begin{itemize}
\item \textbf{Isotropic semi algebraic groups over non-Archimedean local fields}.\index{algebraic group} {Let $k$ be a non-Archimedean local field, and $\mathbf G$ be an algebraic group defined over $k$. Assume that $\mathbf G$ is non-commutative, almost $k$-simple and $k$-isotropic. The group $\mathbf G(k)$, endowed with the Hausdorff topology induced by $k$, is a compactly generated \tdlc group (see \cite[Corollary I.2.3.5]{Margulis}).  Let  $\mathbf G(k)^+$ be the normal subgroup of   $\mathbf G(k)$ generated by the unipotent radicals of $k$-defined parabolic subgroups. Then $\mathbf G(k)^+$ is a closed cocompact subgroup of $\mathbf G(k)$ by \cite[\S 6.14]{BorelTits}, and the quotient of  $\mathbf G(k)^+$ by its centre $Z$ is abstractly simple by the main result of \cite{TitsSimple}. In particular $\mathbf G(k)^+/Z$ belongs to the class $\sclass$.} 
All of these simple algebraic groups have been classified by J. Tits~\cite{Tits-Corvallis}; each of them acts properly and cocompactly on a locally finite Euclidean building constructed by F. Bruhat and Tits~\cite{BruhatTits}.  We also remark that if $k$ is a $p$-adic field, then the simple group $\mathbf G(k)^+/Z$ also carries the structure of a $p$-adic Lie group. Conversely, every compactly generated topologically simple $p$-adic Lie group\index{p-adic Lie group@$p$-adic Lie group} is of that form\footnote{Every topologically simple $p$-adic Lie group whose adjoint representation is non-trivial is   isomorphic to a $p$-adic simple algebraic group of the form $\mathbf G(k)^+/Z$, and is thus compactly generated, see \cite[Proposition~6.5]{CCLTV} (we emphasize that analytic $p$-adic groups are implicitly assumed to be linear in the latter reference).  It is however not known whether a topologically simple $p$-adic Lie group can have a trivial adjoint representation (or even be one-dimensional, hence locally isomorphic to $\mathbf Z_p$). A topologically simple $p$-adic Lie group whose adjoint representation is trivial must have an abelian Lie algebra and, hence,   be  locally abelian. Therefore it cannot be compactly generated by Theorem~\ref{thmintro:QZ}.}. More generally, it follows from \cite[Corollary~1.4]{CStu} that simple algebraic groups are the only members of the class $\sclass$ that are (locally) linear over a local field.

\item \textbf{Complete  Kac--Moody groups of indecomposable type over finite fields}.\index{Kac--Moody group} These groups are constructed as completions of minimal Kac--Moody groups over finite fields defined by J. Tits \cite{Tits-KM}, with respect to a suitable topology. The resulting locally compact groups are topologically simple and act properly and cocompactly on a locally finite building that need be neither Euclidean nor hyperbolic, see \cite{Remy}. In fact, all of these groups are abstractly simple by the main result of \cite{Marquis}.  These groups also contain non-abelian discrete free groups, and so are non-amenable. Some variations of this construction are possible, see \cite{RR}.

\item \textbf{Groups of tree automorphisms with Tits' property (P)}.\index{tree} A simplicity criterion for groups acting on trees has been given by Tits \cite{Tits70} and can be used to produce many examples. Some concrete examples have been constructed and studied in the groundbreaking work by Burger--Mozes \cite{BurgerMozes}.  The article \cite{BurgerMozes} also contains a wealth of results on the structure of \tdlc groups acting on trees, which have provided an important source of inspiration of the present work.  A recent variation on the Burger--Mozes constructions is due to Smith \cite{SmithNew}. Smith's construction  takes as an input a pair of transitive permutation groups that are both generated by their stabilisers; one of them is required to be finite, and the other is to be compactly generated of countable degree with finite subdegrees. The output is a group in $\mathscr S$ acting on a biregular tree, with one class of vertices of finite degree and the other of infinite degree. By varying the infinite permutation group in the input, Smith obtains the first construction of uncountably many pairwise non-isomorphic group in $\mathscr S$.

\item \textbf{Other groups of tree automorphisms} Tits' property (P) has been generalised by C. Banks, M. Elder and G. Willis~\cite{BanksElderWillis} and Le Boudec \cite{Boudec} to construct new examples of groups in $\sclass$; the groups in \cite{Boudec} act continuously but non-properly on locally finite trees.  Another simplicity criterion was developed by R. M\"oller and J.~Vonk~\cite{MollerVonk}, although the present authors are not aware of any new examples arising from this criterion.

\item \textbf{Groups of tree spheromorphisms}. The prototypical examples in this family are the groups of tree spheromorphisms introduced by Neretin \cite{Neretin}: there is one such group for each regular locally finite tree $T$ of degree $d \ge 3$, with isomorphism type depending on $d$. Their simplicity is due to C. Kapoudjian~\cite{Kapoudjian}. Some variations have been constructed by Barnea--Ershov--Weigel~\cite{BEW} and P.-E.~Caprace and T.~De~Medts~\cite{CDM}. 

\item \textbf{Groups acting on CAT(0) cube complexes}. The aforementioned Tits' simplicity criterion for groups acting on trees can be generalised for groups acting on higher-dimensional CAT(0) cube complexes; examples have been obtained by F.~Haglund and F.~Pau\-lin~\cite{HaglundPaulin}, N.~Lazarovich~\cite{Lazarovich} and P.-E.~Caprace~\cite{Cap}.  A very general construction, incorporating many previous examples in this class and also generalising the work of Burger--Mozes and Smith, was recently obtained by T.~De Medts, A.~Silva and K.~Struyve \cite{DMSS}.

\end{itemize}

%With the exception of algebraic groups and Kac--Moody groups, all the examples above admit a natural non-trivial micro-supported action  the actual proofs of simplicity of all examples above may be viewed as using variations of Tits' argument for groups acting on trees in \cite{Tits70}. We finish this section with a general  simplicity criterion that may be viewed as an abstract version of Tits' criterion. It applies to all groups in $\sclass$ with a non-trivial centraliser lattice (in view of Theorem~\ref{thmintro:WeaklyDecomposable}), as well as to many other 

Recall that we have divided the class $\sclass$ into five types, according to properties of the structure, centraliser and decomposition lattices.  We summarise here the division of the known examples amongst the five types.

It is known that simple algebraic groups over local fields do not admit any non-trivial compact locally normal subgroup, since each of their compact open subgroups is \hji (see \cite{Riehm}).   It is unknown whether Kac--Moody groups of \textit{compact hyperbolic type} can be locally \hji groups.   However, in every other known example of groups in $\sclass$, the existence of a non-trivial compact locally normal subgroup has been observed.  The type of the simple Kac--Moody groups is not known in general, although certainly they cannot be of atomic type, as they are all abstractly simple by \cite{Marquis}.

Several families of groups in $\sclass$ acting on higher-dimensional CAT(0) cube complexes have been discovered.  For all known groups in these families, there exist non-discrete fixators of wings  in the sense of \cite{Cap} and their complements; this property in turn implies that the centraliser lattice is non-trivial, so these examples are either weakly or locally decomposable.  For the examples in \cite{Cap}, a characterisation is given of which of these examples are locally decomposable, from which it can be seen that not all examples are locally decomposable.  In fact, as far as the present authors are aware, the non-locally decomposable examples in \cite{Cap} (and potentially some of their generalisations in \cite{DMSS}) are the only known examples of groups $G$ in $\sclass$ with $\ldlat(G)$ trivial but $\lcent(G)$ non-trivial.

Once the examples of algebraic groups, Kac--Moody groups, and groups acting on higher-dimensional CAT(0) cube complexes are excluded, all remaining examples mentioned in this appendix are easily seen to be of locally decomposable type, that is, such that the decomposition lattice is non-trivial.  Many of these examples are obtained by using variations of Tits' simplicity criterion from \cite{Tits70}. That criterion, and its abstract version provided by Proposition~\ref{prop:SimplicityCriterion},  is quite flexible and it is very likely that more examples of groups in $\sclass$ may be constructed by exploiting them. However, the `wildness' which is inherent in the flexibility of this type of construction should be contrasted by the fact that the resulting groups in $\sclass$ will always be of weakly decomposable type, and thus fall into the sub-family for which the tools developed in the present paper are most powerful.

\begin{rem}
Although most known examples are known to have non-trivial centraliser lattice, it is general hard to determine if a given Boolean algebra in the centraliser lattice is actually the whole of $\lcent(G)$ or $\ldlat(G)$, or merely a subalgebra.  As far as the authors are aware, there are only two situations where $\lcent(G)$ or $\ldlat(G)$ is non-trivial and has been completely determined:
\begin{enumerate}[(i)]
\item If a compact open subgroup of $G$ is a branch group, we can appeal to a result of A. Garrido \cite{Garrido} to conclude that, given any branch action of a compact open subgroup $U$ of $G$ on a rooted tree $T$, then $\ldlat(U)$, and hence $\ldlat(G)$, is generated by the rigid stabilisers of $U$ acting on $T$, or equivalently the rigid stabilisers of $U$ acting on the boundary of $T$.  In other words, in this situation the profinite spaces $\mfS(\ldlat(G))$ and $\partial T$ are $G$-equivariantly isomorphic.  This description covers Neretin's tree spheromorphism groups and the known examples of groups acting on locally finite trees with Tits' property (P), and in these examples we see that $\ldlat(G)$ is $G$-equivariantly isomorphic to the algebra of clopen subsets of the space $\partial T$.
\item If $G \le \Aut(T)$, where $T$ is a locally finite tree, such that $G$ is topologically simple and acts transitively on $\partial T$, then either $\lcent(G) = \{0,\infty\}$ or $\mfS(\lcent(G))$ is $G$-equivariantly isomorphic to $\partial T$: see Theorem~\ref{thm:appendix}(i) below.
\end{enumerate}
\end{rem}

\section{Locally primitive groups of tree automorphisms}

A group $G$ of permutations of a set $X$ is \defbold{quasi-primitive}\index{quasi-primitive} if every non-trivial normal subgroup of $G$ acts transitively on $X$.

A group $G$ of automorphisms of a locally finite tree $T$ is \textbf{locally quasi-primitive}\index{quasi-primitive!locally quasi-primitive} if for every vertex $v$, the finite permutation group induced by the action of the stabiliser $G_v$ on the set $E(v)$ of edges emanating from $v$, is quasi-primitive.

Groups of tree automorphisms provide an important  source of examples of groups in the class $\sclass$. The fundamental work of Burger--Mozes \cite{BurgerMozes} has shown  that the case where the action is locally quasi-primitive is especially rich and interesting. The goal of this appendix is to illustrate our results by specialising to that class.  We first show that the only fixed points in the structure lattice are the trivial ones.

\begin{prop}\label{prop:PrimTrees}
Let $T$ be a locally finite tree and $G \leq \Aut(T)$ be a closed edge-transitive subgroup which is locally quasi-primitive. If $G$ is topologically simple, then  $\lnorm(G)^G = \{0, \infty\}$. 
\end{prop}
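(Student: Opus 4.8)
The plan is to show that any nonzero $G$-fixed point of $\lnorm(G)$ equals $\infty$, i.e. that every infinite locally normal subgroup $K$ commensurated by $G$ is open. First I would record the ambient properties of $G$. Since $G$ acts edge-transitively on the locally finite tree $T$ with compact (profinite) vertex stabilisers, it is a compactly generated \tdlc group, and it is non-compact because $T$ is infinite; I will assume $G$ is non-discrete, the discrete case being immediate since then all locally normal subgroups are finite and $\lnorm(G) = \{0\}$. By Theorem~\ref{thm:noqz}, $G$ is then locally C-stable, and by Corollary~\ref{cor:TrivialQC:2} every infinite locally normal subgroup commensurated by $G$ has trivial quasi-centraliser. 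This last fact is the target of the contradiction I aim to produce.

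Next I would perform a standard reduction. Let $K$ represent a nonzero class $\alpha \in \lnorm(G)^G$; being compact and nontrivial, $K$ fixes a vertex $v$ (the case of an inverted edge is handled analogously). Since $\N_G(K)$ is open it contains a ball fixator, hence a finite-index subgroup of $G_v$, so $K$ has only finitely many $G_v$-conjugates. Replacing $K$ by the intersection $\bigcap_{g \in G_v} gKg^{-1}$ of these finitely many pairwise commensurable conjugates, I may assume $K \le G_v$ and $K \trianglelefteq G_v$ without changing the class $\alpha$. Local quasi-primitivity now applies: the image $\bar K$ of $K$ in the finite local group $\bar G_v = G_v^{E(v)}$ is a normal subgroup, hence either trivial or transitive on $E(v)$. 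Moreover $\Fix(K)$ is a $G_v$-invariant subtree through $v$, and since $\bar G_v$ is transitive, $K$ fixes one neighbour of $v$ if and only if it fixes the entire star of $v$.

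The heart of the argument is to show that if $K$ is \emph{not} open, then it fixes some half-tree pointwise. Writing $K_n = K \cap G_{B(v,n)}$ for the (open, finite-index) subgroup of $K$ fixing the ball of radius $n$ about $v$, I observe that non-openness of $K$ forces $[G_{B(v,n)} : K_n] = \infty$ for every $n$, so the $K_n$ remain ``thin'' at every scale. Each $K_n$ preserves every half-tree $T_x$ hanging off a vertex $x$ in the sphere $S(v,n)$, and I would study the restriction maps $K_n \to \Aut(T_x)$. The point I must extract from local quasi-primitivity — propagated from the star of $v$ to deeper vertices in the manner of Burger--Mozes \cite{BurgerMozes} — is a dichotomy: either the local action of $K$ is transitive cofinally along every ray, in which case connectedness of $T$ together with transitivity forces $K$ to contain a ball fixator and hence to be open, contrary to assumption; or there exist $n$ and a vertex $x \in S(v,n)$ such that $K_n$ fixes $T_x$ pointwise. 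In the latter case any automorphism supported inside $T_x$ commutes with $K_n$; since the rigid stabiliser $\rist_G(T_x)$ is non-trivial — which follows from non-discreteness together with edge-transitivity, the fixator of a large ball being a non-trivial subgroup of the product of the half-tree rigid stabilisers — I obtain a non-trivial element of $\CC_G(K_n) \le \QC_G(K)$, contradicting the triviality of $\QC_G(K)$. Thus $K$ must be open, so $\alpha = \infty$ and $\lnorm(G)^G = \{0,\infty\}$.

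The main obstacle will be exactly the dichotomy of the previous paragraph: local quasi-primitivity is a hypothesis only about the action on stars, whereas I need to control the local action of the commensurated subgroup $K$ \emph{simultaneously at all vertices and all levels}. The crux is therefore to propagate quasi-primitivity one level at a time, passing from $G_v$ to the edge stabilisers $G_e$ and then to neighbouring vertices, so as to guarantee that failure of openness produces a direction in which $K$ acts trivially. A subtlety to handle with care is that the ``local action at a vertex'' is not a commensurability invariant at the class $\infty$ (which admits both transitive and star-fixing representatives), although it is controlled for classes strictly below $\infty$, which is precisely the regime relevant to the contradiction.
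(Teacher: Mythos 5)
There is a genuine gap, and it sits exactly where you flag it yourself: the dichotomy ``either the local action of $K$ is transitive cofinally along every ray, or some $K_n$ fixes a half-tree pointwise'' is never established, and the two principles your sketch leans on to produce it are both false. First, quasi-primitivity does not propagate in the way you need: the hypothesis constrains \emph{normal} subgroups of the full local group $\bar{G}_x$, but at a vertex $x \neq v$ the image of $K_n$ in $\bar{G}_x$ is normalised only by the image of $\N_G(K) \cap G_x$, which may be a proper (open) subgroup of $G_x$, so quasi-primitivity at $x$ says nothing about that image. (One can partially repair this by replacing $K$ at each vertex $x$ by the intersection of its finitely many $G_x$-conjugates, using commensuratedness, but you do not do this and it does not rescue the rest.) Second, the first horn of the dichotomy is false for closed subgroups: inside $G = \Aut(T)^+$ for $T$ the $(p+1)$-regular tree (an edge-transitive, locally primitive, topologically simple group), the subgroup $\mathrm{PGL}_2(\bZ_p) \leq G_v$ is closed and acts transitively on \emph{every} sphere about $v$, yet it is not open and contains no ball fixator. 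So ``transitivity at all levels plus connectedness'' cannot force openness; a correct argument must use commensuratedness of $K$ in an essential way at precisely this step, and your sketch gives no indication how. Third, your justification that half-tree rigid stabilisers are non-trivial --- that the fixator of a large ball embeds in the product of the half-tree fixators --- is Tits' independence property, which fails in general: for $\PSL_2(\bQ_p)$ acting on its Bruhat--Tits tree, every half-tree fixator is trivial although ball fixators are not. (This last point happens to be repairable in your situation: $K_n \leq \Fix_G(T_x)$ already exhibits a non-trivial half-tree fixator, and edge-transitivity conjugates it into the complementary half-tree; but the first two defects are fatal to the sketch as it stands.)

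For contrast, the paper's proof avoids any control of local actions at depth. It uses the fact that $G$ is compactly presented (it acts properly and cocompactly on $T$) to apply Proposition~\ref{prop:U/L:bis}: the class $\alpha$ has a representative $K \trianglelefteq G_e$ with $G_e/K$ torsion-free and finite-by-abelian, which makes $K$ the \emph{unique maximal} closed representative of $\alpha$ inside $G_e$. That canonicity forces the abstract normal closures $M_i$ of $K$ in the vertex stabilisers to satisfy $M_i \cap G_e = K$; local quasi-primitivity enters only once, to show $M_i$ is transitive on $E(v_i)$ whenever $K$ is not normal in $G_{v_i}$. Bass--Serre theory then identifies $T$ with the tree of an amalgam ($M_1 *_K M_2$, or a star of groups when $K$ is normal in $G_{v_2}$), and topological simplicity forces the closed subgroup generated by the $M_i$ to be all of $G$, whence $K$ is open and $\alpha = \infty$. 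The canonical-representative-plus-amalgam mechanism is the ingredient your proposal is missing, and it is what lets quasi-primitivity be used only at the star of a single edge rather than at all vertices and all scales.
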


\begin{proof}
\medskip \noindent
Let $\alpha \in \lnorm(G)^G \smallsetminus \{0\}$ be a non-zero fixed point of $G$ in the structure lattice. Let $e$ be an edge of $T$. By Lemma~\ref{profcomm}, the compact open subgroup $G_e$ contains a closed normal subgroup $L$ which  is a representative of $\alpha$. Since $G$ acts properly and cocompactly on a tree, it is compactly presented by \cite[Corollary~8.A.9]{CornulierHarpe}. Therefore, the last assertion of Proposition~\ref{prop:compression_factoring} ensures that the quotient $G_e / L$ is a topologically finitely generated, finite-by-abelian, profinite group. In particular it has a unique maximal finite subgroup. Let $K$ denote  the preimage in $G_e$ of the maximal finite subgroup of $G_e/L$. Thus $K$ is a closed normal subgroup of $G_e$ representing $\alpha$, and $G_e/K$ is torsion-free. Moreover it follows from the construction that  $K$ is the unique maximal closed subgroup of $G_e$ representing $\alpha$. 

Let $v_1$ and $v_2$ be the two adjacent vertices that share the edge $e$. The compact subgroup $K$ cannot be normal both in $G_{v_1}$ and $G_{v_2}$, since it would then be normal in $\langle G_{v_1} \cup G_{v_2} \rangle = G$. Upon renaming $v_1$ and $v_2$, we assume henceforth that $K$ is not normal in $G_{v_1}$. Let $M_1$ be the (abstract) normal closure of $K$ in $G_{v_1}$. By \cite[{Corollary~7.13}]{CRW-Part1} we have $[M_1] = [K] = \alpha$. Note that $K$ is contained in $M_1 \cap G_e$. Since $M_1 \cap G_e$ is commensurate with  $K$, and since $K$  is the unique maximal closed subgroup of $G_e$ representing $\alpha$, we infer that $M_1 \cap G_e = K$. 

Since $M_1$ is normal in $G_{v_1}$ while $K$ is not, it follows from the equality  $M_1 \cap G_e = K$ that $M_1$ acts non-trivially on the set of edges $E(v_1)$ emanating from $v_1$. By hypothesis the $G_{v_1}$-action on $E(v_1)$ is quasi-primitive, hence $M_1$ is transitive on $E(v_1)$. 

Now we distinguish two cases. Assume first that $K$ is not normal in $G_{v_2}$. We then denote the (abstract) normal closure of $K$ in  $G_{v_2}$ by $M_2$. The same arguments as above show that $M_2 \cap G_e = K$ and that $M_2$ is transitive on $E(v_2)$. It follows that $M_1 \cap M_2 = K$, and that $T$ is equivariantly isomorphic to the Bass--Serre tree of the amalgamated product $M_1 *_K M_2$. In particular the group $D = \langle M_1 \cup M_2\rangle $ acts properly  and edge-transitively on $T$, and is thus closed in $G$. Since $D$ is edge-transitive, it acts transitively on the $G$-conjugacy class of $K$. Therefore $D$ coincides with the normal closure of $K$ in $G$. Since $G$ is topologically simple, it follows that $D=G$. Thus  $M_1 = G_{v_1}$ and $M_2 = G_{v_2}$, and both are open in $G$, so that $\alpha = \infty$.

Consider next the case when $K$ is normal in $G_{v_2}$. We enumerate all the vertices at distance~$1$ from $v_2$ by $w_1 = v_1, w_2, \dots, w_d$, and for each $i$ we set $e_i = \{w_i, v_2\}$. Since $G_{v_2}$ is transitive on $E(v_2) = \{e_1, \dots, e_d\}$ by hypothesis, it follows that $K$ is contained as a closed normal subgroup in $G_{e_i}$, and is the unique maximal closed subgroup of $G_{e_i}$ representing $\alpha$. For each $i = 1, \dots, d$, we denote by $M_i$ the normal closure of $K$ in $G_{w_i}$. We claim that $M_i  \cap G_{e_i} = K$ and that $M_i$ is transitive on $E(w_i)$ for all $i$. Indeed, this has already been established for $i =1$, and the claim for $i>1$ follows from the transitivity of $G_{v_2}$   on $E(v_2)$. Noting that $M_i \cap M_j = K$ for all $i \neq j$ in $\{1, \dots, d\}$, we see that $T$ coincides with the Bass--Serre tree of the amalgamated product $M_1 *_K M_2 *_K \dots *_K M_d$ (viewed as the fundamental group of a finite tree of groups, the finite tree in question being isomorphic to the star $S= e_1 \cup \dots \cup e_d$). It follows that the group $D = \langle M_1 \cup \dots \cup M_d \rangle $ acts properly on $T$ with the star $S$ as a fundamental domain. In particular $D$ is closed in $G$, and $D$ is transitive on the $G$-conjugacy class of $K$. As in the previous case, this implies that $D$ is normal, hence open in $G$, so that $K$ is of countable index in $G$. Therefore $K$ is open and $\alpha = \infty$. 
\end{proof}

We emphasise that, under the hypothesis that $G$ is locally quasi-primitive, it follows from \cite[Prop.~1.2.1]{BurgerMozes} that $G$ is topologically simple as soon as it has no non-trivial discrete normal subgroup, and no proper open normal subgroup of finite index. 

We next turn to the case where $G$ is doubly transitive on the set of ends $\partial T$. This implies that $G$ is locally $2$-transitive, hence locally primitive.  In fact, if $G \leq \Aut(T)$ is closed, non-compact and transitive on $\partial T$, then it is $2$-transitive on $\partial T$ and locally $2$-transitive: see \cite[Lemma.~3.1.1]{BurgerMozes}. 

\begin{thm}\label{thm:appendix}
Let $T$ be an infinite locally finite tree and $G \leq \Aut(T)$ be a closed subgroup which is topologically simple and acts transitively on the set of ends $\partial T$. Then:
\begin{enumerate}[(i)]
\item The Stone space $\mfS(\lcent(G))$ is either trivial, or equivariantly homeomorphic to $\partial T$.  In particular $\lcent(G)$ is either trivial or countable.

\item $\lcent(G) \neq \{0, \infty\}$ if and only if the pointwise stabiliser of some half-tree is non-trivial.

\item If $\lcent(G) \neq \{0, \infty\}$, then $G$ is abstractly simple.
\end{enumerate}

\end{thm}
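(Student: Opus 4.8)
The plan is to identify $\mfS(\lcent(G))$ with the boundary $\partial T$ by realising the centraliser lattice as the Boolean algebra $\mcA(\partial T)$ of clopen subsets of $\partial T$. I may assume $G$ is non-discrete and $\partial T$ is infinite, the remaining cases being degenerate (a countable group cannot act transitively on the uncountable boundary of a thick locally finite tree). Being closed, non-compact and transitive on $\partial T$, the group $G$ is then $2$-transitive on $\partial T$ and locally $2$-transitive by \cite[Lemma~3.1.1]{BurgerMozes}; in particular it is edge-transitive, locally quasi-primitive, and cocompact on $T$, so $G \in \sclass$, and by Theorem~\ref{thm:noqz} it is locally C-stable (so $\lcent(G)$ is a genuine Boolean algebra and the rigid-stabiliser calculus of \cite{CRW-Part1} is available). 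The $G$-action on the profinite space $\partial T$ is continuous and smooth, and the shadows $\partial T_1$ of half-trees generate $\mcA(\partial T)$. The first step is a dictionary: for a half-tree $T_1$ with complementary half-tree $T_1^c$ and edge $e$, I would show $\rist_G(\partial T_1) = \Fix_G(T_1^c)$, since an automorphism fixing every end outside $\partial T_1$ fixes the convex hull of $\partial T \setminus \partial T_1$, which is $T_1^c$ because $T$ is leafless. Taking $U = G_e$, the subgroups $\Fix_U(T_1)$ and $\Fix_U(T_1^c)=\rist_U(\partial T_1)$ are locally normal and centralise one another, and using local C-stability together with local $2$-transitivity (which forces the local action of a half-tree fixator to have trivial quasi-centraliser) I would verify $\QC_U(\Fix_U(T_1)) = \Fix_U(T_1^c)$ up to commensurability, so that $[\Fix_U(T_1^c)] = [\Fix_U(T_1)]^\bot \in \lcent(G)$. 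Sending $\partial T_1 \mapsto [\Fix_U(T_1^c)]$ thus defines a $G$-equivariant homomorphism of Boolean algebras $\iota \colon \mcA(\partial T) \to \lcent(G)$.

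The heart of the argument, and the step I expect to be hardest, is to prove that $\iota$ is surjective: every $\beta \in \lcent(G)$ is a finite Boolean combination of shadow classes. Writing $\beta = [\rist_U(\beta)]$ as in \cite{CRW-Part1}, one must analyse the compact locally normal subgroup $\rist_U(\beta)$ through its action on $T$. By normality in $U = G_w$ and local primitivity its image in each local permutation group is trivial or transitive, and an induction over $T$ should show that its support is controlled by a clopen subset of $\partial T$, with no structure finer than half-tree fixators available. This is exactly where the Burger--Mozes independence (Property (P))-type structure of locally primitive groups enters, and it is the crux of part~(i).

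Granting surjectivity, part~(ii) is immediate: if $\lcent(G) \neq \{0,\infty\}$ then some shadow class $[\Fix_U(T_1^c)]$ is non-zero, whence $\Fix_G(T_1^c) \neq 1$; conversely a non-trivial half-tree fixator produces two non-zero shadow classes with meet $0$ (using that $T$ is thick), so $\lcent(G) \neq \{0,\infty\}$. For part~(i), in the non-degenerate case $\lcent(G) \neq \{0,\infty\}$ the map $\iota$ is also injective: any non-empty clopen contains a shadow $\partial T_1$, and $\rist_U(\partial T_1) = \Fix_U(T_1^c)$ is infinite because $\QZ(G)=1$ forces the rigid stabiliser of every non-zero element to be non-discrete, so $\iota$ has trivial kernel. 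Hence $\iota$ is a $G$-equivariant isomorphism $\mcA(\partial T) \cong \lcent(G)$, and Stone duality yields a $G$-equivariant homeomorphism $\mfS(\lcent(G)) \cong \partial T$; countability of $\lcent(G)$ follows since $T$ is countable. When $\lcent(G) = \{0,\infty\}$ the Stone space is a single point, which is the other alternative. This framing is consistent with Theorem~\ref{thmintro:WeaklyBranch}(ii), the weakly branch quotient $\Omega \to \partial T$ it provides corresponding precisely to the inclusion $\iota$, so that surjectivity of $\iota$ is the same as injectivity of that quotient map.

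Finally, part~(iii) follows cleanly from the general theory once $\lcent(G) \neq \{0,\infty\}$, i.e. $|\lcent(G)|>2$. Since $G$ is edge-transitive and locally quasi-primitive, Proposition~\ref{prop:PrimTrees} gives $\lnorm(G)^G = \{0,\infty\}$, that is $\lnormf(G) = \{\infty\}$. Combined with $|\lcent(G)|>2$, Proposition~\ref{prop:LCent:Dnc}(ii) then yields that $G$ is abstractly simple, as required.
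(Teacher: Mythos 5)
Your reduction of parts (ii) and (iii) is sound: the converse direction of (ii) (non-trivial half-tree fixators for disjoint half-trees commute and hence yield non-trivial elements of $\lcent(G)$) needs no hard input, and your proof of (iii) via Proposition~\ref{prop:PrimTrees} combined with Proposition~\ref{prop:LCent:Dnc}(ii) is exactly the paper's argument. The problem is part (i), and you have correctly identified where it lies: the surjectivity of your map $\iota \colon \mcA(\partial T) \to \lcent(G)$, i.e.\ the claim that \emph{every} element of the centraliser lattice is a Boolean combination of half-tree fixator classes. This is not a technical step you may defer; it is the entire content of the theorem, and your sketch of it does not work. You propose to analyse $\rist_U(\beta)$ by ``an induction over $T$'' resting on ``Burger--Mozes independence (Property (P))-type structure of locally primitive groups'', but local quasi-primitivity (even local $2$-transitivity) does \emph{not} imply any independence property: there is no a priori reason why a compact locally normal subgroup of $U$ should decompose along half-trees, and groups such as rank-$2$ complete Kac--Moody groups or $\PSL_2(\bQ_p)$ acting on their trees illustrate that the fine structure of locally normal subgroups is not visible from the local action. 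Your fallback remark — that surjectivity of $\iota$ is equivalent to injectivity of the quotient map $\Omega \to \partial T$ furnished by Theorem~\ref{thmintro:WeaklyBranch}(ii) — does not close the gap either, since that theorem only provides a surjection and you offer no mechanism to show it is injective.

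The paper closes exactly this gap by a dynamical argument that avoids any structural analysis of rigid stabilisers on the tree. Let $P = G_\xi$ be an end stabiliser. Then $P$ is closed, amenable (it is \{locally elliptic\}-by-cyclic), and cocompact, since the orbit map $G/P \to \partial T$ is a homeomorphism by \cite{Arens}. By Furstenberg's theorem (Proposition~\ref{prop:StronglyProx:amen} and \cite[Prop.~4.4]{Furstenberg}), \emph{every} compact $G$-space that is minimal and strongly proximal is an equivariant image of $G/P$; and since $G$ is $2$-transitive on $\partial T$ by \cite[Lemma~3.1.1]{BurgerMozes}, the subgroup $P$ is maximal, so the only such images are $G/P \cong \partial T$ and the singleton. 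Theorem~\ref{thmintro:WeaklyBranch}(i) says $\Omega = \mfS(\lcent(G))$ is minimal and strongly proximal, and (i) follows at once; part (ii) then follows from (i) together with the weakly branch property of the $G$-action on $\Omega$. If you want to salvage your approach, you should replace the unproven ``induction over $T$'' by this maximality-plus-strong-proximality argument; as written, your proof of (i), and with it the forward direction of your (ii), is not established.
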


\begin{proof}
(i) Since $T$ is infinite and locally finite, the set of ends $\partial T$ is non-empty. Let $\xi \in \partial T$ and $P = G_\xi$ be the stabiliser of $\xi$. Then the set of elliptic elements of $P$ is an open normal subgroup $P_0$ of $P$, which is locally elliptic, and the quotient $P/P_0$ is cyclic. In particular $P$ is amenable. The canonical equivariant bijection $G/P \to \partial T$ is continuous, and must therefore be a  homeomorphism by \cite[Theorem~8]{Arens}. It follows that $G/P$ is compact. Moreover $P$ is a maximal subgroup of $G$ since the $G$-action on $G/P$ is doubly transitive (by  \cite[Lemma~3.1.1]{BurgerMozes}), hence primitive. Therefore, by Proposition~\ref{prop:StronglyProx:amen}, every compact $G$-space which is minimal and strongly proximal either is isomorphic to $G/P \cong \partial T$, or is a singleton. In particular, this applies to the Stone space $\mfS(\lcent(G))$ by Theorem~\ref{thmintro:WeaklyDecomposable}. This proves (i). 

(ii) Given any pair of half-trees $H_1, H_2$ in $T$, there is an element $g \in G$ mapping $H_1$ properly inside $H_2$. The `if' part follows easily from that observation. Assume conversely that $\lcent(G)$ is non-trivial. Then $\mfS(\lcent(G))$ can be identified with $\partial T$ by (i). Since the $G$-action on $\mfS(\lcent(G))$ is locally weakly decomposable by Theorem~\ref{thmintro:WeaklyDecomposable}, it follows that the pointwise stabiliser in $G$ of any non-empty proper clopen subset of $\partial T$ is non-trivial. The conclusion follows, since the pointwise stabiliser of a non-empty proper clopen subset of $\partial T$ fixes pointwise a half-tree of $T$. 

(iii) Upon replacing $T$ by a minimal $G$-invariant subtree, and discarding the vertices of degree $2$, we may assume that the $G$-action on $T$ is edge-transitive. By \cite[Lemma~3.1.1]{BurgerMozes}, the $G$-action on $\bd T$ is $2$-transitive, and the $G$-action on $T$ is locally $2$-transitive, hence locally quasi-primitive. The conclusion then follows from Proposition~\ref{prop:PrimTrees} and Theorem~\ref{thmintro:PropertyS1}.
\end{proof}

In particular, in this case we can solve the problem posed by Remark~\ref{rem:ex_prox}.

\begin{cor}
Let $T$ be an infinite locally finite tree and $G \leq \Aut(T)$ be a closed subgroup that is topologically simple and acts transitively on the set of ends $\partial T$.  Then every orbit of $G$ on $\lcent(G) \smallsetminus \{0,\infty\}$ is minorising.
\end{cor}

\begin{proof}
We may assume that $\lcent(G)$ is non-trivial.  Then by Theorem~\ref{thm:appendix}(i), the Stone space $\mfS(\lcent(G))$ is $G$-equivariantly homeomorphic to $\partial T$; in other words, we may identify $\lcent(G)$ with the Boolean algebra $\mcA$ of clopen subsets of $\partial T$.  Given a directed edge $e$ in $T$, let $S_e$ be the half-tree of vertices closer to $o(e)$ than to $t(e)$.  Then the set $\alpha_e$ of ends of $S_e$ is an element of $\mcA$; moreover, it is easily seen that the set
\[\mcH := \{\alpha_e \mid e \in ET\}\]
is minorising in $\mcA$.  Now \cite[Lemma.~3.1.1]{BurgerMozes} implies that $G$ is transitive on undirected edges of $T$; since $G$ is topologically simple, $G$ preserves the natural bipartition of the vertices.  Hence $G$ has exactly two orbits of the directed edges of $T$, so that $e$ and $e'$ lie in the same orbit if and only if $o(e)$ and $o(e')$ lie in the same part of the bipartition.  We see that for any two edges $e,e'$, there exists $g \in G$ such that $ge \neq e'$ and either $o(e') = t(ge)$ or there is an edge $e''$ from $o(e')$ to $t(ge)$.  In either case we see that $S_{ge} \subset S_{e'}$ and hence $g\alpha_{e} < \alpha_{e'}$.  Thus $G\alpha_e$ is minorising in $\mcH$, and hence in $\mcA$, for all $e \in ET$.  In particular, since $(\alpha_e)^\bot = \alpha_{\overline{e}}$, we obtain an element $\alpha \in \lcent(G)$ such that both $\alpha$ and $\alpha^\bot$ are minorising under the $G$-action.  Given $\beta,\gamma \in \lcent(G) \smallsetminus \{0,\infty\}$, there exist $g,h \in G$ such that $g\inv \alpha^\bot < \beta^\bot$ and $h\alpha < \gamma$; these two inequalities imply that $hg\beta < \gamma$.  Hence every orbit of $G$ on $\lcent(G) \smallsetminus \{0,\infty\}$ is minorising.
\end{proof}

\small

\addcontentsline{toc}{section}{\protect\numberline{}Index}
\printindex

\end{document}